\theoremstyle{plain}
\newtheorem{theorem}{Theorem}[section]
\newtheorem{lemma}[theorem]{Lemma}
\newtheorem{corollary}[theorem]{Corollary}
\newtheorem{proposition}[theorem]{Proposition}
\theoremstyle{definition}
\newtheorem{definition}[theorem]{Definition}
\newtheorem{example}[theorem]{Example}
\newtheorem{problem}[theorem]{Problem}
\newtheorem{question}[theorem]{Question}
\theoremstyle{remark}
\newtheorem*{remark}{Remark}
\newcommand{\bN}{\mathbb{N}}
\newcommand{\N}{\bN}
\newcommand{\bZ}{\mathbb{Z}}
\newcommand{\Z}{\bZ}
\newcommand{\cA}{\mathcal{A}}
\newcommand{\cB}{\mathcal{B}}
\newcommand{\cC}{\mathcal{C}}
\newcommand{\cD}{\mathcal{D}}
\newcommand{\cF}{\mathcal{F}}
\newcommand{\cG}{\mathcal{G}}
\newcommand{\cH}{\mathcal{H}}
\newcommand{\cI}{\mathcal{I}}
\newcommand{\I}{\cI}
\newcommand{\cJ}{\mathcal{J}}
\newcommand{\J}{\cJ}
\newcommand{\cK}{\mathcal{K}}
\newcommand{\K}{\cK}
\newcommand{\cL}{\mathcal{L}}
\newcommand{\cN}{\mathcal{N}}
\newcommand{\cP}{\mathcal{P}}
\newcommand{\cR}{\mathcal{R}}
\newcommand{\cT}{\mathcal{T}}
\newcommand{\cU}{\mathcal{U}}
\newcommand{\cW}{\mathcal{W}}
\newcommand{\continuum}{\mathfrak{c}}
\newcommand{\pnumber}{\mathfrak{p}}
\newcommand{\fin}{\mathrm{Fin}}
\newcommand{\Fin}{\fin}
\newcommand{\conv}{\mathrm{conv}} 
\newcommand{\nwd}{\mathrm{nwd}}
\newcommand{\Hindman}{\mathcal{H}} 
\newcommand{\Diff}{\mathcal{D}}
\newcommand{\Ramsey}{\mathcal{R}}
\newcommand{\vdW}{\mathcal{W}}
\DeclareMathOperator{\FS}{FS}
\DeclareMathOperator{\closure}{cl}
\DeclareMathOperator{\FinBW}{FinBW}
\DeclareMathOperator{\hFinBW}{hFinBW}
\begin{document}


\title{A unified approach to Hindman, Ramsey and van der Waerden spaces}


\author[R.~Filip\'{o}w]{Rafa\l{} Filip\'{o}w}
\address[Rafa\l{}~Filip\'{o}w]{Institute of Mathematics\\ Faculty of Mathematics, Physics and Informatics\\ University of Gda\'{n}sk\\ ul.~Wita Stwosza 57\\ 80-308 Gda\'{n}sk\\ Poland}
\email{Rafal.Filipow@ug.edu.pl}
\urladdr{\url{http://mat.ug.edu.pl/~rfilipow}}

\author[K.~Kowitz]{Krzysztof Kowitz}
\address[Krzysztof Kowitz]{Institute of Mathematics\\ Faculty of Mathematics\\ Physics and Informatics\\ University of Gda\'{n}sk\\ ul.~Wita  Stwosza 57\\ 80-308 Gda\'{n}sk\\ Poland}
\email{Krzysztof.Kowitz@phdstud.ug.edu.pl}

\author[A.~Kwela]{Adam Kwela}
\address[Adam Kwela]{Institute of Mathematics\\ Faculty of Mathematics\\ Physics and Informatics\\ University of Gda\'{n}sk\\ ul.~Wita  Stwosza 57\\ 80-308 Gda\'{n}sk\\ Poland}
\email{Adam.Kwela@ug.edu.pl}
\urladdr{\url{https://mat.ug.edu.pl/~akwela}}


\date{\today}


\subjclass[2020]{Primary: 
54A20 
05D10, 
03E75. 
Secondary:
03E02, 
54D30, 
54H05, 
26A03, 
40A35, 
40A05, 
03E17, 
03E15, 
03E05, 
03E35. 
}


\keywords{Ramsey's theorem, Hindman's theorem, van der Waerden's theorem,
sequentially compact space, compact space, Bolzano–Weierstrass theorem, 
Ramsey space, Hindman space, differentially compact space,
ideal, filter, almost disjoint families, Kat\v{e}tov order}


\begin{abstract}
For many years, there have been conducting research (e.g.~by Bergelson, Furstenberg, Kojman, Kubi\'{s}, Shelah, Szeptycki, Weiss) into sequentially compact  spaces that are, in a sense, topological counterparts of some combinatorial theorems, for instance 
Ramsey’s theorem for coloring graphs, 
Hindman’s finite sums theorem  and 
van der Waerden’s arithmetical progressions theorem. These  spaces are defined with the aid of different kinds of convergences: IP-convergence, R-convergence and ordinary convergence. 

The first aim of this paper is to present a unified approach to these various types of convergences and spaces. Then, using this unified  approach, we prove some general theorems about existence of the considered spaces and show that all  results obtained so far in this subject can be derived from our theorems.  

The second aim of this paper is to obtain  new results about the specific types of these spaces. For instance, we construct a Hausdorff Hindman space that is not an $\I_{1/n}$-space and a Hausdorff differentially compact space that is not Hindman. 
Moreover, we compare Ramsey spaces with other types of spaces. For instance, we construct a Ramsey space that is not Hindman and a Hindman space that is not Ramsey.  

The last aim of this paper is to provide a characterization  that shows when  there exists a space of one considered type that is not of the other  kind. 
This characterization is expressed  in purely combinatorial manner with the aid of 
 the so-called Kat\v{e}tov order that has been extensively examined  for many years so far.

This paper may interest the general audience of mathematicians as the results we obtain are on the intersection of topology, combinatorics, set theory and  number theory.

\end{abstract}


\maketitle


\setcounter{tocdepth}{1}
\tableofcontents


\section{Introduction}

For more than twenty years, many mathematicians have been examining  sequentially compact  spaces that are, in a sense, 
topological counterparts of some combinatorial theorems, for instance 
Ramsey’s theorem for coloring graphs, 
Hindman’s finite sums theorem  and 
van der Waerden’s arithmetical progressions theorem
\cite{
BergelsonZelada,
MR2948679,
MR3461181,
MR2904054,
MR3097000,
MR4356195,
MR2320288,
MR2961261,
MR3276758,
MR2471564,
MR603625,
MR531271,
MR2052425,
MR1887003,
MR1866012,
MR1950294,
MR4358658,
MR4552506,
MR3594409,
MR4584767,
Shi2003Numbers}. 
These  spaces are defined with the aid of different kinds of convergences: IP-convergence, R-convergence and ordinary  convergence.

We start our brief overview of these spaces with the ones defined using ordinary convergence. A topological space $X$ is called:
\begin{itemize}
    \item \emph{van der Waerden} \cite{MR1866012} if for every sequence
$\langle x_n\rangle_{n\in \N}$ in $X$  there exists a convergent subsequence 
$\langle x_{n}\rangle_{n\in A}$ with $A$ being an AP-set (i.e.~$A$ contains arithmetic progressions of arbitrary finite length);
    \item an \emph{$\I_{1/n}$-space} \cite{MR2471564} if for every sequence
$\langle x_n\rangle_{n\in \N}$ in $X$  there exists a convergent subsequence 
$\langle x_{n}\rangle_{n\in A}$ with $A$ having the property that the series of reciprocals of elements of $A$ diverges. 
\end{itemize}

In fact both mentioned classes of spaces are special cases of a more general notion. A nonempty family $\I\subseteq\cP(\N)$ of subsets of $\N$ is an \emph{ideal on $\N$} if it is closed under taking subsets and finite unions of its elements, $\N\notin\I$ and $\I$ contains all finite subsets of $\N$ (it is easy to see that the family $\I_{1/n}=\{A\subseteq\N: \sum_{n\in A} 1/n<\infty\}$ 
is an ideal on $\N$, and it follows from van der Waerden's theorem \cite{vanderWearden} that the family 
$\cW = \{A\subseteq\N: \text{$A$ is not an AP-set}\}$ is an ideal on $\N$). If $\I$ is an ideal on $\N$ then a topological space $X$ is called an \emph{$\I$-space} \cite{MR2471564} if for every sequence
$\langle x_n\rangle_{n\in \N}$ in $X$  there exists a converging subsequence 
$\langle x_{n}\rangle_{n\in A}$ with $A\notin \I$.
In particular, van der Waerden spaces coincide with $\cW$-spaces.

Now we want to turn our attention to spaces defined with the aid of different kinds of convergence. We start with Hindman spaces. A set $A\subseteq \N$ is an \emph{IP-set} \cite{MR531271}  if there exists an infinite set $D\subseteq\N$ such that $\FS(D)\subseteq A$ where $\FS(D)$ denotes the set of all finite sums of distinct elements of $D$.
The family $\Hindman = \{A\subseteq\N: \text{$A$ is not an IP-set}\}$ is an ideal on $\N$ (it follows from Hindman's theorem \cite{MR349574}). 

An \emph{IP-sequence} in $X$ is a sequence indexed by $\FS(D)$ for some infinite $D\subseteq\N$.
An IP-sequence $\langle x_n\rangle_{n\in \FS(D)}$ in a topological space $X$ is \emph{IP-convergent} \cite{MR531271} to a point $x\in X$ if for every neighborhood $U$ of  $x$ there exists $m\in \N$ so that $x_n \in  U$ for every $n\in \FS(D\setminus \{0,1,\dots,m\})$ (then  $x$ is called the \emph{IP-limit} of the sequence).

Since only finite spaces are $\cH$-spaces \cite{MR1887003}, Kojman replaced the ordinary convergence with IP-convergence (introduced by Furstenberg and Weiss \cite{MR531271}) to define a meaningful topological counterpart of Hindman's finite sums theorem. Namely, a topological space $X$ is called \emph{Hindman} \cite{MR1887003} if for every sequence
$\langle x_n\rangle_{n\in \N}$ in $X$  there exists an infinite set $D\subseteq \N$ such that the subsequence $\langle x_n\rangle_{n\in \FS(D)}$ IP-converges to some $x\in X$. 

We finish our brief overview of classes of sequentially compact spaces with Ramsey spaces. Let  $[A]^2$  denote the set of all pairs of elements of $A$.
A sequence    
$\langle x_n\rangle_{n\in [D]^2}$ in $X$ (indexed by pairs of natural numbers from some infinite set $D\subseteq\N$)  
\emph{R-converges} \cite{MR2948679, BergelsonZelada}
to a point $x\in X$ if for every neighborhood  $U$ of $x$ there is a finite set $F$ such that $x_{\{a,b\}} \in U$ for all distinct $a,b\in D\setminus F$.
A topological space $X$ is called \emph{Ramsey} \cite{MR4552506} 
 if for every sequence  
$\langle x_n\rangle_{n\in [\N]^2}$ in $X$ there exists an infinite set $D\subseteq\N$ such that the subsequence $\langle x_n\rangle_{n\in [D]^2}$ R-converges to some $x\in X$.

We say that an ideal $\I$ (on $\N$) is below an ideal $\J$  in the \emph{Kat\v{e}tov order} \cite{MR250257}  
 if there is a function $f:\N\to \N$ such that $f^{-1}[A]\in\J$ for every $A\in \I$. 
Note that Kat\v{e}tov order  has been extensively examined (even in its own right) for many years so far 
\cite{
MR3034318,
MR1335140,
MR3600759,
MR4247792,
MR4378082,
MR4036731,
MR3513296,
MR3692233,
MR2777744,
MR3696069,
MR2017358,
MR3019575,
alcantara-phd-thesis,
MR3555332,
MR3550610,
MR3778962,
MR4312995}.
 

\smallskip
 
There are three objectives of this paper. 
The first aim is to present a unified approach to these various types of convergences and spaces. This is achieved in sections in Part~\ref{Part:Partition_regular} with the aid of partition regular functions (Definition~\ref{def:partition-regular}), a convergence with respect to partition regular functions (Definition~\ref{def:rho-convergence}) and a subclass of sequentially compact spaces defined using this new kind of convergence  (see Definition~\ref{def:FinBW-for-rho}). Then using this approach, we prove some general theorems about those classes of spaces (Theorem \ref{thm:FinBW-hFinBW-homogeneous}) and show that all  results obtained so far in this subject can be derived from our theorems (see sections in Parts~\ref{part:FinBW-spaces} and \ref{part:Distinguishing-FinBW-spaces}).  

The second aim of this paper is to obtain  new results concerning specific types of these spaces: Ramsey spaces, Hindman spaces, van der Waerden spaces and $\I_{1/n}$-spaces. For instance, we construct a Hausdorff Hindman space that is not an $\I_{1/n}$-space   
(Corollary~\ref{cor:Fspaces:Pminus-Mrowka}(\ref{cor:Fspaces:Pminus-Mrowka:Hindman-not-summable:existence})) -- this gives  a positive answer to a  question posed by Fla\v{s}kov\'{a}~\cite{Flaskova-slides-2007} (so far only non-Hausdorff answer to this question was known \cite[Theorem~2.5]{MR4356195}). 
We also construct  a Hausdorff so-called differentially compact space that is not Hindman (Corollary~\ref{cor:distinguishing-Hindman-Ramsey-Diff-spaces}(\ref{cor:distinguishing-Hindman-Ramsey-Diff-spaces:diff-not-Hindman-space})) which  yields the  negative  answer to
a question posed by Shi~\cite[Question~4.2.2]{Shi2003Numbers} and other authors \cite[Problem 1]{MR3097000},\cite[Question 3]{MR4358658}. 
Moreover, we compare Ramsey spaces with other types of spaces (so far Ramsey spaces were only examined in their own right without comparing them with other kinds of spaces \cite{MR2948679,MR4552506,corral-guzman-lopez}). For instance, we construct a Ramsey space that is not Hindman and a Hindman space that is not Ramsey (Corollary~\ref{cor:distinguishing-Hindman-Ramsey-Diff-spaces}).

The final aim of this paper is to provide a characterization  that shows when there exists a space of one considered type that is not of the other  type (Theorem~\ref{thm:characterization-for-rho} and other results in Part~\ref{part:characterization}). This characterization is expressed in purely combinatorial manner with the aid of the  Kat\v{e}tov order or its counterpart in the realm of partition regular functions (Definition~\ref{Katetov-order-for-rho}).


\section{Preliminaries}

In the paper we are exclusively interested in Hausdorff topological spaces 
with one exception (Sections~\ref{sec:nonHausdorf-world} and \ref{sec:nonHausdorf-world2}) where we were unable to obtain results for Hausdorff spaces but succeeded  in constructing  a topological space with unique limits of sequences.

Following von Neumann, we identify an ordinal number $\alpha$  with the set of all ordinal numbers less than $\alpha$. 
In particular, the  smallest infinite ordinal number $\omega=\{0,1,\dots\}$ is equal to the set $\N$ of all natural numbers, and each natural number $n = \{0,\dots,n-1\}$ is equal  to the set of all natural numbers less than $n$.
Using this identification, we can for instance write $n\in k$ instead of $n<k$ and $n<\omega$ instead of $n\in \omega$ or $A\cap n$ instead of $A\cap \{0,1,\dots,n-1\}$. 

If $A\subseteq\omega$ and $n\in \omega$, we write
$A+n = \{a+n:a\in A\}$ and $A-n=\{a-n:a\in A, a>n\}$.

We write  
$[A]^2$ to denote the set of all unordered pairs of elements of $A$, 
$[A]^{<\omega}$ to denote the family of all finite subsets of $A$,
$[A]^\omega$ to denote the family of all infinite countable subsets of $A$
and
$\cP(A)$ to denote the family of all subsets of $A$.

We say that a family $\cA$ of subsets of a set $\Lambda$ is an \emph{almost disjoint family on $\Lambda$} if 
\begin{enumerate}
\item $|A|=|\Lambda|$ for every $A\in \cA$ and 
\item $|A\cap B|<|\Lambda|$ for all  distinct elements $A,B\in \cA$.
\end{enumerate}

By  $A \sqcup B$
we denote the \emph{disjoint union} of sets $A$ and $B$:
$$A \sqcup B = (A\times\{0\})\cup (B\times\{1\})  = \{(x,0): x\in A\}\cup\{(y,1):y\in B\}.$$
For families of sets $\cA\subseteq\cP(\Lambda)$ and $\cB\subseteq \cP(\Sigma)$, we write
$\cA \oplus \cB =\{ 
A\sqcup B:A\in \cA,B\in \cB\}.$

A nonempty family $\I\subseteq\cP(\Lambda)$ of subsets of $\Lambda$ is an \emph{ideal on $\Lambda$} if it is closed under taking subsets and finite unions of its elements, $\Lambda\notin\I$ and $\I$ contains all finite subsets of $\Lambda$.
By $\fin(\Lambda)$ we denote the family of all finite subsets of  $\Lambda$. For $\Lambda=\omega$, we write $\Fin$ instead of $\Fin(\omega)$. 
For an ideal $\I$ on  $\Lambda$, we write $\I^+=\{A\subseteq \Lambda: A\notin\I\}$ and call it the \emph{coideal of $\I$}, and we write $\I^*=\{\Lambda\setminus A: A\in\I\}$ and call it the \emph{filter dual to $\I$}.
For an ideal $\I$ on $\Lambda$
and $A\in \I^+$, it is easy to see that  
$\I\restriction A=\{A\cap B:B\in \I\}$
is an ideal on $A$. 

In our research the following ideal on $\omega^2$ plays an important role:
$$\Fin^2=\left\{C\subseteq\omega^2: 
\{n\in \omega: \{k\in\omega:(n,k)\in C\}\notin \Fin\}\in \Fin\right\}.$$

We say that a function $f:\Lambda\to \Sigma$
is \emph{$\I$-to-one} if $f^{-1}(\sigma)\in \I$ for every $\sigma\in \Sigma$.

A set $A\subseteq X$ 
is $F_\sigma$ ($G_\delta$, $F_{\sigma\delta}$, etc., resp.) in a topological space $X$  if $A$ is a union of a countable family of closed sets ($A$ is an intersection of a countable family of open sets, $A$ is an intersection  of a countable family of $F_\sigma$ sets, etc., resp.).

For a function $f:X\to Y$ and a set $A\subseteq X$, we write $f\restriction A$ to denote the restriction of $f$ to the set $A$.


\part{Partition regular operations}
\label{Part:Partition_regular}



\section{Partition regular operations and ideals associated with them}

Below we introduce a notion that proved to be a convenient tool allowing  to grasp the common feature of different kinds of convergences related to  Hindman, Ramsey and van der Waerden spaces.

\begin{definition}
\label{def:partition-regular}
Let $\Lambda$ and $\Omega$ be  countable infinite sets.
Let $\cF$ be a nonempty family of infinite subsets of $\Omega$ such that $F\setminus K\in \cF$ for every $F\in \cF$ and a finite set $K\subseteq \Omega$.
We say that a function $\rho:\cF\to [\Lambda]^\omega$ 
is \emph{partition regular} if 
\begin{description}
\item[(M)\label{def:partition-regular:monotone}] 
$\forall E,F\in \cF\, \left( E\subseteq F \implies \rho(E)\subseteq \rho(F)\right)$, 

\item[(R)\label{def:partition-regular:partition-regular}]
$\forall F\in \cF\,\forall A,B \subseteq \Lambda \,\left(\rho(F) =  A\cup B\implies 
\exists E\in \cF \,(\rho(E)\subseteq A \lor \rho(E)\subseteq B)
\right).$

\item[(S)\label{def:partition-regular:finitely-supported}]
$\forall F\in \cF\,\exists E\in \cF\,(E\subseteq F \land \forall a\in \rho(E)\,\exists K\in[\Omega]^{<\omega}(a\notin \rho(E\setminus K)))$.
\end{description}
\end{definition}

In our considerations, we use the following easy observation concerning condition  \nameref{def:partition-regular:finitely-supported} of Definition \ref{def:partition-regular}. 

\begin{proposition}
\label{prop:finite-support}
Let 
$\rho:\cF\to[\Lambda]^\omega$ (with $\cF\subseteq[\Omega]^\omega$) be a partition regular function. Then for every $F\in \cF$ there is $E\in \cF$ such that $E\subseteq F$ and for every finite set $L\subseteq \Lambda$ there exists a finite set $K\subseteq \Omega$
such that $\rho(E\setminus K)\subseteq \rho(E)\setminus L$.
\end{proposition}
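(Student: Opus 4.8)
The plan is to read off $E$ directly from condition \nameref{def:partition-regular:finitely-supported} and then upgrade the single-point statement it provides to the finite-set statement by amalgamating finitely many witnesses and invoking monotonicity \nameref{def:partition-regular:monotone}. In particular, the partition-regularity property \nameref{def:partition-regular:partition-regular} should not be needed at all.

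First I would reformulate the target inclusion. Since $E\setminus K\subseteq E$ for every finite $K\subseteq\Omega$, condition \nameref{def:partition-regular:monotone} already gives $\rho(E\setminus K)\subseteq\rho(E)$; hence the desired $\rho(E\setminus K)\subseteq\rho(E)\setminus L$ is equivalent to the disjointness $\rho(E\setminus K)\cap L=\emptyset$. This reduces the task to ensuring that no point of $L$ survives in $\rho(E\setminus K)$, and moreover only the points of $L$ lying in $\rho(E)$ require attention, as the remaining ones are automatically excluded by monotonicity.

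Concretely, I would fix $E\in\cF$ with $E\subseteq F$ as furnished by \nameref{def:partition-regular:finitely-supported}: for every $a\in\rho(E)$ there is a finite $K_a\subseteq\Omega$ with $a\notin\rho(E\setminus K_a)$ (note $E\setminus K_a\in\cF$ by the closure assumption on $\cF$, so $\rho(E\setminus K_a)$ is defined). Given a finite $L\subseteq\Lambda$, I would set $L'=L\cap\rho(E)$ and $K=\bigcup_{a\in L'}K_a$, which is a finite subset of $\Omega$ as a finite union of finite sets. For each $a\in L'$ we have $E\setminus K\subseteq E\setminus K_a$, so \nameref{def:partition-regular:monotone} yields $\rho(E\setminus K)\subseteq\rho(E\setminus K_a)$ and therefore $a\notin\rho(E\setminus K)$; the points of $L\setminus L'$ lie outside $\rho(E)\supseteq\rho(E\setminus K)$ and are thus excluded as well. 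Hence $\rho(E\setminus K)\cap L=\emptyset$, which together with $\rho(E\setminus K)\subseteq\rho(E)$ gives $\rho(E\setminus K)\subseteq\rho(E)\setminus L$, as required.

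I do not expect a genuine obstacle here: the whole content is the passage from ``each single $a\in\rho(E)$ can be killed by removing a finite set'' to ``all of the finitely many relevant $a$'s can be killed simultaneously''. The only point needing a little care is that amalgamating the witnesses into one $K=\bigcup_{a\in L'}K_a$ does not spoil the exclusions already achieved for individual points, and this is exactly what monotonicity \nameref{def:partition-regular:monotone} guarantees via $E\setminus K\subseteq E\setminus K_a$.
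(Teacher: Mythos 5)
Your proof is correct and follows essentially the same route as the paper: take $E$ from condition \nameref{def:partition-regular:finitely-supported}, set $K=\bigcup\{K_a:a\in\rho(E)\cap L\}$, and use monotonicity to conclude $\rho(E\setminus K)\subseteq\rho(E)\setminus L$. You merely spell out the monotonicity step that the paper leaves implicit.
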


\begin{proof}
For $F\in \cF$, let $E\in \cF$ be as in condition \nameref{def:partition-regular:finitely-supported} of Definition \ref{def:partition-regular}. 
Let $L\subseteq \Lambda$ be a finite set.
For every $a\in \rho(E)$, we take a finite set $K_a$ such that $a\notin \rho(E\setminus K_a)$.
Then $K=\bigcup\{K_a:a\in \rho(E)\cap L\}$ is finite 
and 
$\rho(E\setminus K)\subseteq \rho(E)\setminus L$.
\end{proof}

The following easy proposition reveals basic relationships between  partition regular functions and ideals.

\begin{proposition}\ 
\label{prop:rho-versus-ideal}
\begin{enumerate}
\item If $\rho:\cF\to[\Lambda]^\omega$ is partition regular, then 
$$\I_{\rho} = \{A\subseteq \Lambda: \forall  F\in \cF\, (\rho(F)\not\subseteq A)\}.$$
is an ideal on $\Lambda$. 
\label{prop:rho-versus-ideal:rho-gives-ideal}

    \item 
If $\I$ is an ideal on $\Lambda$, then 
the function 
$$\rho_{\I}:\I^+\to[\Lambda]^\omega \text{\ \ \  given by \ \ } \rho_{\I}(A)=A$$ 
is partition regular and  $\I=\I_{\rho_{\I}}.$ \label{prop:rho-versus-ideal:ideal-gives-rho} 
\end{enumerate}
\end{proposition}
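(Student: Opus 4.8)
The plan is to handle the two parts separately: in part~\eqref{prop:rho-versus-ideal:rho-gives-ideal} I would verify the defining properties of an ideal directly for $\I_\rho$, and in part~\eqref{prop:rho-versus-ideal:ideal-gives-rho} I would check the three partition-regularity conditions for $\rho_\I$ and then establish the identity $\I=\I_{\rho_\I}$.

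For part~\eqref{prop:rho-versus-ideal:rho-gives-ideal}, downward closure is immediate: if $\rho(F)\not\subseteq A$ for all $F\in\cF$ and $B\subseteq A$, then $\rho(F)\not\subseteq B$ as well. That $\I_\rho$ contains every finite subset of $\Lambda$ (and in particular is nonempty) follows because each $\rho(F)$ lies in $[\Lambda]^\omega$ and is therefore infinite, so it cannot be contained in a finite set; and $\Lambda\notin\I_\rho$ follows from $\cF\neq\emptyset$, since any $F\in\cF$ gives $\rho(F)\subseteq\Lambda$. The only step carrying real content is closure under finite unions, and this is precisely where condition \nameref{def:partition-regular:partition-regular} is used. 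Given $A,B\in\I_\rho$, I would argue by contradiction: if $A\cup B\notin\I_\rho$, then $\rho(F)\subseteq A\cup B$ for some $F\in\cF$, so $\rho(F)=(\rho(F)\cap A)\cup(\rho(F)\cap B)$; applying \nameref{def:partition-regular:partition-regular} yields $E\in\cF$ with $\rho(E)\subseteq\rho(F)\cap A\subseteq A$ or $\rho(E)\subseteq\rho(F)\cap B\subseteq B$, contradicting $A\in\I_\rho$ or $B\in\I_\rho$.

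For part~\eqref{prop:rho-versus-ideal:ideal-gives-rho}, I would first confirm that $\cF=\I^+$ is an admissible domain (here $\Omega=\Lambda$): it is nonempty since $\Lambda\in\I^+$, its members are infinite because $\I$ contains all finite sets, and $F\setminus K\in\I^+$ whenever $F\in\I^+$ and $K$ is finite, since otherwise $F=(F\setminus K)\cup(F\cap K)$ would lie in $\I$. Conditions \nameref{def:partition-regular:monotone} and \nameref{def:partition-regular:finitely-supported} for $\rho_\I$ are trivial: \nameref{def:partition-regular:monotone} reduces to $E\subseteq F$, and for \nameref{def:partition-regular:finitely-supported} one takes $E=F$ and, for each $a\in E$, the singleton $K=\{a\}$, so that $a\notin E\setminus\{a\}$. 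Condition \nameref{def:partition-regular:partition-regular} is again the substantive one but is short here: if $F=A\cup B$ with $F\in\I^+$, then $A$ and $B$ cannot both lie in $\I$ (else $F\in\I$), so one of them belongs to $\I^+$ and serves as the required $E$.

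Finally, for $\I=\I_{\rho_\I}$ I would unwind the definitions. A set $A$ belongs to $\I_{\rho_\I}$ exactly when no $F\in\I^+$ satisfies $F\subseteq A$, that is, when $A$ has no subset outside $\I$. By downward closure of $\I$ this happens precisely when $A\in\I$: if $A\in\I$ then every subset of $A$ is in $\I$, while if $A\notin\I$ then $A$ is itself a subset of $A$ lying in $\I^+$. I do not expect any genuine obstacle; the proposition is essentially bookkeeping, and the single point deserving care is recognizing that condition \nameref{def:partition-regular:partition-regular}, phrased for a covering $\rho(F)=A\cup B$, is to be applied after intersecting a given cover of $\rho(F)$ with the two candidate members of $\I_\rho$.
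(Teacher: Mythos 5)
Your proof is correct; the paper itself labels this proposition as ``easy'' and supplies no proof at all, so there is nothing to compare against, but your argument is exactly the expected one. In particular, you correctly isolate the only two nontrivial points: that closure of $\I_\rho$ under finite unions comes from applying condition \nameref{def:partition-regular:partition-regular} to the partition $\rho(F)=(\rho(F)\cap A)\cup(\rho(F)\cap B)$, and that $\I^+$ is a legitimate domain for $\rho_\I$ (closed under removing finite sets) because $\I$ contains all finite sets and is closed under finite unions.
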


\begin{remark}
If $\rho$ is partition regular and $\tau =\rho_{\I_\rho}$, then 
$\I_{\tau} = \I_\rho$ but, as we will see, in general, $\rho \neq \tau$.  More important,  $\tau$ may miss some crucial properties which $\rho$ possesses (e.g.~$P$-like properties -- see Proposition~\ref{prop:properties-of-ideal-versus-rho}(\ref{prop:Plike-properties-for-known-rho:FS-r-Delta:Pminus})(\ref{prop:Plike-properties-for-known-rho:Hindman-Ramsey-Diff})).\label{prop:rho-versus-ideal:rho-gives-ideal:rho}
\end{remark}

Below we present the most important examples of partition regular functions that were our prototypes while we were thinking on a unified approach to  Hindman, Ramsey and van der Waerden spaces.

\subsection{Hindman’s finite sums theorem}

Let the function $\FS :[\omega]^\omega\to [\omega]^\omega$ be given by 
$$\FS(D)  = \left\{ \sum_{n\in \alpha} n : \alpha\in [D]^{<\omega}\setminus\{\emptyset\}\right\}$$
i.e.~$\FS(D)$ is the set of all finite non-empty sums of distinct elements of $D$. 

A set $D \subseteq \omega$ is \emph{sparse} \cite[p.~1598]{MR1887003} if for each $n\in \FS(D)$ there exists the unique set $\alpha \subseteq D$ such that $n = \sum_{i\in \alpha} i$. This unique set will be denoted by $\alpha_D(n)$.
For instance, the set $E = \{2^i:i\in\omega\}$ is sparse, and in the sequel, we write $\alpha(n)$ instead of $\alpha_E(n)$. 


A sparse set $D\subseteq \omega$ is \emph{very sparse} \cite[p.~894]{MR4356195} if $\alpha_D(x)\cap \alpha_D(y)\neq\emptyset$ implies  $x+y\notin \FS(D)$ for every $x,y\in \FS(D)$.

\begin{theorem}[Hindman] 
The function 
 $\FS$ is partition regular
 and 
 the family 
$$\Hindman = \I_{\FS} = \{A\subseteq \omega: \forall  D\in [\omega]^\omega\, (\FS(D)\not\subseteq A)\}$$
is an ideal on  $\omega$. The ideal $\Hindman$ is called \emph{the Hindman ideal} \cite[p.~109]{MR2471564}.
It is known that sets from  $\Hindman^+$ (that  are called \emph{IP-sets})  are examples of  so-called Poincar\'{e} sequences\footnote{A set $W\subseteq\Z$  is called a \emph{Poincar\'{e} sequence} \cite[Definition~3.6 at p.~72]{MR603625} if for any measure preserving system $(X,\mathcal{B},\mu, T)$ and $A\in \mathcal{B}$ with $\mu(A)>0$ we have $\mu (T^{-n}[A]\cap A)>0$ for some $n\in W$, $n\neq0$.} that play an important role in the study of recurrences in topological dynamics~\cite[p.~74]{MR603625}.
\end{theorem}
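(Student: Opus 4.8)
The plan is to verify directly that $\FS$ meets the three requirements (M), (R), (S) of Definition~\ref{def:partition-regular} in the case $\Lambda=\Omega=\omega$ and $\cF=[\omega]^\omega$, and then to quote Proposition~\ref{prop:rho-versus-ideal}(\ref{prop:rho-versus-ideal:rho-gives-ideal}), which turns any partition regular function into an ideal. Once that is done, the displayed description of $\I_{\FS}$ is literally the definition of $\I_\rho$ specialized to $\rho=\FS$, and the identity $\Hindman=\I_{\FS}$ is obtained by unwinding the definition of an IP-set: $A$ is not an IP-set exactly when no infinite $D$ satisfies $\FS(D)\subseteq A$, i.e.\ when $A\in\I_{\FS}$.

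Two of the three conditions I expect to dispatch immediately. For \nameref{def:partition-regular:monotone}, if $E\subseteq F$ then every nonempty finite sum of distinct elements of $E$ is in particular such a sum for $F$, so $\FS(E)\subseteq\FS(F)$. For \nameref{def:partition-regular:finitely-supported} I would simply take $E=F$: given $a\in\FS(E)$, the set $K=\{e\in E:e\le a\}$ is finite, and since every element of $E\setminus K$ exceeds $a$, every nonempty finite sum of distinct elements of $E\setminus K$ also exceeds $a$; hence $a\notin\FS(E\setminus K)$, which is exactly what \nameref{def:partition-regular:finitely-supported} demands.

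The one genuinely nontrivial point, and the main obstacle, is condition \nameref{def:partition-regular:partition-regular} — but this is precisely (the relativized form of) Hindman's finite sums theorem~\cite{MR349574}, so I do not intend to reprove it. Concretely, if $F\in[\omega]^\omega$ and $\FS(F)=A\cup B$, I read this as a two-colouring of $\FS(F)$; Hindman's theorem then yields an infinite $E\subseteq F$ with $\FS(E)$ monochromatic, that is, $\FS(E)\subseteq A$ or $\FS(E)\subseteq B$. The only thing to check is that the standard statement delivers the witnessing set $E$ inside $F$ (with $\FS(E)\subseteq\FS(F)$), which is its usual formulation. With (M), (R) and (S) established, Proposition~\ref{prop:rho-versus-ideal}(\ref{prop:rho-versus-ideal:rho-gives-ideal}) immediately gives that $\I_{\FS}$ is an ideal on $\omega$, completing the argument.
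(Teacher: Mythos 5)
Your proposal is correct and follows the same overall template as the paper's proof: verify conditions (M), (R), (S) of Definition~\ref{def:partition-regular} and then invoke Proposition~\ref{prop:rho-versus-ideal}(\ref{prop:rho-versus-ideal:rho-gives-ideal}). Conditions (M) and (R) are handled identically in both arguments --- (M) is immediate, and (R) is exactly Hindman's finite sums theorem, which neither you nor the paper reproves. The one place where you genuinely diverge is condition (S): the paper verifies it by passing to an infinite \emph{sparse} subset $G\subseteq F$ (citing Kojman's observation that such subsets always exist), for which the unique representation $\alpha_G(a)$ of each $a\in\FS(G)$ serves as the finite set $K$. You instead keep $E=F$ and take $K=\{e\in E:e\le a\}$; since every summand of any representation of $a$ is at most $a$, all elements of $E\setminus K$ exceed $a$, hence so does every nonempty finite sum of distinct elements of $E\setminus K$, and therefore $a\notin\FS(E\setminus K)$. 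This is a valid and more elementary argument: it avoids the sparse-set machinery entirely and in fact shows that \emph{every} $F\in[\omega]^\omega$, not merely a suitably chosen subset, satisfies the inner clause of (S). What the paper's route buys is an early appearance of sparse sets, which it needs later anyway (in Lemma~\ref{lem:KOJMAN} and in the proof that $\FS$ has small accretions); your route buys a self-contained one-line verification. The remaining bookkeeping --- identifying $\Hindman$ with $\I_{\FS}$ by unwinding the definition of an IP-set --- is definitional and handled correctly.
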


\begin{proof}
It is easy to see that condition \nameref{def:partition-regular:monotone}
of   Definition~\ref{def:partition-regular} is satisfied for $\FS$.
Condition \nameref{def:partition-regular:partition-regular} of   Definition~\ref{def:partition-regular}
 holds for $\FS$ as in this case it is the well known Hindman's finite sums theorem \cite[Theorem~3.1]{MR349574},\cite[Theorem~3.5]{MR2757532}. 
 To see that condition  \nameref{def:partition-regular:finitely-supported} of   Definition~\ref{def:partition-regular} holds for $\FS$, it is enough to notice \cite[p.~1598]{MR1887003} that  every infinite set $F\subseteq\omega$ has an infinite sparse subset $G\subseteq F$ which obviously satisfies  condition \nameref{def:partition-regular:finitely-supported}.
Finally,  Proposition~\ref{prop:rho-versus-ideal}(\ref{prop:rho-versus-ideal:rho-gives-ideal}) shows that $\cH$ is an ideal on $\omega$.
\end{proof}

The following lemma will be used in some proofs regarding properties of the function $\FS$.

\begin{lemma}[{\cite[Lemma~7]{MR1887003}}]
\label{lem:KOJMAN}
If $D$ is an infinite  sparse set, then there exists a set  $S=\{s_i:i\in\omega\}\subseteq \FS(D)$ such that for every $i\in\omega$ we have 
$s_i<s_{i+1}$ and 
$$\max \alpha_D(s_i)<\min \alpha_D(s_{i+1}) \text{ and } \max \alpha(s_i)<\min \alpha(s_{i+1}).$$
\end{lemma}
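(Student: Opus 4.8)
The plan is to construct $S=\{s_i:i\in\omega\}$ recursively, choosing each $s_{i+1}$ so that both its $D$-support $\alpha_D(s_{i+1})$ and its binary support $\alpha(s_{i+1})$ lie entirely above those of $s_i$. I would start with $s_0=\min D\in\FS(D)$, and, assuming $s_i$ has been defined, set $d^\ast=\max\alpha_D(s_i)$ and let $2^m=\max\alpha(s_i)$ be the largest power of two occurring in the binary expansion of $s_i$. The new element $s_{i+1}$ will be drawn from $\FS(D')$, where $D'=\{d\in D:d>d^\ast\}$ is the infinite tail of $D$ lying strictly above $d^\ast$.

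With this setup the condition $\max\alpha_D(s_i)<\min\alpha_D(s_{i+1})$ becomes automatic: since $D$ is sparse, any $s\in\FS(D')$ has its unique representing set $\alpha_D(s)$ contained in $D'$, so every element of $\alpha_D(s)$ exceeds $d^\ast=\max\alpha_D(s_i)$. The real work is in arranging $\max\alpha(s_i)<\min\alpha(s_{i+1})$, i.e.\ forcing the binary expansion of $s_{i+1}$ to carry no bits at or below position $m$; equivalently, I want $s_{i+1}$ to be divisible by $2^{m+1}$. To secure this, I would put $K=m+1$, pick any $2^K$ distinct elements $e_1<\dots<e_{2^K}$ of $D'$, and consider the $2^K+1$ partial sums $P_0=0,\ P_1=e_1,\ \dots,\ P_{2^K}=e_1+\dots+e_{2^K}$. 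By the pigeonhole principle two of them are congruent modulo $2^K$, say $P_a\equiv P_b\pmod{2^K}$ with $a<b$, and then $s_{i+1}:=P_b-P_a=e_{a+1}+\dots+e_b$ is a nonempty sum of distinct elements of $D'$, hence lies in $\FS(D)$ and is divisible by $2^K=2^{m+1}$.

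It then remains to read off the three required inequalities. Divisibility by $2^{m+1}$ gives $\min\alpha(s_{i+1})\ge 2^{m+1}>2^m=\max\alpha(s_i)$, settling the binary-support condition; it also forces $s_{i+1}\ge 2^{m+1}$, whereas $s_i\le 2^{m+1}-1$ since its top bit is $2^m$, so $s_i<s_{i+1}$; and the $D$-support inequality was already observed. Iterating the step produces the desired infinite $S$. The only genuinely nontrivial point is the pigeonhole step manufacturing a finite sum from $D$ divisible by a prescribed power of two; once that is in hand, everything else is routine bookkeeping about sparse (binary and $D$-) representations.
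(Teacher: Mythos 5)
Your proof is correct. Note that the paper itself gives no proof of this lemma; it is imported verbatim as Lemma~7 of Kojman's paper \cite{MR1887003}, so there is no in-paper argument to compare against. Your construction is a sound self-contained proof: the pigeonhole step (among the partial sums $P_0,\dots,P_{2^{K}}$ of any $2^{K}$ elements of $D'$ two agree modulo $2^{K}$, so some nonempty block sum $e_{a+1}+\dots+e_b$ is divisible by $2^{K}=2^{m+1}$) correctly manufactures an element of $\FS(D')$ whose binary support sits strictly above $\max\alpha(s_i)$; sparseness of $D$ guarantees that the unique $D$-representation of that element is the block $\{e_{a+1},\dots,e_b\}\subseteq D'$, which gives the $\alpha_D$-separation; and $s_i\le 2^{m+1}-1<2^{m+1}\le s_{i+1}$ gives monotonicity. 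This is essentially the standard argument for Kojman's lemma, and all three required inequalities are verified.
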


\subsection{Ramsey’s theorem for coloring graphs}

\begin{theorem}[Ramsey]
Let $r:[\omega]^\omega\to \left[[\omega]^2\right]^\omega$
be given by 
$$r(H) = [H]^2 = \{\{x,y\}\subseteq [\omega]^2: x,y\in H, x\neq y\}$$
i.e.~$r(H)$ is the set of all unordered pairs of elements of $H$.
Then $r$ is partition regular and the family 
$$\Ramsey = \I_r = \{A\subseteq[\omega]^2: \forall H\in [\omega]^\omega\, ([H]^2\not\subseteq A)\}$$
is an ideal on $[\omega]^2$.
The ideal $\Ramsey$ is called the \emph{Ramsey ideal} \cite{alcantara-phd-thesis, MR3019575}.
(If we identify a set $A\subseteq [\omega]^2$ with a graph $G_A=(\omega,A)$, the ideal $\Ramsey$ can be seen as an ideal consisting of graphs without infinite complete subgraphs).
\end{theorem}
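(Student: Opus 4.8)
The plan is to verify directly that $r$ satisfies the three conditions \nameref{def:partition-regular:monotone}, \nameref{def:partition-regular:partition-regular}, \nameref{def:partition-regular:finitely-supported} of Definition~\ref{def:partition-regular} (here $\Omega=\omega$, $\Lambda=[\omega]^2$, $\cF=[\omega]^\omega$, and $r(H)=[H]^2$), and then to invoke Proposition~\ref{prop:rho-versus-ideal}(\ref{prop:rho-versus-ideal:rho-gives-ideal}) to conclude that $\I_r$ is an ideal on $[\omega]^2$. This mirrors exactly the structure of the proof just given for $\FS$: the only genuine combinatorial input is the relevant partition theorem, here Ramsey's theorem in place of Hindman's.

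First I would dispose of the two easy conditions. Condition \nameref{def:partition-regular:monotone} is immediate: if $E\subseteq F$ then every pair of elements of $E$ is a pair of elements of $F$, so $[E]^2\subseteq[F]^2$. Condition \nameref{def:partition-regular:finitely-supported} is equally routine and in fact holds with $E=F$: given any pair $a=\{x,y\}\in[F]^2$, the finite set $K=\{x\}$ satisfies $a\notin[F\setminus K]^2=r(F\setminus K)$, since $F\setminus K$ no longer contains $x$.

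The substance lies entirely in condition \nameref{def:partition-regular:partition-regular}, which is nothing but Ramsey's theorem for pairs and two colours. Given $F\in[\omega]^\omega$ and $A,B\subseteq[\omega]^2$ with $[F]^2=A\cup B$, I would define a $2$-colouring $c\colon[F]^2\to\{0,1\}$ by setting $c(p)=0$ when $p\in A$ and $c(p)=1$ otherwise (every pair lies in $A\cup B$, so this is well defined and $c(p)=1$ forces $p\in B$). Ramsey's theorem then yields an infinite $E\subseteq F$ homogeneous for $c$; if $E$ has colour $0$ then $[E]^2\subseteq A$, and if it has colour $1$ then $[E]^2\subseteq B$. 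Thus $r(E)\subseteq A$ or $r(E)\subseteq B$, which is exactly what \nameref{def:partition-regular:partition-regular} demands.

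Having verified all three conditions, $r$ is partition regular, and Proposition~\ref{prop:rho-versus-ideal}(\ref{prop:rho-versus-ideal:rho-gives-ideal}) immediately gives that $\I_r$ is an ideal on $[\omega]^2$. I do not expect any real obstacle: as in the Hindman case, conditions \nameref{def:partition-regular:monotone} and \nameref{def:partition-regular:finitely-supported} are combinatorially trivial, and the heart of the argument is a single citation of the classical partition theorem. The only point worth a moment's care is the well-definedness of the colouring $c$ when $A$ and $B$ overlap, which is handled by breaking ties in favour of $A$ as above.
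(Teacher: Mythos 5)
Your proof is correct and follows essentially the same route as the paper: conditions \nameref{def:partition-regular:monotone} and \nameref{def:partition-regular:finitely-supported} are checked by the same trivial observations (the paper removes both elements of a pair where you remove one, which is immaterial), condition \nameref{def:partition-regular:partition-regular} is Ramsey's theorem (which you merely unpack into an explicit two-colouring), and the ideal conclusion comes from Proposition~\ref{prop:rho-versus-ideal}(\ref{prop:rho-versus-ideal:rho-gives-ideal}).
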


\begin{proof}
It is easy to see that condition \nameref{def:partition-regular:monotone}
of   Definition~\ref{def:partition-regular} is satisfied for $r$.
Condition \nameref{def:partition-regular:partition-regular} of   Definition~\ref{def:partition-regular}
 holds for $r$ as in this case it is the well known Ramsey's theorem for coloring graphs \cite[Theorem~A]{MR1576401},\cite[Theorem~1.5]{MR1044995}. 
 To see that condition  \nameref{def:partition-regular:finitely-supported} of   Definition~\ref{def:partition-regular} holds for $r$, 
 it is enough to notice that  for every 
 $\{a,b\}\in [F]^2$ we have $\{a,b\}\notin [F\setminus \{a,b\}]^2$.
 Finally,  Proposition~\ref{prop:rho-versus-ideal}(\ref{prop:rho-versus-ideal:rho-gives-ideal}) shows that $\Ramsey$ is an ideal on $[\omega]^2$.
\end{proof}

\subsection{The positive differences and the associated ideal}

Let the function $\Delta :[\omega]^\omega\to [\omega]^\omega$ be given by 
$$\Delta(E)  = \left\{a-b: a,b\in E, a>b \right\}$$
i.e.~$\Delta(E)$ is the set of all positive differences  of distinct elements of $E$. 

We say that a set  $E\subseteq\omega$ is \emph{$\Diff$-sparse} \cite[p.~2009]{MR3097000} if for every $a\in \Delta(E)$ there are unique elements $b,c\in  E$ such that  $a = b-c$. 

\begin{proposition}
The function  $\Delta$ is partition regular 
and the family 
$$ \Diff = \I_{\Delta} = \{A\subseteq \omega: \forall  E\in [\omega]^\omega\, (\Delta(E)\not\subseteq A)\}$$
is an ideal on  $\omega$ such that  $\Diff\subsetneq \Hindman$. 
It is known that sets from  $\Diff^+$ are examples of so-called Poincar\'{e} sequences \cite[p.~74]{MR603625}.
\end{proposition}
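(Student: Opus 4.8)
To show that $\Delta$ is partition regular I must verify the three conditions of Definition~\ref{def:partition-regular} for the function $\Delta:[\omega]^\omega\to[\omega]^\omega$, $\Delta(E)=\{a-b:a,b\in E, a>b\}$, with $\Lambda=\Omega=\omega$ and $\cF=[\omega]^\omega$. Condition \nameref{def:partition-regular:monotone} is immediate: if $E\subseteq F$ then every positive difference realized in $E$ is realized in $F$, so $\Delta(E)\subseteq\Delta(F)$. For condition \nameref{def:partition-regular:partition-regular}, suppose $\Delta(F)=A\cup B$ for some $F\in[\omega]^\omega$; this is exactly a $2$-coloring of the difference set, and I would invoke the known fact (a consequence of van der Waerden's theorem / the Schur–Hindman circle of results on difference sets) that the family of sets of differences is partition regular, yielding an infinite $E\subseteq F$ with $\Delta(E)$ monochromatic, i.e.\ $\Delta(E)\subseteq A$ or $\Delta(E)\subseteq B$. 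For condition \nameref{def:partition-regular:finitely-supported}, given $F\in[\omega]^\omega$ I would pass to a $\Diff$-sparse subset $E\subseteq F$ (the analogue of passing to a sparse set in the Hindman case): by thinning $F$ one obtains an infinite $E$ in which every difference $a=b-c$ has a unique representing pair; then for any $a\in\Delta(E)$, taking $K=\{b,c\}$ one has $a\notin\Delta(E\setminus K)$, since removing either witness destroys the only representation. This shows \nameref{def:partition-regular:finitely-supported} holds. Then Proposition~\ref{prop:rho-versus-ideal}(\ref{prop:rho-versus-ideal:rho-gives-ideal}) immediately gives that $\Diff=\I_\Delta$ is an ideal on $\omega$.

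It remains to prove the strict inclusion $\Diff\subsetneq\Hindman$. For the inclusion $\Diff\subseteq\Hindman$, I would argue contrapositively at the level of coideals: it suffices to show $\Hindman^+\subseteq\Diff^+$, i.e.\ that every IP-set is a difference set. Given $A\in\Hindman^+$, fix an infinite $D$ with $\FS(D)\subseteq A$; enumerate $D=\{d_0<d_1<\cdots\}$ and set $p_k=\sum_{i\le k} d_i$, so that the partial sums $\{p_k:k\in\omega\}$ form an infinite set $E$ whose pairwise positive differences $p_\ell-p_k=\sum_{k<i\le\ell} d_i$ (for $\ell>k$) are themselves finite sums of distinct elements of $D$, hence lie in $\FS(D)\subseteq A$. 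Thus $\Delta(E)\subseteq A$, witnessing $A\in\Diff^+$. Taking complements gives $\Diff\subseteq\Hindman$.

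For strictness I need a set in $\Hindman\setminus\Diff$, i.e.\ a set that contains the full difference set of some infinite $E$ but contains no $\FS(D)$. A natural candidate is an arithmetic-type or lacunary construction: for instance one may take $E$ an arithmetic progression so that $\Delta(E)$ is a fixed infinite set $A$ of multiples of a common difference, and check that such an $A$ is not an IP-set because $\FS$ of any infinite set grows too fast / is not confined to a single residue structure. Concretely, $A=\Delta(\omega)=\{1,2,3,\dots\}$ is everything, so I would instead engineer $E$ so that $\Delta(E)$ avoids being an IP-set; the cleanest route is to exhibit an explicit $A\in\Diff^+\cap\Hindman$, for example by taking $A$ to be the difference set of a sufficiently sparse (e.g.\ geometrically growing) sequence and verifying it contains no $\FS(D)$.

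\textbf{Main obstacle.} The partition-regularity step \nameref{def:partition-regular:partition-regular} and the inclusion $\Diff\subseteq\Hindman$ are both governed by well-known combinatorial results (difference sets are partition regular, and IP-sets are difference sets), so the genuinely delicate point is the \emph{strictness} $\Diff\neq\Hindman$: I must produce a concrete infinite $E$ whose difference set $\Delta(E)$ provably contains no set of the form $\FS(D)$. This requires a careful growth/gap argument showing that the additive structure forced by $\FS(D)$ (closure under adding distinct terms) cannot be embedded into the difference set of a suitably chosen lacunary $E$, and verifying this non-embedding is the step where the real work lies.
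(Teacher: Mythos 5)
Your outline follows the same route as the paper's proof: (M) is immediate, (R) is the known partition regularity of the family of difference sets, (S) is obtained by passing to a $\Diff$-sparse subset, and the ideal statement then falls out of Proposition~\ref{prop:rho-versus-ideal}. Two of your steps deserve comment. First, your proof of the inclusion $\Diff\subseteq\Hindman$ via partial sums (for $E=\{p_k:k\in\omega\}$ with $p_k=\sum_{i\le k}d_i$ one has $p_\ell-p_k=d_{k+1}+\dots+d_\ell\in\FS(D)$, so every IP-set contains some $\Delta(E)$) is correct and is exactly the standard argument; the paper only cites this inclusion, and the same trick reappears there as the witness for $\Delta\leq_K\FS$ in Theorem~\ref{thm:Katetov-between-known-rho}. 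Second, a small misattribution: the partition regularity of $\{\Delta(E):E\in[\omega]^\omega\}$ is a consequence of Ramsey's theorem (color the pair $\{a,b\}$ by the cell containing $|a-b|$ and take an infinite homogeneous set), not of van der Waerden's theorem; the paper's witness $\phi(\{n,k\})=n-k$ for $\Delta\leq_K r$ makes this reduction explicit.

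The genuine gap is the strictness $\Diff\neq\Hindman$, which you explicitly leave unverified. You correctly reduce it to exhibiting an infinite $E$ for which $\Delta(E)$ is not an IP-set, but you produce no such $E$: your first concrete candidate (an arithmetic progression) fails, as you note, and ``a sufficiently sparse (e.g.\ geometrically growing) sequence'' is only a proposal whose key property --- that $\FS(D)\not\subseteq\Delta(E)$ for every infinite $D$ --- is precisely what must be proved. This is not automatic: fast growth of $E$ does not by itself rule out that three elements $x<y<z$ of some $D$ together with all their subset sums land in $\Delta(E)$, and the verification requires an actual structural argument (e.g.\ base-$b$ digit patterns of $e_n-e_m$ for $E=\{b^n:n\in\omega\}$, or exploiting the uniqueness of the ``top index'' forced by $e_{n+1}>2e_n$). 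The paper sidesteps this by citation (Shi, Proposition~4.2.1; and Proposition~4.1 of the differential-compactness paper). As written, your proposal establishes $\Diff\subseteq\Hindman$ but not $\Diff\subsetneq\Hindman$.
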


\begin{proof}

It is easy to see that condition \nameref{def:partition-regular:monotone}
of   Definition~\ref{def:partition-regular} is satisfied for $\Delta$.
It is known \cite[Proposition~4.1]{MR3097000} that condition \nameref{def:partition-regular:partition-regular} of Definition~\ref{def:partition-regular}
 holds for $\Delta$. 
 To see that condition \nameref{def:partition-regular:finitely-supported} of   Definition~\ref{def:partition-regular} holds for $\Delta$, it is enough to notice 
\cite[Proposition~4.3(2)]{MR3097000} that every infinite set $F\subseteq \omega$ has an infinite $\Diff$-sparse subset $G\subseteq F$ which obviously satisfies condition \nameref{def:partition-regular:finitely-supported}.
Finally,  Proposition~\ref{prop:rho-versus-ideal}(\ref{prop:rho-versus-ideal:rho-gives-ideal}) shows that $\Diff$ is an ideal on $\omega$ and it is known \cite[Proposition~4.2.1]{Shi2003Numbers},\cite[Proposition~4.1]{MR3097000} that $\Diff\subsetneq\Hindman$.
\end{proof}

\subsection{The summable ideal}

\begin{proposition}
The family 
$$\I_{1/n}=\left\{A\subseteq\omega: \sum_{n\in A}\frac{1}{n+1}<\infty\right\}$$ 
is an ideal on $\omega$. 
The ideal $\I_{1/n}$ is called the \emph{summable ideal} \cite[Definition~1.6]{MR1124539},\cite[Example~3]{MR0363911},\cite[p.~238]{MR588216},\cite[p.~411]{MR748847}.
The function $\rho_{\I_{1/n}}:\I_{1/n}^+\to[\omega]^\omega$ given by $\rho_{\I_{1/n}}(A)=A$ is partition regular and 
$\I_{1/n} = \I_{\rho_{\I_{1/n}}}$.
\end{proposition}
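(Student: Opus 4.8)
The plan is to split the statement into two independent tasks: first establish that $\I_{1/n}$ is an ideal on $\omega$ by checking the four defining axioms directly, and then obtain the partition regularity of $\rho_{\I_{1/n}}$ and the identity $\I_{1/n}=\I_{\rho_{\I_{1/n}}}$ for free from the general machinery already proved. The crucial observation is that the second and third claims are literal instances of Proposition~\ref{prop:rho-versus-ideal}(\ref{prop:rho-versus-ideal:ideal-gives-rho}), applied to $\Lambda=\omega$ and $\I=\I_{1/n}$; so once $\I_{1/n}$ is known to be an ideal, nothing further needs to be verified for those parts.

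For the ideal axioms, I would argue as follows. Write $s(A)=\sum_{n\in A}\frac{1}{n+1}$ for $A\subseteq\omega$, so that $\I_{1/n}=\{A:s(A)<\infty\}$. Downward closure is immediate, since $B\subseteq A$ gives $s(B)\le s(A)$ by nonnegativity of the summands. Closure under finite unions follows from subadditivity: $s(A\cup B)\le s(A)+s(B)$, so the union of two summable sets is summable. Every finite set $F$ satisfies $s(F)<\infty$ trivially, as $s(F)$ is then a finite sum, which also shows $\I_{1/n}$ contains all finite subsets of $\omega$.

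The only part carrying genuine content is the requirement $\omega\notin\I_{1/n}$, which amounts to the divergence of the harmonic series $s(\omega)=\sum_{n=0}^{\infty}\frac{1}{n+1}=\infty$. This is the single nontrivial analytic ingredient, and it is the point I would flag as the main (indeed only) obstacle, although it is of course classical. With all four axioms in hand, $\I_{1/n}$ is an ideal on $\omega$.

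Finally, to close the argument I would invoke Proposition~\ref{prop:rho-versus-ideal}(\ref{prop:rho-versus-ideal:ideal-gives-rho}) with $\I=\I_{1/n}$. That proposition asserts precisely that for any ideal $\I$ on a set $\Lambda$ the identity function $\rho_{\I}\colon\I^+\to[\Lambda]^\omega$, $\rho_{\I}(A)=A$, is partition regular and satisfies $\I=\I_{\rho_{\I}}$. Here one should note in passing that the domain condition is met: since $\I_{1/n}$ contains every finite set, each member of $\I_{1/n}^+$ is infinite, so $\rho_{\I_{1/n}}$ indeed maps into $[\omega]^\omega$; likewise $\I_{1/n}^+$ is closed under removing finite sets. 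Thus both the partition regularity of $\rho_{\I_{1/n}}$ and the equality $\I_{1/n}=\I_{\rho_{\I_{1/n}}}$ are immediate, completing the proof.
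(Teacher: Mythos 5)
Your proposal is correct and follows exactly the route the paper takes: the paper's proof likewise dismisses the ideal axioms as easy (you have simply filled in the routine verification, with the divergence of the harmonic series as the only substantive point) and then cites Proposition~\ref{prop:rho-versus-ideal}(\ref{prop:rho-versus-ideal:ideal-gives-rho}) for the partition regularity of $\rho_{\I_{1/n}}$ and the identity $\I_{1/n}=\I_{\rho_{\I_{1/n}}}$. No differences worth noting.
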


\begin{proof}
It is easy to show that $\I_{1/n}$ is an ideal on $\omega$, whereas Proposition~\ref{prop:rho-versus-ideal}(\ref{prop:rho-versus-ideal:ideal-gives-rho}) gives the required properties of $\rho_{\I_{1/n}}$.
\end{proof}

\subsection{van der Waerden’s arithmetical progressions theorem}

\begin{theorem}[van der Waerden]
A set $A\subseteq \omega$ is called an \emph{AP-set} if it contains an arithmetic progressions of arbitrary finite length. 
The family 
$$\vdW = \{A\subseteq\omega: \text{$A$ is not an AP-set}\}$$
is an ideal on $\omega$. 
The ideal $\vdW$ is called the \emph{van der Waerden ideal} \cite[p.~107]{MR2471564}.
The function $\rho_\vdW:\vdW^+\to[\omega]^\omega$ given by $\rho_\vdW(A)=A$ is partition regular and  $\vdW = \I_{\rho_\vdW}$.
\end{theorem}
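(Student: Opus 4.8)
The plan is to reuse the two-step scheme already applied to the summable ideal: first establish directly that $\vdW$ is an ideal on $\omega$, and then obtain everything else for free from Proposition~\ref{prop:rho-versus-ideal}(\ref{prop:rho-versus-ideal:ideal-gives-rho}), which says that for any ideal $\I$ the map $\rho_\I(A)=A$ is partition regular and satisfies $\I=\I_{\rho_\I}$. Applied with $\I=\vdW$, this immediately delivers both the partition regularity of $\rho_\vdW$ and the identity $\vdW=\I_{\rho_\vdW}$. Consequently, the only real content is the verification of the ideal axioms for $\vdW$.

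Most of these axioms are routine. A finite set contains no arbitrarily long arithmetic progression, so $\fin\subseteq\vdW$ and in particular $\vdW\neq\emptyset$; the set $\omega$ is plainly an AP-set, so $\omega\notin\vdW$; and if $A$ is not an AP-set then no subset $B\subseteq A$ can be one either, since a long progression inside $B$ would also sit inside $A$, which gives closure under subsets. The substantive axiom is closure under finite unions, which by induction reduces to the case of two sets, and after replacing $B$ by $B\setminus A$ we may assume $A$ and $B$ disjoint. I would prove the contrapositive: assuming $A\cup B$ is an AP-set, fix a target length $k$ and use that $A\cup B$ contains an arithmetic progression $P$ whose length is the van der Waerden number $W(2,k)$. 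Colouring $P$ according to membership in $A$ versus $B$ and applying van der Waerden's theorem \cite{vanderWearden} to $P$ (which carries the same progression structure as an initial segment of $\omega$) yields a monochromatic sub-progression of length $k$; hence for every $k$ at least one of $A$, $B$ contains a progression of length $k$.

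The step demanding the most care is turning this ``for each $k$, one of the two sets works'' assertion into ``a single one of them is an AP-set'', since the successful set could a priori depend on $k$. A pigeonhole argument resolves this: choosing for each $k$ a set in $\{A,B\}$ that contains a length-$k$ progression defines a map taking one of its two values on an infinite set of $k$'s, and that value then contains progressions of unboundedly large length, hence of every finite length, and so is an AP-set. This contradicts $A,B\in\vdW$ and proves closure under unions. With $\vdW$ confirmed to be an ideal, Proposition~\ref{prop:rho-versus-ideal}(\ref{prop:rho-versus-ideal:ideal-gives-rho}) completes the proof.
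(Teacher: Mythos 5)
Your proposal is correct and follows essentially the same route as the paper: the paper also verifies the ideal axioms for $\vdW$ directly (treating additivity as an instance of van der Waerden's theorem, which you spell out via van der Waerden numbers and a pigeonhole step) and then invokes Proposition~\ref{prop:rho-versus-ideal}(\ref{prop:rho-versus-ideal:ideal-gives-rho}) for the partition regularity of $\rho_\vdW$ and the identity $\vdW=\I_{\rho_\vdW}$. The extra detail you supply for additivity is a faithful expansion of the step the paper delegates to the citation.
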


\begin{proof}
It is easy to see that all conditions from the definition of an ideal but additivity are satisfied, whereas additivity is the well known van der Waerden's arithmetical progressions theorem \cite{vanderWearden},\cite[Theorem~2.1]{MR1044995}. 
Finally, Proposition~\ref{prop:rho-versus-ideal}(\ref{prop:rho-versus-ideal:ideal-gives-rho})
gives the required properties of $\rho_\vdW$.
\end{proof}

\subsection{Ideals on directed sets}

Finally, we introduce a class  of partition regular functions which are connected with ideals on directed sets \cite{MR3461173,MR3461181}.
Recall, that  $(\Lambda,<)$ is a \emph{directed set} if the relation $<$ is an upward directed strict partial order on $\Lambda$.

Let $(\Lambda,<)$ be a \emph{directed set}  such that $\Lambda$ is infinite countable.
A set $B\subseteq\Lambda$ is \emph{cofinal} in $(\Lambda,<)$ if for every $\lambda\in \Lambda$ there is $b\in B$ with $\lambda <  b$.
A family $\I$ of subsets of $\Lambda$ is an \emph{ideal on $(\Lambda,<)$} 
\cite[Definition~2.2]{MR3461181}
if $\I$ is an ideal on $\Lambda$ and $\I$ contains all sets which are \emph{not} cofinal.
A family $\cB$ of subsets of $\Lambda$ is a \emph{coideal basis on $(\Lambda,<)$} 
\cite[Definition~2.4]{MR3461181}
if  $\cB\neq\emptyset$, all sets in $\cB$ are cofinal and 
if $C\cup D\in\cB$, then there exists $B\in\cB$ such that  $B\subseteq C$ or $B\subseteq D$. In particular, for every ideal $\I$ on $(\Lambda,<)$ the family $\I^+$ is a coideal basis on $(\Lambda,<)$.
It is known \cite[Proposition~2.7]{MR3461173} that  
 $\I$ is an ideal on $(\Lambda,<)$ if and only if there exists a coideal basis $\cB$ on $(\Lambda,<)$ such that $\I=\{A\subseteq\Lambda: \forall B\in \cB\,(B\not\subseteq A)\}$.

 The following easy proposition reveals basic relationships between  partition regular functions and ideals on directed sets.

\begin{proposition}\ 
\label{prop:rho-versus-ideal-on-directed-sets}
Let $(\Lambda,<)$ be a directed set. 
\begin{enumerate}
\item If $\rho:\cF\to[\Lambda]^\omega$ is a partition regular function such that $\rho(F)$ is cofinal for every $F\in \cF$, then 
$$\I_{\rho} = \{A\subseteq \Lambda: \forall  F\in \cF\, (\rho(F)\not\subseteq A)\}.$$
is an ideal on $(\Lambda,<)$. 
\label{prop:rho-versus-ideal-on-directed-sets:rho-gives-ideal}

    \item 
For a coideal basis $\cB$ on $(\Lambda,<)$ (in particular for $\cB=\I^+$, where $\I$ is an ideal on $(\Lambda,<)$), we define
$$ \widehat{\cB} = \{B\setminus K:B\in \cB, K\in[\Lambda]^{<\omega}\}.$$
Then the function $\rho_{\cB}:\widehat{\cB}\oplus\Fin(\Lambda)^*\to[\Lambda]^\omega$ given by
$$\rho_{\cB}((B\setminus K)\sqcup C)=(B\setminus K)\cap\{\lambda\in\Lambda:\forall \lambda'\in (\Lambda\setminus C)\, (\lambda'<\lambda)\}$$ 
is a partition regular function such that $\rho_{\cB}((B\setminus K)\sqcup C)$ is cofinal for every $(B\setminus L)\sqcup C\in \widehat{\cB}\oplus\Fin(\Lambda)^*$ and  $\I_{\rho_{\cB}} = \{A\subseteq\Lambda: \forall B\in \cB\,(B\not\subseteq A)\}.$\label{prop:rho-versus-ideal-on-directed-sets:ideal-gives-rho:coideal-basis} 
\end{enumerate}
\end{proposition}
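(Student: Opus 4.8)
Part~(\ref{prop:rho-versus-ideal-on-directed-sets:rho-gives-ideal}) is immediate. By Proposition~\ref{prop:rho-versus-ideal}(\ref{prop:rho-versus-ideal:rho-gives-ideal}) the family $\I_\rho$ is already an ideal on $\Lambda$, so it remains only to check that it contains every non-cofinal $A\subseteq\Lambda$; but if $\rho(F)\subseteq A$ for some $F$, then $A$ would be a superset of the cofinal set $\rho(F)$ and hence cofinal, so no such $F$ exists and $A\in\I_\rho$.

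For part~(\ref{prop:rho-versus-ideal-on-directed-sets:ideal-gives-rho:coideal-basis}) I would first isolate two order-theoretic facts about $(\Lambda,<)$, writing $U(S)=\{\lambda\in\Lambda:\forall s\in S\,(s<\lambda)\}$ for the set of strict upper bounds of a finite $S\subseteq\Lambda$. Since $\cB$ is a nonempty family of cofinal sets, $(\Lambda,<)$ has no maximal element, so every finite subset of $\Lambda$ has a strict upper bound (induct on pairs and then bump up once). From this I get: (i) $U(S)$ is cofinal (given $\mu$, take a strict upper bound $\nu$ of $S\cup\{\mu\}$; the set $\{\lambda:\nu<\lambda\}$ is infinite and lies inside $U(S)$), and (ii) $\Lambda\setminus U(S)$ is not cofinal (each $\{\lambda:\neg(s<\lambda)\}$ has no element above $s$ and hence is not cofinal, and $\Lambda\setminus U(S)$ is a finite union of such sets, while the non-cofinal sets are closed under finite unions). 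Fact (i), after deleting the finite set $K$ from the infinite set $\{\lambda\in B:\nu<\lambda\}$, shows $\rho_{\cB}((B\setminus K)\sqcup C)=(B\setminus K)\cap U(\Lambda\setminus C)$ is cofinal, hence in $[\Lambda]^\omega$; and a routine check shows the domain $\widehat{\cB}\oplus\Fin(\Lambda)^*$ is nonempty, consists of infinite subsets of $\Lambda\sqcup\Lambda$, and is closed under removing finite sets, so it is a legitimate domain in the sense of Definition~\ref{def:partition-regular}.

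Next I would verify that $\rho_{\cB}$ is partition regular. Condition \nameref{def:partition-regular:monotone} is routine, since enlarging $(B\setminus K)\sqcup C$ enlarges $B\setminus K$ and shrinks $\Lambda\setminus C$ while $U$ is order-reversing; condition \nameref{def:partition-regular:finitely-supported} holds with $E=F$ by deleting the point $(a,0)$ from $F$ for a given $a\in\rho_{\cB}(F)$. The heart of the matter is condition \nameref{def:partition-regular:partition-regular}. Given $F=(B_0\setminus K_0)\sqcup C$ with $R:=\rho_{\cB}(F)=(B_0\setminus K_0)\cap U(\Lambda\setminus C)$ and a splitting $R=P\cup Q$, I would apply the coideal basis property to the cover $B_0=\bigl(P\cup(B_0\setminus R)\bigr)\cup Q$ (legitimate because $R\subseteq B_0\in\cB$), obtaining $B''\in\cB$ inside $P\cup(B_0\setminus R)$ or inside $Q$, and take $E=(B''\setminus K_0)\sqcup C$. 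That $\rho_{\cB}(E)$ falls into the intended piece rests on $P\subseteq R\subseteq U(\Lambda\setminus C)$ together with the identity $(B_0\setminus R)\cap U(\Lambda\setminus C)=B_0\cap K_0\cap U(\Lambda\setminus C)\subseteq K_0$, so that the subtraction of $K_0$ annihilates the unwanted contribution and yields $\rho_{\cB}(E)\subseteq P$ (respectively $\subseteq Q$).

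Finally I would establish $\I_{\rho_{\cB}}=\{A\subseteq\Lambda:\forall B\in\cB\,(B\not\subseteq A)\}$, calling the right-hand side $\J$. If $A\notin\J$, i.e.\ $B\subseteq A$ for some $B\in\cB$, then $F=B\sqcup\Lambda$ gives $U(\emptyset)=\Lambda$ and $\rho_{\cB}(F)=B\subseteq A$, so $A\notin\I_{\rho_{\cB}}$; this proves $\I_{\rho_{\cB}}\subseteq\J$. Conversely, if $A\in\J$ but $R=\rho_{\cB}(F)\subseteq A$ for some $F$ as above, then applying the coideal basis property to $B_0=R\cup(B_0\setminus R)$ gives $B'\in\cB$ with $B'\subseteq R$ or $B'\subseteq B_0\setminus R$; the latter is impossible because $B_0\setminus R\subseteq K_0\cup(\Lambda\setminus U(\Lambda\setminus C))$ would force the cofinal set $B'\setminus K_0$ into the non-cofinal set $\Lambda\setminus U(\Lambda\setminus C)$ (fact (ii)), so $B'\subseteq R\subseteq A$, contradicting $A\in\J$. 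I expect the two uses of the coideal basis property---in condition \nameref{def:partition-regular:partition-regular} and in this last inclusion---to be the main obstacle: in each the property supplies the required $\cB$-set, but one must carefully absorb the two truncations, by the finite set $K_0$ and by the upper-bound set $U(\Lambda\setminus C)$, which is exactly where facts (i) and (ii) about cofinality in $(\Lambda,<)$ are needed.
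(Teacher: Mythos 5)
The paper states this proposition without proof (it is flagged as an ``easy proposition''), so there is no official argument to compare against; judged on its own, your proof is correct and complete. The two points that genuinely require care are exactly the ones you isolate and handle properly: the cofinality facts about the upper-bound sets $U(S)$ (including that $U(S)$ is upward closed, so its intersection with a cofinal set stays cofinal, and that cofinal sets survive deletion of finite sets), and the absorption of the two truncations in the verification of condition \nameref{def:partition-regular:partition-regular} and of $\J\subseteq\I_{\rho_{\cB}}$, via the identity $(B_0\setminus R)\cap U(\Lambda\setminus C)\subseteq K_0$. Part~(\ref{prop:rho-versus-ideal-on-directed-sets:rho-gives-ideal}) correctly reduces to Proposition~\ref{prop:rho-versus-ideal}(\ref{prop:rho-versus-ideal:rho-gives-ideal}) plus the observation that a superset of a cofinal set is cofinal. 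No gaps.
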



\section{Restrictions and small accretions}


\subsection{Restrictions of partition regular operations}
\label{sec:restriction-of-rho}

For $B\notin \I_\rho$, we define a family 
$\cF\restriction B = \{E\in \cF: \rho(E)\subseteq B\}$
and a function 
$\rho\restriction B :\cF\restriction B\to [B]^\omega$ by 
$(\rho\restriction B)(E) = \rho(E)$ (i.e.~$\rho\restriction B = \rho\restriction (\cF\restriction B)$).
The following easy proposition reveals relationships between restriction of a function $\rho$ and restriction of an ideal $\I_\rho$.

\begin{proposition}
\label{prop:restrictions-of-rho}
If $\rho:\cF\to[\Lambda]^\omega$ is  partition regular and $B\notin \I_\rho$, then 
$\rho\restriction  B$ is partition regular
and $\I_{\rho\restriction B} = \I_{\rho}\restriction B$.
\end{proposition}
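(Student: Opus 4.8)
The plan is to verify the three defining conditions of Definition~\ref{def:partition-regular} directly for $\rho\restriction B$, and then to establish the ideal equality by comparing the two defining quantifications. First I would check that $\cF\restriction B$ is an admissible domain: it is nonempty because $B\notin\I_\rho$ supplies some $F\in\cF$ with $\rho(F)\subseteq B$, i.e.\ $F\in\cF\restriction B$; and it is closed under removing finite sets, since for $E\in\cF\restriction B$ and finite $K\subseteq\Omega$ we have $E\setminus K\in\cF$ and, by (M) for $\rho$, $\rho(E\setminus K)\subseteq\rho(E)\subseteq B$, whence $E\setminus K\in\cF\restriction B$. Moreover each value $\rho(E)$ with $E\in\cF\restriction B$ is an infinite subset of $B$, so $\rho\restriction B$ indeed maps into $[B]^\omega$.

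For the three conditions, the point throughout is that monotonicity (M) of $\rho$ keeps all the witnesses produced by $\rho$ inside $\cF\restriction B$. Condition (M) for $\rho\restriction B$ is inherited verbatim from (M) for $\rho$. For (R), given $F\in\cF\restriction B$ and a partition $\rho(F)=A\cup C$ with $A,C\subseteq B$, condition (R) for $\rho$ yields $E\in\cF$ with $\rho(E)\subseteq A$ or $\rho(E)\subseteq C$; since $A,C\subseteq B$, in either case $\rho(E)\subseteq B$, so $E\in\cF\restriction B$ is the required witness. For (S), given $F\in\cF\restriction B$, apply (S) for $\rho$ to obtain $E\in\cF$ with $E\subseteq F$ and the finite-support property; by (M), $\rho(E)\subseteq\rho(F)\subseteq B$, so $E\in\cF\restriction B$, and for every finite $K$ we likewise have $E\setminus K\in\cF\restriction B$ with $(\rho\restriction B)(E\setminus K)=\rho(E\setminus K)$, so the finite-support clause transfers unchanged.

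For the ideal equality I would first rewrite $\I_\rho\restriction B$. From the definition $\I_\rho\restriction B=\{B\cap C:C\in\I_\rho\}$ together with downward closure of $\I_\rho$ one gets the cleaner description $\I_\rho\restriction B=\{A\subseteq B:A\in\I_\rho\}=\{A\subseteq B:\forall F\in\cF\,(\rho(F)\not\subseteq A)\}$, while by definition $\I_{\rho\restriction B}=\{A\subseteq B:\forall E\in\cF\restriction B\,(\rho(E)\not\subseteq A)\}$. The inclusion $\I_\rho\restriction B\subseteq\I_{\rho\restriction B}$ is immediate since $\cF\restriction B\subseteq\cF$. For the reverse inclusion, take $A\in\I_{\rho\restriction B}$, so $A\subseteq B$, and suppose toward a contradiction that $\rho(F)\subseteq A$ for some $F\in\cF$; then $\rho(F)\subseteq A\subseteq B$ forces $F\in\cF\restriction B$, contradicting $\rho(F)\not\subseteq A$. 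Hence $A\in\I_\rho$, and as $A\subseteq B$ we conclude $A\in\I_\rho\restriction B$.

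There is no genuine obstacle here; every clause reduces to the corresponding clause for $\rho$. The only points demanding care are that (i) one must invoke (M) to certify that the subsets and finite modifications arising in the arguments still belong to $\cF\restriction B$, and (ii) in the equality the constraint $A\subseteq B$ is exactly what makes any witness $F$ with $\rho(F)\subseteq A$ automatically lie in $\cF\restriction B$, so that quantifying over $\cF$ and over $\cF\restriction B$ yields the same ideal.
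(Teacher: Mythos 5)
Your proof is correct and complete; the paper actually omits the argument entirely (it labels the proposition as ``easy''), and your direct verification of (M), (R), (S) for $\rho\restriction B$ together with the two inclusions for the ideal equality is exactly the intended routine check. The two points you flag — using (M) to keep witnesses inside $\cF\restriction B$, and the observation that $A\subseteq B$ forces any $F$ with $\rho(F)\subseteq A$ into $\cF\restriction B$ — are indeed the only places where anything needs to be said.
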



\subsection{Small accretions of partition regular operations}

We will need the following notion in the last part of the paper for characterization that shows when there exists a space of one considered type that is not of the other type (Theorem~\ref{thm:characterization-for-rho}). 

\begin{definition}
\label{def:small-accretions}
Let 
$\rho:\cF\to[\Lambda]^\omega$ (with $\cF\subseteq[\Omega]^\omega$) be a  partition regular function.
\begin{enumerate}
    \item 
A set $F\in \cF$ has \emph{small accretions} if 
$\rho(F)\setminus\rho(F\setminus K)\in\I_\rho$ for every finite set $K$.

\item $\rho$  has \emph{small accretions} if for every $E\in \cF$ there is $F\in \cF$ such that $F\subseteq E$ and $F$ has small accretions.
\end{enumerate}
\end{definition}

\begin{proposition}
\label{prop:ideal-rho-is-sparse}
\label{prop:ideal-rho-are-sparse-and-accretions}
\label{prop:FS-properties:accretion}
If  $\rho\in \{\FS,r,\Delta\}\cup\{\rho_\I:\text{$\I$ is an ideal}\}$, then  $\rho$ has small accretions.
\end{proposition}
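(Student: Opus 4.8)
The plan is to verify the defining property of small accretions separately for each of the four kinds of functions. For $\rho_\I$ and for $r$ the accretion set $\rho(F)\setminus\rho(F\setminus K)$ can be computed explicitly and is at once seen to lie in the associated ideal, so in fact every member of the domain already has small accretions. For $\FS$ and $\Delta$ the accretion set is genuinely non-trivial; the idea is to pass first to a suitably ``sparse'' subset on which the representation (as a finite sum, resp.\ as a difference) is unique, and then to rule out that the accretion set contains an entire $\FS$-set (resp.\ difference set). I expect the $\FS$ case to be the crux.

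For $\rho_\I$ (where $\cF=\I^+$ and $\I_{\rho_\I}=\I$ by Proposition~\ref{prop:rho-versus-ideal}(\ref{prop:rho-versus-ideal:ideal-gives-rho})), given $E\in\I^+$ I take $F=E$. For any finite $K$ one has $F\setminus K\in\I^+$ (else $F=(F\setminus K)\cup(F\cap K)$ would be a union of two members of $\I$), so $\rho_\I(F\setminus K)=F\setminus K$ is defined and $\rho_\I(F)\setminus\rho_\I(F\setminus K)=F\cap K$ is finite, hence in $\I=\I_{\rho_\I}$. For $r$ (where $\I_r=\Ramsey$), given $E$ I again take $H=E$, and for finite $K$ compute $r(H)\setminus r(H\setminus K)=[H]^2\setminus[H\setminus K]^2=\{\{a,b\}\in[H]^2:\{a,b\}\cap K\neq\emptyset\}$. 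This lies in $\Ramsey$, for if it contained $[H']^2$ with $H'$ infinite, then any two distinct $a,b\in H'\setminus K$ (which exist as $H'$ is infinite and $K$ finite) would give a pair $\{a,b\}\in[H']^2$ disjoint from $K$, a contradiction.

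For $\FS$ the plan is to choose $F\subseteq E$ \emph{very sparse} (the existence of very sparse subsets of an arbitrary infinite set is known; see \cite{MR4356195}), so that whenever $x,y\in\FS(F)$ and $x+y\in\FS(F)$ one has $\alpha_F(x)\cap\alpha_F(y)=\emptyset$. Fix finite $K$ and set $K'=K\cap F$. By uniqueness of representations, $n\in\FS(F\setminus K)=\FS(F\setminus K')$ iff $\alpha_F(n)\cap K'=\emptyset$, whence $\FS(F)\setminus\FS(F\setminus K)=\{n\in\FS(F):\alpha_F(n)\cap K'\neq\emptyset\}$. Suppose toward a contradiction that this set contains $\FS(D)$ for some infinite $D=\{a_0<a_1<\dots\}$. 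Each $a_i\in\FS(F)$ then satisfies $\alpha_F(a_i)\cap K'\neq\emptyset$, while for $i\neq j$ we have $a_i+a_j\in\FS(D)\subseteq\FS(F)$, so very sparseness forces $\alpha_F(a_i)\cap\alpha_F(a_j)=\emptyset$. Thus $\{\alpha_F(a_i):i\in\omega\}$ is an infinite family of pairwise disjoint finite sets each meeting the finite set $K'$, which is impossible. Hence $\FS(F)\setminus\FS(F\setminus K)\in\Hindman=\I_{\FS}$, so $F$ has small accretions.

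For $\Delta$ the argument is parallel but, tellingly, needs only ordinary $\Diff$-sparseness. I take $F\subseteq E$ that is $\Diff$-sparse (such a subset exists by \cite[Proposition~4.3(2)]{MR3097000}), so each $a\in\Delta(F)$ has a unique witness $a=b_a-c_a$ with $b_a,c_a\in F$, $b_a>c_a$. With $K'=K\cap F$ one gets $\Delta(F)\setminus\Delta(F\setminus K)=\{a\in\Delta(F):b_a\in K'\text{ or }c_a\in K'\}$. The part with $b_a\in K'$ is finite (there $a\leq\max K'$), hence in $\Diff$; for $S_2=\{a:c_a\in K'\}$ I suppose $\Delta(E')\subseteq S_2$ for infinite $E'=\{e_0<e_1<\dots\}$ and set $d_j=e_j-e_0$. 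Each $d_j$ has witness $d_j=b_j-c_j$ with $c_j\in K'$, so $b_j=d_j+c_j$; since $K'$ is finite, infinitely many $j$ share a common value $c_j=c^*$, and along these the $b_j=d_j+c^*$ are distinct elements of $F$ tending to infinity. For two such indices $j<l$ we have $e_l-e_j=d_l-d_j=b_l-b_j$ with $b_l>b_j$ in $F$, so by uniqueness $(b_l,b_j)$ is the witness of $e_l-e_j\in S_2$, forcing its subtrahend $b_j\in K'$; but $b_j\to\infty$, a contradiction. Hence $\Delta(F)\setminus\Delta(F\setminus K)$ is a union of two members of $\Diff$, so it lies in $\Diff=\I_\Delta$. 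The main obstacle is precisely the $\FS$ case: carrying in addition means that a merely sparse $F$ does not make the supports of summands disjoint, so one is forced to the stronger notion of a very sparse set, and the existence of very sparse subsets is the one external input the argument genuinely relies on; subtraction, by contrast, cancels the shared witness cleanly, which is why $\Diff$-sparseness already suffices for $\Delta$.
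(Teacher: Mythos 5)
Your proof is correct and follows essentially the same route as the paper's: the explicit computation of the accretion set for $\rho_\I$ and for $r$, passage to a very sparse subset followed by a pigeonhole on the finite set $K$ for $\FS$, and passage to a $\Diff$-sparse subset with the same two-part decomposition of the accretion set for $\Delta$. The only deviation is that in the $\Delta$ case you prove directly (via a second pigeonhole on the common subtrahend $c^*$) what the paper obtains by citing that $F-n\in\Diff$ for $\Diff$-sparse $F$; this makes your argument marginally more self-contained but changes nothing essential.
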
 

\begin{proof}[Proof for $\rho = \rho_\I$ where  $\I$ is an ideal]
The function $\rho$ has small accretions, since for every $A\in \I^+$ and finite $K\subseteq \Lambda$ we have 
$\rho_\I(A)\setminus\rho_\I(A\setminus K)=A\setminus (A\setminus K) \subseteq K \in \I$.
\end{proof}

\begin{proof}[Proof for $\rho = \FS$]
It is known \cite[Lemma 2.2]{MR4356195} that 
every infinite set $E\subseteq \omega$ has an infinite very sparse subset $F\subseteq E$, so 
if we show that every very sparse set has small accretions, the proof will be finished.

Let $F\subseteq \omega$ be an infinite very sparse set and $K\subseteq\omega$ be a finite set.
Assume towards contradiction that $\FS(D)\subseteq \FS(F)\setminus \FS(F\setminus K)=\{x\in \FS(F):\ \alpha_{F}(x)\cap K\neq\emptyset\}$ for some $D\in[\omega]^\omega$. Since $K$ is finite, we can find $x,y\in D$, $x\neq y$, such that $\alpha_{F}(x)\cap \alpha_{F}(y)\neq\emptyset$. But then $x+y\in \FS(D   )\setminus \FS(F)$, a contradiction.
\end{proof}

\begin{proof}[Proof for $\rho = r$]
The function  $r$  has small accretions, since for every  $A\in[\omega]^\omega$ and finite $K\subseteq \omega$ we have $r(A)\setminus r(A\setminus K) = [A]^2\setminus [A\setminus K]^2=\{\{i,j\}:\ i\in A\cap K,j\in A\}\in\cR$.
\end{proof}

\begin{proof}[Proof for $\rho = \Delta$]
It is known \cite[Proposition~4.3(2)]{MR3097000}
that every infinite set $E\subseteq \omega$ has an infinite $\Diff$-sparse subset $F\subseteq E$, so 
if we show that every $\cD$-sparse set has small accretions, the proof will be finished.

Let $F\subseteq \omega$ be an infinite $\Diff$-sparse set and $K\subseteq\omega$ be a finite set.
It is known \cite[Proposition~4.3(1)]{MR3097000} that then $F-n\in \Diff$ for every $n<\min F$, and consequently,
$\{a-b:a\in F\setminus K,b\in F\cap K\}\cap \omega\in \cD$. 
Thus, 
$\Delta(F)\setminus \Delta(F\setminus K)= \{a-b:a\in F\cap K, b\in F, a>b\}\cup (\{a-b:a\in F\setminus K,b\in F\cap K\}\cap \omega)\in \cD$ as a finite union of sets from $\Diff$.
\end{proof}


\section{Topological complexity of partition regular operations}

If $\Lambda$ is a countable infinite set, then we consider $2^\Lambda=\{0,1\}^\Lambda$ as a product (with the product topology) of countably many copies of a discrete topological space $\{0,1\}$. Since $2^\Lambda$ is a Polish space \cite[p.~13]{MR1321597} and  $[\Lambda]^\omega$ is a $G_\delta$ subset of $2^\Lambda$, we obtain that  $[\Lambda]^\omega$ is a Polish space as well \cite[Theorem~3.11]{MR1321597}. 
In particular, if $\Lambda$ and $\Omega$ are countable infinite and $\cF\subseteq[\Omega]^\omega$, we say that a partition regular function  $\rho:\cF\to [\Lambda]^\omega$
is \emph{continuous} if $\rho$ is a continuous function from a topological subspace $\cF$ into a topological space $[\Lambda]^\omega$.

By identifying subsets of $\Lambda$  with their characteristic functions,
we equip $\cP(\Lambda)$ with the topology of the space $2^\Lambda$ and therefore
we can assign topological notions to ideals on $\Lambda$.
In particular, an ideal $\I$ is \emph{Borel} (\emph{analytic}, \emph{coanalytic}, resp.) if $\I$ is a Borel  (analytic, coanalytic, resp.) subset of $2^\Lambda$. Recall, a set $A\subseteq X$ is \emph{analytic} if there is a Polish space $Y$ and a Borel set $B\subseteq X\times Y$ such that $A$ is a projection of $B$ onto the first coordinate \cite[Exercise~14.3]{MR1321597}, and a set $C\subseteq X$ is coanalytic if $X\setminus C$ is an analytic set.

\begin{proposition}
\label{prop:continuous-rho-gives-coanalytic-ideal}
    If a partition regular function $\rho:\cF\to[\Lambda]^\omega$ (with $\cF\subseteq[\Omega]^\omega$) is continuous and $\cF$ is a closed subset of $[\Omega]^\omega$, then the ideal $\I_\rho$ is coanalytic.
\end{proposition}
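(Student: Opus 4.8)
The plan is to show that the complement of $\I_\rho$ inside $2^\Lambda$ is analytic, which is exactly what it means for $\I_\rho$ to be coanalytic. Unwinding the definition of $\I_\rho$ from Proposition~\ref{prop:rho-versus-ideal}, a set $A\subseteq\Lambda$ fails to lie in $\I_\rho$ precisely when there is some $F\in\cF$ with $\rho(F)\subseteq A$. Hence the complement is the projection onto the first coordinate of the set
\[
B=\{(A,F)\in 2^\Lambda\times\cF:\rho(F)\subseteq A\}.
\]
Since $\cF$ is closed in the Polish space $[\Omega]^\omega$, it is itself Polish, so once I know that $B$ is Borel, the projection $\proj(B)=2^\Lambda\setminus\I_\rho$ is analytic by the very definition of an analytic set recalled just before the proposition.

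So the heart of the matter is to verify that $B$ is Borel (in fact I expect it to be closed). Here I would use that $\Lambda$ is countable in order to rewrite the subset relation as a countable conjunction: for $(A,F)\in 2^\Lambda\times\cF$ one has $\rho(F)\subseteq A$ if and only if for every $\lambda\in\Lambda$, membership $\lambda\in\rho(F)$ forces $\lambda\in A$. Thus
\[
B=\bigcap_{\lambda\in\Lambda}\left(\{(A,F):\lambda\notin\rho(F)\}\cup\{(A,F):\lambda\in A\}\right).
\]
For each fixed $\lambda$ the set $\{(A,F):\lambda\in A\}$ depends only on the $\lambda$-th coordinate of $A$, hence is clopen in $2^\Lambda\times\cF$. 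For the other piece I would invoke continuity of $\rho$: the map $F\mapsto\rho(F)$ is continuous from $\cF$ into $2^\Lambda$, and composing it with the continuous evaluation of the $\lambda$-th coordinate shows that $\{F\in\cF:\lambda\in\rho(F)\}$ is clopen in $\cF$, so $\{(A,F):\lambda\notin\rho(F)\}$ is clopen as well. Each term of the intersection is therefore clopen, and the intersection ranges over the countable set $\Lambda$, so $B$ is closed, and in particular Borel.

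Assembling the pieces, $B$ is Borel, $\cF$ is Polish, and consequently $2^\Lambda\setminus\I_\rho=\proj(B)$ is analytic, which yields that $\I_\rho$ is coanalytic. The two hypotheses enter exactly once each in this final assembly: continuity of $\rho$ makes every slice defining $B$ clopen, and closedness of $\cF$ guarantees that the index space over which we project is Polish, so that the image lands in the analytic class. I expect the only genuine (though still routine) obstacle to be the bookkeeping that establishes $B$ is Borel; after that the conclusion is an immediate appeal to the projection characterization of analytic sets.
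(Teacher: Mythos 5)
Your proposal is correct and follows essentially the same route as the paper: both exhibit the coideal $\I_\rho^+$ as the projection onto the first coordinate of the set $B=\{(A,F):\rho(F)\subseteq A\}$ and reduce the whole proposition to checking that $B$ is Borel. The only difference lies in that verification --- you write $B$ as a countable intersection of clopen subsets of $2^\Lambda\times\cF$ (using closedness of $\cF$ only to make $\cF$ Polish), whereas the paper shows that the complement of $B$ inside $\cP(\Lambda)\times[\Omega]^\omega$ is open via a two-case argument (using closedness of $\cF$ to handle points $F\notin\cF$); both verifications are valid and invoke the continuity of $\rho$ in the same way.
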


\begin{proof}
We will show that $\I_\rho^+ = \cP(\Lambda)\setminus \I_\rho$ is an analytic set.
Let $B = \{(A,F)\in \cP(\Lambda)\times \cF: \rho(F)\subseteq A\}.$
Since $B\subseteq \cP(\Lambda)\times [\Omega]^\omega$ and $\I_\rho^+$ is a projection of $B$ onto the first coordinate, we only need to show that $B$ is a Borel set. 
It suffices to show that $C = (\cP(\Lambda)\times [\Omega]^\omega) \setminus B$ is an open set, since
$$B= ((\cP(\Lambda)\times [\Omega]^\omega) \setminus C)\cap(\cP(\Lambda)\times \cF).$$

Let $(A,F)\in C$. We have two cases: (1) $F\notin \cF$ or (2) $F\in \cF$.

\emph{Case (1)}. 
Since $\cF$ is closed, there is an open set $U\subseteq [\Omega]^\omega$ with $F\in U$ and $U\cap \cF=\emptyset$.
Then $W = \cP(\Lambda) \times U$ is open and $(A,F)\in W \subseteq C$.

\emph{Case (2)}. 
Since $\rho(F)\not\subseteq A$, there is $a\in \rho(F)\setminus A$.
Let $V=\{D\in \cP(\Lambda): a\in D\}$. Then $V$ is an open and closed set, $A\notin V$ and $\rho(F)\in V$.
Since $\rho$ is continuous at the point $F$, there is an open set $U\subseteq [\Omega]^\omega$ such that $F\in U$ and $\rho[U]\subseteq V $.
Then $W = (\cP(\Lambda)\setminus V)\times U$ is 
open and $(A,F)\in W \subseteq C$.
\end{proof}

\begin{proposition}\ 
\label{prop:top-complex-of-ideals}
    \begin{enumerate}
    \item The ideals $\I_{1/n}$ and $\vdW$ are $F_\sigma$.\label{prop:top-complex-of-ideals:summable-vdW}
        \item The functions $\FS$ and  $r$ are continuous.\label{prop:top-complex-of-ideals:continuus} 
        \item The function $\Delta$ is not continuous. In fact, the function $\Delta$ is discontinuous at every  point  $A$ such that $\Delta(A)\neq\omega$. \label{prop:notcontinuus}

\item 
If $\cL = \{A\in [\omega]^{\omega}: \forall n\in\omega\,(e_A(n+1)-e_A(n)>e_A(n))\}$ where 
$e_A : \omega \to A$ is the increasing enumeration of a set $A\subseteq\omega$, then
  $\cI_{\Delta}=\cI_{\Delta\restriction \cL}$, $\cL$ is closed
  and
  $\Delta\restriction \cL$ is continuous.\label{prop:top-complex-of-ideals:restriction-of-Delta} 
      
        \item The ideals $\Hindman$, $\Ramsey$ and $\Diff$ are coanalytic.\label{prop:top-complex-of-ideals:Hindman-Ramsey-Diff}
    \end{enumerate}

\end{proposition}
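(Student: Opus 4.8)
The plan is to handle the five parts separately, in each case using the principle that a subset of $2^\Lambda$ (or of $[\Lambda]^\omega$) is open or closed precisely when membership is decided by a finite piece of the characteristic function; the entire difficulty is isolating, for each operation, which finite piece suffices. For part (1), I would write $\I_{1/n}=\bigcup_{k\in\omega}\{A\subseteq\omega:\sum_{n\in A}1/(n+1)\le k\}$ and observe that each summand is closed because $A\mapsto\sum_{n\in A}1/(n+1)$ is lower semicontinuous (a supremum of the continuous finite partial sums), so a strict excess over $k$ is already witnessed on a finite initial segment. Likewise $\vdW=\bigcup_{\ell\in\omega}\{A:A\text{ contains no arithmetic progression of length }\ell\}$, and ``containing an AP of length $\ell$'' is an open condition, so each summand is closed; both ideals are thus $F_\sigma$. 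For part (2), fix a single output coordinate $p$: any $\alpha\in[D]^{<\omega}$ with $\sum\alpha=p$ consists of elements $\le p$, so membership of $p$ in $\FS(D)$ depends only on $D\cap\{0,\dots,p\}$, while membership of $\{x,y\}$ in $[H]^2$ depends only on the coordinates $x,y$ of $H$. Each output coordinate is a finitely supported function of the input, which is exactly continuity.

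For part (3), fix $A$ with $\Delta(A)\ne\omega$ and pick $d\in\omega\setminus\Delta(A)$; then $d\notin\Delta(A)$ means $b+d\notin A$ for every $b\in A$. Given any basic neighbourhood of $A$ (fixing $A$ on $\{0,\dots,N\}$), I choose $b\in A$ with $b>N$ and set $E=A\cup\{b+d\}$: then $E$ lies in that neighbourhood yet $d\in\Delta(E)$. Hence no neighbourhood of $A$ maps into the clopen set $\{C:d\notin C\}$ containing $\Delta(A)$, so $\Delta$ is discontinuous at $A$; since sparse sets $A$ with $\Delta(A)\ne\omega$ exist, $\Delta$ is not continuous.

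Part (4) is where the real work lies. That $\cL$ is closed follows as in part (1): if $e_A(n+1)\le 2e_A(n)$ for some $n$, this is witnessed by the first $n+2$ elements of $A$, which lie in a finite initial segment, so the complement of $\cL$ is open. For $\I_\Delta=\I_{\Delta\restriction\cL}$, the inclusion $\I_\Delta\subseteq\I_{\Delta\restriction\cL}$ is immediate from $\cL\subseteq[\omega]^\omega$; conversely, every infinite $E$ contains, by a greedy choice of elements with super-increasing gaps, an infinite subset $E'\in\cL$, and then condition \nameref{def:partition-regular:monotone} of Definition~\ref{def:partition-regular} gives $\Delta(E')\subseteq\Delta(E)$, so any $A$ absorbing some $\Delta(E)$ already absorbs some $\Delta(E')$ with $E'\in\cL$. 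The crux is continuity of $\Delta\restriction\cL$: I would prove the key estimate that for $A\in\cL$ any witnessing pair $a>b$ in $A$ with $a-b=d$ satisfies $a>2b$ (consecutive elements already more than double by the defining inequality of $\cL$, non-consecutive ones even more), whence $b<d$ and $a=b+d<2d$. Thus $d\in\Delta(A)$ holds iff some $b\in A\cap\{0,\dots,d-1\}$ has $b+d\in A$, a condition depending only on $A\cap\{0,\dots,2d-1\}$. This finite support of each output coordinate gives continuity.

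Finally, part (5) is an application of Proposition~\ref{prop:continuous-rho-gives-coanalytic-ideal}. For $\Hindman=\I_{\FS}$ and $\Ramsey=\I_r$ the domain is all of $[\omega]^\omega$ (closed in itself) and the operation is continuous by part (2), so both ideals are coanalytic. For $\Diff$, although $\Delta$ itself is discontinuous by part (3), part (4) lets me replace $\Delta$ by $\Delta\restriction\cL$, which has the same associated ideal, a closed domain, and is continuous; Proposition~\ref{prop:continuous-rho-gives-coanalytic-ideal} then yields coanalyticity. The main obstacle throughout is part (4): identifying the closed ``lacunary'' domain $\cL$ on which $\Delta$ becomes finitely supported without shrinking the associated ideal is precisely what rescues the coanalyticity of $\Diff$ despite the failure of continuity in part (3).
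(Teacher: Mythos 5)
Your proof is correct and follows essentially the same route as the paper's: finite support of each output coordinate for the continuity claims in (2) and (4) (your doubling estimate $a>2b$ on $\cL$, giving $b<d$ and $a<2d$, is exactly the fact the paper uses in the form $\Delta(C)\cap(e_A(M)+1)=\Delta(C\cap(e_A(M)+1))$), and Proposition~\ref{prop:continuous-rho-gives-coanalytic-ideal} applied to $\FS$, $r$ and $\Delta\restriction\cL$ for (5). The only differences are cosmetic: you supply proofs where the paper cites references for (1) and for the ``obvious'' equality $\I_\Delta=\I_{\Delta\restriction\cL}$, and in (3) you perturb $A$ by the single point $b+d$ rather than by the whole tail $\omega\setminus N$ (in both your argument and the paper's one must tacitly take the witness $d\notin\Delta(A)$ to be positive, since $0$ never lies in any $\Delta(E)$).
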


\begin{proof}

(\ref{prop:top-complex-of-ideals:summable-vdW})
It is known that $\I_{1/n}$ and $\vdW$ are $F_\sigma$ \cite[Example~1.5]{MR1124539}, \cite[Example~4.12]{MR4572258}.

    (\ref{prop:top-complex-of-ideals:continuus})
    \emph{Case of $\FS$}.
    Let $D\in [\omega]^\omega$ and let $U$ be an open basic neighborhood of $\FS(D)$. Then there exists a finite set $G\subseteq\omega$ such that $U = \{B\in [\omega]^\omega:  B\cap \{0,1,\dots,\max G\} = G\}$.
Let  $F =  D \cap \{0,1,\dots,\max G\}$.
Then  $V = \{A\in [\omega]^\omega: A\cap \{0,1,\dots,\max G\}=F\}$
is an open neighborhood of $D$
and 
$\FS[V]\subseteq U$.

    \emph{Case of $r$}. Let $D\in [\omega]^\omega$ and  let $U$ be an open basic neighborhood of $[D]^2$. There exists a finite set $G\subseteq[\omega]^2$ such that $U = \{B\in \left[[\omega]^2\right]^\omega:  B\cap [N]^2 = G\}$, where $N=\max\{\max\{p,q\}:\{p,q\}\in G\}$.
Then  $V = \{A\in [\omega]^\omega: A\cap N=D\}$
is an open neighborhood of $D$ and $r[V]\subseteq U$.

    (\ref{prop:notcontinuus}) Let $A\subseteq \omega$ be such that  $b\notin \Delta(A)$ for some $b\in \omega$. Then  $U=\{B\subseteq \omega: b\notin B\}$ is an open neighborhood of $\Delta(A)$. Let $V$ be an open basic neighborhood of $A$. There is $N\in\omega$ such that $V=\{C\subseteq \omega: C\cap N=A\cap N\}$. Then $C=(A\cap N)\cup (\omega\setminus N)\in V$ and $\Delta(C)=\omega\notin U$. Hence the function $\Delta$ is  discontinuous at the point  $A$.

(\ref{prop:top-complex-of-ideals:restriction-of-Delta})
It is obvious that $\cI_{\Delta}=\cI_{\Delta\restriction \cL}$. To show that $\cL$ is closed, notice that  $[\omega]^\omega\setminus\cL$ is open as for each $A\in[\omega]^\omega\setminus\cL$ there is $n\in\omega$ such that $e_A(n+1)-e_A(n)\leq e_A(n)$ and $U=\{C\in[\omega]^\omega:C\cap(e_A(n+1)+1)=A\cap(e_A(n+1)+1)\}$ is an open neighborhood of $A$ disjoint with $\cL$.

Below we show that 
  $\Delta\restriction \cL$ is continuous. 
 Let $A\in \cL$.
  We are going to show that  the function $\Delta\restriction \cL$ is continuous at the point  $A$.
Let $U$ be a neighborhood of $\Delta(A)$. Without loss of generality, we can assume that there is $N\in \omega$ such that $U=\{B\in [\omega]^{\omega}: B\cap N=\Delta(A)\cap N\}$.
  There  exists $M\in \omega$ such that $e_A(M)>N$.
 Then $V=\{C\in [\omega]^{\omega}: C\cap (e_A(M)+1)=A\cap (e_A(M)+1)\}$ is an open  neighborhood of $A$. 
Once we show that $\Delta[V\cap\cL]\subseteq U$, the proof will be finished.
Let $C\in V\cap\cL$.
Since $A,C\in \cL$, we obtain 
$\Delta(C)\cap (e_A(M)+1)=\Delta(C\cap (e_A(M)+1))=\Delta(A\cap (e_A(M)+1))= \Delta(A)\cap (e_A(M)+1)$.
But $N<e_A(M)$, hence 
    $\Delta(C)\cap N=\Delta(A)\cap N$
    and consequently 
 $\Delta(C)\in U$.

    (\ref{prop:top-complex-of-ideals:Hindman-Ramsey-Diff}) 
    It is known that $\Hindman$ and $\Ramsey$ are coanalytic \cite[Example~4.11]{MR4572258},\cite[Lemma~1.6.24]{alcantara-phd-thesis} (but it also follows from item (\ref{prop:top-complex-of-ideals:continuus}) and Proposition~\ref{prop:continuous-rho-gives-coanalytic-ideal}).
    It follows from item (\ref{prop:top-complex-of-ideals:restriction-of-Delta}) and Proposition \ref{prop:continuous-rho-gives-coanalytic-ideal} that $\Diff$ is coanalytic.
\end{proof}


\section{P-like properties}

\subsection{P-like properties of ideals}

For $A,B\subseteq\Lambda$, we write $A\subseteq^* B$ if 
there is a finite set $K\subseteq \Lambda$ with $A\setminus K \subseteq B$.

Let us recall definitions of P-like properties of ideals that are considered in the literature \cite[p.~2030]{MR3692233}. 
An ideal $\I$ on $\Lambda$ is 
\begin{itemize}

\item 
\emph{$P^-(\Lambda)$} if 
for every $\subseteq$-decreasing sequence $A_n\in \I^+$ with $A_0=\Lambda$ and $A_n\setminus A_{n+1}\in \I$ for each $n\in\omega$ there exists $B\in\I^+$ such that $B \subseteq^*  A_n$  for each $n\in\omega$;
\item 
\emph{$P^-$} if 
for every $\subseteq$-decreasing sequence $A_n\in \I^+$ with $A_n\setminus A_{n+1}\in \I$ for each $n\in\omega$ there exists $B\in\I^+$ such that $B \subseteq^*  A_n$  for each $n\in\omega$;

\item 
\emph{$P^+$} if for every $\subseteq$-decreasing sequence $A_n\in \I^+$ there exists $B\in\I^+$ such that $B \subseteq^* A_n$ for each $n\in\omega$.

\end{itemize}

The following proposition reveals some implications between  P-like properties and provides equivalent forms of the properties $P^-(\Lambda)$ and  $P^-$ that were  considered in the literature \cite{MR4584767} under  the names \emph{weak P-ideals} and \emph{hereditary weak P-ideals}, where the author used them for in-depth research on  $\I$-spaces.

\begin{proposition}\ 
\label{prop:Plike-basic-properties-for-ideals}
\label{prop:weak-Pplus-extends-to-Pplus}
\label{prop:weak-P-equivalent-definitions}
Let  $\I$ be an ideal on an infinite countable set $\Lambda$.
\begin{enumerate}
    \item $\I\text{ is }P^+\implies\I\text{ is }P^-\implies\I\text{ is }P^-(\Lambda)$.\label{prop:Plike-basic-properties-for-ideals:Pplus-implies-weakPplus}\label{prop:Plike-basic-properties-for-ideals:weakPplus-implies-Pminus}\label{prop:Plike-basic-properties-for-ideals:Pminus-implies-Pminus-of-Lambda}\label{prop:Plike-basic-properties-for-ideals:items}
\item The implications from item (\ref{prop:Plike-basic-properties-for-ideals:items}) cannot be reversed.\label{prop:Plike-basic-properties-for-ideals:reversed-items}

\item 
 The following conditions are equivalent.\label{prop:weak-P-equivalent-definitions:charakterization}
\begin{enumerate}
    \item $\I$ is $P^-(\Lambda)$ ($\I$ is  $P^-$, resp.).
\item For every partition $\cA$ of $\Lambda$ (of any set $C\in \I^+$, resp.) into sets from $\I$  
there exists  $B\in \I^+$ such that $B\subseteq\Lambda$ ($B\subseteq C$, resp.) and $B\cap A$ is finite for each $A\in \cA$.
\item $\I$ is a \emph{weak P-ideal} (\emph{hereditary weak P-ideal}, resp.) i.e. 
for every countable family $\cA\subseteq \I$ of subsets of $\Lambda$ (subsets of any $C\in \I^+$, resp.) there exists  $B\in \I^+$ such that $B\subseteq \Lambda$ ($B\subseteq C$, resp.)  and $B\cap A$ is finite for each $A\in \cA$.\label{prop:weak-P-equivalent-definitions:family-of-sets}
\end{enumerate}

\end{enumerate}
\end{proposition}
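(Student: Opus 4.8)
The plan is to prove Proposition~\ref{prop:Plike-basic-properties-for-ideals} in three separate parts, corresponding to its three items. The bulk of the content is item (\ref{prop:weak-P-equivalent-definitions:charakterization}), the equivalence of the three formulations of $P^-(\Lambda)$ (and symmetrically $P^-$), so I would focus most effort there; items (\ref{prop:Plike-basic-properties-for-ideals:items}) and (\ref{prop:Plike-basic-properties-for-ideals:reversed-items}) I would handle more briefly. Throughout I would work with $P^-(\Lambda)$ and indicate parenthetically the trivial modifications (replacing $\Lambda$ by an arbitrary $C\in\I^+$ and $\subseteq$ by $\subseteq\cap\,C$) that yield the $P^-$ version.

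\textbf{Item (\ref{prop:Plike-basic-properties-for-ideals:items}): the chain of implications.} The implication $P^+\implies P^-$ is immediate, since the hypothesis of $P^-$ (a $\subseteq$-decreasing sequence with $A_n\setminus A_{n+1}\in\I$) is a special case of the hypothesis of $P^+$ (an arbitrary $\subseteq$-decreasing sequence in $\I^+$), while the conclusions coincide. For $P^-\implies P^-(\Lambda)$ I would observe that $P^-(\Lambda)$ simply adds the extra normalization $A_0=\Lambda$ to the hypothesis of $P^-$, so any sequence witnessing the hypothesis of $P^-(\Lambda)$ already witnesses that of $P^-$; applying $P^-$ yields the desired $B$. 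Both implications are thus purely formal unravelings of the definitions.

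\textbf{Item (\ref{prop:weak-P-equivalent-definitions:charakterization}): the characterization.} This is the heart of the proposition, and I would prove it as a cycle (a)$\implies$(b)$\implies$(c)$\implies$(a). For (a)$\implies$(b): given a partition $\cA=\{A_i:i\in\omega\}$ of $\Lambda$ into sets from $\I$, set $A_n=\Lambda\setminus\bigcup_{i<n}A_i$. These form a $\subseteq$-decreasing sequence with $A_0=\Lambda$; each $A_n\in\I^+$ (since a cofinite-in-$\cA$ union of $\I$-sets cannot be removed from $\I^+$, as $\Lambda\notin\I$), and $A_n\setminus A_{n+1}=A_n\in\I$. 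Applying $P^-(\Lambda)$ gives $B\in\I^+$ with $B\subseteq^* A_n$ for all $n$; then $B\cap A_i\subseteq B\cap(\Lambda\setminus A_{i+1})$ is contained modulo a finite set in $\Lambda\setminus A_{i+1}$, which is disjoint from $A_i$, forcing $B\cap A_i$ finite. The direction (b)$\implies$(c) requires only turning an arbitrary countable family $\cA\subseteq\I$ into a genuine partition: replace $\cA$ by the sets $A_n'=A_n\setminus\bigcup_{i<n}A_i$ together with the remainder $\Lambda\setminus\bigcup_n A_n$ (if nonempty, it lies in $\I^+$ but we may split it off or absorb it), noting that a $B$ meeting each partition piece finitely also meets each original $A_n$ finitely since $A_n\subseteq\bigcup_{i\le n}A_i'$. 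Finally (c)$\implies$(a): given a decreasing sequence $A_n\in\I^+$ with $A_n\setminus A_{n+1}\in\I$ and $A_0=\Lambda$, the family $\cA=\{A_n\setminus A_{n+1}:n\in\omega\}\subseteq\I$ is countable, so (c) yields $B\in\I^+$ meeting each $A_n\setminus A_{n+1}$ finitely; since $\Lambda\setminus A_n=\bigcup_{i<n}(A_i\setminus A_{i+1})$ is a finite union of such sets, $B\cap(\Lambda\setminus A_n)$ is finite, i.e.~$B\subseteq^* A_n$, as required.

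\textbf{The main obstacle} I anticipate is the bookkeeping in item (\ref{prop:weak-P-equivalent-definitions:reversed-items}), showing the implications are strict: here I cannot argue formally but must exhibit concrete separating ideals. For $P^-(\Lambda)$ but not $P^-$ one needs an ideal that is a weak P-ideal globally yet fails hereditarily on some $C\in\I^+$; the natural candidate is a direct-sum / restriction construction where $\I\restriction C$ is engineered (via Proposition~\ref{prop:restrictions-of-rho}-style restrictions) to fail the weak P-property, and I would most likely point to a product such as $\finxfin$-type or $\Fin^2$-based example, or defer to a later explicit construction in the paper. For $P^-$ but not $P^+$ the standard witness is an ideal containing a decreasing sequence in $\I^+$ with \emph{no} pseudo-intersection in $\I^+$ yet with all the $\I$-to-$\I$ gaps being the obstruction only when gaps are removed; classically the ideal $\emptyset\times\Fin$ or an appropriate Fubini-type ideal separates these. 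I would verify the separating examples satisfy the positive property via item (\ref{prop:weak-P-equivalent-definitions:charakterization})(c) (easier to check than the raw definition) and fail the stronger property by displaying the offending decreasing sequence explicitly.
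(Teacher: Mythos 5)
Your items (\ref{prop:Plike-basic-properties-for-ideals:items}) and (\ref{prop:weak-P-equivalent-definitions:charakterization}) are correct and in fact more detailed than the paper, which dismisses both as ``Straightforward''; the cycle (a)$\implies$(b)$\implies$(c)$\implies$(a) is the natural argument and your verifications go through. One small slip: in (b)$\implies$(c) the remainder $\Lambda\setminus\bigcup_n A_n$ need \emph{not} lie in $\I^+$ when nonempty (it could be finite, hence in $\I$); the correct case split is the one you gesture at anyway --- if the remainder is in $\I$, absorb it into a partition piece and apply (b), and if it is in $\I^+$, it is itself the desired $B$, being disjoint from every member of $\cA$.

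The genuine gap is item (\ref{prop:Plike-basic-properties-for-ideals:reversed-items}), which is the only part the paper proves in detail and which you leave as two unexecuted promissory notes. Your candidate for $P^-$ without $P^+$ is fine: $\emptyset\times\Fin$ (all vertical sections finite) is $F_{\sigma\delta}$, hence $P^-$, and fails $P^+$ via $A_n=(\omega\setminus n)\times\omega$; the paper instead uses the density ideal $\I_d$, citing a reference for $P^-$ and killing $P^+$ with a decreasing sequence $B_n$ satisfying $0<\overline{d}(B_n)<1/n$. But for $P^-(\Lambda)$ without $P^-$, ``a $\Fin^2$-based direct sum, or defer to a later construction'' is not a proof: you must name the ideal and run both checks. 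The paper's witness is $\Fin\oplus\Fin^2$ on $\Lambda=\omega\sqcup\omega^2$. It is $P^-(\Lambda)$ because for any decreasing sequence with $A_0=\Lambda$ and $\I$-small differences, the fixed set $B=\omega\sqcup\emptyset\in\I^+$ works: $B\setminus A_n\subseteq B\cap\bigcup_{i<n}(A_i\setminus A_{i+1})$ is a finite union of sets whose first-summand parts are finite. It is not $P^-$ because $A_n=\emptyset\sqcup((\omega\setminus n)\times\omega)$ is a decreasing sequence in $\I^+$ with differences $\emptyset\sqcup(\{n\}\times\omega)\in\I$, yet any $B$ with $B\subseteq^* A_n$ for all $n$ has finite trace on the first summand and on each section $\{k\}\times\omega$ of the second, hence $B\in\I$. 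Until you commit to such an example and carry out these two verifications, item (\ref{prop:Plike-basic-properties-for-ideals:reversed-items}) remains unproved.
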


\begin{proof}
(\ref{prop:Plike-basic-properties-for-ideals:Pplus-implies-weakPplus})
Straightforward.

(\ref{prop:Plike-basic-properties-for-ideals:reversed-items})
The ideal 
$\Fin\oplus\Fin^2 $ 
is $P^-(\omega\sqcup\omega^2)$ (the set $B=\omega\sqcup\emptyset$ works for every sequence) but not $P^-$ (as witnessed by the sets $A_n=\emptyset\sqcup((\omega\setminus n)\times\omega)$).

Below we show an example of a $P^-$ ideal that is  not $P^+$.
  For a set $A\subseteq\omega$, we  define the asymptotic density of $A$ by 
 $\overline{d}(A)=\limsup_{n\to\infty} |A\cap n|/n$.
 Then the ideal $\I_d=\{A\subseteq\omega:\overline{d}(A)=0\}$ is $P^-$ (see e.g.~\cite[Corollary~1.1]{MR54000}). Now we show that  $\I_d$  is not $P^+$.
Take a decreasing sequence $B_n\subseteq \omega$ such that $0<\overline{d}(B_n)< 1/n$ for each $n\in \omega$.
If  $C\subseteq \omega$ is such that $C\subseteq^* B_n$ for all $n\in\omega$, then $\overline{d}(C)\leq \overline{d}(B_n)\to0$ as $n\to\infty$. Hence $C\in \I_d$. This shows that $\I_d$ is not $P^+$.

(\ref{prop:weak-P-equivalent-definitions:charakterization})
Straightforward.
\end{proof}

There are known relationships between topological complexity and P-like properties.

\begin{theorem}[{\cite[Proposition~4.9]{MR4584767},\cite[Lemma 1.2]{MR748847},\cite[Theorem~3.7]{MR3692233}}]\ 
\label{thm:Plike-properties-for-definable-ideals}
\begin{enumerate}
\item Each $G_{\delta\sigma\delta}$ (in particular, $F_{\sigma\delta}$) ideal is $P^-$ (hence $P^-(\Lambda)$).\label{thm:Plike-properties-for-definable-ideals:Fsigmadelta}
  
\item Each $F_\sigma$ ideal is $P^+$ (hence $P^-$ and $P^-(\Lambda)$). \label{thm:Plike-properties-for-definable-ideals:Fsigma}

\item  If $\I$ is an analytic ideal, then the following conditions are equivalent.
\begin{enumerate}
\item There exists a $P^+$ ideal $\J$ with  $\I\subseteq \J$.
\item There exists an $F_\sigma$ ideal $\K$ with  $\I\subseteq \K$.
\end{enumerate}

\end{enumerate}    
\end{theorem}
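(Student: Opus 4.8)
The plan is to treat the three items separately, using the submeasure representations of definable ideals together with the combinatorial reformulation of the weak $P$-properties supplied by Proposition~\ref{prop:weak-P-equivalent-definitions}. Since all three statements are established in the cited literature, the point of the sketch is to isolate the mechanism behind each and to locate the genuine difficulty.

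For item (\ref{thm:Plike-properties-for-definable-ideals:Fsigma}) I would invoke Mazur's theorem: an ideal $\I$ on $\Lambda$ is $F_\sigma$ if and only if $\I=\{A\subseteq\Lambda:\phi(A)<\infty\}$ for some lower semicontinuous submeasure $\phi$. Given a $\subseteq$-decreasing sequence $\langle A_n\rangle$ in $\I^+$, one has $\phi(A_n)=\infty$ for every $n$, and since $\phi$ is subadditive and finite on finite sets, $\phi(A_n\setminus K)=\infty$ for every finite $K$. Using lower semicontinuity I would choose recursively finite sets $F_n\subseteq A_n$ with $\phi(F_n)\ge n$ and $\min F_{n+1}>\max F_n$, and set $B=\bigcup_n F_n$. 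Then $\phi(B)\ge\phi(F_n)\ge n$ for all $n$, so $B\in\I^+$, while $F_m\subseteq A_m\subseteq A_n$ for $m\ge n$ forces $B\setminus A_n\subseteq F_0\cup\dots\cup F_{n-1}$ to be finite; hence $B\subseteq^* A_n$ for each $n$ and $\I$ is $P^+$. The implications $P^+\Rightarrow P^-\Rightarrow P^-(\Lambda)$ are Proposition~\ref{prop:Plike-basic-properties-for-ideals}(\ref{prop:Plike-basic-properties-for-ideals:items}).

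For item (\ref{thm:Plike-properties-for-definable-ideals:Fsigmadelta}) I would first reduce, via Proposition~\ref{prop:weak-P-equivalent-definitions}, the property $P^-$ to $\I$ being a hereditary weak $P$-ideal: for every $C\in\I^+$ and every partition $\{A_i\}_{i\in\omega}$ of $C$ into sets from $\I$ there is $B\in\I^+$, $B\subseteq C$, meeting each $A_i$ finitely. Writing $D_i=A_0\cup\dots\cup A_i\in\I$, the task becomes producing $B\subseteq C$ with $B\cap D_i$ finite for each $i$ and $B\notin\I$. The natural attempt mirrors item (\ref{thm:Plike-properties-for-definable-ideals:Fsigma}): fixing a decomposition $\I=\bigcap_k\bigcup_m C_{k,m}$ into closed pieces (taken hereditary and increasing in $m$ for concreteness), the set $C$ escapes some level $k_0$, and one tries to run the Mazur selection inside $C$ while avoiding the $D_i$'s. \textbf{This is where the main obstacle lies}: unlike the $F_\sigma$ case there is no single subadditive submeasure, the pieces $C_{k,m}$ are not closed under unions, and deleting $D_i$ may push $C\setminus D_i$ back into level $k_0$ (although $C\setminus D_i$ still escapes \emph{some}, possibly $i$-dependent, level, since $C\setminus D_i\in\I^+$). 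Overcoming this requires the genuine content of \cite[Proposition~4.9]{MR4584767} and \cite[Theorem~3.7]{MR3692233}: one must exploit the full $F_{\sigma\delta}$ (indeed $G_{\delta\sigma\delta}$) structure and perform a simultaneous fusion over the levels rather than a selection at a single fixed level. I would therefore quote these results for the general case, observing that the failure of the clean one-submeasure argument is exactly what explains why $F_{\sigma\delta}$ ideals are only $P^-$ and need not be $P^+$.

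Finally, for item (3) the implication (b)$\Rightarrow$(a) is immediate: an $F_\sigma$ ideal $\K\supseteq\I$ is $P^+$ by item (\ref{thm:Plike-properties-for-definable-ideals:Fsigma}), so $\J=\K$ witnesses (a). The substantive direction is (a)$\Rightarrow$(b): from a $P^+$ ideal $\J\supseteq\I$ with $\I$ analytic one must manufacture an $F_\sigma$ ideal $\K\supseteq\I$. Here I would use the analyticity of $\I$ together with the fragmentation furnished by $P^+$-ness to build a lower semicontinuous submeasure $\phi$ with $\I\subseteq\{A:\phi(A)<\infty\}$, the latter being the required $F_\sigma$ ideal by Mazur's theorem; extracting $\phi$ from the abstract $P^+$ extension is the crux and is carried out in \cite[Lemma~1.2]{MR748847}. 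I expect item (\ref{thm:Plike-properties-for-definable-ideals:Fsigmadelta}) and this last construction to be the hardest steps, whereas item (\ref{thm:Plike-properties-for-definable-ideals:Fsigma}) is entirely routine.
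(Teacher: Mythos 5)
The paper states this theorem as a known result and gives no proof at all, merely citing \cite[Proposition~4.9]{MR4584767}, \cite[Lemma 1.2]{MR748847} and \cite[Theorem~3.7]{MR3692233}; your proposal is consistent with that, and goes slightly further in a correct way: the Mazur lower-semicontinuous-submeasure selection you give for item (2) is the standard proof and is sound (including the reduction $\phi(A_n\setminus K)=\infty$ via subadditivity and the finiteness of $B\setminus A_n$), and your derivation of (b)$\Rightarrow$(a) in item (3) from item (2) is exactly right. Your honest identification of where the one-submeasure argument breaks down for $G_{\delta\sigma\delta}$ ideals, with deferral to the cited sources, matches the paper's own treatment.
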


\subsection{P-like properties of partition regular operations}

\begin{definition}
Let  $\rho:\cF\to[\Lambda]^\omega$ be partition regular.
 For  sets $F\in \cF$ and $B\subseteq\Lambda$,  we write 
$\rho(F)\subseteq^\rho B$
if 
there is a finite set $K\subseteq \Omega$ with $\rho(F\setminus K)\subseteq B$.
\end{definition}

\begin{remark}
We want to stress here that the relation ``$\rho(F)\subseteq^\rho B$'' is in fact a relation between $F$ and $B$ and not between $\rho(F)$ and $B$ because it can happen that $\rho(F)=\rho(G)$ and $\rho(F)\subseteq^\rho B$ but $\rho(G)\not\subseteq^\rho B$.   
We decided that we write $\rho(F)\subseteq^\rho B$ instead of $F\subseteq^\rho B$ as the former  seems more natural for us. 
The same remark applies to other notions involving ``$\rho(F)$'' we defined earlier or we define later  (e.g.~Definitions~\ref{def:P-like-rho} and \ref{def:rho-convergence}). 
\end{remark}

The following properties will prove useful in the studies of classes of sequentially compact spaces defined with the aid of partition regular functions.

\begin{definition}
\label{def:P-like-rho}
Let  $\rho:\cF\to[\Lambda]^\omega$ be partition regular.
 We say that $\rho$ is 
\begin{enumerate}

\item 
\emph{$P^-(\Lambda)$} if for every $\subseteq$-decreasing sequence $A_n\in \I_{\rho}^+$ with $A_0=\Lambda$ and $A_n\setminus A_{n+1}\in \I_{\rho}$ for each $n\in\omega$ 
there exists $F\in\cF$ such that $\rho(F)\subseteq^\rho A_n$
for each $n\in\omega$;

\item 
\emph{$P^-$} if for every $\subseteq$-decreasing sequence $A_n\in \I_{\rho}^+$ with $A_n\setminus A_{n+1}\in \I_{\rho}$ for each $n\in\omega$ 
there exists $F\in\cF$ such that $\rho(F)\subseteq^\rho A_n$
for each $n\in\omega$;

\item 
\emph{$P^+$} if for every $\subseteq$-decreasing sequence $A_n\in \I_{\rho}^+$  
there exists $F\in\cF$ such that $\rho(F)\subseteq^\rho A_n$
for each $n\in\omega$;

\item  
\emph{weak $P^+$}  if
for every $E\in \cF$ there exists $F\in \cF$ such that $\rho(F)\subseteq \rho(E)$ and for every  sequence $\{F_n: n\in\omega\}\subseteq \cF$ such that $\rho(F) \supseteq \rho(F_n)\supseteq \rho(F_{n+1})$ for each $n\in\omega$ 
there exists $G\in\cF$ such that  $\rho(G)\subseteq^\rho \rho(F_n)$
for each $n\in\omega$.

\end{enumerate}
\end{definition}

The following result reveals basic properties of the above defined notions and their connections with P-like properties of ideals.

\begin{proposition}
\label{prop:Plike-basic-properties}
\label{prop:properties-of-ideal-versus-rho}
Let  $\rho:\cF\to[\Lambda]^\omega$ be partition regular with $\cF\subseteq[\Omega]^\omega$.
Let $\I$ be an ideal on $\Lambda$.
\begin{enumerate}

\item \label{prop:Plike-basic-properties:implications}
$\rho\text{ is }P^+\implies\rho\text{ is weak }P^+\implies\rho\text{ is }P^-\implies\rho\text{ is }P^-(\Lambda)$.
\label{prop:Plike-basic-properties:Pplus-implies-weakPplus}\label{prop:Plike-basic-properties:weakPplus-implies-Pminus}\label{prop:Plike-basic-properties:Pminus-implies-Pminus-of-Lambda}

\item 
$\I$ is $P^+$ $\iff$  $\rho_\I$ is $P^+$, for every ideal $\I$. Similar equivalences hold for $P^-$ and $P^-(\Lambda)$, resp.\label{prop:properties-of-ideal-versus-rho:ideal-rho}

\item  
The implications from item (\ref{prop:Plike-basic-properties:implications}) cannot be reversed.\label{prop:Plike-basic-properties:reversed-implications}

\item If $\I_\rho$ is $P^-(\Lambda)$ ($P^-$, $P^+$, resp.), then $\rho$ is $P^-(\Lambda)$ ($P^-$, $P^+$, resp.).\label{prop:Plike-basic-properties:Pminus-ideal-implies-rho}

\item  
The implications from item (\ref{prop:Plike-basic-properties:Pminus-ideal-implies-rho}) cannot be reversed in case of $P^-(\Lambda)$ and $P^-$ properties.\label{prop:Plike-basic-properties:Pminus-ideal-implies-rho:reversed-implications}
\end{enumerate}

\end{proposition}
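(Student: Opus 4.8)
The plan is to dispatch the five items in the order (\ref{prop:Plike-basic-properties:implications}), (\ref{prop:properties-of-ideal-versus-rho:ideal-rho}), (\ref{prop:Plike-basic-properties:Pminus-ideal-implies-rho}), and then the two ``cannot be reversed'' items (\ref{prop:Plike-basic-properties:reversed-implications}) and (\ref{prop:Plike-basic-properties:Pminus-ideal-implies-rho:reversed-implications}) together, since the last two are where all the genuine examples live. For item (\ref{prop:Plike-basic-properties:implications}) I would prove the three implications separately. The implication $P^-\Rightarrow P^-(\Lambda)$ is immediate: the sequences quantified over in the definition of $P^-(\Lambda)$ form a subfamily (we merely add $A_0=\Lambda$) of those quantified over for $P^-$. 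For $P^+\Rightarrow\text{weak }P^+$ I would, given $E\in\cF$, simply take $F=E$: since $P^+$ puts no restriction on the admissible decreasing sequences, any $\{F_n\}$ with $\rho(E)\supseteq\rho(F_n)\supseteq\rho(F_{n+1})$ consists of sets $\rho(F_n)\in\I_\rho^+$, so $P^+$ returns the required $G$. The substantive implication is $\text{weak }P^+\Rightarrow P^-$. Given a $\subseteq$-decreasing $A_n\in\I_\rho^+$ with $A_n\setminus A_{n+1}\in\I_\rho$, I pick $E\in\cF$ with $\rho(E)\subseteq A_0$ and let $F$ be the set provided by weak $P^+$ for this $E$, so $\rho(F)\subseteq\rho(E)\subseteq A_0$. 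The key observation is that $\rho(F)\setminus A_n\subseteq A_0\setminus A_n=\bigcup_{k<n}(A_k\setminus A_{k+1})\in\I_\rho$, whence $\rho(F)\cap A_n\in\I_\rho^+$; this lets me recursively choose $F_n\in\cF$ with $\rho(F_n)\subseteq\rho(F_{n-1})\cap A_n$ (starting from $F_{-1}=F$), using repeatedly that removing an $\I_\rho$-set from an $\I_\rho^+$-set leaves an $\I_\rho^+$-set, together with the defining property of $\I_\rho^+$. The resulting sequence satisfies $\rho(F)\supseteq\rho(F_0)\supseteq\rho(F_1)\supseteq\cdots$ with $\rho(F_n)\subseteq A_n$, so weak $P^+$ yields $G\in\cF$ with $\rho(G)\subseteq^\rho\rho(F_n)\subseteq A_n$ for every $n$, which is exactly $P^-$.

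Item (\ref{prop:properties-of-ideal-versus-rho:ideal-rho}) is a matter of unwinding definitions: for $\rho_\I$ one has $\I_{\rho_\I}=\I$ by Proposition~\ref{prop:rho-versus-ideal}(\ref{prop:rho-versus-ideal:ideal-gives-rho}), and $\rho_\I(A)\subseteq^{\rho_\I}B$ says precisely that $A\setminus K\subseteq B$ for some finite $K$, i.e.\ $A\subseteq^* B$; hence the three defining conditions for $\rho_\I$ being $P^+/P^-/P^-(\Lambda)$ become verbatim the conditions for $\I$ being $P^+/P^-/P^-(\Lambda)$, with the ideal witness $B$ playing the role of $F$. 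For item (\ref{prop:Plike-basic-properties:Pminus-ideal-implies-rho}) the idea is to transfer a pseudo-intersection from $\Lambda$ to $\cF$. Assuming $\I_\rho$ is $P^+$ (the $P^-$ and $P^-(\Lambda)$ cases are identical, the extra hypotheses on $\langle A_n\rangle$ being the same in both formulations), a decreasing $\langle A_n\rangle$ in $\I_\rho^+$ yields $B\in\I_\rho^+$ with $B\subseteq^* A_n$ for all $n$; I choose $F_0\in\cF$ with $\rho(F_0)\subseteq B$ and then, crucially, apply Proposition~\ref{prop:finite-support} to replace $F_0$ by a finitely supported $F\subseteq F_0$. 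The point is that $B\subseteq^* A_n$ only removes finitely many points of $\Lambda$, giving $\rho(F)\setminus L_n\subseteq A_n$ for finite $L_n\subseteq\Lambda$, whereas $\subseteq^\rho$ requires thinning $F$ inside $\Omega$; Proposition~\ref{prop:finite-support} converts the former into the latter, producing for each $n$ a finite $K_n\subseteq\Omega$ with $\rho(F\setminus K_n)\subseteq\rho(F)\setminus L_n\subseteq A_n$, i.e.\ $\rho(F)\subseteq^\rho A_n$. I expect this $\subseteq^*$-to-$\subseteq^\rho$ conversion to be the one technically delicate point in the ``easy'' half of the proposition.

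The remaining items are witnessed by examples. For item (\ref{prop:Plike-basic-properties:reversed-implications}) I would import the separation of $P^-(\Lambda)$ from $P^-$ through item (\ref{prop:properties-of-ideal-versus-rho:ideal-rho}) from Proposition~\ref{prop:Plike-basic-properties-for-ideals}(\ref{prop:Plike-basic-properties-for-ideals:reversed-items}): the function $\rho_{\Fin\oplus\Fin^2}$ is $P^-(\Lambda)$ but not $P^-$. For the other two levels it helps to note, again by unwinding, that $\rho_\I$ is weak $P^+$ exactly when every $E\in\I^+$ has a subset $F\in\I^+$ with $\I\restriction F$ being $P^+$ (``$\I$ is $P^+$ below a dense family of positive sets''). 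With this reading, $\rho_{\I_d}$ separates $P^-$ from weak $P^+$: $\I_d$ is $P^-$, yet relativizing the density-zero counterexample of Proposition~\ref{prop:Plike-basic-properties-for-ideals}(\ref{prop:Plike-basic-properties-for-ideals:reversed-items}) to any positive-density $F$ shows $\I_d\restriction F$ is never $P^+$. Dually, $\rho_{\emptyxfin}$ separates weak $P^+$ from $P^+$: the all-columns-finite ideal $\emptyxfin$ is not $P^+$ (witnessed by $A_m=(\omega\setminus m)\times\omega$, whose pseudo-intersections have all columns finite), but below any $E\in(\emptyxfin)^+$ a single infinite column $F$ gives $\emptyxfin\restriction F\cong\Fin$, which is $P^+$. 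Finally, for item (\ref{prop:Plike-basic-properties:Pminus-ideal-implies-rho:reversed-implications}) the examples cannot be of the form $\rho_\I$ (item (\ref{prop:properties-of-ideal-versus-rho:ideal-rho}) rules that out), so I would invoke the genuine operations: by the computations of Proposition~\ref{prop:properties-of-ideal-versus-rho}(\ref{prop:Plike-properties-for-known-rho:FS-r-Delta:Pminus})(\ref{prop:Plike-properties-for-known-rho:Hindman-Ramsey-Diff}) established below, $\FS$ is $P^-$ (indeed $P^-(\Lambda)$) whereas $\I_{\FS}=\Hindman$ is not $P^-(\Lambda)$, so $\FS$ witnesses both non-reversals at once. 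The mechanism behind this gap is exactly the small accretions phenomenon (Proposition~\ref{prop:ideal-rho-is-sparse}): the $\rho$-version of $P^-$ may retreat in the index set $\Omega$, and small accretions keep the resulting error inside $\I_\rho$, a freedom the rigid $\subseteq^*$-pseudo-intersection of the ideal version lacks.

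The hard part will be item (\ref{prop:Plike-basic-properties:reversed-implications}), specifically pinning down functions that sit strictly between adjacent levels at the weak $P^+$ node, because weak $P^+$ is the one property invisible through the ideal translation of item (\ref{prop:properties-of-ideal-versus-rho:ideal-rho}); the reformulation of weak $P^+$ for $\rho_\I$ as local $P^+$-ness on a dense family of positive sets is what makes the candidates $\rho_{\I_d}$ and $\rho_{\emptyxfin}$ tractable, and verifying ``never $P^+$ below any positive set'' for $\I_d$ is the most delicate computation in the whole argument.
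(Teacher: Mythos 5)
Your proposal is correct and follows essentially the same route as the paper's proof: the same recursive construction of the sequence $\langle F_n\rangle$ for weak $P^+\Rightarrow P^-$, the same use of Proposition~\ref{prop:finite-support} to convert $\subseteq^*$ into $\subseteq^\rho$ in item~(\ref{prop:Plike-basic-properties:Pminus-ideal-implies-rho}), and the same witnesses ($\Fin\oplus\Fin^2$, $\I_d$, the all-columns-finite ideal, and $\FS$ versus $\Hindman$) for the non-reversibility items. The only difference is presentational: you make explicit the reformulation of weak $P^+$ for $\rho_\I$ as local $P^+$-ness below a dense family of positive sets, which the paper leaves implicit in its ``slight modification'' remark.
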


\begin{proof}

(\ref{prop:Plike-basic-properties:implications}) 
Below we only show that if $\rho$ is  weak $P^+$ then it is $P^-$ since other implications are straightforward.

Let 
 $A_n\in \I_{\rho}^+$ be a $\subseteq$-decreasing sequence with $A_n\setminus A_{n+1}\in \I_{\rho}$ for each $n\in\omega$. Since $A_0\in\I_{\rho}^+$, there is $E\in\cF$ such that $\rho(E)\subseteq A_0$. Using the fact that $\rho$ is weak $P^+$ we can find $F\in\cF$ with $\rho(F)\subseteq\rho(E)$ and such as in the definition of weak $P^+$ property. 
 
We will show that there is a sequence $\{F_n: n\in\omega\}\subseteq \cF$ such that $\rho(F_0)\subseteq \rho(F)$ and $\rho(F_{n+1})\subseteq \rho(F_n)\cap A_{n+1}$ for each $n\in\omega$. Indeed, since $\rho(F)\subseteq A_0$, it suffices to put $F_0=F$. Suppose now that $F_i$ have been constructed for $i\leq n$. 
 Since $\rho(F_n)\cap A_{n+1}  = \rho(F_n) \setminus (A_n\setminus A_{n+1}) \in \I^+_\rho$, there is $F_{n+1}\in \cF$ with  $\rho(F_{n+1})\subseteq\rho(F_n)\cap A_{n+1}$.
 
Since $F$ is as in the definition of weak $P^+$ property, there  exists $G\in\cF$ such that $\rho(G)\subseteq^\rho \rho(F_n)$
for each $n\in\omega$.
Thus,  $\rho(G)\subseteq^\rho  A_n$
for each $n\in\omega$.

 (\ref{prop:properties-of-ideal-versus-rho:ideal-rho}) Straightforward. 

(\ref{prop:Plike-basic-properties:reversed-implications})
The cases of the second and third implications follow from Proposition~\ref{prop:Plike-basic-properties-for-ideals}(\ref{prop:Plike-basic-properties-for-ideals:reversed-items}) 
and item (\ref{prop:properties-of-ideal-versus-rho:ideal-rho}), where the proof of the fact that  $\rho_{\I_d}$ is not weak $P^+$ is just a slight modification of the proof that $\I_d$ is not $P^+$.

Now we show  that the first implication cannot be reversed. Consider the ideal 
$\I= \{A\subseteq\omega\times\omega: \text{$A\cap (\{n\}\times\omega)$ is finite for every $n\in\omega$}\}$. Then $\I$ is not $P^+$ as witnessed by $A_n=(\omega\setminus n)\times\omega$, so $\rho_\I$ is not $P^+$ (by item (\ref{prop:properties-of-ideal-versus-rho:ideal-rho})). However, we will show that $\rho_\I$ is weak $P^+$. Let $E\in\I^+$. Then there is $n\in\omega$ such that $F=E\cap (\{n\}\times\omega)$ is infinite. Then $F\in\I^+$ and it is easy to see that if $F_n\in\I^+$ are such that $F\supseteq F_n\supseteq F_{n+1}$ then one can pick $x_n\in F_n$ for each $n\in\omega$ and $G=\{x_n:n\in\omega\}\in \I^+$ is such that $G\setminus \{x_i:i<n\}\subseteq F_n$ for all $n\in\omega$.

(\ref{prop:Plike-basic-properties:Pminus-ideal-implies-rho})
Proofs in all cases are very similar, so we only present  a proof  for the property $P^-(\Lambda)$.
Let $A_n\in \I_{\rho}^+$ be a $\subseteq$-decreasing sequence with $A_0=\Lambda$ and $A_n\setminus A_{n+1}\in \I_{\rho}$ for each $n\in\omega$. Since $\I_{\rho}$ is $P^-(\Lambda)$, there is $B\not\in \I_{\rho}$ such that for every $n\in \omega$ one can find a finite set $K_n\subseteq \Omega$ such that $B\setminus K_n\subseteq A_n$. From the fact that $B\not\in \I_{\rho}$, there is $F\in \cF$ such that $\rho(F)\subseteq B$. 
Using Proposition~\ref{prop:finite-support}, we can find  $E\in \cF$  such that $E\subseteq F$ and  for every $K_n$ there exists a finite set $L_n\subseteq \Omega$ such that $\rho(E\setminus L_n)\subseteq \rho(E)\setminus K_n$. Then $\rho(E\setminus L_n)\subseteq \rho(E)\setminus K_n \subseteq B\setminus K_n\subseteq A_n$, so $\rho$ is $P^-(\Lambda)$.

(\ref{prop:Plike-basic-properties:Pminus-ideal-implies-rho:reversed-implications})
In Proposition~\ref{prop:Plike-properties-for-known-rho}(\ref{prop:Plike-properties-for-known-rho:FS-r-Delta:Pminus})(\ref{prop:Plike-properties-for-known-rho:Hindman-Ramsey-Diff}) 
we will show that $\rho=\FS$ is weak $P^+$, but $\I_\rho = \Hindman$ is not $P^-(\omega)$.
\end{proof}

We will need the following lemma to show that $FS$, $r$ and $\Delta$ are not $P^+$.

\begin{lemma}\label{lemma:wlP}
Let $\rho:\cF\to[\Lambda]^\omega$ (with $\cF\subseteq[\Omega]^\omega$) be a  partition regular function such that   
there exists a function $\tau:[\Omega]^{<\omega}\to \Lambda$ such that
\begin{enumerate}
        \item $\forall F\in \cF\, \forall \{a,b\}\in [F]^2 \, (\tau\left(\{a,b\}\right)\in \rho(F))$,
        \item $\forall F\in \cF\, \forall c\in \rho(F)\,  \exists S\in [F]^{<\omega}\,(\tau(S)=c)$,
    \item there exists a pairwise disjoint family $\{P_n: n\in \omega\}\subseteq \cF$ such that  the family  $\{\rho(P_n):n\in\omega\}$ is also pairwise disjoint  and 
 the restriction $\tau\restriction \left[\bigcup\{P_n:n\in\omega\}\right]^{<\omega}$
 is one-to-one.\label{lemma:wlP:item-sets-P}
 \end{enumerate}
Then $\rho$ is not $P^+$.
\end{lemma}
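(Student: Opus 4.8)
The plan is to exhibit a single $\subseteq$-decreasing sequence in $\I_\rho^+$ that admits no $\rho$-pseudo-intersection, thereby violating the definition of $P^+$ for $\rho$. Writing $Q=\bigcup_{n\in\omega}P_n$, the natural candidate is
$$A_n=\bigcup_{m\geq n}\rho(P_m),\qquad n\in\omega.$$
This sequence is clearly $\subseteq$-decreasing, and since $\rho(P_n)\subseteq A_n$ and $P_n\in\cF$ we have $A_n\in\I_\rho^+$ for every $n$. Thus it suffices to prove that there is no $F\in\cF$ with $\rho(F)\subseteq^\rho A_n$ for all $n$.

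First I would record the decoding consequence of disjointness and injectivity. If $\{a,b\}\subseteq Q$ with $a\in P_i$, $b\in P_j$ and $\tau(\{a,b\})\in\rho(P_m)$ for some $m$, then by assumption (2) applied to $P_m$ there is $S\in[P_m]^{<\omega}$ with $\tau(S)=\tau(\{a,b\})$; since $\{a,b\},S\in[Q]^{<\omega}$, the injectivity in assumption~(\ref{lemma:wlP:item-sets-P}) forces $\{a,b\}=S\subseteq P_m$, hence $i=j=m$. In other words, the pairwise disjoint sets $\rho(P_m)$ \emph{detect} pairs of $Q$: the $\tau$-value of such a pair can enter $\bigcup_m\rho(P_m)=A_0$ only when both endpoints lie in one and the same $P_m$, so a \emph{cross pair} $\{a,b\}$ with $a\in P_i$, $b\in P_j$, $i\neq j$, has $\tau(\{a,b\})\notin A_0$.

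Now suppose toward a contradiction that $F\in\cF$ and finite sets $K_n\subseteq\Omega$ satisfy $\rho(F\setminus K_n)\subseteq A_n$ for every $n$, and put $F'=F\setminus K_0\in\cF$. By assumption (1), every pair $\{a,b\}\in[F']^2$ satisfies $\tau(\{a,b\})\in\rho(F')\subseteq A_0=\bigcup_m\rho(P_m)$, and by disjointness of the $\rho(P_m)$ this assigns to each pair a unique index $m(a,b)$. The decoding step should then force $a,b\in P_{m(a,b)}$, so that $F'\subseteq Q$; moreover, since any two pairs sharing a vertex receive the same index, a connectedness argument pins all of $F'$ inside a single $P_{m_0}$. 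Once $F'\subseteq P_{m_0}$ is established, fix $n>m_0$: the set $F\setminus(K_0\cup K_n)\in\cF$ is contained in $P_{m_0}$, so by monotonicity~\nameref{def:partition-regular:monotone} we get $\rho(F\setminus(K_0\cup K_n))\subseteq\rho(P_{m_0})$; but this set is also contained in $\rho(F\setminus K_n)\subseteq A_n$, and $\rho(P_{m_0})\cap A_n=\emptyset$ by disjointness, contradicting the fact that $\rho(F\setminus(K_0\cup K_n))$ is infinite.

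The step I expect to be the main obstacle is exactly the decoding when the endpoints are not known in advance to lie in $Q$: the injectivity in~(\ref{lemma:wlP:item-sets-P}) is available only on $[Q]^{<\omega}$, so to deduce $\{a,b\}=S$ from $\tau(\{a,b\})=\tau(S)$ one first needs $a,b\in Q$. Hence the crux is to show that a $\rho$-pseudo-intersection $F$ may, after deleting finitely many indices, be taken to satisfy $F'\subseteq Q$ — rather than merely treating the clean but insufficient special case $F\subseteq Q$. I would try to extract this from the interplay of (1) (which pushes \emph{all} pair-values of $F'$ into the disjoint union $\bigcup_m\rho(P_m)$), the covering property (2), and the injectivity~(\ref{lemma:wlP:item-sets-P}), arguing that an element of $F'$ outside $Q$ would, together with cofinally many other elements of $F'$, yield a pair whose $\tau$-value escapes $\bigcup_m\rho(P_m)$ and so violates $\rho(F')\subseteq A_0$. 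Making this containment $F'\subseteq^* Q$ precise is where the real work of the proof lies.
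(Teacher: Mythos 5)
Your choice of the decreasing sequence $A_n=\bigcup_{m\geq n}\rho(P_m)$ is exactly the one the paper uses, and both your ``decoding'' observation (for pairs inside $Q$) and your endgame once $F'\subseteq P_{m_0}$ is known are sound. But the bridge you flag yourself --- that a pseudo-intersection $F$ satisfies $F\setminus K_0\subseteq^* Q$ --- is a genuine gap, and the route you sketch for closing it cannot work. You propose to show that an element $x\in F'\setminus Q$ produces, together with some $y\in F'$, a pair whose $\tau$-value escapes $\bigcup_m\rho(P_m)$. However, hypothesis (1) combined with $\rho(F')\subseteq A_0$ already guarantees that \emph{every} pair from $F'$ has its $\tau$-value inside $\bigcup_m\rho(P_m)$, and nothing in the hypotheses constrains $\tau$ on pairs not contained in $Q$: the injectivity assumption is silent there, so $\tau$ could send every such pair into $\rho(P_0)$, say. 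Hence no contradiction can be extracted from $x\notin Q$ along these lines, and $F'\subseteq^* Q$ is simply not derivable from (1)--(3).

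The paper's proof avoids this obstacle entirely by a case split on the sets $F\cap P_n\subseteq\Omega$ rather than by trying to locate $F$ inside $Q$. If $F\cap P_{n_0}$ is infinite for some $n_0$, pick distinct $a,b\in(F\cap P_{n_0})\setminus K_{n_0+1}$; applying (1) to $P_{n_0}$ gives $\tau(\{a,b\})\in\rho(P_{n_0})$, while applying (1) to $F\setminus K_{n_0+1}$ gives $\tau(\{a,b\})\in\rho(F\setminus K_{n_0+1})\subseteq A_{n_0+1}$, contradicting disjointness of the $\rho(P_m)$ --- note that no injectivity is needed in this case. If every $F\cap P_n$ is finite, one picks points $x_{k_n}\in F\cap P_{k_n}$ from infinitely many distinct $P_{k_n}$, forms a cross pair inside $F\setminus K_0$, and runs precisely your decoding argument, which is now legitimate because the cross pair lies in $[Q]^2$. (Even here some care remains: one must ensure $F$ meets infinitely many $P_n$, a point the paper passes over quickly.) So the correct move is not to prove $F'\subseteq Q$ but to exploit whatever intersection $F$ actually has with the $P_n$'s; you should rebuild your argument around that dichotomy.
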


\begin{proof}
Let $\{P_n:n\in\omega\}$ be as in item (\ref{lemma:wlP:item-sets-P}) of the lemma. For each $n\in\omega$, we define 
$B_n = \bigcup\{\rho(P_i): i\geq n\}.$
Then $B_n\in \I_{\rho}^+$ and $B\supseteq B_n\supseteq B_{n+1}$ for each $n\in\omega$. 
If we show that there is no 
$G\in \cF$ such that $\rho(G)\subseteq^\rho B_n$ for every $n\in \omega$, the proof will be finished.
Suppose for sake of contradiction that there exists $G\in \cF$ such that for every $n\in \omega$ there exists a finite set $K_n\subseteq \Omega$ with $\rho(G\setminus K_n)\subseteq B_n$.
We have two cases:
\begin{enumerate}
    \item $|G\cap P_{n_0}|=\omega$ for some $n_0\in \omega$,
    \item $|G\cap P_{n}|<\omega$ for all $n\in \omega$.
\end{enumerate}

\emph{Case (1).}  We take distinct  $a,b\in (G\cap P_{n_0})\setminus K_{n_0+1}$. 
Since $a,b\in P_{n_0}\in\cF$, we have $\tau\left(\{a,b\}\right)\in  \rho(P_{n_0})$. 
On the other hand, $a,b\in G\setminus K_{n_0+1}\in \cF$, so $\tau\left(\{a,b\}\right)\in  \rho(G\setminus K_{n_0+1}) \subseteq B_{n_0+1}$. 
Hence, there exists $i\geq n_0+1$ such that $\tau\left(\{a,b\}\right)\in \rho(P_i)$. 
A contradiction with $\rho(P_i)\cap \rho(P_{n_0})=\emptyset$.

\emph{Case (2).} In this case, there exists a strictly increasing sequence $\{k_n : n\in \omega\}$   
such that  we can choose an element $x_{k_n}\in G\cap P_{k_n}$ for each $n\in \omega$. 
Since $x_{k_n}$ are pairwise distinct, there is $N\in \omega$ such that  $x_{k_n}\in G\setminus K_0$ for every $n\geq N$. 
In particular,  $\tau\left(\{x_{k_N},x_{k_{N+1}}\}\right)\in \rho(G\setminus K_0)\subseteq B_0$, and consequently there exists $i\in \omega$ such that $\tau\left(\{x_{k_N},x_{k_{N+1}}\}\right)\in \rho(P_i)$.
Therefore there is a finite set $S\subseteq P_i$ such that $\tau(S)=\tau\left(\{x_{k_N},x_{k_{N+1}}\}\right)$. 
Since $P_n$ are pairwise disjoint and $x_{k_n}\in P_n$,  we obtain that $x_{k_N}\notin P_i$ or $h_{k_N+1}\notin P_i$.
Consequently,  $\{x_{k_N},x_{k_{N+1}}\} \neq S$, so  $\tau\restriction \left[\bigcup\{P_n:n\in\omega\}\right]^{<\omega}$ is not one-to-one, a contradiction. 
\end{proof}

\begin{proposition}\ 
\label{prop:Plike-properties-for-known-rho}
\begin{enumerate}

\label{prop:properties-of-ideal-versus-rho:Pplus}\label{prop:properties-of-ideal-versus-rho:P-like-properties:weakPplus}\label{prop:properties-of-ideal-versus-rho:P-like-properties:Pminus} 

\item The ideals $\vdW$ and $\I_{1/n}$ are $P^+$ (hence, $P^-$ and $P^-(\omega)$) while $\rho_\vdW$ and $\rho_{\I_{1/n}}$ are $P^+$, weak $P^+$, $P^-$ and $P^-(\omega)$. \label{prop:Plike-properties-for-known-rho:vdW}\label{prop:Plike-properties-for-known-rho:summable}\label{prop:Plike-properties-for-known-rho:Fsigma-known}

\item 
If  $\rho\in \{\FS,r,\Delta\}$, then
$\rho$ is not $P^+$,\label{prop:Plike-properties-for-known-rho:FS-r-Delta:Pplus}

\item 
If  $\rho\in \{\FS,r,\Delta\}$, then $\rho$ is   weak $P^+$ (hence $P^-$ and $P^-(\Lambda)$).\label{prop:Plike-properties-for-known-rho:FS-r-Delta:weakPplus}\label{prop:Plike-properties-for-known-rho:FS-r-Delta:Pminus}\label{prop:Plike-properties-for-known-rho:FS-r-Delta:veryweakPplus}

\item 
If  $\I\in \{\Hindman,\Ramsey,\Diff\}$, then $\I$ is  
not $P^-(\Lambda)$ (hence not $P^+$ and not $P^-$).\label{prop:Plike-properties-for-known-rho:Hindman-Ramsey-Diff}

\end{enumerate}
\end{proposition}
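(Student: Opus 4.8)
The statement bundles four assertions; I would dispatch items~(\ref{prop:Plike-properties-for-known-rho:vdW}) and~(\ref{prop:Plike-properties-for-known-rho:FS-r-Delta:Pplus}) quickly and spend the real effort on the weak $P^+$ property in item~(\ref{prop:Plike-properties-for-known-rho:FS-r-Delta:weakPplus}) and on the failure of $P^-(\Lambda)$ in item~(\ref{prop:Plike-properties-for-known-rho:Hindman-Ramsey-Diff}). For item~(\ref{prop:Plike-properties-for-known-rho:vdW}): the ideals $\vdW$ and $\I_{1/n}$ are $F_\sigma$ by Proposition~\ref{prop:top-complex-of-ideals}(\ref{prop:top-complex-of-ideals:summable-vdW}), hence $P^+$ by Theorem~\ref{thm:Plike-properties-for-definable-ideals}(\ref{thm:Plike-properties-for-definable-ideals:Fsigma}); then $\rho_\vdW$ and $\rho_{\I_{1/n}}$ are $P^+$ by Proposition~\ref{prop:properties-of-ideal-versus-rho}(\ref{prop:properties-of-ideal-versus-rho:ideal-rho}), and weak $P^+$, $P^-$, $P^-(\omega)$ follow from the implications in Proposition~\ref{prop:Plike-basic-properties}(\ref{prop:Plike-basic-properties:implications}).

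For item~(\ref{prop:Plike-properties-for-known-rho:FS-r-Delta:Pplus}) I would verify the hypotheses of Lemma~\ref{lemma:wlP} in each case. For $\FS$ take $\tau(S)=\sum_{i\in S}i$: the first two conditions of Lemma~\ref{lemma:wlP} are immediate, and for condition~(\ref{lemma:wlP:item-sets-P}) I would fix a super-increasing set $\{a_i:i\in\omega\}$, so that distinct finite subsets have distinct sums, and split its index set into infinitely many infinite pieces $I_n$, putting $P_n=\{a_i:i\in I_n\}$; then $\{\FS(P_n)\}$ is automatically pairwise disjoint and $\tau$ is one-to-one on $[\bigcup_nP_n]^{<\omega}$. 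For $r$ the first condition forces $\tau(\{a,b\})=\{a,b\}$; since the values on finite sets of size $\neq 2$ are unconstrained by the first two conditions, I would extend $\tau$ to a one-to-one map on $[\bigcup_nP_n]^{<\omega}$ by choosing the $P_n$ inside a coinfinite subset of $\omega$. For $\Delta$ the first condition forces $\tau(\{a,b\})=|a-b|$, which is injective on pairs once $\bigcup_nP_n$ is a Sidon set, and this again extends to an injection. In all three cases Lemma~\ref{lemma:wlP} then yields that $\rho$ is not $P^+$.

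The core is item~(\ref{prop:Plike-properties-for-known-rho:FS-r-Delta:weakPplus}). For $r$ the argument is transparent: given $E$ put $F=E$; any sequence with $r(F)\supseteq r(F_n)\supseteq r(F_{n+1})$ forces $F\supseteq F_n\supseteq F_{n+1}$, so picking distinct $g_n\in F_n$ and setting $G=\{g_n:n\in\omega\}$ gives $[\{g_i:i\ge k\}]^2\subseteq[F_k]^2$, that is $r(G)\subseteq^\rho r(F_k)$ for every $k$. For $\FS$ I would first choose $F\subseteq E$ very sparse. The structural fact I would lean on is that $\FS(A)\subseteq\FS(F)$ forces the $F$-supports $\{\alpha_F(a):a\in A\}$ to be pairwise disjoint: if $\alpha_F(a)\cap\alpha_F(a')\neq\emptyset$ for distinct $a,a'$ then $a+a'\notin\FS(F)$ by very sparseness, contradicting $a+a'\in\FS(A)\subseteq\FS(F)$. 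Consequently each $F_n$ is realized as a family of pairwise disjoint ``blocks'' $\{\alpha_F(x):x\in F_n\}\subseteq[F]^{<\omega}$, and $\FS(F_{n+1})\subseteq\FS(F_n)$ makes every block at level $n+1$ a disjoint union of blocks at level $n$. Diagonally I would select $g_n\in F_n$ with the supports $\alpha_F(g_n)$ pairwise disjoint, which is possible because the pairwise disjoint supports of the elements of $F_n$ can avoid any fixed finite subset of $F$, and set $G=\{g_n:n\in\omega\}$. For $k\le i_1<\dots<i_m$ each $g_{i_j}$ lies in $\FS(F_k)$, so $\alpha_F(g_{i_j})$ is a disjoint union of level-$k$ blocks; hence $g_{i_1}+\dots+g_{i_m}$ has $F$-support a disjoint union of level-$k$ blocks and is therefore a sum of distinct elements of $F_k$, i.e.\ lies in $\FS(F_k)$. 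This gives $\FS(G\setminus\{g_0,\dots,g_{k-1}\})\subseteq\FS(F_k)$, hence $\FS(G)\subseteq^\rho\FS(F_k)$ for all $k$. For $\Delta$ I would run the same scheme using the difference-analog of very sparseness supplied by $\Diff$-sparse sets. I expect the $\Delta$ case, and pinning down the nestedness of the block systems, to be the main obstacle.

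Finally, for item~(\ref{prop:Plike-properties-for-known-rho:Hindman-Ramsey-Diff}) I would use the partition reformulation of $P^-(\Lambda)$ in Proposition~\ref{prop:weak-P-equivalent-definitions}(\ref{prop:weak-P-equivalent-definitions:charakterization}): it suffices to produce a partition of the ground set into $\I$-small pieces with no $\I^+$ partial transversal, i.e.\ such that every $\I^+$ set meets some piece infinitely. For $\Ramsey$ take the stars $C_k=\{\{k,j\}:j>k\}$; each is triangle-free, hence in $\Ramsey$, while any $B\in\Ramsey^+$ contains an infinite clique whose least vertex $v$ already contributes infinitely many edges to $C_v$. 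For $\Hindman$ and $\Diff$ take the $2$-adic partition $C_k=\{n:\nu_2(n)=k\}$, each of which lies in $\Hindman\cap\Diff$ because summing or differencing two elements of equal $2$-adic valuation strictly raises the valuation. If $B\supseteq\FS(E)$ then, fixing $e_0\in E$, either infinitely many $e_j$ satisfy $\nu_2(e_j)>\nu_2(e_0)$, so the sums $e_0+e_j$ all lie in $C_{\nu_2(e_0)}$, or cofinitely many satisfy $\nu_2(e_j)\le\nu_2(e_0)$, so pigeonhole puts infinitely many $e_j$ in one $C_m$. If $B\supseteq\Delta(E)$ then, fixing $e_0$, either the valuations $\nu_2(e_j-e_0)$ repeat some value infinitely often, or they tend to infinity, which means $e_j\to e_0$ $2$-adically, in which case $\nu_2(e_j-e_1)=\nu_2(e_1-e_0)$ for all large $j$, again filling a single class. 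Either way $B$ meets some $C_k$ infinitely, so these partitions witness that $\Hindman$, $\Ramsey$ and $\Diff$ are not $P^-(\Lambda)$.
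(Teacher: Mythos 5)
Items (1), (2) and (4) of your proposal are correct and essentially match the paper's route: (1) via the $F_\sigma$ complexity and Theorem~\ref{thm:Plike-properties-for-definable-ideals}(\ref{thm:Plike-properties-for-definable-ideals:Fsigma}); (2) via Lemma~\ref{lemma:wlP} with the same choices of $\tau$ and of the sparse/$\Diff$-sparse ground set (your remark about arranging injectivity of $\tau$ off the pairs is a sensible tightening of the hypothesis the paper glosses over); and (4) via the star partition for $\Ramsey$ and the $2$-adic valuation classes $C_k=\{2^k(2m+1):m\in\omega\}$ for $\Hindman$ and $\Diff$ -- these are exactly the paper's sets $A_k$, except that where the paper cites \cite{MR4356195} and \cite{MR4358658} for the two key facts about them, you prove both directly, and your valuation arguments check out. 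In item (3) your $r$ case coincides with the paper's, and your $\FS$ case is a genuine self-contained proof (disjointness of $F$-supports forced by very sparseness, refinement of block systems, diagonal choice of $g_n$ with disjoint supports) where the paper merely cites \cite[Lemma~2.3]{MR4356195}; I checked the block-refinement and the closure of $\FS(\{g_i:i\ge k\})$ under sums into $\FS(F_k)$, and the argument is sound.

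The genuine gap is the $\Delta$ case of item (3), which you explicitly defer (``I would run the same scheme\dots I expect the $\Delta$ case\dots to be the main obstacle''). It is in fact the hardest part of the proposition, and the $\FS$ scheme does not transfer as stated. The paper's proof first replaces $F$ by $E=\{x_i\}$ with $x_{i+1}-x_i>x_i-x_0$, so that the consecutive differences $a_i=x_{i+1}-x_i$ form a very sparse set $A$ and $\Delta(E)=\{\sum_{i\in I}a_i: I\ \text{a finite interval}\}\subseteq\FS(A)$. The crucial structural fact is then that $\Delta(E')\subseteq\Delta(E)$ forces the $A$-supports of the consecutive differences of $E'$ to be a partition of $\omega$ into \emph{consecutive finite intervals} (adjacency is forced because $y_{n+2}-y_n=(y_{n+2}-y_{n+1})+(y_{n+1}-y_n)$ must again have interval support, and very sparseness rules out overlapping supports); a chain $\Delta(F_{k+1})\subseteq\Delta(F_k)\subseteq\Delta(E)$ therefore yields a \emph{nested system of interval partitions} of $\omega$, each coarsening the previous one. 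The diagonal selection must then pick points $x_{n_k}$ whose indices $n_k$ are left endpoints of level-$k$ intervals, using that left endpoints at level $k+1$ remain left endpoints at level $k$; merely choosing elements with pairwise disjoint supports, as in your $\FS$ argument, does not guarantee that the differences of the selected points decompose as sums over consecutive level-$k$ intervals, which is what membership in $\Delta(F_k)$ requires. Until this interval-partition structure and the compatible choice of endpoints are established, the claim that $\Delta$ is weak $P^+$ -- and hence $P^-$, on which a large part of the paper (e.g.\ Theorems~\ref{thm:TWIERDZENIE-for-Fspaces:Pminus-Mrowka:rho} and \ref{thm:characterization-for-rho} applied to $\Delta$) depends -- remains unproved in your write-up.
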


\begin{proof}

(\ref{prop:Plike-properties-for-known-rho:Fsigma-known})
It follows from 
Theorem~\ref{thm:Plike-properties-for-definable-ideals}(\ref{thm:Plike-properties-for-definable-ideals:Fsigma}) and 
the fact that  $\cW$ and $\I_{1/n}$ are $F_{\sigma}$ ideals (see Proposition~\ref{prop:top-complex-of-ideals}(\ref{prop:top-complex-of-ideals:summable-vdW})). The ``hence'' part follows from Proposition~\ref{prop:Plike-basic-properties}.

(\ref{prop:Plike-properties-for-known-rho:FS-r-Delta:Pplus})
Below we show that $\rho$ is not $P^+$   separately for each $\rho$.

\emph{Case of $\rho=FS$.}
We define a function $\tau:[\omega]^{<\omega}\to\omega$ by $\tau(S) =\sum_{i\in S}i$. Then we take an infinite sparse set $P$ and a partition $\{P_n:n\in \omega\}$ of $P$ into infinite sets. 
Lemma~\ref{lemma:wlP} shows that $\FS$ is not $P^+$.

\emph{Case of $\rho=r$.}
We define a function $\tau:[\omega]^{<\omega}\to[\omega]^2$ by $\tau(\{a,b\}) = \{a,b\}$ for distinct $a>b$ and $\tau(S) = \{0,1\}$ otherwise. Then we take a partition $\{P_n:n\in \omega\}$ of $\omega$ into infinite sets. 
Lemma~\ref{lemma:wlP} shows that $r$ is not $P^+$.

\emph{Case of $\rho=\Delta$.}
We define a function $\tau:[\omega]^{<\omega}\to\omega$ by $\tau(\{a,b\}) = a-b$ for distinct $a>b$ and $\tau(S) = 0$ otherwise. Then we take an infinite $\Diff$-sparse set $P$ and a partition $\{P_n:n\in \omega\}$ of $P$ into infinite sets. 
Lemma~\ref{lemma:wlP} shows that $\Delta$ is not $P^+$.

(\ref{prop:Plike-properties-for-known-rho:FS-r-Delta:weakPplus})
The ``hence'' part follows from Proposition~\ref{prop:Plike-basic-properties}(\ref{prop:Plike-basic-properties:weakPplus-implies-Pminus}). Below we show that $\rho$ is weak $P^+$ separately for each $\rho$.

\emph{Case of $\rho=\FS$.} 
It is proved in  \cite[Lemma~2.3]{MR4356195} (see also \cite[Example~2.9(2)]{MR3461181}).

\emph{Case of $\rho=r$.} 
For any  $E\in[\omega]^\omega$
we take $F=E$. Let $F_n\in [\omega]^\omega$ be such that $[F]^2\supseteq [F_n]^2\supseteq [F_{n+1}]^2$ for each $n\in\omega$.
We pick $x_n\in F_n\setminus\{x_i:i<n\}$ for each $n\in\omega$. Then $G=\{x_n:n\in\omega\}\in [\omega]^\omega$ and $[G]^2\subseteq^r[F_n]^2$ for each $n\in\omega$.

\emph{Case of $\rho=\Delta$.} 
Fix any $F\in[\omega]^\omega$. Inductively pick a sequence $(x_i)_{i\in \omega}\subseteq\omega$ such that $x_i\in F$, $x_i<x_{i+1}$ and $x_{i+1}-x_i>x_i-x_0$ for all $i\in \omega$. Let $E=\{x_i:\ i\in\omega\}\in[F]^{\omega}$. 

Define $a_i=x_{i+1}-x_i$ for all $i\in\omega$ and observe that $a_i=x_{i+1}-x_i>x_i-x_0=\sum_{j<i}a_j$. Put $A=\{a_i:\ i\in\omega\}$. By \cite[proof of Lemma 2.2]{MR4356195} 
the set $A$ is very sparse, i.e. $A$ is sparse and 
 if $\alpha_{A}(x)\cap \alpha_{A}(y)\neq\emptyset$ then $x+y\notin \FS(A)$. 
Note that $\Delta(E)=\{\sum_{i\in I}a_i:\ I\text{ is a finite interval in }\omega\}\subseteq \FS(A)$. 

Observe that if $\Delta(\{y_n:\ n\in\omega\})\subseteq\Delta(E)$, where $y_n<y_{n+1}$ for all $n\in \omega$, then there is a partition of $\omega$ into finite intervals $(I_n)_{n\in \omega}$ such that $\max I_n<\min I_{n+1}$ and $y_{n+1}-y_n=\sum_{i\in I_n}a_i$. Indeed, as $y_{n+1}-y_n\in \Delta(\{y_n:\ n\in\omega\})\subseteq\Delta(E)\subseteq\FS(A)$, for each $n\in \omega$ there is a finite interval $I_n$ such that $y_{n+1}-y_n=\sum_{i\in I_n}a_i$ (because $A$ is sparse, we get $I_n=\alpha_{A}(y_{n+1}-y_n)$). We need to show that the intervals $I_n$ are pairwise disjoint and cover $\omega$. Suppose first that $\sup I_n+1<\inf I_{n+1}$ for some $n\in \omega$. Then $y_{n+2}-y_n=(y_{n+2}-y_{n+1})+(y_{n+1}-y_{n})=\sum_{i\in I_n\cup I_{n+1}}a_i$. On the other hand, $y_{n+2}-y_n\in\Delta(\{y_n:\ n\in\omega\})\subseteq\Delta(E)$, so $y_{n+2}-y_n=\sum_{i\in I}a_i$ for some interval $I$. This contradicts uniqueness of $\alpha_{A}(y_{n+2}-y_n)$ (because $A$ is sparse). Suppose now that $I_n\cap I_{n+1}\neq\emptyset$. Then $y_{n+2}-y_n=(y_{n+2}-y_{n+1})+(y_{n+1}-y_{n})\notin\FS(A)$ (because $A$ is very sparse), which contradicts $\Delta(\{y_n:\ n\in\omega\})\subseteq\Delta(E)\subseteq FS(A)$.

Fix any sequence $(F_k)_{k\in \omega}\subseteq[\omega]^\omega$ such that $\Delta(F_{k+1})\subseteq\Delta(F_k)\subseteq\Delta(E)$ for all $k\in \omega$. By the previous paragraph, with each $k\in \omega$ we can associate a partition of $\omega$ into finite intervals $I^k_n$, i.e., $\Delta(F_k)=\{\sum_{i\in I}a_i:\ I=I^k_j\cup I^k_{j+1}\cup \ldots\cup I^k_{j'}\text{ for some }j<j'\}$. 

Observe that actually for each $n,k\in\omega$ we have that $I^{k+1}_n=\bigcup_{i\in I}I^k_i$ for some interval $I$. Indeed, otherwise for some $n,k\in\omega$ we would have $x=\sum_{i\in I^{k+1}_n}a_i\in\Delta(F_{k+1})\subseteq\Delta(F_k)$, so $x=\sum_{i\in I}a_i$ for some $I=I^k_j\cup I^k_{j+1}\cup \ldots\cup I^k_{j'}$, which contradicts that $A$ is sparse.

Inductively pick a sequence $(n_k)_{k\in \omega}\subseteq \omega$ such that for each $k\in \omega$ we have $n_{k+1}>n_k$ (so also $a_{n_{k+1}}>a_{n_k}$) and $n_k=\min I^k_j$ for some $j\in \omega$. Define $E'=\{x_{n_k}:\ k\in\omega\}$. Notice that $x_{n_{k+1}}-x_{n_k}=\sum_{i\in[n_k,n_{k+1})} a_i$. Then for each $k\in \omega$ we have $\Delta(E'\setminus [0,x_{n_k}))=\Delta(\{x_{n_i}:\ i\geq k\})\subseteq\{\sum_{i\in I}a_i:\ I=I^k_j\cup I^k_{j+1}\cup \ldots\cup I^k_{j'}\text{ for some }j<j'\}=\Delta(F_k)$.

(\ref{prop:Plike-properties-for-known-rho:Hindman-Ramsey-Diff})
The ``hence'' part follows from Proposition~\ref{prop:Plike-basic-properties-for-ideals}. Below we show that $\I$ is not $P^-(\Lambda)$ separately for each $\I$.

\emph{Case of $\I=\Hindman$.} 
Let $A_k=\{2^k(2n+1): n\in\omega\}$ for each $k\in\omega$.
In  \cite[item (2) in the proof of Proposition~1.1]{MR4356195}, the authors showed that $A_k\in \Hindman$ for every $k\in\omega$, whereas in 
\cite[item (1) in the proof of Proposition~1.1]{MR4356195} it is shown that 
for every $B\notin\Hindman$  there is $k\in\omega$ such that $B\cap A_k$ is infinite.
Thus, the family $\{A_k:k\in\omega\}$ witnesses the fact that $\Hindman$ is not $P^-(\omega)$.

\emph{Case of $\I=\Ramsey$.} 
Let $A_n = \{\{k,i\}: i>k\geq n\}$ for every $n\in\omega$. 
Then $A_n\notin\Ramsey$, $A_0=[\omega]^2$ and $A_n\setminus A_{n+1} = \{\{n,i\}: i> n\}\in \Ramsey$.
Suppose, for sake of contradiction, that there is $B\notin\Ramsey$ such that $B\subseteq^* A_n$ for every $n\in\omega$.
Let $H=\{h_n:n\in\omega\}$ be an infinite set such that $[H]^2\subseteq B$ and $h_n<h_{n+1}$ for every $n\in \omega$.
Since $[H]^2\subseteq^*A_{h_1}$, there is a finite set $F$ such that 
$[H]^2\setminus F\subseteq A_{h_1}$.
Since $F$ is finite, there is $k>0$ such that $\{h_0,h_n\}\notin F$ for every $n\geq k$.
Then  $\{\{h_0,h_n\}:n\geq k\}\subseteq [H]^2\setminus F$ and $\{\{h_0,h_n\}:n\geq k\}\cap A_{h_1}=\emptyset$, a contradiction.

\emph{Case of $\I=\Diff$.} 
Let $A_k=\{2^k(2n+1): n\in\omega\}$ for each $k\in\omega$.
In  \cite[item (2) in the proof of Theorem 2.1]{MR4358658}, the author showed that $A_k\in \Diff$ for every $k\in\omega$, whereas in 
\cite[item (1) in the proof of Theorem 2.1]{MR4358658} it is shown that 
for every $B\notin\Diff$  there is $k\in\omega$ such that $B\cap A_k$ is infinite.
Thus, the family $\{A_k:k\in\omega\}$ witnesses the fact that $\Diff$ is not $P^-(\omega)$.
\end{proof}

The following easy observation will be useful in our considerations. 

\begin{proposition}
\label{prop:Pminus-for-rho-equivalent-condition}
If   $\rho:\cF\to[\Lambda]^\omega$ is  partition regular  with $\cF\subseteq[\Omega]^\omega$, then the following conditions are equivalent.
\begin{enumerate}
    \item 
$\rho$ is $P^-$ ($P^-(\Lambda)$, resp.).

\item For every countable family $\cB\subseteq\I_\rho$ with  $\bigcup\cB\notin \I_\rho$ ($\bigcup\cB = \Lambda$, resp.)  there exists $F\in \cF$ such that 
$\rho(F) \subseteq \bigcup\cB$ and
for every finite subfamily $\cC\subseteq \cB$ there is a finite $K\subseteq\Omega$ such that 
$\rho(F\setminus K) \cap \bigcup\cC =\emptyset$. 
\end{enumerate}
\end{proposition}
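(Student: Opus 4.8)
The plan is to transfer, via a telescoping argument, the equivalence between ``$\subseteq$-decreasing sequences with small consecutive differences'' and ``countable families of small sets'' that underlies the ideal-level Proposition~\ref{prop:weak-P-equivalent-definitions}(\ref{prop:weak-P-equivalent-definitions:charakterization}). The key observation is the following reformulation: if $F\in\cF$ satisfies $\rho(F)\subseteq A_0$ for a $\subseteq$-decreasing sequence $(A_n)$ with consecutive differences $B_n:=A_n\setminus A_{n+1}$, then, since $A_0\setminus A_n=\bigcup_{k<n}B_k$ and $\rho(F\setminus K)\subseteq\rho(F)\subseteq A_0$ by monotonicity, the statement ``$\rho(F)\subseteq^{\rho}A_n$ for every $n$'' is equivalent to ``for every finite subfamily $\cC\subseteq\{B_k:k\in\omega\}$ there is a finite $K\subseteq\Omega$ with $\rho(F\setminus K)\cap\bigcup\cC=\emptyset$''. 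Here the side condition $\rho(F)\subseteq A_0$ is precisely what will match the first clause $\rho(F)\subseteq\bigcup\cB$ of~(2). Throughout I use freely that $\cF$ is closed under removing finite sets and that $\rho$ is monotone (condition~\nameref{def:partition-regular:monotone}), so that trimming $F$ to $F\setminus K$ keeps us inside $\cF$ and only shrinks $\rho(F)$.

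For the implication $(1)\Rightarrow(2)$, given a countable $\cB=\{B_k:k\in\omega\}\subseteq\I_{\rho}$ with $C:=\bigcup\cB\notin\I_{\rho}$ (or $C=\Lambda$), I would set $A_n=C\setminus\bigcup_{k<n}B_k$. This sequence is $\subseteq$-decreasing, satisfies $A_n\setminus A_{n+1}\subseteq B_n\in\I_{\rho}$, and each $A_n\in\I_{\rho}^+$ since otherwise $C=A_n\cup\bigcup_{k<n}B_k$ would be a finite union of members of $\I_{\rho}$. Thus $P^-$ (resp.\ $P^-(\Lambda)$, using $A_0=C=\Lambda$) yields $F\in\cF$ with $\rho(F)\subseteq^{\rho}A_n$ for all $n$. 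Taking $n=0$ gives a finite $K_0$ with $\rho(F\setminus K_0)\subseteq C$; I replace $F$ by $F':=F\setminus K_0\in\cF$, so that $\rho(F')\subseteq\bigcup\cB$, as required in the first clause of~(2). For the second clause, a finite $\cC\subseteq\cB$ lies inside some initial segment $\{B_0,\dots,B_{n-1}\}$, whence $\bigcup\cC\subseteq C\setminus A_n$; picking $K_n$ with $\rho(F\setminus K_n)\subseteq A_n$ and invoking monotonicity on $F'\setminus K_n=F\setminus(K_0\cup K_n)\subseteq F\setminus K_n$ gives $\rho(F'\setminus K_n)\subseteq A_n$, hence $\rho(F'\setminus K_n)\cap\bigcup\cC=\emptyset$.

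For the converse $(2)\Rightarrow(1)$, given a $\subseteq$-decreasing $A_n\in\I_{\rho}^+$ with $A_n\setminus A_{n+1}\in\I_{\rho}$ (and $A_0=\Lambda$ in the $P^-(\Lambda)$ case), I would run the telescoping backwards by putting $B_k=A_k\setminus A_{k+1}\in\I_{\rho}$. Since $\bigcup_k B_k=A_0\setminus\bigcap_k A_k$ need not cover $A_0$, I enlarge the family to $\cB'=\{B_k:k\in\omega\}\cup\{\{x\}:x\in\bigcap_k A_k\}$; this is still countable (as $\Lambda$ is countable), consists of members of $\I_{\rho}$, and has $\bigcup\cB'=A_0$, which is not in $\I_{\rho}$ (resp.\ equals $\Lambda$). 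Applying~(2) to $\cB'$ produces $F\in\cF$ with $\rho(F)\subseteq A_0$ and the finite-avoidance property. For each $n$, the finite subfamily $\{B_0,\dots,B_{n-1}\}$ has union $A_0\setminus A_n$, so there is a finite $K$ with $\rho(F\setminus K)\cap(A_0\setminus A_n)=\emptyset$; combining this with $\rho(F\setminus K)\subseteq\rho(F)\subseteq A_0$ (monotonicity) yields $\rho(F\setminus K)\subseteq A_n$, i.e.\ $\rho(F)\subseteq^{\rho}A_n$. As this holds for every $n$, $\rho$ is $P^-$ (resp.\ $P^-(\Lambda)$).

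The only genuinely delicate point, and the one I would double-check most carefully, is the mismatch caused by $\bigcap_k A_k$ in the $(2)\Rightarrow(1)$ direction: the telescoped family $\{A_k\setminus A_{k+1}\}$ covers only $A_0\setminus\bigcap_k A_k$, so without padding by singletons its union could fall into $\I_{\rho}$ and the hypothesis of~(2) would fail. Everything else is routine bookkeeping of the finite shift sets $K$, handled uniformly by the closure of $\cF$ under finite modifications and by monotonicity of $\rho$; one should also note that for a \emph{finite} family $\cB$ the hypothesis $\bigcup\cB\notin\I_{\rho}$ is never met (a finite union of $\I_{\rho}$-sets stays in $\I_{\rho}$), so $(2)$ is vacuous there and only countably infinite families need to be enumerated.
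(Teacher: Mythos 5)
Your proof is correct and follows essentially the same telescoping argument as the paper's ($A_n=\bigcup\cB\setminus\bigcup_{k<n}B_k$ in one direction, $B_n=A_n\setminus A_{n+1}$ in the other). You are in fact slightly more careful than the paper on two points: trimming $F$ by $K_0$ so that $\rho(F)\subseteq\bigcup\cB$ holds literally, and padding the telescoped family with the singletons of $\bigcap_n A_n$ --- without which $\bigcup_n(A_n\setminus A_{n+1})$ could lie in $\I_\rho$ and condition (2) could not be invoked; the paper's own proof silently identifies $\bigcup\{B_i:i\geq n\}$ with $A_n$, which is only valid when $\bigcap_k A_k=\emptyset$.
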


\begin{proof}
We will assume that $\rho$ is  $P^-$, as the proof in the case of $P^-(\Lambda)$ is similar.

$(1)\implies(2)$.
Let $\cB=\{B_n:n\in\omega\}$, where $\bigcup\cB\notin \I_\rho$ and $B_n\in\I_{\rho}$ for every $n\in \omega$.
For each $n\in\omega$, we define $A_n = \bigcup\cB\setminus \bigcup\{B_i:i<n\}$. 
Since $\rho$ is   $P^-$, there exists $F\in \cF$ such that $\rho(F)\subseteq^\rho A_n$ for each $n\in\omega$.
Let $\cC\subseteq\cB$ be a finite subfamily.
Let  $n\in\omega$ be such that $\cC\subseteq \{B_i:i<n\}$. 
Then $\bigcup\cC \subseteq \bigcup\{B_i:i<n\}$.
Let $K\subseteq\Omega$ be a finite set such that $\rho(F\setminus K)\subseteq A_n$.
Then $\rho(F\setminus K)\cap \bigcup\{B_i:i<n\}=\emptyset$, so 
 $\rho(F\setminus K)\cap \bigcup\cC=\emptyset$.

$(2)\implies(1)$.
Let $A_n\in \I_{\rho}^+$ be such that $A_n\supseteq A_{n+1}$ and  $A_n\setminus A_{n+1}\in \I_{\rho}$ for each $n\in\omega$.
For each $n\in\omega$ we define $B_n=A_n\setminus A_{n+1}$.
Let $\cB=\{B_n:n\in\omega\}$. Then there exists $F\in \cF$ such that 
$\rho(F) \subseteq \bigcup\cB$ and
for every finite subfamily $\cC\subseteq \cB$ there is a finite $K\subseteq\Omega$ such that 
$\rho(F\setminus K) \cap \bigcup\cC =\emptyset$. 
Thus for any $n\in\omega$, we find a finite set $K\subseteq\Omega$ such that 
$\rho(F\setminus K) \cap \bigcup\{B_i:i<n\} =\emptyset$. 
Hence $\rho(F\setminus K)\subseteq \bigcup\{B_i:i\geq n\}=A_n$,
so $\rho(F)\subseteq^\rho A_n$.
\end{proof}


\section{Kat\v{e}tov order}


\subsection{Kat\v{e}tov order between ideals}

We say that an \emph{ideal $\I_1$ on $\Lambda_1$
is above an ideal $\I_2$ on $\Lambda_2$ in the Kat\v{e}tov order} (in short: $\I_2\leq_K \I_1$) \cite{MR250257} if 
 there exists  a function $\phi:\Lambda_1\to \Lambda_2$ 
 such that $\phi[A]\notin \I_2$ for each $A\notin \I_1$.
If $\Lambda_1=\Lambda_2$ and  $\I_2\subseteq \I_1$, then obviously the identity function on $\Lambda_1$ witnesses that $\I_2\leq_K\I_1$.

There are known relationships between Kat\v{e}tov order, P-like properties and topological complexity.

\begin{proposition}[{\cite[Theorem~3.8]{MR3692233}}]
\label{prop:Pminus-versus-FinSQRD-for-ideal}
\label{prop:Pminus-versus-FinSQRD-for-rho}
Let  $\I$ be an ideal on $\Lambda$.

\begin{enumerate}

\item $\I$ is $P^-(\Lambda)$ $\iff$ $\Fin^2 \not\leq_K \I$.\label{prop:Pminus-versus-FinSQRD-for-ideal:Pminus-Lambda}

\item $\I$ is  $P^-$ $\iff$ $\Fin^2\not\leq_K \I\restriction A$ for every $A\in \I^+$.\label{prop:Pminus-versus-FinSQRD-for-ideal:Pminus}

\end{enumerate}

\end{proposition}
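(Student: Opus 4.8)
The plan is to prove item (\ref{prop:Pminus-versus-FinSQRD-for-ideal:Pminus-Lambda}) directly via a column construction and then obtain item (\ref{prop:Pminus-versus-FinSQRD-for-ideal:Pminus}) by relativizing to the restrictions $\I\restriction A$. Throughout I would use the characterization of $P^-(\Lambda)$ as being a weak P-ideal (Proposition~\ref{prop:weak-P-equivalent-definitions}): $\I$ is $P^-(\Lambda)$ iff for every partition $\{A_n:n\in\omega\}$ of $\Lambda$ into sets from $\I$ there is $B\in\I^+$ with $B\cap A_n$ finite for all $n$. I would also record the standard reformulation of the Kat\v{e}tov order: $\Fin^2\leq_K\I$ holds iff there is $\phi\colon\Lambda\to\omega^2$ with $\phi^{-1}[C]\in\I$ for every $C\in\Fin^2$, which (passing to complements) is equivalent to $\phi[A]\notin\Fin^2$ for every $A\in\I^+$. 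Recall that $C\subseteq\omega^2$ lies in $\Fin^2$ iff only finitely many of its columns $C_n=\{k:(n,k)\in C\}$ are infinite, so $\phi[A]\notin\Fin^2$ means precisely that $\phi[A]$ has infinitely many infinite columns.

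For the implication $\Fin^2\leq_K\I\Rightarrow\I$ is not $P^-(\Lambda)$, I would fix a witnessing $\phi$ and consider the partition of $\omega^2$ into its columns $V_n=\{n\}\times\omega$. Each $V_n\in\Fin^2$, so $A_n:=\phi^{-1}[V_n]\in\I$, and $\{A_n:n\in\omega\}$ partitions $\Lambda$. If some $B\in\I^+$ satisfied $|B\cap A_n|<\omega$ for every $n$, then the $n$-th column of $\phi[B]$ would inject into $B\cap A_n$ and hence be finite, giving $\phi[B]\in\Fin^2$ and contradicting $B\in\I^+$. Thus no such almost-transversal exists, so $\I$ is not $P^-(\Lambda)$.

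For the converse I would assume $\I$ is not $P^-(\Lambda)$ and fix a partition $\{A_n:n\in\omega\}$ of $\Lambda$ into sets from $\I$ admitting no $B\in\I^+$ with $B\cap A_n$ finite for all $n$. The key observation, and the step I expect to be the main obstacle, is that such a partition automatically enjoys the stronger property that every $B\in\I^+$ meets infinitely many $A_n$ in an infinite set. Indeed, if $B\in\I^+$ met only $A_{n_1},\dots,A_{n_j}$ infinitely, then $B'=B\setminus(A_{n_1}\cup\dots\cup A_{n_j})$ would meet every part finitely, hence $B'\in\I$ (otherwise $B'$ would be a forbidden almost-transversal); but then $B=B'\cup\bigcup_{i\le j}(B\cap A_{n_i})$ would be a finite union of members of $\I$, contradicting $B\in\I^+$. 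Granting this, I would enumerate each part as $A_n=\{a^n_k:k<|A_n|\}$ and define $\phi(a^n_k)=(n,k)$; this is a well-defined $\phi\colon\Lambda\to\omega^2$ because the $A_n$ partition $\Lambda$. Since the $n$-th column of $\phi[B]$ is in bijection with $B\cap A_n$, for every $B\in\I^+$ the set $\phi[B]$ has infinitely many infinite columns, i.e.\ $\phi[B]\notin\Fin^2$. Hence $\phi$ witnesses $\Fin^2\leq_K\I$, which finishes item (\ref{prop:Pminus-versus-FinSQRD-for-ideal:Pminus-Lambda}).

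Finally, item (\ref{prop:Pminus-versus-FinSQRD-for-ideal:Pminus}) would follow by relativization. By Proposition~\ref{prop:weak-P-equivalent-definitions}, $\I$ is $P^-$ iff the ideal $\I\restriction A$ is a weak P-ideal (equivalently, is $P^-(A)$) for every $A\in\I^+$. Applying item (\ref{prop:Pminus-versus-FinSQRD-for-ideal:Pminus-Lambda}) to the ideal $\I\restriction A$ on the underlying set $A$ gives that $\I\restriction A$ is $P^-(A)$ iff $\Fin^2\not\leq_K\I\restriction A$. Combining the two equivalences yields that $\I$ is $P^-$ iff $\Fin^2\not\leq_K\I\restriction A$ for every $A\in\I^+$, as required.
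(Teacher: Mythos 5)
Your proof is correct. The paper does not prove this proposition itself --- it cites \cite[Theorem~3.8]{MR3692233} --- and your argument (the column construction $\phi^{-1}[\{n\}\times\omega]=A_n$ in one direction, enumerating the parts of a bad partition as columns in the other, with the auxiliary observation that a bad partition must meet every $\I$-positive set infinitely often in infinitely many parts, followed by relativization to $\I\restriction A$ for item~(2)) is precisely the standard proof of that cited result.
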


\begin{proposition}\ 
\label{prop:FinSQRD-below-FS-r-Delta}
\label{prop:FinSQRD-not-below-Fsigma}

\begin{enumerate}
\item $\Fin^2\leq_K\I$ for $\I\in\{\Diff,\Hindman,\Ramsey\}$.\label{prop:FinSQRD-below-FS-r-Delta:ideal}\label{prop:FinSQRD-below-FS-r-Delta:Delta-ideal}\label{prop:FinSQRD-below-FS-r-Delta:FS-ideal}\label{prop:FinSQRD-below-FS-r-Delta:r-ideal}

\item If $\I$ is an $G_{\delta\sigma\delta}$ ideal, then 
$\Fin^2 \not\leq_K \I\restriction A$ for every $A\in \I^+$ 
In particular, $\Fin^2 \not\leq_K  \vdW$ and $\Fin^2 \not\leq_K  \I_{1/n}$.\label{prop:FinSQRD-not-below-Fsigma:item}\label{prop:FinSQRD-not-below-Fsigma:vdW-summable}

\end{enumerate}   
\end{proposition}

\begin{proof}
(\ref{prop:FinSQRD-below-FS-r-Delta:ideal})
Using Proposition~\ref{prop:Pminus-versus-FinSQRD-for-ideal}(\ref{prop:Pminus-versus-FinSQRD-for-ideal:Pminus-Lambda}),  we need  to show that $\Diff$, $\Hindman$ and  $\Ramsey$ are not $P^-(\Lambda)$ ideals, but this follows from Proposition~\ref{prop:Plike-properties-for-known-rho}(\ref{prop:Plike-properties-for-known-rho:Hindman-Ramsey-Diff}).
(For $\I =\Ramsey$, this item  was earlier proved by Meza-Alc\'{a}ntara~\cite[Lemma 1.6.25]{alcantara-phd-thesis}.)

(\ref{prop:FinSQRD-not-below-Fsigma:item})
It follows from Theorem~\ref{thm:Plike-properties-for-definable-ideals}(\ref{thm:Plike-properties-for-definable-ideals:Fsigmadelta}) and Propositions~\ref{prop:Pminus-versus-FinSQRD-for-ideal}(\ref{prop:Pminus-versus-FinSQRD-for-ideal:Pminus})
and 
\ref{prop:Plike-properties-for-known-rho}(\ref{prop:Plike-properties-for-known-rho:Fsigma-known}).
\end{proof}


\subsection{Kat\v{e}tov order between partition regular operations}

The following notion will be crucial for showing when a class of sequentially compact spaces defined by $\rho_1$ is contained in a class of sequentially compact spaces defined by $\rho_2$.

\begin{definition}
\label{Katetov-order-for-rho}
Let $\rho_i:\cF_i\to[\Lambda_i]^\omega$ 
be partition regular  (with $\cF_i\subseteq [\Omega_i]^\omega$) for each $i=1,2$.
We say that \emph{$\rho_1$ is above  $\rho_2$ in the Kat\v{e}tov order}
(in short: $\rho_2\leq_K \rho_1$)
if  there is a function $\phi:\Lambda_1\to\Lambda_2$
such that 
$$\forall F_1\in \cF_1\, \exists F_2\in \cF_2\, \forall K_1\in [\Omega_1]^{<\omega}\, \exists K_2\in [\Omega_2]^{<\omega}\,(\rho_2(F_2\setminus K_2)\subseteq\phi[\rho_1(F_1\setminus K_1)])$$
or equivalently: 
$$\forall F_1\in \cF_1\, \exists F_2\in \cF_2\, \forall K_1\in [\Omega_1]^{<\omega}\, (\rho_2(F_2)\subseteq^{\rho_2}\phi[\rho_1(F_1\setminus K_1)]).$$
\end{definition}

The following  proposition reveals some basic properties of this new order on partition regular functions.

\begin{proposition}\ 
\label{prop:basic-prop-of-new-order}
\begin{enumerate}
    \item 
The relation $\leq_K$ is a preorder (a.k.a. quasi order) i.e.~it is reflexive and transitive.\label{prop:basic-prop-of-new-order:preorder}
    \item 
The preorder $\leq_K$ is upward and downward directed.\label{prop:basic-prop-of-new-order:directed}

\item Let $\rho:\cF\to[\Lambda]^\omega$
(with $\cF\subseteq [\Omega]^\omega$) be  partition regular.
\begin{enumerate}
\item 
 $\rho\leq_K \rho\restriction \rho(F)$ for every $F\in \cF$.\label{prop:basic-prop-of-new-order:restrictions}

\item 
$\rho_{\Fin(\Lambda)}\leq_K\rho$.\label{prop:basic-prop-of-new-order:minimal-element}
\end{enumerate}

\end{enumerate}
\end{proposition}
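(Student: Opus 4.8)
The plan is to treat the four assertions separately, disposing of the three structural ones by exhibiting explicit witnessing functions and isolating upward directedness as the genuinely constructive part.

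For reflexivity in (1) I would take $\phi=\mathrm{id}_\Lambda$ and, for a given $F_1$, set $F_2=F_1$, $K_2=K_1$. For transitivity, if $\phi_{12}\colon\Lambda_1\to\Lambda_2$ witnesses $\rho_2\leq_K\rho_1$ and $\phi_{23}\colon\Lambda_2\to\Lambda_3$ witnesses $\rho_3\leq_K\rho_2$, then $\phi_{23}\circ\phi_{12}$ should witness $\rho_3\leq_K\rho_1$: given $F_1$, first extract $F_2$ and then $F_3$ from the two hypotheses; given $K_1$, obtain $K_2$ from $\rho_2\leq_K\rho_1$ and then $K_3$ from $\rho_3\leq_K\rho_2$ applied to $K_2$, and conclude by monotonicity of images that $\rho_3(F_3\setminus K_3)\subseteq\phi_{23}[\rho_2(F_2\setminus K_2)]\subseteq(\phi_{23}\circ\phi_{12})[\rho_1(F_1\setminus K_1)]$. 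This is routine.

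For (3)(a), since $\rho\restriction\rho(F)$ has codomain indexed by $\rho(F)\subseteq\Lambda$, I would let $\phi$ be the inclusion $\rho(F)\hookrightarrow\Lambda$ and, for $E\in\cF\restriction\rho(F)$, reuse the same set with $K_2=K_1$; the value $(\rho\restriction\rho(F))(E\setminus K_1)$ equals $\rho(E\setminus K_1)$ and is fixed by the inclusion. For (3)(b) take $\phi=\mathrm{id}_\Lambda$; the task is to produce, from $F_1\in\cF$, an infinite $F_2\subseteq\Lambda$ whose cofinite tails fall into every $\rho(F_1\setminus K_1)$. Enumerating the countable set $\Omega$ and letting $K^{(m)}$ be its first $m$ elements, I would choose distinct $b_m\in\rho(F_1\setminus K^{(m)})$ (possible since each such set lies in $[\Lambda]^\omega$) and set $F_2=\{b_m:m\in\omega\}$; for an arbitrary finite $K_1$, picking $N$ with $K_1\subseteq K^{(N)}$ yields $F_2\setminus\{b_0,\dots,b_{N-1}\}\subseteq\rho(F_1\setminus K^{(N)})\subseteq\rho(F_1\setminus K_1)$ by monotonicity.

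Downward directedness in (2) then follows for free: by (3)(b) every $\rho_{\Fin(\Lambda)}$ is $\leq_K$-below every $\rho$ whose codomain is indexed by $\Lambda$, and any two such functions are $\leq_K$-equivalent through a bijection of the underlying countable sets, so a single copy of $\rho_{\Fin}$ is a $\leq_K$-minimum and hence a common lower bound of any $\rho_1,\rho_2$. Upward directedness is the main obstacle, since the naive product $\rho_1(F_1)\times\rho_2(F_2)$ on $\Lambda_1\times\Lambda_2$ fails condition~(R) of Definition~\ref{def:partition-regular} (a colouring separating ``rows from columns'' leaves no monochromatic rectangle). Instead I would pass to the canonical partition regular function of the Fubini product of the associated ideals: writing $A_{\lambda_1}=\{\lambda_2:(\lambda_1,\lambda_2)\in A\}$, let $\J=\I_{\rho_1}\otimes\I_{\rho_2}$ on $\Lambda_1\times\Lambda_2$, where $A\in\J$ iff $\{\lambda_1:A_{\lambda_1}\notin\I_{\rho_2}\}\in\I_{\rho_1}$, and take $\rho=\rho_{\J}$, which is partition regular by Proposition~\ref{prop:rho-versus-ideal}(\ref{prop:rho-versus-ideal:ideal-gives-rho}). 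I expect $\pi_1$ to witness $\rho_1\leq_K\rho_{\J}$ and $\pi_2$ to witness $\rho_2\leq_K\rho_{\J}$: for $A\in\J^+$ the base $B_A=\{\lambda_1:A_{\lambda_1}\notin\I_{\rho_2}\}$ is $\I_{\rho_1}$-positive, giving $F_1\in\cF_1$ with $\rho_1(F_1)\subseteq B_A\subseteq\pi_1[A]$, while any nonempty fiber $A_{\lambda_1^\ast}$ is $\I_{\rho_2}$-positive, giving $F_2\in\cF_2$ with $\rho_2(F_2)\subseteq A_{\lambda_1^\ast}\subseteq\pi_2[A]$. The delicate point is the innermost alternation $\forall K\,\exists K'$ of Definition~\ref{Katetov-order-for-rho}: deleting a finite $K\subseteq\Lambda_1\times\Lambda_2$ from $A$ removes only $\pi_1[K]$ from $B_A$ and a finite slice from the distinguished fiber, and I would absorb these by replacing $F_1$ (resp.\ $F_2$) with the finite-support subset supplied by Proposition~\ref{prop:finite-support}, whose $\rho$-value sheds any prescribed finite set once a suitable finite part of the index is removed. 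Checking the projection inclusions $B_A\setminus\pi_1[K]\subseteq\pi_1[A\setminus K]$ (a row survives projection whenever its index is untouched) and its fiberwise analogue is the crux that makes $\pi_1,\pi_2$ genuine witnesses.
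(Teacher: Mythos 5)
Your arguments for reflexivity, transitivity and both halves of item (3) coincide with the paper's own proofs, down to the choice of witnessing functions ($\phi_{23}\circ\phi_{12}$ for transitivity, the inclusion $\rho(F)\hookrightarrow\Lambda$ for (3)(a), and the diagonal selection $b_m\in\rho(F_1\setminus K^{(m)})$ for (3)(b)). The genuine divergence is in item (2). There the paper exhibits a rectangle product $\pi(F_0\times F_1)=\rho_0(F_0)\times\rho_1(F_1)$ as an upper bound and a disjoint sum $\sigma(F_0\sqcup F_1)=\rho_0(F_0)\sqcup\rho_1(F_1)$ as a lower bound, with coordinate projections and injections as witnesses. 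Your objection to the rectangle product is well founded: already for $\rho_0=\rho_1=\rho_{\Fin}$ the partition of $\omega\times\omega$ into $\{(m,n):m\le n\}$ and $\{(m,n):m>n\}$ admits no infinite monochromatic rectangle, so condition \nameref{def:partition-regular:partition-regular} fails for $\pi$; a symmetric difficulty afflicts $\sigma$, whose values always meet both summands and hence can never be contained in one colour class of the partition into the two copies. Your replacements are sound. For the upper bound, $\rho_{\J}$ with $\J=\I_{\rho_1}\otimes\I_{\rho_2}$ is partition regular by Proposition~\ref{prop:rho-versus-ideal}, the projection $\pi_1$ works because $B_{A\setminus K}=B_A$ for finite $K$ and $B_{A\setminus K}\subseteq\pi_1[A\setminus K]$ (so one may even take $K_1=\emptyset$), and $\pi_2$ works via $(A\setminus K)_{\lambda_1^*}=A_{\lambda_1^*}\setminus(\text{a finite set})$ together with Proposition~\ref{prop:finite-support}. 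For the lower bound, deducing a common minorant from item (3)(b) plus the Kat\v{e}tov equivalence of all copies of $\rho_{\Fin}$ under bijections (and transitivity from item (1)) is correct and bypasses the disjoint-sum construction entirely. What your route buys is an upper bound that is verifiably partition regular; what it gives up is only the explicit product structure, which is irrelevant since the statement asserts mere existence of bounds.

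One small imprecision: the phrase ``any nonempty fiber $A_{\lambda_1^*}$ is $\I_{\rho_2}$-positive'' is false as written (a fiber can be finite and nonempty); you of course intend a fiber over a point of $B_A$, which exists because $B_A\notin\I_{\rho_1}$ and in particular $B_A\neq\emptyset$. With that reading the argument is complete.
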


\begin{proof}
(\ref{prop:basic-prop-of-new-order:preorder})
Reflexivity of $\leq_K$ is obvious. To show transitivity, fix $\cF_i\subseteq[\Omega_i]^\omega$ and $\rho_i:\cF_i\to[\Lambda_i]^\omega$, $i=1,2,3$, and suppose that  $\rho_1\leq_K\rho_2$ is witnessed by $f$ and $\rho_2\leq_K\rho_3$ is witnessed by $g$. We claim that $\rho_1\leq_K\rho_3$ is witnessed by $h:\Lambda_3\to\Lambda_1$ given by $h(x)=f(g(x))$ for all $x\in \Lambda_3$.
Let $F_3\in\cF_3$. Then we can find $F_2\in\cF_2$ such that
for every $K\in[\Omega_3]^{<\omega}$ there exists $L_K\in[\Omega_2]^{<\omega}$ such that  $\rho_2(F_2\setminus L_K)\subseteq g\left[\rho_3(F_3\setminus K)\right].$
Then for $F_2$ we can find $F_1\in\cF_1$ such that
for every $L\in[\Omega_2]^{<\omega}$ there exists $M_L\in[\Omega_1]^{<\omega}$ such that $\rho_1(F_1\setminus M_L)\subseteq f\left[\rho_2(F_2\setminus L)\right].$
Now for a given $K\in[\Omega_3]^{<\omega}$ we have
$\rho_1(F_1\setminus M_{L_K})\subseteq f\left[\rho_2(F_2\setminus L_K)\right]\subseteq f\left[g\left[\rho_3(F_3\setminus K)\right]\right]=h\left[\rho_3(F_3\setminus K)\right],$
so the proof is finished.

(\ref{prop:basic-prop-of-new-order:directed})
Let $\rho_i:\cF_i\to[\Lambda_i]^\omega$ with $\cF_i\subseteq[\Omega_i]^\omega$ be partition regular for $i=0,1$. 
We define the following partition regular functions 
$\pi : \{F_0\times F_1: F_0\in\cF_0, F_1\in\cF_1\}\to [\Lambda_0\times \Lambda_1]^{\omega}$
by
$\pi (F_0\times F_1)=\rho_0(F_0)\times \rho_1(F_1)$
and 
$\sigma: \cF_0\oplus\cF_1 \to [\Lambda_0\oplus\Lambda_1]^{\omega}$ by 
$\sigma ((F_0\times\{0\}) \cup (F_1\times\{1\}))= (\rho_0(F_0)\times\{0\}) \cup (\rho_1(F_1)\times \{1\}).$

Then 
$\sigma \leq_K \rho_i$ ($i=0,1$) is witnessed by a function $\phi_i:\Lambda_i\to \Lambda_0\oplus\Lambda_1$ given by $\phi_i(x) = (x,i)$,
whereas 
$\rho_i\leq_K \pi$ ($i=0,1$) is witness by 
a function $\psi_i:\Lambda_0\times \Lambda_1\to \Lambda_i$ given by $\psi_i(x_0,x_1) = x_i$. 

(\ref{prop:basic-prop-of-new-order:restrictions})
Let $F\in \cF$. We claim that $\phi: \rho(F)\to\Lambda$ given by $\phi(\lambda)=\lambda$ is a witness for $\rho\leq_K \rho\restriction \rho(F)$. Let $F_1\in \cF\restriction \rho(F)$. Then $F_2=F_1$ is such that for every finite set $K_1\subseteq \Omega$ we take $K_2=K_1$ and see  that $\rho(F_2\setminus K_2)\subseteq \phi(\rho(F_1\setminus K_1))$. 

(\ref{prop:basic-prop-of-new-order:minimal-element})
We claim that  $\phi:\Lambda\to\Lambda$ given by $\phi(\lambda)=\lambda$ is a witness for $\rho_{\Fin(\Lambda)}\leq_K \rho$.
Let $F\in \cF$.
Let $\Omega =\{o_n:n\in\omega\}$.
Since $\rho(F\setminus\{o_i:i<n\})$ is infinite for every $n\in\omega$, we can pick a one-to-one sequence $(a_n:n\in\omega)$ such that 
$a_n\in \rho(F\setminus\{o_i:i<n\})$ for every $n\in\omega$.
Then $A=\{a_n:n\in\omega\}\in \Fin(\Lambda)^+$ is an infinite set.
For a finite set  $K\subseteq\Omega$ 
there is $n\in\omega$ such that $K\subseteq\{o_i:i<n\}$.
Then  $L=\{a_i:i<n\}$ is finite subset of $\Lambda$ and
$A\setminus L \subseteq \rho(F\setminus\{o_i:i<n\})\subseteq \rho(F\setminus K)$.
\end{proof}

Now we compare the relation $\leq_K$ between partition regular operations with the relation $\leq_K$ between ideals.

\begin{proposition}\ 
\label{prop:Katetov-for-rho-functions}
\label{prop:Katetov-for-rho-function-and-ideal-rho}
\label{prop:Katetov-for-ideal-rho}
Let $\rho_i:\cF_i\to[\Lambda_i]^\omega$ for each $i=1,2$ and $\rho:\cF\to[\Lambda]^\omega$ be partition regular. Let $\I$ be  ideal.

\begin{enumerate}
    \item $\rho_2\leq_K\rho_1 \implies \I_{\rho_2}\leq_K\I_{\rho_1}$ with the same witnessing function.\label{prop:Katetov-for-rho-functions:implication}

\item 
\begin{enumerate}
\item If $\rho_2$ is $P^+$ (in particular, if $\rho_2=\rho_\I$ and $\I$ is $P^+$), then $\rho_2\leq_K\rho_1 \iff  \I_{\rho_2}\leq_K\I_{\rho_1}$;\label{prop:Katetov-for-rho-functions:equivalence}\label{prop:Katetov-for-ideal-rho:idealP+}

\item $\rho\leq_K\rho_{\I} \iff \I_{\rho}\leq_K\I$;
\label{prop:Katetov-for-rho-function-and-ideal-rho:sparse}\label{prop:Katetov-for-rho-function-and-ideal-rho:Pminus}
\label{prop:Katetov-for-ideal-rho:item}
\end{enumerate}

\end{enumerate}
\end{proposition}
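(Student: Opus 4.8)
The plan is to prove the three statements in turn, using part (1) as the common engine for the forward implications in (2a) and (2b). For part (1), I would take a witness $\phi:\Lambda_1\to\Lambda_2$ for $\rho_2\leq_K\rho_1$ and verify that the very same $\phi$ witnesses $\I_{\rho_2}\leq_K\I_{\rho_1}$. Given $A\in\I_{\rho_1}^+$, pick $F_1\in\cF_1$ with $\rho_1(F_1)\subseteq A$; the defining property of $\rho_2\leq_K\rho_1$ produces $F_2\in\cF_2$, and instantiating it with $K_1=\emptyset$ yields a finite $K_2$ with $\rho_2(F_2\setminus K_2)\subseteq\phi[\rho_1(F_1)]\subseteq\phi[A]$. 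Since $F_2\setminus K_2\in\cF_2$, the infinite set $\rho_2(F_2\setminus K_2)\in[\Lambda_2]^\omega$ sits inside $\phi[A]$, whence $\phi[A]\in\I_{\rho_2}^+$, as required.

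For part (2a), the forward implication is exactly part (1), so the substance is the converse under the hypothesis that $\rho_2$ is $P^+$. Let $\phi$ witness $\I_{\rho_2}\leq_K\I_{\rho_1}$ and fix $F_1\in\cF_1$. Enumerating $\Omega_1=\{o_n:n\in\omega\}$ and setting $K_1^n=\{o_i:i<n\}$, I would form the $\subseteq$-decreasing sequence $A_n=\phi[\rho_1(F_1\setminus K_1^n)]$; each $\rho_1(F_1\setminus K_1^n)\in\I_{\rho_1}^+$ (as $F_1\setminus K_1^n\in\cF_1$), so each $A_n\in\I_{\rho_2}^+$, and monotonicity \nameref{def:partition-regular:monotone} gives the decreasing property. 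Applying that $\rho_2$ is $P^+$ yields $F_2\in\cF_2$ with $\rho_2(F_2)\subseteq^{\rho_2}A_n$ for every $n$. For an arbitrary finite $K_1\subseteq\Omega_1$ choose $n$ with $K_1\subseteq K_1^n$; then $A_n\subseteq\phi[\rho_1(F_1\setminus K_1)]$, and the finite set $K_2$ furnished by $\rho_2(F_2)\subseteq^{\rho_2}A_n$ satisfies $\rho_2(F_2\setminus K_2)\subseteq\phi[\rho_1(F_1\setminus K_1)]$, so $\phi$ witnesses $\rho_2\leq_K\rho_1$. The parenthetical clause follows since $\rho_\I$ is $P^+$ exactly when $\I$ is, by Proposition~\ref{prop:properties-of-ideal-versus-rho}(\ref{prop:properties-of-ideal-versus-rho:ideal-rho}).

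For part (2b), the forward implication is again part (1), together with $\I_{\rho_\I}=\I$ from Proposition~\ref{prop:rho-versus-ideal}(\ref{prop:rho-versus-ideal:ideal-gives-rho}). For the converse, suppose $\phi$ witnesses $\I_\rho\leq_K\I$, i.e.\ $\phi[A]\in\I_\rho^+$ for every $A\in\I^+$. Fixing $A\in\I^+$, which plays the role of $F_1$ since $\rho_\I(A)=A$, I would pick $F\in\cF$ with $\rho(F)\subseteq\phi[A]$ and then invoke Proposition~\ref{prop:finite-support} to replace $F$ by $E\subseteq F$ with the finite-support property; put $F_2=E$, noting $\rho(E)\subseteq\rho(F)\subseteq\phi[A]$. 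For finite $K_1\subseteq\Lambda$ the image $\phi[K_1]$ is finite, so the finite-support property delivers finite $K_2\subseteq\Omega$ with $\rho(E\setminus K_2)\subseteq\rho(E)\setminus\phi[K_1]$; combining this with $\rho(E)\subseteq\phi[A]$ and the elementary inclusion $\phi[A]\setminus\phi[K_1]\subseteq\phi[A\setminus K_1]$ gives $\rho(F_2\setminus K_2)\subseteq\phi[A\setminus K_1]=\phi[\rho_\I(A\setminus K_1)]$, exactly the defining condition for $\rho\leq_K\rho_\I$.

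The main obstacle is this converse in (2b): no $P^+$ hypothesis is available, so one must extract from a single Katětov function a witness coping with the infinitely many tails $A\setminus K_1$ simultaneously. The point that makes it work is that $\rho_\I$ carries these tails trivially, as $\rho_\I(A\setminus K_1)=A\setminus K_1$ merely deletes finitely many points, which transfers the whole burden onto the $\rho$-side, where condition \nameref{def:partition-regular:finitely-supported} (packaged as Proposition~\ref{prop:finite-support}) lets one shave off the finite obstruction $\phi[K_1]$. The only routine checks are the set-theoretic inclusion $\phi[A]\setminus\phi[K_1]\subseteq\phi[A\setminus K_1]$ and the fact that $F_2\setminus K_2$ remains in $\cF$.
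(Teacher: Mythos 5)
Your proposal is correct and follows essentially the same route as the paper's proof in all three parts: the same witness-transfer argument for (1), the same tail-sequence $B_n=\phi[\rho_1(F_1\setminus\{o_i:i<n\})]$ combined with the $P^+$ property of $\rho_2$ for the converse in (2a), and the same use of Proposition~\ref{prop:finite-support} together with the inclusion $\phi[A]\setminus\phi[K]\subseteq\phi[A\setminus K]$ for the converse in (2b). (Your reading of the $P^+$ hypothesis in (2a) as a property of the function $\rho_2$ rather than of the ideal $\I_{\rho_2}$ is the intended one.)
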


\begin{proof}
(\ref{prop:Katetov-for-rho-functions:implication})
Let $\cF_i\subseteq[\Omega_i]^\omega$ for $i=1,2$. Let $\phi$ be a witness for $\rho_2\leq_K\rho_1$.
We claim that $\phi$ is also a witness for $ \I_{\rho_2}\leq_K\I_{\rho_1}$.
Let $A\notin \I_{\rho_1}$.
Then there is $F_1\in \cF_1$ with $\rho_1(F_1)\subseteq A$.
Since $\rho_2\leq_K\rho_1$, there is $F_2\in \cF_2$ and a finite set $K_2\subseteq\Omega_2$ such that $\rho_2(F_2\setminus K_2)\subseteq \phi[\rho_1(F_1\setminus\emptyset)]=\phi[\rho_1(F_1)]$.
Since $F_2\setminus K_2\in \cF_2$ and $\rho_2(F_2\setminus K_2)\subseteq \phi[A]$, we obtain that $\phi[A]\notin \I_{\rho_2}$. Thus the proof of this item is finished.

(\ref{prop:Katetov-for-rho-functions:equivalence})
The ``in particular'' part  follows  from  Propositions~\ref{prop:rho-versus-ideal}(\ref{prop:rho-versus-ideal:ideal-gives-rho}) and \ref{prop:Plike-basic-properties}(\ref{prop:properties-of-ideal-versus-rho:ideal-rho}).

We only have to show the implication ``$\impliedby$'', because the reversed implication is true by item (\ref{prop:Katetov-for-rho-functions:implication}).
Let $\phi:\Lambda_1\to \Lambda_2$  be a witness for 
$\I_{\rho_2}\leq_K \I_{\rho_1}$.
We claim that $\phi$ is also a witness for $\rho_2\leq_K\rho_1$.
Let $F_1\in \cF_1$, $\Omega_1=\{o_n:n\in\omega\}$ and $B_n = \phi[\rho_1(F_1\setminus \{o_i:i<n\})]$ for each $n\in\omega$.
Then  $B_n\notin\I_{\rho_2}$, $B_n\supseteq B_{n+1}$ for each $n\in\omega$, and since $\I_{\rho_2}$ is $P^+$, there is $F_2\in \cF_2$ such that for each $n\in\omega$ there is a finite set $L_n\subseteq \Omega_2$ with $\rho_2(F_2\setminus L_n)\subseteq B_n$.
Now, for any   finite set $K_1\subseteq\Omega_1$ there is $n\in\omega$ such that $K_1\subseteq\{o_i:i<n\}$.
Let $K_2=L_n$.
Then 
$\rho_2(F_2\setminus K_2) \subseteq B_n \subseteq  
\phi[\rho_1(F_1\setminus K_1)]
$. Thus the proof of this item is finished.

(\ref{prop:Katetov-for-rho-function-and-ideal-rho:sparse}) The implication ``$\implies$'' follows from item (\ref{prop:Katetov-for-rho-functions:implication})
and 
Proposition~\ref{prop:rho-versus-ideal}(\ref{prop:rho-versus-ideal:ideal-gives-rho}),
so below we show the reverse implication.

Suppose that $\I$ is an ideal on $\Lambda$ and $\cF\subseteq[\Omega]^\omega$.
Let $\phi:\Lambda\to\Lambda$ be a witness of $\I_{\rho}\leq_K\I$.
We claim that the same $\phi$ is also a witness for $\rho\leq_K\rho_{\I}$.
Indeed, for $A\notin\I$ we find $E\in \cF$ such that $\rho(E)\subseteq \phi[A]$.
Using Proposition~\ref{prop:finite-support}, we can find a set $F\in \cF$ such that  $F\subseteq E$ and  for any finite set  $K\subseteq \Lambda_1$  there exists a finite set $L\subseteq\Omega$ with  
$\rho(F\setminus L)\subseteq \rho(F)\setminus \phi[K]$.
Consequently, $\rho(F\setminus L)\subseteq \phi[A]\setminus \phi[K] \subseteq\phi[A\setminus K]$.
\end{proof}

The following example shows that in general $\rho_2\leq_K\rho_1$ and $\I_{\rho_2}\leq_K\I_{\rho_1}$ are not equivalent.

\begin{example}
\label{ex}
$\Fin^2\leq_K\Hindman$, but $\rho_{\Fin^2}\not\leq_K\FS$.
\end{example}

\begin{proof}
By Proposition~\ref{prop:FinSQRD-below-FS-r-Delta}(\ref{prop:FinSQRD-below-FS-r-Delta:ideal}) we know that  $\I_{\rho_{\Fin^2}}=\Fin^2\leq_K\Hindman = \I_{\FS}$. Thus, we only need to show that $\rho_{\Fin^2}\not\leq_K\FS$. 

Suppose that $\rho_{\Fin^2}\leq_K \FS$ and let $\phi:\omega\to \omega^2$ be a witness  for this.
For each $n\in\omega$, we define $A_n = \phi^{-1}[(\omega\setminus n)\times \omega]$.
Then $A_0=\omega$, $A_n\supseteq A_{n+1}$ and 
$A_n\setminus A_{n+1}\subseteq \phi^{-1}[\{n\}\times\omega]\in \Hindman$ for each $n\in \omega$ by Proposition~\ref{prop:Katetov-for-rho-functions}(\ref{prop:Katetov-for-rho-functions:implication}).
Since $\FS$ is $P^-(\omega)$ by Proposition~\ref{prop:Plike-properties-for-known-rho}(\ref{prop:Plike-properties-for-known-rho:FS-r-Delta:Pminus}), there is $F\in [\omega]^\omega$ such that for every $n\in\omega$ there is a finite set $K_n\subseteq \omega$ with $\FS(F\setminus K_n) \subseteq A_n$.
Now, using the fact that $\rho_{\Fin^2}\leq_K\FS$, we find $B\notin \Fin^2$ such that for every $n\in \omega$ there is a finite set $L_n\subseteq\omega^2$
with $B\setminus L_n\subseteq \phi[\FS(F\setminus K_n)] \subseteq \phi[A_n] \subseteq (\omega\setminus n)\times \omega$. In particular, sets $B\cap (\{n\}\times \omega)$ are finite for every $n$, so $B\in \Fin^2$, a contradiction.  
\end{proof}

\begin{remark}
The partition regular function $\rho_{\Fin^2}$ from Example \ref{ex} is not $P^-$. In Example \ref{example:ideal-Katetov-does-not-imply-rho-Katetov-for-Pminus} we will show that there are partition regular functions $\rho_1$ and $\rho_2$ which are $P^-$ and have small accretions such that $\I_{\rho_2}\subseteq\I_{\rho_1}$ (in particular, $\I_{\rho_2}\leq_K\I_{\rho_1}$), but $\rho_2\not\leq_K\rho_1$.
\end{remark}



\subsection{Kat\v{e}tov order between \texorpdfstring{$\FS, r, \Delta, \vdW  \text{ and } \I_{1/n}$}{FS, r, Delta, W and I(1/n)}}

\begin{theorem}\ 
\label{thm:Katetov-between-known-rho}
\begin{enumerate}
    \item 
    $\Hindman \not\leq_K \Ramsey$. In particular, $\FS\not\leq_K r$.\label{thm:Katetov-between-known-rho:Hindman-not-below-Ramsey}

   \item $\Ramsey \not\leq_K \Hindman$. In particular, $r\not\leq_K\FS$.\label{thm:Katetov-between-known-rho:Ramsey-not-below-Hindman}

\item 
 $\Delta \leq_K \FS$ and    $\Diff\subseteq\Hindman$. In particular, $\Diff\leq_K\Hindman$.\label{thm:Katetov-between-known-rho:Diff-below-Hindman}

 \item 
 $\Delta \leq_K r$. In particular, $\Diff\leq_K\Ramsey$.\label{thm:Katetov-between-known-rho:Diff-below-Ramsey}

\item $\Ramsey\not\leq_K \Diff$. In particular, $r\not\leq_K \Delta$.\label{thm:Katetov-between-known-rho:Ramsey-not-below-Diff}

\item $\Hindman\not\leq_K \Diff$. In particular, $\FS\not\leq_K \Delta$.\label{thm:Katetov-between-known-rho:Hindman-not-below-Diff}

    \item 
    $\I_{1/n} \not\leq_K \Ramsey$. In particular,  $\rho_{\I_{1/n}} \not\leq_K r$.\label{thm:Katetov-between-known-rho:Summable-not-below-Ramsey}

    \item 
    $\I_{1/n} \not\leq_K \Hindman$. In particular,  $\rho_{\I_{1/n}} \not\leq_K \FS$.\label{thm:Katetov-between-known-rho:Summable-not-below-Hindman}

    \item 
    $\I_{1/n} \not\leq_K \Diff$. In particular,  $\rho_{\I_{1/n}} \not\leq_K \Delta$.\label{thm:Katetov-between-known-rho:Summable-not-below-Diff}

    \item 
    $\I_{1/n} \not\leq_K \vdW$. In particular,  $\rho_{\I_{1/n}} \not\leq_K \rho_{\vdW}$.\label{thm:Katetov-between-known-rho:Summable-not-below-vdW}

    \item 
    $\cD \not\leq_K \I_{1/n}$. In particular,  $\Delta\not\leq_K \rho_{\I_{1/n}}$.\label{thm:Katetov-between-known-rho:Diff-not-below-Summable}

    \item 
    $\Hindman \not\leq_K \I_{1/n}$. In particular,  $\FS \not\leq_K \rho_{\I_{1/n}}$.\label{thm:Katetov-between-known-rho:Hindman-not-below-Summable}

    \item 
    $\Ramsey \not\leq_K \I_{1/n}$. In particular,  $r \not\leq_K \rho_{\I_{1/n}}$.\label{thm:Katetov-between-known-rho:Ramsey-not-below-Summable}

    \item 
     $\cD\not\leq_K\mathcal{W}$. In particular,  $\Delta \not\leq_K \rho_{\cW}$.\label{thm:Katetov-between-known-rho:Diff-not-below-van}

    \item 
     $\Hindman\not\leq_K\mathcal{W}$. In particular,  $\FS \not\leq_K \rho_{\cW}$. \label{thm:Katetov-between-known-rho:Hindman-not-below-van}

     \item 
     $\Ramsey\not\leq_K\mathcal{W}$. In particular,  $r\not\leq_K \rho_{\cW}$. \label{thm:Katetov-between-known-rho:Ramsey-not-below-van}

\end{enumerate}

\end{theorem}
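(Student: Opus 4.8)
The plan is to obtain the statement by chaining Kat\v{e}tov reductions through $\Fin^2$, exploiting that $\vdW$ is a $P^+$ ideal while $\Ramsey$ lies above $\Fin^2$. First observe that the ``in particular'' clause reduces to the ideal statement: if $r\leq_K\rho_{\vdW}$ held, then Proposition~\ref{prop:Katetov-for-rho-functions}(\ref{prop:Katetov-for-rho-functions:implication}), together with $\I_r=\Ramsey$ and $\I_{\rho_{\vdW}}=\vdW$, would give $\Ramsey\leq_K\vdW$. Hence it suffices to prove $\Ramsey\not\leq_K\vdW$, and I would derive the function version as an immediate corollary.

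For the main statement I would argue by contradiction. Suppose $\Ramsey\leq_K\vdW$, witnessed by some $\phi\colon\omega\to[\omega]^2$. By Proposition~\ref{prop:FinSQRD-below-FS-r-Delta}(\ref{prop:FinSQRD-below-FS-r-Delta:ideal}) we have $\Fin^2\leq_K\Ramsey$, witnessed by some $\psi\colon[\omega]^2\to\omega^2$. Composing the two witnesses, $\psi\circ\phi\colon\omega\to\omega^2$ sends every $A\notin\vdW$ first to $\phi[A]\notin\Ramsey$ and then to $\psi[\phi[A]]\notin\Fin^2$, so $\Fin^2\leq_K\vdW$. On the other hand, $\vdW$ is $P^+$ by Proposition~\ref{prop:Plike-properties-for-known-rho}(\ref{prop:Plike-properties-for-known-rho:vdW}), hence $P^-(\omega)$ by Proposition~\ref{prop:Plike-basic-properties-for-ideals}(\ref{prop:Plike-basic-properties-for-ideals:items}), and therefore $\Fin^2\not\leq_K\vdW$ by Proposition~\ref{prop:Pminus-versus-FinSQRD-for-ideal}(\ref{prop:Pminus-versus-FinSQRD-for-ideal:Pminus-Lambda}). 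This contradiction finishes the proof.

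The only ingredient not already isolated in the excerpt is transitivity of $\leq_K$ on ideals, but this is routine: it is the ideal-level analogue of Proposition~\ref{prop:basic-prop-of-new-order}(\ref{prop:basic-prop-of-new-order:preorder}) and is exactly what justifies composing $\phi$ and $\psi$ above. I expect no genuine obstacle here; in fact the very same three-step chain simultaneously proves items~(\ref{thm:Katetov-between-known-rho:Diff-not-below-van}) and~(\ref{thm:Katetov-between-known-rho:Hindman-not-below-van}), since $\Fin^2\leq_K\I$ for each $\I\in\{\Diff,\Hindman,\Ramsey\}$ by the same Proposition~\ref{prop:FinSQRD-below-FS-r-Delta}(\ref{prop:FinSQRD-below-FS-r-Delta:ideal}). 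Should a self-contained combinatorial argument be preferred, the alternative would be to construct, for an arbitrary $\phi\colon\omega\to[\omega]^2$, an AP-set $A$ for which the graph $\phi[A]$ contains no infinite clique (for instance by building $A$ as a union of ever longer arithmetic progressions placed far apart so that $\phi[A]$ stays locally finite on, or otherwise avoids cliques through, each vertex); but the transitivity route is shorter and reuses machinery already in place.
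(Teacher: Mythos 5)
Your argument is correct and is essentially the paper's own: the authors run exactly this $\Fin^2$-chain (namely $\Fin^2\leq_K\I$ for $\I\in\{\Diff,\Hindman,\Ramsey\}$ combined with $\Fin^2\not\leq_K\cW$) to prove item~(\ref{thm:Katetov-between-known-rho:Diff-not-below-van}) and then deduce items~(\ref{thm:Katetov-between-known-rho:Hindman-not-below-van}) and~(\ref{thm:Katetov-between-known-rho:Ramsey-not-below-van}) by transitivity through $\Diff\leq_K\Hindman$ and $\Diff\leq_K\Ramsey$, whereas you apply the chain to $\Ramsey$ directly. The only cosmetic difference is the justification of $\Fin^2\not\leq_K\vdW$: the paper invokes $F_\sigma$-ness of $\cW$ (this is also recorded as Proposition~\ref{prop:FinSQRD-not-below-Fsigma}(\ref{prop:FinSQRD-not-below-Fsigma:item})), while you go through $P^+\Rightarrow P^-(\omega)$ and Proposition~\ref{prop:Pminus-versus-FinSQRD-for-ideal}(\ref{prop:Pminus-versus-FinSQRD-for-ideal:Pminus-Lambda}) — both routes are already available in the paper.
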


\begin{proof}
    The ``in particular'' parts follow from Proposition \ref{prop:Katetov-for-rho-functions}(\ref{prop:Katetov-for-rho-functions:implication}).

    The proofs of items~(\ref{thm:Katetov-between-known-rho:Hindman-not-below-Ramsey}), (\ref{thm:Katetov-between-known-rho:Ramsey-not-below-Hindman}),
    (\ref{thm:Katetov-between-known-rho:Summable-not-below-Ramsey}), (\ref{thm:Katetov-between-known-rho:Summable-not-below-Hindman}) and (\ref{thm:Katetov-between-known-rho:Summable-not-below-vdW}) 
    can be found in~\cite{naszKatetov}.   

(\ref{thm:Katetov-between-known-rho:Diff-below-Hindman})
The inclusion is proved in \cite[Proposition~4.2.1]{Shi2003Numbers} (see also \cite[Propositions~4.2]{MR3097000}). Below, we show that 
 $\Delta \leq_K \FS$.

We claim that the identity function  $\phi:\omega\to\omega$,  $\phi(n)=n$ for every $n\in\omega$ is a witness for $\Delta \leq_K \FS$.

For any infinite set  $A\subseteq \omega$, 
we define an infinite set 
$B=\{\sum_{i\leq n}a_i:n\in\omega\}$, where 
$\{a_n:n\in\omega\}$ is the increasing enumeration of $A$.
Next, for any finite set $K$, we define a finite set 
$L=\{0,1,\dots,\sum_{i\leq k}a_i\}$, where  $k = \max\{i\in \omega:a_i\in K\}$ (for $K=\emptyset$ we take $k=0$).
Finally, we observe that  
$\Delta(B\setminus L)\subseteq \FS(A\setminus K) = \phi[\FS(A\setminus K)]$, so the proof is finished. 

(\ref{thm:Katetov-between-known-rho:Diff-below-Ramsey}) 
We claim that $\phi: [\omega]^2\to\omega$ given by the formula $\phi(\{n,k\})=n-k$, where $n>k$, is a witness for $\Delta \leq_K r$. For any infinite set  $A\subseteq \omega$, we take $B =  A$. Then for any finite set $K\subseteq \omega$, we take $L=K$. Next, we notice that $\Delta(B\setminus L)  = \Delta(A\setminus K) =   \phi[[A\setminus K]^2] = \phi[r(A\setminus K)]$, so the proof is finished.

(\ref{thm:Katetov-between-known-rho:Ramsey-not-below-Diff})
It follows from items 
(\ref{thm:Katetov-between-known-rho:Diff-below-Hindman})
and
(\ref{thm:Katetov-between-known-rho:Ramsey-not-below-Hindman}).

(\ref{thm:Katetov-between-known-rho:Hindman-not-below-Diff})
It follows from items (\ref{thm:Katetov-between-known-rho:Diff-below-Ramsey})
and
(\ref{thm:Katetov-between-known-rho:Hindman-not-below-Ramsey}).

(\ref{thm:Katetov-between-known-rho:Summable-not-below-Diff})
It follows from items (\ref{thm:Katetov-between-known-rho:Summable-not-below-Hindman}) and (\ref{thm:Katetov-between-known-rho:Diff-below-Hindman}).

(\ref{thm:Katetov-between-known-rho:Diff-not-below-Summable})
Suppose otherwise: $\cD\leq_K\I_{1/n}$. 
By Proposition~\ref{prop:FinSQRD-below-FS-r-Delta}(\ref{prop:FinSQRD-below-FS-r-Delta:FS-ideal}) $\Fin^2\leq_K \cD$, so $\Fin^2\leq_K\I_{1/n}$.
By Proposition~\ref{prop:Pminus-versus-FinSQRD-for-ideal}(\ref{prop:Pminus-versus-FinSQRD-for-ideal:Pminus-Lambda}), we obtain that $\I_{1/n}$ is not a $P^-(\omega)$ ideal, a contradiction with Proposition~\ref{prop:Plike-properties-for-known-rho}(\ref{prop:Plike-properties-for-known-rho:summable}).

(\ref{thm:Katetov-between-known-rho:Hindman-not-below-Summable})
It follows from items (\ref{thm:Katetov-between-known-rho:Diff-below-Hindman}) and (\ref{thm:Katetov-between-known-rho:Diff-not-below-Summable}).

(\ref{thm:Katetov-between-known-rho:Ramsey-not-below-Summable})
It follows from items (\ref{thm:Katetov-between-known-rho:Diff-below-Ramsey}) and (\ref{thm:Katetov-between-known-rho:Diff-not-below-Summable}).

(\ref{thm:Katetov-between-known-rho:Diff-not-below-van})
Suppose otherwise: $\cD\leq_K\cW$. 
Using Proposition \ref{prop:FinSQRD-below-FS-r-Delta}(\ref{prop:FinSQRD-below-FS-r-Delta:ideal}) we get that $\Fin^2\leq_K \cD$, so $\Fin^2\leq_K\cW$. However, since $\cW$ is $F_\sigma$ (see \cite[Example~4.12]{MR4572258}), $\Fin^2\not\leq_K\cW$ by \cite[Theorems 7.5 and 9.1]{MR2520152}. A contradiction.

(\ref{thm:Katetov-between-known-rho:Hindman-not-below-van})
It follows from items (\ref{thm:Katetov-between-known-rho:Diff-below-Hindman}) and (\ref{thm:Katetov-between-known-rho:Diff-not-below-van}).

(\ref{thm:Katetov-between-known-rho:Ramsey-not-below-van})
It follows from items (\ref{thm:Katetov-between-known-rho:Diff-below-Hindman}) and (\ref{thm:Katetov-between-known-rho:Diff-not-below-van}).

\end{proof}

\begin{question}\ 
\label{q:Katetov-between-known-rho}
 Is $\vdW  \leq_K \I$ for $\I\in \{\I_{1/n},\Hindman, \Ramsey,\Diff\}$?
\end{question}

\begin{remark}
The positive answer to Question~\ref{q:Katetov-between-known-rho} for $\I=\I_{1/n}$  is  implied by the inclusion $\vdW\subseteq\I_{1/n}$ that is known as the Erd\H{o}s conjecture on arithmetic progressions (a.k.a.~the Erd\H{o}s–Tur\'{a}n conjecture) which can be rephrased in the following manner: if the sum of the reciprocals of the elements of a set $A\subseteq\omega$ diverges, then $A$ contains arbitrarily long finite arithmetic progressions.
\end{remark}


\section{Tallness and homogeneity}


\subsection{Tallness of partition regular functions}

An ideal $\I$ on $\Lambda$ is 
\emph{tall} if for every infinite set $A\subseteq \Lambda$ there exists an infinite set  $B\subseteq A$ such that $B\in \I$ (\cite[p.~210]{MR0363911}, see also \cite[Definition~0.6]{MR491197}).
It is not difficult to see that $\I$ is not tall $\iff$  $\I\leq_K\J$ for every ideal $\J$ $\iff$
$\I\leq_K\Fin$ $\iff$ $\I\restriction A=\Fin(A)$ for some $A\in\I^+$.

The following  proposition  characterizes  tallness of the ideal $\I_\rho$ in terms of $\rho$ and serves as a definition of tallness of partition regular functions.

\begin{proposition}
\label{prop:prop-of-talness}
Let $\rho:\cF\to[\Lambda]^\omega$ 
be partition regular  (with $\cF\subseteq [\Omega]^\omega$).
The following conditions are equivalent.
\begin{enumerate}

\item $\I_{\rho}$ is tall.\label{prop:prop-of-talness:ideal}

\item There exists  a partition regular function $\tau$ such that $\rho\not\leq_K \tau$.\label{prop:prop-of-talness:definition}

\item 
$\I_{\rho}\restriction \rho(F) \neq \Fin(\rho(F))$ for every $F\in \cF$.\label{prop:prop-of-talness:via-sparseness}

\item $\rho\not\leq_K\rho_{\Fin(\Lambda)}$.\label{prop:prop-of-talness:below-FIN}

\end{enumerate}
\end{proposition}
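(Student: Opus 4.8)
The plan is to route everything through the ideal $\I_\rho$, exploiting the elementary fact that, by the very definition of $\I_\rho$, a set $A\subseteq\Lambda$ satisfies $A\notin\I_\rho$ if and only if $\rho(F)\subseteq A$ for some $F\in\cF$; in particular every $\rho(F)$ lies in $\I_\rho^+$ and the family $\{\rho(F):F\in\cF\}$ is cofinal in $\I_\rho^+$. I would first prove $(1)\iff(3)$ straight from the definition of tallness. For $(1)\implies(3)$, fix $F\in\cF$; since $\rho(F)$ is infinite, tallness supplies an infinite $B\subseteq\rho(F)$ with $B\in\I_\rho$, so that $B\in(\I_\rho\restriction\rho(F))\setminus\Fin(\rho(F))$ and hence $\I_\rho\restriction\rho(F)\neq\Fin(\rho(F))$. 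For $(3)\implies(1)$, take an arbitrary infinite $A\subseteq\Lambda$: if $A\in\I_\rho$ then $A$ is itself an infinite member of $\I_\rho$ inside $A$, while if $A\in\I_\rho^+$ then $\rho(F)\subseteq A$ for some $F$, and $(3)$ produces an infinite $B\subseteq\rho(F)\subseteq A$ with $B\in\I_\rho$; either way $\I_\rho$ is tall.

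Next I would settle $(1)\iff(4)$. Applying Proposition~\ref{prop:Katetov-for-rho-functions}(\ref{prop:Katetov-for-rho-function-and-ideal-rho:sparse}) with the ideal $\Fin(\Lambda)$ gives $\rho\leq_K\rho_{\Fin(\Lambda)}\iff\I_\rho\leq_K\Fin(\Lambda)$. On the other hand, the criteria for non-tallness recalled just before the proposition state that $\I_\rho$ is not tall if and only if $\I_\rho\leq_K\Fin$, and transporting witnesses along a bijection $\Lambda\to\omega$ shows $\I_\rho\leq_K\Fin\iff\I_\rho\leq_K\Fin(\Lambda)$. Chaining these, $\rho\leq_K\rho_{\Fin(\Lambda)}$ holds precisely when $\I_\rho$ is not tall; negating both sides yields $(4)\iff(1)$.

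Finally I would obtain $(2)$ via $(4)\iff(2)$. The implication $(4)\implies(2)$ is immediate, taking $\tau=\rho_{\Fin(\Lambda)}$, which is partition regular by Proposition~\ref{prop:rho-versus-ideal}(\ref{prop:rho-versus-ideal:ideal-gives-rho}). For $(2)\implies(4)$ I would argue contrapositively: assuming $\rho\leq_K\rho_{\Fin(\Lambda)}$, I claim $\rho\leq_K\tau$ for \emph{every} partition regular $\tau$, so no witness for $(2)$ can exist. Indeed, writing $\tau:\cG\to[\Sigma]^\omega$, Proposition~\ref{prop:basic-prop-of-new-order}(\ref{prop:basic-prop-of-new-order:minimal-element}) gives $\rho_{\Fin(\Sigma)}\leq_K\tau$, a bijection $\Sigma\to\Lambda$ gives $\rho_{\Fin(\Lambda)}\leq_K\rho_{\Fin(\Sigma)}$, and transitivity (Proposition~\ref{prop:basic-prop-of-new-order}(\ref{prop:basic-prop-of-new-order:preorder})) closes the chain $\rho\leq_K\rho_{\Fin(\Lambda)}\leq_K\rho_{\Fin(\Sigma)}\leq_K\tau$.

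The one point I would handle with care—and essentially the only subtlety, since the heavy lifting is already done by the recalled tallness criteria and by Proposition~\ref{prop:Katetov-for-rho-functions}—is the bookkeeping across the three index sets $\Lambda$, $\Sigma$ and $\omega$: namely that $\rho_{\Fin(\Lambda)}$ is a global $\leq_K$-least element among \emph{all} partition regular functions, not merely among those with codomain $[\Lambda]^\omega$, and that the symbol $\Fin$ in the recalled criterion may be replaced by $\Fin(\Lambda)$. Both facts reduce to transporting witnessing functions along a fixed bijection while keeping the direction of $\leq_K$ straight, so I would state these transports explicitly rather than leave them implicit.
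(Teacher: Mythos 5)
Your proof is correct, but it takes a different route through the four conditions than the paper does. The paper proves the cycle $(1)\Rightarrow(2)\Rightarrow(3)\Rightarrow(4)\Rightarrow(1)$, and the middle two implications are done by hand: for $(2)\Rightarrow(3)$ it constructs, from a single $F$ with $\I_\rho\restriction\rho(F)=\Fin(\rho(F))$, an explicit witness showing $\rho\leq_K\tau$ for every $\tau$ (via a one-to-one map onto $\rho(F)$, a diagonal choice of points $b_n\in\phi[\tau(G\setminus\{\gamma_i:i<n\})]$, and Proposition~\ref{prop:finite-support}); for $(3)\Rightarrow(4)$ it extracts from a witness of $\rho\leq_K\rho_{\Fin(\Lambda)}$ an infinite set on which the witness is injective and produces an $F$ violating $(3)$. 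You instead prove $(1)\Leftrightarrow(3)$ by a purely elementary set-theoretic argument, reduce $(1)\Leftrightarrow(4)$ to the already-proved equivalence $\rho\leq_K\rho_\I\iff\I_\rho\leq_K\I$ of Proposition~\ref{prop:Katetov-for-rho-function-and-ideal-rho}(\ref{prop:Katetov-for-rho-function-and-ideal-rho:sparse}) together with the recalled criterion that non-tallness is $\I\leq_K\Fin$, and get $(2)\Leftrightarrow(4)$ from the $\leq_K$-minimality of $\rho_{\Fin}$ and transitivity. What your approach buys is modularity and brevity: the two diagonal constructions disappear, being absorbed into lemmas the paper has already established (whose proofs contain essentially the same diagonal arguments, so nothing is gained in total work, but nothing is duplicated either). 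Your explicit attention to transporting witnesses along bijections between $\omega$, $\Lambda$ and $\Sigma$, and to the direction of $\leq_K$ under such transport, is exactly the right point to be careful about and is handled correctly.
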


\begin{proof}
(\ref{prop:prop-of-talness:ideal})$\implies$(\ref{prop:prop-of-talness:definition}) If  $\I_\rho$ is tall, there is an ideal $\J$ such that $\I_\rho\not\leq_K\J$. Then $\rho\not\leq_K\rho_\J$ by Proposition \ref{prop:Katetov-for-rho-functions}(\ref{prop:Katetov-for-rho-functions:implication}).

(\ref{prop:prop-of-talness:definition})$\implies$(\ref{prop:prop-of-talness:via-sparseness})
Suppose that there is $F\in \cF$ such that $\I_{\rho}\restriction \rho(F) =  \Fin(\rho(F))$.
We will show that $\rho\leq_K \tau$
for every  partition regular function $\tau$ .

Take 
any  partition regular  function $\tau:\cG\to[\Sigma]^\omega$ 
with $\cG\subseteq [\Gamma]^\omega$.

Let $\phi:\Sigma\to\Lambda$ be a one-to-one function such that $\phi[\Sigma]=\rho(F)$.
We claim that $\phi$ is a witness for $\rho\leq_K\tau$.

Let $G\in \cG$ and $\Gamma=\{\gamma_n:n\in\omega\}$.
Since 
$\phi[\tau(G\setminus \{\gamma_i:i<n\})]$ is infinite for every $n\in\omega$,
we can pick a one-to-one sequence $(b_n:n\in\omega)$ such that
$b_n\in \phi[\tau(G\setminus \{\gamma_i:i<n\})]$ for each $n\in\omega$.
Define $B=\{b_n:n\in\omega\}$.
Since $B$ is infinite,  $B\subseteq \rho(F)$ and $\I_{\rho}\restriction\rho(F)=\Fin(\rho(F))$, 
there is $H\in \cF$ such that $\rho(H)\subseteq B$. 
Using Proposition~\ref{prop:finite-support}, there is  $E\in\cF$ with $E\subseteq H$ such that for any $n\in\omega$ 
there is a finite set $L\subseteq\Omega$ such that 
$\rho(E\setminus L)\subseteq \rho(E)\setminus \{b_i:i<n\}$.
Consequently, for any finite set $K\subseteq \Gamma$ there is $n\in\omega$ such that $K\subseteq \{\gamma_i:i<n\}$, so we can find a finite set $L\subseteq\Omega$ such that 
$\rho(E\setminus L)
\subseteq 
\rho(E)\setminus \{b_i:i<n\}
\subseteq 
B\setminus \{b_i:i<n\}
\subseteq
\phi[\tau(G\setminus \{\gamma_i:i<n\})]
\subseteq \phi[\tau(G\setminus K)]
$.

(\ref{prop:prop-of-talness:via-sparseness})$\implies$(\ref{prop:prop-of-talness:below-FIN})
Let $\phi:\Lambda\to\Lambda$ be a witness for $\rho\leq_K\rho_{\Fin(\Lambda)}$.
Since $\phi^{-1}[\{\lambda\}]\in \Fin(\Lambda)$ for every $\lambda\in \Lambda$
and $\phi[\Lambda]$ is infinite, there is an infinite set $A\subseteq\Lambda$, such that $\phi\restriction A$ is one-to-one.
Then we can find $F\in \cF$ such that $\rho(F)\subseteq\phi[A]$.
We claim that 
$\I_{\rho}\restriction \rho(F) = \Fin(\rho(F))$. Indeed, let $B\subseteq \rho(F)$ be infinite and observe that $\phi^{-1}[B]$ is infinite, so $B=\phi[\phi^{-1}[B]]\notin\I_\rho$.

(\ref{prop:prop-of-talness:below-FIN})$\implies$(\ref{prop:prop-of-talness:ideal}) If $\rho\not\leq_K\rho_{\Fin(\Lambda)}$ then by Proposition~\ref{prop:Katetov-for-rho-function-and-ideal-rho}(\ref{prop:Katetov-for-rho-function-and-ideal-rho:sparse}), $\I_\rho\not\leq_K\Fin(\Lambda)$, and consequently $\I_\rho$ is tall. 
\end{proof}

\begin{definition}
    We say that a partition regular function $\rho$ is \emph{tall} if any item of Proposition~\ref{prop:prop-of-talness} holds.    
\end{definition}

\begin{proposition}
The ideals $\Hindman$, $\Ramsey$, $\cD$, $\vdW$ and $\I_{1/n}$ are tall (hence, $FS$, $r$, $\Delta$, $\rho_\cW$ and $\rho_{\I_{1/n}}$ are tall).
\end{proposition}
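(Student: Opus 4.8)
The plan is to prove tallness of the four ideals living on $\omega$, namely $\Hindman$, $\Diff$, $\vdW$ and $\I_{1/n}$, by one and the same construction, and to handle $\Ramsey$ (which lives on $[\omega]^2$) by a short graph-theoretic dichotomy. The parenthetical ``hence'' needs no extra work: by Proposition~\ref{prop:prop-of-talness} tallness of a partition regular function $\rho$ is, by definition, equivalent to tallness of its associated ideal $\I_\rho$, and since $\I_{\FS}=\Hindman$, $\I_r=\Ramsey$, $\I_\Delta=\Diff$, $\I_{\rho_\cW}=\vdW$ and $\I_{\rho_{\I_{1/n}}}=\I_{1/n}$, tallness of each ideal transfers at once to the corresponding function.

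For the four ideals on $\omega$, I would fix an infinite $A\subseteq\omega$ and build greedily an increasing sequence $B=\{b_n:n\in\omega\}\subseteq A$ with $b_0\ge 1$ and the lacunary condition $b_{n+1}>2b_n$ for all $n$; this is possible because $A$ is unbounded. The claim is that this single $B$ belongs to all four ideals. Membership in $\I_{1/n}$ is clear, since $b_n\ge 2^n$ gives $\sum_n 1/(b_n+1)\le\sum_n 2^{-n}<\infty$. For $\vdW$, if $b_i,b_j,b_k$ (with $i<j<k$) were a three-term progression then $b_k=2b_j-b_i\le 2b_j\le 2b_{k-1}<b_k$, which is absurd, so $B$ contains no three-term progression and hence $B\in\vdW$. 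For $\Diff$, if $\Delta(E)\subseteq B$ for some infinite $E=\{e_0<e_1<e_2<\dots\}$, then the three differences $e_1-e_0$, $e_2-e_1$ and $e_2-e_0=(e_1-e_0)+(e_2-e_1)$ all lie in $B$; writing them as $b_s,b_t,b_u$ we get $u>s,t$ and $b_u=b_s+b_t\le 2b_{u-1}<b_u$, a contradiction, so $B\in\Diff$. Finally $B\in\Hindman$ comes for free from the inclusion $\Diff\subseteq\Hindman$.

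For $\Ramsey$, I would fix an infinite $A\subseteq[\omega]^2$, regarded as a graph, and split into two cases. If some vertex $v$ belongs to infinitely many edges of $A$, I take $B=\{e\in A:v\in e\}$; for any infinite $H$ there are two elements of $H$ different from $v$, so the edge joining them lies in $[H]^2$ but not in $B$, whence $[H]^2\not\subseteq B$ and $B\in\Ramsey$. Otherwise every vertex meets only finitely many edges of $A$, and a routine greedy selection (each new edge avoids the finitely many edges meeting the endpoints already used) produces an infinite matching $B\subseteq A$; since a vertex of a matching has degree at most one while, for $|H|\ge 3$, every vertex of $[H]^2$ has degree at least two, again $[H]^2\not\subseteq B$, so $B\in\Ramsey$.

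The only non-mechanical point is choosing the right uniform strengthening for $B$: the doubling inequality $b_{n+1}>2b_n$ is precisely what simultaneously rules out three-term progressions, the difference pattern $\{p,q,p+q\}$ and divergence of the reciprocal series. I expect the arithmetic-progression clause to be the easiest place to slip, since a merely gap-increasing sequence does \emph{not} work -- for instance $0,3,6$ occurs inside the gap-increasing set $\{n(n+1)/2:n\in\omega\}$ -- so the extra strength of the doubling condition is genuinely needed. The $\Ramsey$ case is combinatorially unrelated but entirely elementary through the star/matching alternative.
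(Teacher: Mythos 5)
Your proof is correct, and it is genuinely more self-contained than the paper's: the paper disposes of this proposition entirely by citation (to Fla\v{s}kov\'{a}-type computations for $\vdW$ and $\I_{1/n}$, to Meza-Alc\'{a}ntara's thesis for $\Ramsey$, and to the differential-compactness paper for $\Diff$ and $\Hindman$), whereas you give a single elementary construction. Your key observation -- that one doubling-lacunary set $B\subseteq A$ with $b_0\ge 1$ and $b_{n+1}>2b_n$ simultaneously kills three-term progressions, the additive pattern $\{p,q,p+q\}$ (which handles both $\Delta(E)$ via $e_1-e_0$, $e_2-e_1$, $e_2-e_0$ and, through $\Diff\subseteq\Hindman$, the $\FS$ case), and the divergence of $\sum 1/(b_n+1)$ -- is a nice uniformization that the paper does not offer, and the star/matching dichotomy for $\Ramsey$ is the standard argument done cleanly. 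All the individual verifications check out: the inequality $b_u=b_s+b_t\le 2b_{u-1}<b_u$ works whether or not $s=t$; the AP case needs $b_i\ge 1$ for strictness, which you arranged; and the transfer to the partition regular functions via Proposition~\ref{prop:prop-of-talness} together with $\I_{\FS}=\Hindman$, $\I_r=\Ramsey$, $\I_\Delta=\Diff$, $\I_{\rho_\cW}=\vdW$, $\I_{\rho_{\I_{1/n}}}=\I_{1/n}$ is exactly what the paper does. What your approach buys is a proof readable without chasing four external sources; what the paper's buys is brevity and attribution. The only stylistic quibble is that your closing remark about gap-increasing sequences failing for APs, while true (the triangular numbers contain $0,3,6$), is commentary rather than proof and could be dropped.
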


\begin{proof}
For the case of $\vdW$ and $\I_{1/n}$ see \cite[p. 3-4]{MR3868039}. For other cases, see \cite [under Lemma 1.6.24]{alcantara-phd-thesis} and \cite[Proposition 4.3 and text above Lemma 3.2]{MR3097000}). Tallness of the listed partition regular functions follows then from Proposition \ref{prop:prop-of-talness}.
\end{proof}


\subsection{Homogeneity  of partition regular functions}

Let $\I_i$ be an ideal on $\Lambda_i$ for each $i=1,2$.
Ideals $\I_1$ and $\I_2$ are \emph{isomorphic} 
(in short: $\I_1\approx \I_2$) if 
 there exists  a bijection $\phi:\Lambda_1\to \Lambda_2$ 
 such that $ A\in \I_1 \iff \phi[A] \in \I_2$ for each $A\subseteq \Lambda_1$. An ideal  $\I$ on $\Lambda$ is \emph{homogeneous} if 
 the ideals $\I$ and $\I\restriction A$ are isomorphic for every $A\in \I^+$ (\cite[Definition~1.3]{MR3594409}, see also \cite{Fremlin-countable-type}).
We say that $\I$ is \emph{K-homogeneous} if 
$\I\restriction A \leq_K \I$ for every $A\in \I^+$ (in \cite[p.~37]{MR2777744}, the author uses the name \emph{K-uniform} in this case).
Note that we always have $\I\leq_K \I\restriction A$ for every $A\in \I^+$ (see e.g.~\cite[p.~46]{MR2777744}).

\begin{definition}
Let $\rho:\cF\to[\Lambda]^\omega$ be partition regular. 
We say that $\rho$ is \emph{K-homogeneous} if 
$\rho\restriction A \leq_K \rho$ for every $A\in \I_\rho^+$
(note that we always have $\rho\leq_K \rho\restriction A$ for every $A\in \I_\rho^+$ by Proposition~\ref{prop:basic-prop-of-new-order}(\ref{prop:basic-prop-of-new-order:restrictions})).
\end{definition}

\begin{proposition}\
\label{prop:Katetov-for-rho-functions:homo:homo:rho-via-ideal}
\begin{enumerate}
    \item If a partition regular function $\rho$ is K-homogeneous then $\I_\rho$ is K-homogeneous.\label{prop:Katetov-for-rho-functions:homo:homo:rho-via-ideal:item1}
    \item An ideal  $\I$ is K-homogeneous $\iff$ $\rho_\I$ is K-homogeneous.\label{prop:Katetov-for-rho-functions:homo:homo:rho-via-ideal:item2}
\end{enumerate}
\end{proposition}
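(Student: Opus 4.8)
The plan is to reduce everything to three facts already established: the behavior of the associated ideal under restriction (Proposition~\ref{prop:restrictions-of-rho}, which gives $\I_{\rho\restriction B}=\I_\rho\restriction B$), the monotonicity of the Kat\v{e}tov order from functions to ideals (Proposition~\ref{prop:Katetov-for-rho-functions}(\ref{prop:Katetov-for-rho-functions:implication})), and the equivalence $\rho\leq_K\rho_\I\iff\I_\rho\leq_K\I$ (Proposition~\ref{prop:Katetov-for-rho-function-and-ideal-rho:sparse}). For item (\ref{prop:Katetov-for-rho-functions:homo:homo:rho-via-ideal:item1}), I would take an arbitrary $A\in\I_\rho^+$. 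Since $\rho$ is K-homogeneous we have $\rho\restriction A\leq_K\rho$, so Proposition~\ref{prop:Katetov-for-rho-functions}(\ref{prop:Katetov-for-rho-functions:implication}) yields $\I_{\rho\restriction A}\leq_K\I_\rho$. By Proposition~\ref{prop:restrictions-of-rho} the left-hand ideal equals $\I_\rho\restriction A$, so $\I_\rho\restriction A\leq_K\I_\rho$ for every $A\in\I_\rho^+$, which is exactly K-homogeneity of $\I_\rho$.

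For item (\ref{prop:Katetov-for-rho-functions:homo:homo:rho-via-ideal:item2}), the one observation that makes both directions work is the identification
\[
\rho_\I\restriction A=\rho_{\I\restriction A}\qquad\text{for every }A\in\I^+.
\]
To see this I would unwind the definition of restriction from Section~\ref{sec:restriction-of-rho}: the domain $\cF\restriction A$ of $\rho_\I\restriction A$ consists of those $E\in\I^+$ with $\rho_\I(E)=E\subseteq A$, i.e.\ of the sets $E\subseteq A$ with $E\notin\I$, which is precisely $(\I\restriction A)^+$; and on this domain both functions act as the identity. The backward direction of the equivalence is then immediate: if $\rho_\I$ is K-homogeneous, item (\ref{prop:Katetov-for-rho-functions:homo:homo:rho-via-ideal:item1}) shows $\I_{\rho_\I}$ is K-homogeneous, and since $\I=\I_{\rho_\I}$ by Proposition~\ref{prop:rho-versus-ideal}(\ref{prop:rho-versus-ideal:ideal-gives-rho}), the ideal $\I$ is K-homogeneous.

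For the forward direction I would assume $\I$ is K-homogeneous and fix $A\in\I_{\rho_\I}^+=\I^+$. The goal $\rho_\I\restriction A\leq_K\rho_\I$ becomes, via the identification above, $\rho_{\I\restriction A}\leq_K\rho_\I$. Applying Proposition~\ref{prop:Katetov-for-rho-function-and-ideal-rho:sparse} with the partition regular function $\rho_{\I\restriction A}$ and the ideal $\I$, this is equivalent to $\I_{\rho_{\I\restriction A}}\leq_K\I$; and $\I_{\rho_{\I\restriction A}}=\I\restriction A$ by Proposition~\ref{prop:rho-versus-ideal}(\ref{prop:rho-versus-ideal:ideal-gives-rho}). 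Thus the claim reduces to $\I\restriction A\leq_K\I$, which is exactly the K-homogeneity of $\I$.

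The proof is essentially a bookkeeping exercise chaining previously proven equivalences, so there is no genuine analytic obstacle; the only point requiring care is the bookkeeping itself, namely verifying cleanly that $\rho_\I\restriction A=\rho_{\I\restriction A}$ and invoking Proposition~\ref{prop:Katetov-for-rho-function-and-ideal-rho:sparse} in the correct direction (it is stated for a general $\rho$ against an ideal $\rho_\I$, which is precisely the shape $\rho_{\I\restriction A}\leq_K\rho_\I$ we need). I would therefore present item (\ref{prop:Katetov-for-rho-functions:homo:homo:rho-via-ideal:item2}) by first recording this identity as a one-line remark and then splitting into the two implications as above.
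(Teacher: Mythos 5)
Your proposal is correct and follows essentially the same route as the paper: item (1) is the chain $\rho\restriction A\leq_K\rho\Rightarrow\I_{\rho\restriction A}\leq_K\I_\rho$ combined with $\I_{\rho\restriction A}=\I_\rho\restriction A$, and item (2) rests on the observation $\rho_{\I\restriction A}=\rho_\I\restriction A$ together with the equivalence $\rho\leq_K\rho_\I\iff\I_\rho\leq_K\I$, which is exactly the paper's argument. The only cosmetic difference is that you route the backward direction of item (2) through item (1) rather than reading both directions off the single equivalence, which changes nothing of substance.
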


\begin{proof} 
(\ref{prop:Katetov-for-rho-functions:homo:homo:rho-via-ideal:item1}) It follows from Propositions~\ref{prop:Katetov-for-rho-functions}(\ref{prop:Katetov-for-rho-functions:implication}) and \ref{prop:restrictions-of-rho}.

(\ref{prop:Katetov-for-rho-functions:homo:homo:rho-via-ideal:item2}) Observe that if $A\in\I^+$ then $\rho_{\I\restriction A}=\rho_{\I}\restriction A$. Thus, it follows from Propositions~\ref{prop:Katetov-for-rho-functions}(\ref{prop:Katetov-for-ideal-rho:item}) and \ref{prop:restrictions-of-rho}.
\end{proof}

We need the following lemma to show that $FS$ and $r$ are K-homogeneous.

\begin{lemma}
\label{lem:K-homogeneous}
Let $\rho:\cF\to[\Lambda]^\omega$ be partition regular (with $\cF\subseteq[\Omega]^\omega$). If $\I_\rho$ is homogeneous and $\rho$ is $P^-$ and has small accretions then $\rho$ is K-homogeneous.
\end{lemma}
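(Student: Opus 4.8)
The goal is to verify $\rho\restriction A\leq_K\rho$ for an arbitrary $A\in\I_\rho^+$, and the plan is to show that the very bijection witnessing homogeneity does the job. Since $\I_\rho$ is homogeneous, fix a bijection $\phi:\Lambda\to A$ with $C\in\I_\rho\iff\phi[C]\in\I_\rho\restriction A$ for every $C\subseteq\Lambda$; because $\phi[C]\subseteq A$, this is the same as $C\in\I_\rho\iff\phi[C]\in\I_\rho$. Note that $\phi$ has exactly the right type to witness $\rho\restriction A\leq_K\rho$ (a map $\Lambda\to A$, since here $\Lambda_1=\Lambda$ and $\Lambda_2=A$), and I would claim it is such a witness according to Definition~\ref{Katetov-order-for-rho}.

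Fix $F_1\in\cF$. First I would reduce to the case that $F_1$ has small accretions: since $\rho$ has small accretions there is $F_1'\subseteq F_1$ with $F_1'\in\cF$ having small accretions, and proving the required inclusions for $F_1'$ automatically yields them for $F_1$ by monotonicity \nameref{def:partition-regular:monotone}, as $\rho(F_1'\setminus K_1)\subseteq\rho(F_1\setminus K_1)$ and hence $\phi[\rho(F_1'\setminus K_1)]\subseteq\phi[\rho(F_1\setminus K_1)]$. So I may assume $F_1$ itself has small accretions. Enumerate $\Omega=\{o_n:n\in\omega\}$ and set $D_n=\phi[\rho(F_1\setminus\{o_i:i<n\})]$. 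By \nameref{def:partition-regular:monotone} the sequence $(D_n)$ is $\subseteq$-decreasing, and each $D_n\notin\I_\rho$ (because $\rho(F_1\setminus\{o_i:i<n\})\notin\I_\rho$ and $\phi$ preserves $\I_\rho$-membership), so $D_n\in\I_\rho^+$.

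The key step is that $D_n\setminus D_{n+1}\in\I_\rho$. Since $\phi$ is a bijection, $D_n\setminus D_{n+1}=\phi[\rho(F_1\setminus\{o_i:i<n\})\setminus\rho(F_1\setminus\{o_i:i\le n\})]$, and by \nameref{def:partition-regular:monotone} the bracketed set is contained in $\rho(F_1)\setminus\rho(F_1\setminus K)$ for the finite set $K=\{o_i:i\le n\}$, which lies in $\I_\rho$ because $F_1$ has small accretions; applying $\phi$ and using that it preserves $\I_\rho$ gives $D_n\setminus D_{n+1}\in\I_\rho$. Thus $(D_n)$ is precisely the kind of sequence to which the $P^-$ property of $\rho$ (Definition~\ref{def:P-like-rho}) applies, producing $F_2\in\cF$ with $\rho(F_2)\subseteq^\rho D_n$ for every $n$. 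Since $\rho(F_2)\subseteq^\rho D_0\subseteq A$, there is a finite $K\subseteq\Omega$ with $\rho(F_2\setminus K)\subseteq A$; replacing $F_2$ by $F_2\setminus K$ (still in $\cF$, and still satisfying $\rho(F_2\setminus K)\subseteq^\rho D_n$ for all $n$ as $F_2\setminus K\subseteq F_2$) I may assume $F_2\in\cF\restriction A$.

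Finally, given a finite $K_1\subseteq\Omega$, pick $n$ with $K_1\subseteq\{o_i:i<n\}$; then $D_n\subseteq\phi[\rho(F_1\setminus K_1)]$ by \nameref{def:partition-regular:monotone}, and $\rho(F_2)\subseteq^\rho D_n$ furnishes a finite $K_2\subseteq\Omega$ with $(\rho\restriction A)(F_2\setminus K_2)=\rho(F_2\setminus K_2)\subseteq D_n\subseteq\phi[\rho(F_1\setminus K_1)]$. This is exactly the defining condition of $\rho\restriction A\leq_K\rho$, so $\phi$ is the desired witness and the proof is complete. I expect the main obstacle to be the verification that $D_n\setminus D_{n+1}\in\I_\rho$, where the homogeneity hypothesis (through $\phi$) and the small accretions hypothesis must be combined, together with the bookkeeping needed to trim $F_2$ into $\cF\restriction A$ while preserving all the relations $\rho(F_2)\subseteq^\rho D_n$.
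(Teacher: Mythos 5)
Your proof is correct and follows essentially the same route as the paper's: use the homogeneity bijection $\phi:\Lambda\to A$ as the Kat\v{e}tov witness, pass to a subset of $F_1$ with small accretions, form the decreasing sequence $D_n=\phi[\rho(F_1\setminus\{o_i:i<n\})]$ whose consecutive differences lie in $\I_\rho$, and apply the $P^-$ property of $\rho$. Your explicit trimming of $F_2$ so that it lands in $\cF\restriction A$ is a detail the paper passes over silently, but it is the same argument.
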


\begin{proof} 
Let $A\in\I_\rho^+$. Since $\I_\rho$ is homogeneous, $\I_\rho\restriction A$ and $\I_\rho$ are isomorphic. Let $f:\Lambda\to A$ be a bijection witnessing it. We claim that $f$ witnesses $\rho\restriction A\leq_K\rho$.

Let $F\in\cF$. Since $\rho$ has small accretions, there is $G\in\cF$ such that $G\subseteq F$ and $G$ has small accretions. Enumerate $\Omega=\{o_n:n\in\omega\}$ and define $K_n=\{o_i:i\leq n\}$ and $A_n=f[\rho(G\setminus K_n)]$ for all $n\in\omega$. Then $A_n\supseteq A_{n+1}$. Since $G$ has small accretions and $f$ is a bijection and witnesses that $\I_\rho\restriction A$ and $\I_\rho$ are isomorphic, $A_n\in (\I_\rho\restriction A)^+$ and $A_n\setminus A_{n+1}\subseteq f[\rho(G\setminus K_n)\setminus \rho(G\setminus K_{n+1})]\subseteq f[\rho(G)\setminus \rho(G\setminus K_{n+1})]\in\I_\rho\restriction A$. Using the fact that $\rho$ is $P^-$, we can find $H\in\cF\restriction A$ such that $\rho(H)\subseteq^\rho A_n=f[\rho(G\setminus K_n)]$ for all $n\in\omega$. Hence, given any finite set $K\subseteq\Omega$ there are $n\in\omega$ and finite $L\subseteq\Omega$ such that $K\subseteq K_n$ and $\rho(H\setminus L)\subseteq A_n=f[\rho(G\setminus K_n)]\subseteq f[\rho(G\setminus K)]$.
\end{proof}

\begin{proposition}\ 
\label{prop:rho-is-K-homo}
\begin{enumerate}

\item The ideals $\Hindman$, $\Ramsey$ and $\vdW$ are  homogeneous (hence,  K-homogeneous).\label{prop:rho-is-K-homo:ideals-H-R-W-are-homo} 

\item The functions $\FS$ and $r$ are K-homogeneous.\label{prop:rho-is-K-homo:FS}\label{prop:rho-is-K-homo:r} 

\end{enumerate}
\end{proposition}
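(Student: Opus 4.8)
The plan is to handle the two items separately: item~(1) is the substantive part, and item~(2) then follows by a direct application of Lemma~\ref{lem:K-homogeneous}. First observe that the parenthetical ``hence K-homogeneous'' in item~(1) is immediate: if $\phi:\Lambda\to A$ is a bijection witnessing $\I\restriction A\approx\I$, then in particular $\phi[B]\notin\I\restriction A$ for every $B\notin\I$, so $\phi$ is exactly a Kat\v{e}tov witness for $\I\restriction A\leq_K\I$; since $\I\leq_K\I\restriction A$ always holds, the ideal is K-homogeneous. Thus it suffices to prove that $\Hindman$, $\Ramsey$ and $\vdW$ are homogeneous.

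To prove homogeneity of an ideal $\I\in\{\Hindman,\Ramsey,\vdW\}$, fix $B\in\I^+$; I would first locate a ``canonical'' positive subset $C\subseteq B$ on which the restriction is manifestly isomorphic to $\I$. For $\Hindman$, since $B$ is an IP-set it contains $\FS(D)$ for some infinite sparse $D$, and the vertex map $d_i\mapsto 2^i$ induces a bijection of $\FS(D)$ onto $\FS(\{2^i:i\in\omega\})=\omega\setminus\{0\}$ under which IP-subsets of $\FS(D)$ correspond to IP-subsets of $\FS(\{2^i:i\in\omega\})$, giving $\Hindman\restriction\FS(D)\approx\Hindman$. For $\Ramsey$, $B$ contains $[H]^2$ for an infinite $H$, and any bijection $\omega\to H$ induces a bijection $[\omega]^2\to[H]^2$ sending cliques to cliques, so $\Ramsey\restriction[H]^2\approx\Ramsey$. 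For $\vdW$ I would invoke the corresponding description of the restriction of the van der Waerden ideal to an AP-set (cf.~\cite{MR3594409}).

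The remaining, and most delicate, point of item~(1) is to pass from the canonical subset $C\subseteq B$ to $B$ itself. Here one has $\I\restriction C=(\I\restriction B)\restriction C\approx\I$, while $\I\restriction B$ is itself a restriction of $\I$; thus $\I$ and $\I\restriction B$ are each isomorphic to a positive restriction of the other, and I would close the gap with a Schr\"{o}der--Bernstein-type back-and-forth argument for these ideals (exploiting that each of them contains infinitely many pairwise disjoint canonical copies), or simply cite homogeneity from \cite{MR3594409}. I expect this step to be the main obstacle, the van der Waerden case being the least transparent.

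With item~(1) in hand, item~(2) is routine via Lemma~\ref{lem:K-homogeneous}. For $\rho=\FS$ we have $\I_\rho=\Hindman$, which is homogeneous by item~(1); moreover $\FS$ is weak $P^+$ and hence $P^-$ by Proposition~\ref{prop:Plike-properties-for-known-rho}(\ref{prop:Plike-properties-for-known-rho:FS-r-Delta:Pminus}), and $\FS$ has small accretions by Proposition~\ref{prop:FS-properties:accretion}. The three hypotheses of Lemma~\ref{lem:K-homogeneous} being met, $\FS$ is K-homogeneous. The case $\rho=r$ is identical: $\I_r=\Ramsey$ is homogeneous, $r$ is $P^-$ by the same Proposition~\ref{prop:Plike-properties-for-known-rho}(\ref{prop:Plike-properties-for-known-rho:FS-r-Delta:Pminus}), and $r$ has small accretions by Proposition~\ref{prop:FS-properties:accretion}, so Lemma~\ref{lem:K-homogeneous} gives that $r$ is K-homogeneous. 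Note that $\Delta$ is deliberately absent here, as homogeneity of $\Diff$ is not claimed, so Lemma~\ref{lem:K-homogeneous} cannot be invoked for it.
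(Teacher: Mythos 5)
Your proposal is correct and matches the paper's proof: item (1) is handled in the paper simply by citing \cite[Examples 2.5 and 2.6]{MR3594409} (your fallback option), and item (2) is derived, exactly as you do, from Lemma~\ref{lem:K-homogeneous} together with Propositions~\ref{prop:Plike-properties-for-known-rho}(\ref{prop:Plike-properties-for-known-rho:FS-r-Delta:weakPplus}) and \ref{prop:ideal-rho-is-sparse}. One caution: your alternative ``Schr\"{o}der--Bernstein-type back-and-forth'' for passing from the canonical copy $C$ to an arbitrary $B\in\I^+$ is not automatic (Schr\"{o}der--Bernstein fails for ideal isomorphism in general), so if you wished to avoid the citation you would have to carry that step out in full rather than treat it as routine.
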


\begin{proof}
(\ref{prop:rho-is-K-homo:ideals-H-R-W-are-homo})
See   
\cite[Examples 2.5 and 2.6]{MR3594409}.

(\ref{prop:rho-is-K-homo:FS})
It follows from item (\ref{prop:rho-is-K-homo:ideals-H-R-W-are-homo}), Lemma~\ref{lem:K-homogeneous} and Propositions~\ref{prop:Plike-properties-for-known-rho}(\ref{prop:Plike-properties-for-known-rho:FS-r-Delta:weakPplus}) and \ref{prop:ideal-rho-is-sparse}.
\end{proof}

\begin{question}\ 
\label{q:is-Delta-K-homo}
\label{q:is-summable-ideal-K-homo}
\begin{enumerate}
    \item 
Is the function  $\Delta$ K-homogeneous? \label{q:is-Delta-K-homo:item}
 
\item 
Is the ideal $\I_{1/n}$ K-homogeneous? \label{q:is-summable-K-homo:item}
 
\end{enumerate}
\end{question}


\part{\texorpdfstring{$\FinBW$}{FinBW } spaces}
\label{part:FinBW-spaces}

In this part we define the main object of our studies -- classes of sequentially compact spaces defined with the aid of partition regular functions (Definition \ref{def:FinBW-for-rho}). Next, we prove some general results about those classes of spaces (Theorem \ref{thm:FinBW-hFinBW-homogeneous}).


\section{A convergence with respect to partition regular functions}

\begin{definition}
\label{def:rho-convergence}
Let $X$ be a topological space.
Let $\rho:\cF\to[\Lambda]^\omega$ 
be partition regular with $\cF\subseteq [\Omega]^\omega$.
\begin{enumerate}
    \item 
For $F\in \cF$, a function $f:  \rho(F) \to X$ is called a \emph{$\rho$-sequence} in $X$.

\item 
A $\rho$-sequence  $f:\rho(F) \to X$ is \emph{$\rho$-convergent} 
to a point $x\in X$ if 
for every neighbourhood $U$ of $x$ there is a finite set $K\subseteq\Omega$ such that 
$$f[\rho(F\setminus K)] \subseteq U.$$
\end{enumerate}
\end{definition}

\begin{remark}
Various kinds of convergences considered in the literature can be described in terms of $\rho$-convergence.

\begin{enumerate}

\item 
If   $\rho=\FS$, then $\rho$-convergence coincides  with  
\emph{$IP$-convergence}  (see \cite{MR603625}, \cite{MR531271} or \cite{MR1887003}).

    \item 
If   $\rho=r$, then $\rho$-convergence coincides  with  
the \emph{$\mathcal{R}$-convergence}  (see \cite{BergelsonZelada}, \cite{MR2948679}, \cite[Definition~2.1]{MR4552506}).

\item 
If   $\rho=\Delta$, then $\rho$-convergence coincides  with  
the \emph{differential convergence} (see \cite[Definition~4.2.4]{Shi2003Numbers} or \cite[p.~2010]{MR3097000}).

\item 
If $\I$ is an ideal on $\Lambda$ and $\rho_\I$ is defined as in Proposition~\ref{prop:rho-versus-ideal}(\ref{prop:rho-versus-ideal:ideal-gives-rho}), then $\rho_\I$-convergence coincides with the ordinary convergence.

\end{enumerate}
\end{remark}

The following  proposition reveals relationships 
between $\rho$-convergence and convergence.

\begin{proposition}\ 
\label{prop:basic-relationships-between-rho-like-convergence}
\label{prop:rho-con-implies-suseq-ordinar-comvergence}
\begin{enumerate}
\item 
Let $\rho:\cF\to[\Lambda]^\omega$ 
be partition regular with $\cF\subseteq [\Omega]^\omega$.
Let $F\in \cF$ and $f:\rho(F)\to X$.
\begin{enumerate}
\item If  $f$ is convergent to $L$, then $f\restriction\rho(E)$ is $\rho$-convergent  to $L$ for some $E\in\cF\restriction\rho(F)$.\label{prop:basic-relationships-between-rho-like-convergence:implication} 

\item  
If $f$ is $\rho$-convergent to $L$, then $f\restriction A$ is convergent to $L$ 
for some infinite set $A \subseteq \rho(F)$.\label{prop:rho-con-implies-suseq-ordinar-comvergence:item}

\end{enumerate}

\item 
Let  $\I$ be  an ideal on  $\Lambda$
and  $f:A\to X$ for some  $A\in \I^+$.
Then 
$f$ is convergent  to $L$  $\iff$ $f$ is $\rho_{\I}$-convergent to $L$.\label{prop:basic-relationships-between-rho-like-convergence:ideal-rho} 

\end{enumerate}
\end{proposition}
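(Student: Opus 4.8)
The plan is to handle the three assertions separately, using only the monotonicity condition \nameref{def:partition-regular:monotone} and, for part~(\ref{prop:basic-relationships-between-rho-like-convergence:implication}), the finite-support condition \nameref{def:partition-regular:finitely-supported} through Proposition~\ref{prop:finite-support}. Throughout I read ordinary convergence of a map $g:A\to X$ with $A\in[\Lambda]^\omega$ to a point $L$ as: for every neighbourhood $U$ of $L$ the set $\{a\in A:g(a)\notin U\}$ is finite (this is enumeration-free and matches the usual notion). Part~(\ref{prop:basic-relationships-between-rho-like-convergence:ideal-rho}) is then immediate by unwinding definitions: here $\Omega=\Lambda$, $\cF=\I^+$, and $\rho_\I(A\setminus K)=A\setminus K$ for finite $K\subseteq\Lambda$, so $\rho_\I$-convergence of $f$ to $L$ asks that each neighbourhood $U$ of $L$ admit a finite $K\subseteq\Lambda$ with $f[A\setminus K]\subseteq U$, i.e.\ $\{a\in A:f(a)\notin U\}\subseteq K$; this says precisely that $\{a\in A:f(a)\notin U\}$ is finite (for the converse take $K=\{a\in A:f(a)\notin U\}$), which is ordinary convergence.

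For part~(\ref{prop:basic-relationships-between-rho-like-convergence:implication}) I would first apply Proposition~\ref{prop:finite-support} to fix $E\in\cF$ with $E\subseteq F$, so that $\rho(E)\subseteq\rho(F)$ by \nameref{def:partition-regular:monotone} (whence $E\in\cF\restriction\rho(F)$, as $\rho(F)\in\I_\rho^+$ trivially), and such that every finite $S\subseteq\Lambda$ can be avoided by deleting a finite $K\subseteq\Omega$, meaning $\rho(E\setminus K)\subseteq\rho(E)\setminus S$. Given a neighbourhood $U$ of $L$, ordinary convergence makes $S:=\{a\in\rho(F):f(a)\notin U\}$ finite; choosing the corresponding $K$ gives $\rho(E\setminus K)\subseteq\rho(E)\setminus S\subseteq\rho(F)\setminus S\subseteq f^{-1}[U]$, hence $f[\rho(E\setminus K)]\subseteq U$ and $f\restriction\rho(E)$ is $\rho$-convergent to $L$. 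The finite-support property is exactly what converts ``avoid a finite subset of $\Lambda$'' into ``delete a finite subset of $\Omega$'', and is the only real ingredient of this part.

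For part~(\ref{prop:rho-con-implies-suseq-ordinar-comvergence:item}) the idea is to build a single infinite $A\subseteq\rho(F)$ that is almost contained in every tail $\rho(F\setminus K)$; this sidesteps any first-countability issue. Enumerate $\Omega=\{o_n:n\in\omega\}$, set $K_n=\{o_i:i<n\}$, and note the tails $\rho(F\setminus K_n)$ are infinite and, by \nameref{def:partition-regular:monotone}, $\subseteq$-decreasing. Recursively pick distinct $a_n\in\rho(F\setminus K_n)\setminus\{a_0,\dots,a_{n-1}\}$ and put $A=\{a_n:n\in\omega\}$. For each $m$ we have $a_n\in\rho(F\setminus K_n)\subseteq\rho(F\setminus K_m)$ whenever $n\geq m$, so $A\setminus\rho(F\setminus K_m)\subseteq\{a_0,\dots,a_{m-1}\}$ is finite; since every finite $K\subseteq\Omega$ lies in some $K_m$ and then $\rho(F\setminus K)\supseteq\rho(F\setminus K_m)$, we obtain $A\subseteq^*\rho(F\setminus K)$ for every finite $K\subseteq\Omega$. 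Now for a neighbourhood $U$ of $L$, $\rho$-convergence supplies a finite $K$ with $\rho(F\setminus K)\subseteq f^{-1}[U]$, whence $\{a\in A:f(a)\notin U\}\subseteq A\setminus\rho(F\setminus K)$ is finite, so $f\restriction A$ converges to $L$. The subtle point, and the main thing to get right, is that the diagonalisation runs over the countably many index-tails rather than over neighbourhoods of $L$, which is what keeps the argument valid in arbitrary, possibly non-first-countable, spaces.
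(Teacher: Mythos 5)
Your proof is correct and follows essentially the same route as the paper: part (1a) via Proposition~\ref{prop:finite-support} combined with monotonicity, part (1b) via the diagonal selection of points $a_n\in\rho(F\setminus\{o_i:i<n\})$ over an enumeration of $\Omega$, and part (2) by direct unwinding of the definitions (which the paper simply labels straightforward). No gaps.
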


\begin{proof}
(\ref{prop:basic-relationships-between-rho-like-convergence:implication})
Let $E\in\cF$ with $E\subseteq F$ be as in Proposition~\ref{prop:finite-support} and let $U$ be a neighborhood of $L$. 
Then there exists a finite set  $K$ such  that $f(n)\in U$ for every $n\in \rho(F)\setminus K$.
There is a finite set $L$ such that $\rho(E\setminus L)\subseteq \rho(E)\setminus K\subseteq \rho(F)\setminus K$. Consequently, $f(n)\in U$ for every $n\in \rho(E\setminus L)$.

(\ref{prop:rho-con-implies-suseq-ordinar-comvergence:item})
Let $\Omega=\{o_n:n\in\omega\}$.
For each $n\in\omega$, we pick 
$\lambda_n \in \rho(F\setminus\{o_i:i<n\})\setminus \{\lambda_i:i<n\}$.
Let $A=\{\lambda_n:n\in\omega\}$. We claim that $f\restriction A$ is convergent to $L$.
Indeed, if $U$ is a neighborhood of $L$, then there is a finite set $K\subseteq \Omega$ such that 
$f[\rho(F\setminus K)]\subseteq U$.
Let $n\in\omega$ be such that $K\subseteq \{o_i:i<n\}$.
Then 
$
f[A\setminus \{\lambda_i:i<n\}] 
\subseteq 
f[\rho(F\setminus \{o_i:i<n\})] 
\subseteq f[\rho(F\setminus K)]
\subseteq U
$.

(\ref{prop:basic-relationships-between-rho-like-convergence:ideal-rho})
It is straightforward. 
\end{proof}


\section{\texorpdfstring{$\FinBW$}{FinBW } spaces}
\label{sec:FinBW}

Let $\I$ be an ideal on a countable infinite set $\Lambda$.
The following classes of topological spaces were extensively examined in the literature 
(see e.g.~\cite{MR2320288, MR2471564, MR4584767}):
\begin{enumerate}
    \item 
$\FinBW(\I)$ is the class of all topological spaces $X$ such that for every sequence $f:\Lambda\to X$ there exists $A\in \I^+$ such that $f\restriction A$ converges 
(in \cite{MR2471564}, spaces from  $\FinBW(\I)$ are called \emph{$\I$-spaces});
    \item 
$\hFinBW(\I)$ is   the class of all topological spaces $X$ such that for every 
 $B\in \I^+$ and every sequence $f:B\to X$ 
there exists $A\in \I^+$ such that $A\subseteq B$ and $f\restriction A$ converges.
\end{enumerate}

\begin{remark}
The classes $\FinBW(\I_{\rho})$ for $\rho\in \{\vdW,\I_{1/n}\}$
were  examined in the literature under other names:
\begin{enumerate}
    \item 
in \cite[Definition~3]{MR1866012}, spaces from $\FinBW(\vdW)$ are called  
 \emph{van der Waerden spaces};

\item in \cite[Definition~2.1]{MR2471564},  spaces from $\FinBW(\I_{1/n})$
are called \emph{$\I_{1/n}$-spaces}.
\end{enumerate}
\end{remark}

\begin{definition}
\label{def:FinBW-for-rho}
Let $\rho:\cF\to[\Lambda]^\omega$ 
be a partition regular function.
\begin{enumerate}
 \item 
$\FinBW(\rho)$ is  the class of all topological spaces $X$ such that 
for every sequence $f:\Lambda\to X$ there exists $F\in \cF$ such that $f\restriction \rho(F)$ $\rho$-converges.
 \item 
$\hFinBW(\rho)$ is the class of all topological spaces $X$ such that for 
every $\rho$-sequence $f:\rho(E)\to X$ 
there exists $F\in \cF$ such that $\rho(F)\subseteq \rho(E)$ and $f\restriction \rho(F)$ $\rho$-converges.
\end{enumerate}
\end{definition}

\begin{remark}
The classes $\FinBW(\rho)$ for $\rho\in \{\FS,r,\Delta\}$
were examined in the literature under other names:
\begin{enumerate}

    \item 
in \cite[Definition~4]{MR1887003}, 
 spaces from $\FinBW(\FS)$ are called \emph{Hindman spaces};

\item 
in \cite{MR2948679} (see also \cite[Definition~2.1]{MR4552506}), 
 spaces from $\FinBW(r)$ are called 
\emph{spaces with the Ramsey property}, and we will call them \emph{Ramsey  spaces} in short;

\item 
in \cite[Definition~4.2.4]{Shi2003Numbers} (see also \cite[p.~2010]{MR3097000}),
 spaces in $\FinBW(\Delta)$ 
are called \emph{differentially compact spaces}.

\end{enumerate}
\end{remark}

\begin{remark}
Recall that if $(\Lambda,<)$ is a directed set, then  any function $f:\Lambda\to X$ is called a \emph{net} in $X$. 
A net $f:\Lambda\to X$ in a topological space $X$  \emph{converges} to $x\in X$ if for every neighborhood $U$ of $x$ there is $\lambda_0\in\Lambda$ such that $f(\lambda)\in U$ for every $\lambda>\lambda_0$ (see e.g.~\cite[p.~49]{MR1039321}).
In \cite[Remark 2.6]{MR3461181}, the authors notice that 
if $\cB$ is a coideal basis on  $(\Lambda,<)$, then 
$(B,<\cap(B\times B))$ is a directed set and 
$f\restriction B$ is a subnet of $f$ for every $B\in \cB$.
 Furthermore, they examine topological spaces $X$ having the property that every  net $f:\Lambda\to X$ has a convergent subnet $f\restriction B$ with some $B\in \cB$ (\cite[p.~418]{MR3461181}). 
It is not difficult to see that the class of spaces they examine coincides with the class  
$\FinBW(\rho_{\cB})$ with $\rho_\cB$ defined as in Proposition~\ref{prop:rho-versus-ideal-on-directed-sets}(\ref{prop:rho-versus-ideal-on-directed-sets:ideal-gives-rho:coideal-basis}). 
\end{remark}

The following proposition reveals relationships 
between $\FinBW$-like spaces defined with the aid of partition regular functions and ideals.

\begin{proposition}
\label{prop:basic-relationships-between-FinBW-like-spaces}
 Let $\rho:\cF\to[\Lambda]^\omega$ 
be partition regular  with $\cF\subseteq [\Omega]^\omega$.
Let  $\I$ be  an ideal on $\Lambda$.
\begin{enumerate}

\item \label{prop:basic-relationships-between-FinBW-like-spaces:hFinBW-equals-intersection-of-FinBW}
\begin{enumerate}
\item $\hFinBW(\rho) = \bigcap\{\FinBW(\rho\restriction \rho(F)):F\in \cF\}$.\label{prop:basic-relationships-between-FinBW-like-spaces:hFinBW-equals-intersection-of-FinBW:rho}
\item $\hFinBW(\I) = \bigcap\{\FinBW(\I\restriction A):A\in \I^+\}$.
\end{enumerate}

\item \label{prop:basic-relationships-between-FinBW-like-spaces:inclusion}
\begin{enumerate}
\item
$\hFinBW(\rho) \subseteq \FinBW(\rho)$.
\item 
$\hFinBW(\I) \subseteq \FinBW(\I)$.
\end{enumerate}

\item 
$\FinBW(\I_\rho) \subseteq \FinBW(\rho)$  and $\hFinBW(\I_\rho) \subseteq  \hFinBW(\rho)$.\label{prop:basic-relationships-between-FinBW-like-spaces:rho-ideal}
    \item $\FinBW(\I) = \FinBW(\rho_\I)$ and  $\hFinBW(\I) = \hFinBW(\rho_\I)$.\label{prop:basic-relationships-between-FinBW-like-spaces:ideal-rho}

    \item \label{prop:basic-relationships-between-FinBW-like-spaces:K-homo}\label{thm:FinBW-hFinBW-homogeneous:-equality}
\begin{enumerate}
\item
If $\rho$ is K-homogeneous, then 
$\hFinBW(\rho)=\FinBW(\rho)$.\label{prop:basic-relationships-between-FinBW-like-spaces:K-homo:rho}
    \item If $\I$ is K-homogeneous, then 
$\hFinBW(\I)=\FinBW(\I)$.\label{prop:basic-relationships-between-FinBW-like-spaces:K-homo:ideal}
\end{enumerate}

\end{enumerate}
\end{proposition}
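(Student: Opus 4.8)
The plan is to dispatch the structural items (1)--(4) by unwinding the definitions together with the correspondence between ordinary and $\rho$-convergence established earlier, and to reserve the real work for the K-homogeneous case in item (5).

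I would begin with item (4), which is immediate: by Proposition~\ref{prop:basic-relationships-between-rho-like-convergence:ideal-rho} a function $f:A\to X$ with $A\in\I^+$ converges if and only if it is $\rho_\I$-convergent, and since $\rho_\I(A)=A$ the defining clauses of $\FinBW(\I)$ and $\FinBW(\rho_\I)$ (resp.\ of $\hFinBW(\I)$ and $\hFinBW(\rho_\I)$) become literally the same statement. For item (1)(a) I would note that for each $F\in\cF$ the space-index of $\rho\restriction\rho(F)$ is $\rho(F)$, that $\rho\restriction\rho(F)$ is partition regular with the same underlying $\Omega$ (Proposition~\ref{prop:restrictions-of-rho}), and that by monotonicity (condition \nameref{def:partition-regular:monotone}) the $(\rho\restriction\rho(F))$-convergence of a restricted sequence coincides with its $\rho$-convergence; hence membership in $\FinBW(\rho\restriction\rho(F))$ is exactly the $\hFinBW(\rho)$ clause restricted to $\rho$-sequences living on $\rho(F)$, and intersecting over all $F\in\cF$ reproduces $\hFinBW(\rho)$, giving both inclusions. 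Item (1)(b) then follows from (1)(a) applied to $\rho_\I$ together with item (4) and the identity $\rho_\I\restriction\rho_\I(F)=\rho_{\I\restriction F}$ for $F\in\I^+$ (noted in the proof of Proposition~\ref{prop:Katetov-for-rho-functions:homo:homo:rho-via-ideal}). The inclusions in item (2) are one-line applications of the hereditary clause: for (2)(a) fix any $E_0\in\cF$, restrict a given $f:\Lambda\to X$ to the $\rho$-sequence $f\restriction\rho(E_0)$ and feed it to $\hFinBW(\rho)$; for (2)(b) apply the $\hFinBW(\I)$ clause with $B=\Lambda$.

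For item (3) I would use Proposition~\ref{prop:basic-relationships-between-rho-like-convergence:implication} to convert ordinary convergence into $\rho$-convergence. Given $X\in\FinBW(\I_\rho)$ and $f:\Lambda\to X$, the $\FinBW(\I_\rho)$ clause yields $A\notin\I_\rho$ with $f\restriction A$ convergent; since $A\notin\I_\rho$ there is $F\in\cF$ with $\rho(F)\subseteq A$, so $f\restriction\rho(F)$ is convergent, and Proposition~\ref{prop:basic-relationships-between-rho-like-convergence:implication} produces $E\in\cF$ with $\rho(E)\subseteq\rho(F)$ such that $f\restriction\rho(E)$ is $\rho$-convergent. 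The hereditary version $\hFinBW(\I_\rho)\subseteq\hFinBW(\rho)$ is the same argument carried out inside $B=\rho(E_0)$, which lies in $\I_\rho^+$ because $\rho(E_0)\subseteq\rho(E_0)$.

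The heart of the matter is item (5)(a); granting it, (5)(b) is automatic from item (4) and the equivalence ``$\I$ is K-homogeneous $\iff$ $\rho_\I$ is K-homogeneous'' of Proposition~\ref{prop:Katetov-for-rho-functions:homo:homo:rho-via-ideal:item2}. Since $\hFinBW(\rho)\subseteq\FinBW(\rho)$ is item (2)(a), only $\FinBW(\rho)\subseteq\hFinBW(\rho)$ needs proof. I would take $X\in\FinBW(\rho)$ and a $\rho$-sequence $f:\rho(E_0)\to X$, set $A=\rho(E_0)\in\I_\rho^+$, and use K-homogeneity to fix a witness $\phi:\Lambda\to A$ of $\rho\restriction A\leq_K\rho$ (Definition~\ref{Katetov-order-for-rho}). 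The key move is to apply the hypothesis $X\in\FinBW(\rho)$ not to $f$ but to the \emph{pulled-back} sequence $g=f\circ\phi:\Lambda\to X$, obtaining $F_1\in\cF$ with $g\restriction\rho(F_1)$ $\rho$-convergent to some $L$. The Kat\v{e}tov condition then supplies $F_2\in\cF\restriction A$ so that for every finite $K_1$ there is a finite $K_2$ with $\rho(F_2\setminus K_2)\subseteq\phi[\rho(F_1\setminus K_1)]$; feeding a neighbourhood $U$ of $L$ through $g[\rho(F_1\setminus K_1)]=f[\phi[\rho(F_1\setminus K_1)]]\subseteq U$ gives $f[\rho(F_2\setminus K_2)]\subseteq U$, so $f\restriction\rho(F_2)$ is $\rho$-convergent to $L$ and $\rho(F_2)\subseteq A=\rho(E_0)$ witnesses $\hFinBW(\rho)$. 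The main obstacle is precisely to realise that one must transport the problem to $\Lambda$ via $\phi$ before invoking $\FinBW(\rho)$, and then to verify that the defining inequality of the Kat\v{e}tov order for partition regular functions converts the $\rho$-limit of $g$ back into a genuine $\rho$-limit of $f$ along $\rho(F_2)$.
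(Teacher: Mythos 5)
Your proposal is correct and follows essentially the same route as the paper: items (1)--(4) are dispatched by unwinding definitions together with Proposition~\ref{prop:basic-relationships-between-rho-like-convergence}, and for item (5)(a) you pull the $\rho$-sequence back along a Kat\v{e}tov witness $\phi:\Lambda\to\rho(E_0)$, apply $\FinBW(\rho)$ to $f\circ\phi$, and transfer the $\rho$-limit back via the inclusion $\rho(F_2\setminus K_2)\subseteq\phi[\rho(F_1\setminus K_1)]$, exactly as in the paper's argument. No gaps.
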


\begin{proof}
(\ref{prop:basic-relationships-between-FinBW-like-spaces:hFinBW-equals-intersection-of-FinBW}) and (\ref{prop:basic-relationships-between-FinBW-like-spaces:inclusion}) Straightforward.

(\ref{prop:basic-relationships-between-FinBW-like-spaces:rho-ideal})
It follows from Proposition~\ref{prop:basic-relationships-between-rho-like-convergence}(\ref{prop:basic-relationships-between-rho-like-convergence:implication}) (the other inclusion does not follow from Proposition~\ref{prop:basic-relationships-between-rho-like-convergence}(\ref{prop:rho-con-implies-suseq-ordinar-comvergence:item}) as it gives us only an infinite set $A$, not necessarily $A\in\I_\rho^+$).

(\ref{prop:basic-relationships-between-FinBW-like-spaces:ideal-rho})
It follows from Proposition~\ref{prop:basic-relationships-between-rho-like-convergence}(\ref{prop:basic-relationships-between-rho-like-convergence:ideal-rho}). 

(\ref{prop:basic-relationships-between-FinBW-like-spaces:K-homo:rho})
We only need to show 
$\FinBW(\rho)\subseteq \hFinBW(\rho)$.
Let $X\in\FinBW(\rho)$ and $f:\rho(E)\to X$ be a $\rho$-sequence in $X$ for some $E\in \cF$.
Let $\phi:\Lambda\to\rho(E)$ be a witness for 
$\rho \restriction (\cF\restriction\rho(E))\leq_K \rho$. 
Since $f\circ \phi:\Lambda\to X$, there is $F\in \cF$ such that $f\circ\phi\restriction\rho(F)$ is $\rho$-convergent to some $x\in X$.
Since 
$\rho \restriction (\cF\restriction\rho(E))\leq_K \rho$,
there is $G\in \cF\restriction \rho(E)$  such that
for every finite set $K\subseteq\Omega$ there is a finite set $L\subseteq \Omega$
with 
$\rho(G\setminus L) \subseteq \phi[\rho(F\setminus K)]$.
We claim that $f\restriction \rho(G)$ is $\rho$-convergent to $x$.
Let $U$ be a neighborhood of $x$.
Then there is a finite set $K\subseteq\Omega$ such that $(f\circ\phi)[\rho(F\setminus  K)]\subseteq U$.
We pick a finite set $L\subseteq\Omega$ such that 
$\rho(G\setminus L) \subseteq \phi[\rho(F\setminus K)]$.
Then 
$f[\rho(G\setminus L)] \subseteq f[\phi[\rho(F\setminus K)]] \subseteq U$, so the proof is finished.

(\ref{prop:basic-relationships-between-FinBW-like-spaces:K-homo:ideal})
It follows from items (\ref{prop:basic-relationships-between-FinBW-like-spaces:K-homo:rho}) and 
(\ref{prop:basic-relationships-between-FinBW-like-spaces:ideal-rho})
and Proposition~\ref{prop:Katetov-for-rho-functions:homo:homo:rho-via-ideal}(\ref{prop:Katetov-for-rho-functions:homo:homo:rho-via-ideal:item2}).
\end{proof}

\begin{remark}
In Theorem~\ref{prop:basic-relationships-between-FinBW-like-spaces}(\ref{prop:basic-relationships-between-FinBW-like-spaces:rho-ideal}), we cannot replace inclusion with equality in general because in \cite[Theorems~3 and 10]{MR1887003} the author proved that $\FinBW(\Hindman)$ contains only finite Hausdorff spaces, whereas $\FinBW(\FS)$ contains infinite (even uncountable) ones.
\end{remark}

\begin{corollary}\ 
\begin{enumerate}
\item
\cite[Proposition~4]{MR1866012} $\hFinBW(\vdW)=\FinBW(\vdW)$, and consequently the product of two van der Waerden spaces is van der Waerden.
\item  
\cite[Lemma~8]{MR1887003} $\hFinBW(\FS)=\FinBW(\FS)$, and consequently the product of two Hindman spaces is Hindman.
\item 
\cite[Theorem~3.4]{MR4552506} $\hFinBW(r)=\FinBW(r)$, and consequently the product of two Ramsey spaces is Ramsey.
\end{enumerate}
\end{corollary}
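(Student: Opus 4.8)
The plan is to obtain the three equalities from the K-homogeneity results already in hand, and then to read off the three product assertions from a single general principle: whenever $\hFinBW(\rho)=\FinBW(\rho)$, the class $\FinBW(\rho)$ is closed under finite products. The equalities are immediate. The van der Waerden ideal $\vdW$ is homogeneous, hence K-homogeneous, by Proposition~\ref{prop:rho-is-K-homo}(\ref{prop:rho-is-K-homo:ideals-H-R-W-are-homo}), so Proposition~\ref{prop:basic-relationships-between-FinBW-like-spaces}(\ref{prop:basic-relationships-between-FinBW-like-spaces:K-homo:ideal}) yields $\hFinBW(\vdW)=\FinBW(\vdW)$; and $\FS$ and $r$ are K-homogeneous by Proposition~\ref{prop:rho-is-K-homo}(\ref{prop:rho-is-K-homo:FS}), so Proposition~\ref{prop:basic-relationships-between-FinBW-like-spaces}(\ref{prop:basic-relationships-between-FinBW-like-spaces:K-homo:rho}) yields $\hFinBW(\FS)=\FinBW(\FS)$ and $\hFinBW(r)=\FinBW(r)$.

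For the product statements I would establish the general fact that $\hFinBW(\rho)=\FinBW(\rho)$ together with $X,Y\in\FinBW(\rho)$ implies $X\times Y\in\FinBW(\rho)$ (for the van der Waerden case applied with $\rho=\rho_\vdW$, using $\FinBW(\vdW)=\FinBW(\rho_\vdW)$ from Proposition~\ref{prop:basic-relationships-between-FinBW-like-spaces}(\ref{prop:basic-relationships-between-FinBW-like-spaces:ideal-rho})). Given a sequence $f=(g,h)\colon\Lambda\to X\times Y$, I would first use $X\in\FinBW(\rho)$ to find $F\in\cF$ such that $g\restriction\rho(F)$ is $\rho$-convergent to some $x$, and then, viewing $h\restriction\rho(F)$ as a $\rho$-sequence in $Y$ and using $Y\in\hFinBW(\rho)$, find $G\in\cF$ with $\rho(G)\subseteq\rho(F)$ such that $h\restriction\rho(G)$ is $\rho$-convergent to some $y$. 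Once we know that $g\restriction\rho(G)$ is still $\rho$-convergent to $x$, the $\rho$-sequence $f\restriction\rho(G)$ $\rho$-converges to $(x,y)$, giving $X\times Y\in\FinBW(\rho)$.

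The crux, and the main obstacle, is precisely the persistence of the first coordinate: the hereditariness of $\rho$-convergence along $\rho(G)\subseteq\rho(F)$ with $G\in\cF$. This reduces to the combinatorial claim that for every finite $K\subseteq\Omega$ there is a finite $K'\subseteq\Omega$ with $\rho(G\setminus K')\subseteq\rho(F\setminus K)$; granting it, the finite set $K$ furnished by the $\rho$-convergence of $g\restriction\rho(F)$ on a neighbourhood $U$ of $x$ transfers to $K'$, whence $g[\rho(G\setminus K')]\subseteq g[\rho(F\setminus K)]\subseteq U$. For $\vdW$ the claim is trivial, since by Proposition~\ref{prop:basic-relationships-between-rho-like-convergence}(\ref{prop:basic-relationships-between-rho-like-convergence:ideal-rho}) $\rho_\vdW$-convergence is ordinary convergence and $\rho(G)\subseteq\rho(F)$ merely means $G\subseteq F$, so one restricts a convergent sequence to a subsequence. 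For $r$ it is just as easy, because $[G]^2\subseteq[F]^2$ forces $G\subseteq F$ and one may take $K'\supseteq G\cap K$.

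The genuinely delicate case is $\FS$, where $\FS(G)\subseteq\FS(F)$ does \emph{not} force $G\subseteq F$. Here I would first pass, using condition (S) of Definition~\ref{def:partition-regular} and Lemma~\ref{lem:KOJMAN}, to sparse representatives in which every element of $G$ has a well-defined support $\alpha_F$ inside $F$ and in which these supports are pairwise disjoint. Then the set $\FS(G)\cap\bigl(\FS(F)\setminus\FS(F\setminus K)\bigr)$ consists exactly of the sums that involve some generator of $G$ whose support meets the finite set $K$; there are only finitely many such generators, so taking $K'$ to be this finite set gives $\FS(G\setminus K')\subseteq\FS(F\setminus K)$. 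This is the one place where the concrete combinatorics of sparse sets, encapsulated in the small-accretions property of $\FS$ (Proposition~\ref{prop:ideal-rho-is-sparse}), rather than the abstract framework alone, does the work.
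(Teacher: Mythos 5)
Your derivation of the three equalities is exactly the paper's proof: the paper's argument consists precisely of the two citations you give, Proposition~\ref{prop:rho-is-K-homo} together with Proposition~\ref{prop:basic-relationships-between-FinBW-like-spaces}(\ref{prop:basic-relationships-between-FinBW-like-spaces:K-homo}). For the product assertions the paper offers no argument at all (they are treated as standard consequences of $\hFinBW=\FinBW$, attributed to the cited references), so there you go beyond the paper, and you do so correctly: you have isolated the one point the abstract framework does not settle, namely the persistence of $\rho$-convergence when passing from $F$ to a $G\in\cF$ with merely $\rho(G)\subseteq\rho(F)$ rather than $G\subseteq F$. Your case analysis is sound. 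For $\FS$ the only refinement worth recording is the order of operations: you must shrink $F$ to a \emph{very} sparse $F'\subseteq F$ \emph{before} extracting $G$ (convergence of $g\restriction\FS(F')$ survives because $F'\subseteq F$ gives $\FS(F'\setminus K)\subseteq\FS(F\setminus K)$), and it is very sparseness, not mere sparseness, that forces the supports $\alpha_{F'}(a)$, $a\in G$, to be pairwise disjoint (if $\alpha_{F'}(a)\cap\alpha_{F'}(b)\neq\emptyset$ then $a+b\notin\FS(F')$, contradicting $\FS(G)\subseteq\FS(F')$); after that your finite set $K'=\{a\in G:\alpha_{F'}(a)\cap K\neq\emptyset\}$ does the job. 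This is the same very-sparse combinatorics that underlies the small-accretions proof for $\FS$ in Proposition~\ref{prop:ideal-rho-is-sparse}, as you observe, though what is actually used is the disjoint-supports property rather than the small-accretions statement itself.
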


\begin{proof}
It follows from Theorem~\ref{prop:basic-relationships-between-FinBW-like-spaces}(\ref{prop:basic-relationships-between-FinBW-like-spaces:K-homo}) and 
Proposition~\ref{prop:rho-is-K-homo}.
\end{proof}

\begin{question}[{\cite{Flaskova-slides-2008, Flaskova-slides-2014} and \cite[Question~4.2.3]{Shi2003Numbers}}]\ 
\label{q:FinBW=hFinBW-for-summable}
\label{q:product-of-summable-spaces}
\label{q:FinBW=hFinBW-for-differentially-compact}
\label{q:product-of-diff-comp-spaces}
\begin{enumerate}
    \item 
    \begin{enumerate}
    \item Does $\FinBW(\I_{1/n})=\hFinBW(\I_{1/n})$?\label{q:FinBW=hFinBW-for-summable:item}

\item Is the product of two $\I_{1/n}$-spaces an $\I_{1/n}$-space?\label{q:product-of-summable-spaces:item}

\end{enumerate}

\item 
\begin{enumerate}
    \item Does $\FinBW(\Delta)=\hFinBW(\Delta)$?\label{q:FinBW=hFinBW-for-differentially-compact:item}

\item Is the product of two differentially compact spaces a differentially compact space?\label{q:product-of-diff-comp-spaces:item}
\end{enumerate}
\end{enumerate}
\end{question}

Note that the positive answer to the question in item (\ref{q:FinBW=hFinBW-for-summable:item})  gives the  positive answer to the question in item (\ref{q:product-of-summable-spaces:item}), and similarly for the questions in the  second item. Moreover, the positive answer to the Question \ref{q:is-Delta-K-homo}(\ref{q:is-Delta-K-homo:item}) (Question \ref{q:is-Delta-K-homo}(\ref{q:is-summable-K-homo:item}), resp.)   gives the  positive answer to Question~\ref{q:FinBW=hFinBW-for-differentially-compact}(\ref{q:FinBW=hFinBW-for-differentially-compact:item}) (Question~\ref{q:FinBW=hFinBW-for-summable}(\ref{q:FinBW=hFinBW-for-summable:item}), resp.).

Let us now turn to one of the main result of this paper.

\begin{theorem}
\label{thm:FinBW-hFinBW-homogeneous}
\label{thm:compact-metric-implies-FinBW(rho)}
\label{thm:Pminus-implies-various-spaces-in-FinBW(rho)}
Let $\rho:\cF\to[\Lambda]^\omega$ 
be partition regular with $\cF\subseteq [\Omega]^\omega$.

\begin{enumerate}

\item 
$\FinBW(\rho)$ contains all finite spaces and is a subclass of the class of all sequentially compact spaces.\label{thm:FinBW-hFinBW-homogeneous:SEQ-COMP}\label{thm:FinBW-hFinBW-homogeneous:inclusion}\label{thm:compact-metric-implies-FinBW(rho):skonczone}

\item $\rho$ is not tall $\iff$  $\FinBW(\rho)$ coincides with the class of  all sequentially compact spaces.\label{thm:FinBW-hFinBW-homogeneous:-equality-for-SeqComp:equivalence}\label{thm:FinBW-hFinBW-homogeneous:-equality-for-SeqComp}

\item The following conditions are equivalent.\label{thm:compact-metric-implies-FinBW(rho):Pminus-is-necessary}
\begin{enumerate}
    \item $\rho$ is  $P^-(\Lambda)$.
    \item There are Hausdorff compact spaces of arbitrary cardinality  that belong to $\FinBW(\rho)$.
    \item There exists an infinite Hausdorff topological space $X\in  \FinBW(\rho)$.
\end{enumerate}

\item The following conditions are equivalent.\label{thm:compact-metric-implies-FinBW(rho):Pminus-is-necessary-hFinBW}
\begin{enumerate}
    \item $\rho$ is  $P^-$.
    \item There are Hausdorff compact spaces of arbitrary cardinality  that belong to $\hFinBW(\rho)$.
    \item There exists an infinite Hausdorff topological space $X\in  \hFinBW(\rho)$.
\end{enumerate}

    \item 
If $\rho$ is  $P^-$, then  
\begin{enumerate}
    \item 
the  uncountable non-compact Hausdorff  space $X=\omega_1$ with the order topology belongs to $\FinBW(\rho)$,\label{thm:Pminus-implies-various-spaces-in-FinBW(rho):omega-1}

\item assuming  Continuum Hypothesis (CH) there are Hausdorff compact and separable spaces of cardinality $\continuum$  that belong to $\FinBW(\rho)$.\label{thm:Pminus-implies-various-spaces-in-FinBW(rho):Mrowka-separable}
\end{enumerate}

    \item 
If $\rho$ is weak  $P^+$, then  
every compact metric space is in $\hFinBW(\rho)$.\label{thm:compact-metric-implies-FinBW(rho):metric-compact-spaces}

    \item 
If $\rho$ is $P^+$, then every Hausdorff topological space with the property $(*)$ belongs to 
$\hFinBW(\rho)$.\label{thm:compact-metric-implies-FinBW(rho):spaces-with-star-property}

(A topological space $X$ has the \emph{property (*)} if for every countable set $D\subseteq X$ the closure $\closure_X(D)$ is compact and first-countable -- see \cite{MR1866012}.) 

\end{enumerate}

\end{theorem}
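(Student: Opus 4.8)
The plan is to prove the seven items in order of increasing difficulty, reusing the $P$-like machinery of the previous section together with the two transfer facts Proposition~\ref{prop:basic-relationships-between-rho-like-convergence}(\ref{prop:basic-relationships-between-rho-like-convergence:implication}) and Proposition~\ref{prop:rho-con-implies-suseq-ordinar-comvergence}(\ref{prop:rho-con-implies-suseq-ordinar-comvergence:item}) linking $\rho$-convergence with ordinary convergence. For item~(\ref{thm:FinBW-hFinBW-homogeneous:inclusion}) I would first observe that a finite space is covered by finitely many fibres $f^{-1}(x)$, so one of them lies in $\I_\rho^+$ (a finite union of $\I_\rho$-sets cannot equal $\Lambda$); picking $F\in\cF$ with $\rho(F)\subseteq f^{-1}(x)$ makes $f\restriction\rho(F)$ constant, hence $\rho$-convergent. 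The inclusion $\FinBW(\rho)\subseteq\SeqComp$ then follows from Proposition~\ref{prop:rho-con-implies-suseq-ordinar-comvergence}(\ref{prop:rho-con-implies-suseq-ordinar-comvergence:item}) after fixing a bijection $\Lambda\cong\omega$. For the easy direction of item~(\ref{thm:FinBW-hFinBW-homogeneous:-equality-for-SeqComp}) I would use Proposition~\ref{prop:prop-of-talness}: if $\rho$ is not tall, pick $F_0$ with $\I_\rho\restriction\rho(F_0)=\Fin(\rho(F_0))$; in a sequentially compact $X$ the sequence $f\restriction\rho(F_0)$ has an ordinary convergent subsequence on an infinite $A\subseteq\rho(F_0)$, and such $A$ is automatically in $\I_\rho^+$, so Proposition~\ref{prop:basic-relationships-between-rho-like-convergence}(\ref{prop:basic-relationships-between-rho-like-convergence:implication}) upgrades it to a $\rho$-convergent $\rho$-subsequence, giving $X\in\FinBW(\rho)$.

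For the equivalences in items~(\ref{thm:compact-metric-implies-FinBW(rho):Pminus-is-necessary}) and~(\ref{thm:compact-metric-implies-FinBW(rho):Pminus-is-necessary-hFinBW}) the implications (b)$\Rightarrow$(c) are trivial, so the work is (a)$\Rightarrow$(b) and (c)$\Rightarrow$(a). For (a)$\Rightarrow$(b) I would show that every ordinal space $[0,\kappa]$ with the order topology (compact, Hausdorff, sequentially compact, of arbitrary cardinality) lies in $\hFinBW(\rho)$ when $\rho$ is $P^-$ (resp.\ in $\FinBW(\rho)$ when $\rho$ is $P^-(\Lambda)$). Given a $\rho$-sequence $f$ into $[0,\kappa]$, set $\alpha^*=\min\{\alpha: f^{-1}[[0,\alpha]]\in\I_\rho^+\}$. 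If $\alpha^*$ is a successor or has uncountable cofinality, then since the image of $f$ is countable the fibre $f^{-1}(\{\alpha^*\})$ is already in $\I_\rho^+$ and a constant restriction works; if $\alpha^*$ is a limit of countable cofinality, fix $\beta_n\uparrow\alpha^*$ and apply $P^-$ to the $\subseteq$-decreasing sequence $A_n=f^{-1}[(\beta_n,\alpha^*]]$ (prepending $A_0=\Lambda$ in the $P^-(\Lambda)$ case), whose consecutive differences $f^{-1}[(\beta_n,\beta_{n+1}]]$ lie in $\I_\rho$ by minimality of $\alpha^*$; this yields $F$ with $\rho(F)\subseteq^\rho A_n$, i.e.\ $f\restriction\rho(F)$ $\rho$-converging to $\alpha^*$. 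It is precisely the minimality of $\alpha^*$ that pushes the differences into the ideal, which is why $P^-$ rather than $P^+$ suffices. For (c)$\Rightarrow$(a): from an infinite Hausdorff $X\in\FinBW(\rho)$ (resp.\ $\hFinBW(\rho)$) I would extract a nontrivial convergent sequence $y_k\to y$ with the $y_k$ distinct and $\neq y$ (possible since, by item~(\ref{thm:FinBW-hFinBW-homogeneous:inclusion}), $X$ is sequentially compact). Given $P^-(\Lambda)$-data, a $\subseteq$-decreasing $A_n\in\I_\rho^+$ with $A_n\setminus A_{n+1}\in\I_\rho$, I define $f$ by sending $A_n\setminus A_{n+1}$ to $y_n$ and $\bigcap_n A_n$ to $y$. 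Any $\rho$-convergent restriction has limit $y$ (the closure of the image is the compact set $\{y\}\cup\{y_k\}$, and no $y_k$ can be a $\rho$-limit because its preimage is $\I_\rho$-small); separating $y$ from $y_0,\dots,y_{m-1}$ by a Hausdorff neighbourhood then forces a finite $K$ with $\rho(F\setminus K)\subseteq A_m$, i.e.\ $\rho(F)\subseteq^\rho A_m$, which is the asserted $P$-property.

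For the sufficiency items I would argue as follows. Item~(\ref{thm:Pminus-implies-various-spaces-in-FinBW(rho):omega-1}) is the ordinal argument applied to $\omega_1$: the countable image of any $f\colon\Lambda\to\omega_1$ is bounded by some countable $\gamma$, reducing to $[0,\gamma]$. Item~(\ref{thm:Pminus-implies-various-spaces-in-FinBW(rho):Mrowka-separable}) I would obtain under CH from a Mrówka-type $\Psi$-space attached to an almost disjoint family of size $\continuum$ and its one-point compactification, verifying membership from $P^-$ together with the almost disjoint structure. For item~(\ref{thm:compact-metric-implies-FinBW(rho):metric-compact-spaces}), given a compact metric $X$ and a $\rho$-sequence $f$, I take the $F$ supplied by weak $P^+$ and build a $\rho$-decreasing sequence $F_n$ by repeatedly covering $X$ with finitely many balls of radius $2^{-n}$ and using condition~\nameref{def:partition-regular:partition-regular} to retain one coideal-positive piece; the images $f[\rho(F_n)]$ shrink to a point $x$, and the $\rho$-pseudo-intersection $G$ from weak $P^+$ makes $f\restriction\rho(G)$ $\rho$-converge to $x$. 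For item~(\ref{thm:compact-metric-implies-FinBW(rho):spaces-with-star-property}), property $(*)$ lets me replace $X$ by the compact first-countable space $\closure_X(f[\rho(E)])$; a covering argument produces a point $x$ whose every neighbourhood has $\I_\rho$-positive preimage, and feeding a countable decreasing neighbourhood base at $x$ into $P^+$ gives the $\rho$-convergent restriction (here $P^+$ is needed because the neighbourhood differences need not be $\I_\rho$-small).

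The main obstacle is the remaining implication of item~(\ref{thm:FinBW-hFinBW-homogeneous:-equality-for-SeqComp}), that tallness forces $\FinBW(\rho)$ to be a proper subclass, where one must exhibit a single sequentially compact space \emph{outside} $\FinBW(\rho)$. The naive candidates all fail: whenever $\rho$ is $P^-$ the convergent-sequence space and the one-point compactification of a discrete set both lie in $\FinBW(\rho)$ (via Proposition~\ref{prop:Pminus-for-rho-equivalent-condition}), and whenever $\rho$ is weak $P^+$ every compact metric space does (item~(\ref{thm:compact-metric-implies-FinBW(rho):metric-compact-spaces})); since tall functions such as $\rho_{\I_{1/n}}$ are even $P^+$, the witness must be an infinite, necessarily non-metrizable space failing property~$(*)$. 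The plan is to construct it directly from tallness, using Proposition~\ref{prop:prop-of-talness} to locate inside every $\rho(F)$ an infinite $\I_\rho$-set and assembling these into a sequence with no $\rho$-convergent $\rho$-subsequence in a suitable compactification built from an almost disjoint family refining $\I_\rho$; verifying that no $\rho(F)$ admits a $\rho$-limit is where the genuine difficulty lies.
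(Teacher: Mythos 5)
Your overall architecture matches the paper's (constant-fibre argument for finite spaces, the witness of non-tallness for item~(\ref{thm:FinBW-hFinBW-homogeneous:-equality-for-SeqComp}) ``$\Leftarrow$'', the "minimal critical ordinal" argument for item~(\ref{thm:Pminus-implies-various-spaces-in-FinBW(rho):omega-1}), the same ball-refinement for item~(\ref{thm:compact-metric-implies-FinBW(rho):metric-compact-spaces}), the same covering/neighbourhood-base argument for item~(\ref{thm:compact-metric-implies-FinBW(rho):spaces-with-star-property}), and essentially the paper's sequence $f$ sending $A_n\setminus A_{n+1}$ to $y_n$ for the implications (c)$\Rightarrow$(a)). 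However, there is one step that fails as written: your proof of (a)$\Rightarrow$(b) in item~(\ref{thm:compact-metric-implies-FinBW(rho):Pminus-is-necessary}) using the ordinal space $[0,\kappa]$. In the case where $\alpha^*$ is a limit of countable cofinality you ``prepend $A_0=\Lambda$'' to the sequence $A_n=f^{-1}[(\beta_n,\alpha^*]]$, but then $A_0\setminus A_1=f^{-1}[[0,\beta_1]]\cup f^{-1}[(\alpha^*,\kappa]]$, and the second piece need not lie in $\I_\rho$ — minimality of $\alpha^*$ controls only what happens \emph{below} $\alpha^*$. So the hypothesis of $P^-(\Lambda)$ (which, unlike $P^-$, genuinely requires $A_0=\Lambda$ with all consecutive differences in the ideal) is not met, and the argument does not close; escaping finitely many fibres via Proposition~\ref{prop:Pminus-for-rho-equivalent-condition} also does not yield convergence in $[0,\kappa]$, since that space may have many non-isolated points. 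The paper avoids this entirely by taking $X$ to be the Alexandroff one-point compactification of a \emph{discrete} set of size $\kappa$: there the fibres of $f$ partition $\Lambda$, every neighbourhood of $\infty$ is co-finite, and Proposition~\ref{prop:Pminus-for-rho-equivalent-condition} applies verbatim. (Your ordinal argument is fine for item~(\ref{thm:compact-metric-implies-FinBW(rho):Pminus-is-necessary-hFinBW}) and item~(\ref{thm:Pminus-implies-various-spaces-in-FinBW(rho):omega-1}), where the stronger hypothesis $P^-$ lets you start the decreasing sequence at $A_0=f^{-1}[(\beta_0,\alpha^*]]$ rather than at $\Lambda$.)

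Two further places are only gestured at. For the forward direction of item~(\ref{thm:FinBW-hFinBW-homogeneous:-equality-for-SeqComp}) you correctly identify the witness — $\Phi(\cA)$ for a \emph{maximal} almost disjoint family $\cA\subseteq\I_\rho$ (which exists by tallness) — but you defer exactly the verification that matters. In the paper this is Lemma~\ref{lem:MAD-Mrowka-nie-jest-Fspejsem:technical}: for the identity sequence $f(\lambda)=\lambda$, given $F\in\cF$ one picks a one-to-one selection $b_n\in\rho(F\setminus\{o_j:j<n\})$, uses maximality to find $A\in\cA$ meeting $\{b_n:n\in\omega\}$ infinitely (hence meeting every $\rho(F\setminus K)$), and then rules out a $\rho$-limit in $\Lambda$ (singletons are open), in $\cA$ (that would put $\rho(F\setminus K)$ inside a member of $\cA\subseteq\I_\rho$), and at $\infty$ (the neighbourhood $\Phi(\cA)\setminus(A\cup\{A\})$ is never absorbed). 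Similarly, item~(\ref{thm:Pminus-implies-various-spaces-in-FinBW(rho):Mrowka-separable}) is not a routine verification but the full CH transfinite recursion of Lemmas~\ref{lem:LEMAT} and~\ref{lem:TWIERDZENIE-for-Fspaces-in-FinBW-technical} and Theorem~\ref{thm:TWIERDZENIE-for-Fspaces:Pminus-Mrowka} (combined with Proposition~\ref{prop:ideal-not-below-in-KAT-another-ideal} to get an ideal not Kat\v{e}tov-below $\I_\rho$); your one-sentence sketch does not indicate how $P^-$ and the almost disjoint structure are actually combined there.
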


\begin{proof}
(\ref{thm:FinBW-hFinBW-homogeneous:SEQ-COMP})
It follows from Proposition~\ref{prop:rho-con-implies-suseq-ordinar-comvergence}(\ref{prop:rho-con-implies-suseq-ordinar-comvergence:item}).

(\ref{thm:FinBW-hFinBW-homogeneous:-equality-for-SeqComp:equivalence}) 
The implication ``$\impliedby$'' will follow from Theorem \ref{thm:MAD-Mrowka-nie-jest-Fspejsem}(\ref{thm:MAD-Mrowka-nie-jest-Fspejsem:MAD-in-ideal}). 
To prove the implication ``$\implies$'' we only need to show that every sequentially compact space belongs to $\FinBW(\rho)$.
Fix a sequentially compact space $X$ and $f:\Lambda\to X$.
Let $\phi:\Lambda\to\Lambda$ be a witness for 
$\rho\leq_K \rho_{\Fin(\Lambda)}$.
Since $f\circ \phi:\Lambda\to X$,  there is an infinite set $A\subseteq \Lambda$ such that $(f\circ \phi)\restriction A$ is convergent to some $x\in X$. 
Then  there is $F\in \cF$ with $\rho(F)\subseteq\phi[A]$. 
We claim that $f\restriction \rho(F)$ is $\rho$-convergent to $x$. Let $U$ be any  neighbourhood of $x$. Then there is a finite set $L\subseteq \Lambda$ such that $f[\phi[A\setminus L]] \subseteq  U$. 
Now, we can find a finite set $K \subseteq \Omega$ such that $\rho(F\setminus K) \subseteq\phi[A\setminus L]$. 
Thus, $f[\rho(F\setminus K)]\subseteq U$.

(\ref{thm:compact-metric-implies-FinBW(rho):Pminus-is-necessary})
(a)$\implies$(b) Let $\kappa$ be an infinite  cardinal number. 
Let $X = \kappa\cup\{\infty\}$ be the Alexandroff one-point compactification of the discrete space $\kappa$. 
Then $X$ is Hausdorff, compact and has cardinality $\kappa$.
Moreover,  open neighborhoods of $\infty$ are of the form $X \setminus S$ where $S$ is a compact (hence finite)  subset of $\kappa$.
We show that $X$ is in $\FinBW(\rho)$.
Let $f:\Lambda\to X$.
If there is $x\in X$ with $f^{-1}[\{x\}]\notin\I_\rho$, then we take $F\in \cF$ such that $\rho(F)\subseteq f^{-1}[\{x\}]$ and see that $f\restriction \rho(F)$ is $\rho$-convergent to $x$.
Now, we assume that $f^{-1}[\{x\}]\in \I_\rho$ for every $x\in X$.
By Proposition~\ref{prop:Pminus-for-rho-equivalent-condition}, there is $F\in \cF$ such that 
for every finite set $S\subseteq X$ there is a finite set $K_S$ 
such that $\rho(F\setminus K_S)\cap f^{-1}[S]=\emptyset$.
We claim that $f\restriction \rho(F)$ is $\rho$-convergent to $\infty$. 
Let $U$ be an open neighborhood of $\infty$. Let $S\subseteq\kappa$ be a finite set with $U=X\setminus S$.  
Then $f[\rho(F\setminus K_S)] \subseteq X\setminus S=U$.

(b)$\implies$(c) Obvious.

(c)$\implies$(a)
Suppose that $\rho$ is not $P^-(\Lambda)$ and let $A_n\in\I^+_\rho$ be the witnessing sequence, i.e., $A_0=\Lambda$, $A_{n+1}\subseteq A_n$, $A_n\setminus A_{n+1}\in\I_\rho$ and for each $F\in\cF$ there is $n\in\omega$ such that $\rho(F)\not\subseteq^\rho A_n$. Note that $\bigcap_{n\in\omega}A_n\in\I_\rho$.

Let $X$ be an infinite Hausdorff topological space. We will show that $X\notin\FinBW(\rho)$. If $X$ is not sequentially compact, then $X\notin\FinBW(\rho)$ by item (\ref{thm:FinBW-hFinBW-homogeneous:SEQ-COMP}). If $X$ is sequentially compact, then find any one-to-one sequence $\{x_n:n\in\omega\}$ in $X$ converging to some $x\in X$. Without loss of generality we may assume that $x\neq x_n$ for all $n\in\omega$. Define $f:\Lambda\to X$ by $f\restriction\bigcap_{n\in\omega}A_n=x_0$ and $f(\lambda)=x_{n+1}$, where $n$ is such that $\lambda\in A_n\setminus A_{n+1}$. Suppose for the sake of contradiction that there are $L\in X$ and $F\in\cF$ such that $f\restriction\rho(F)$ $\rho$-converges to $L$. By Proposition \ref{prop:basic-relationships-between-rho-like-convergence}(\ref{prop:rho-con-implies-suseq-ordinar-comvergence:item}) we get that either $L=x_n$ for some $n\in\omega$ or $L=x$. 

If $L=x_n$ for some $n\in\omega$, find open $U$ and $V$ such that $x_n\in U$, $x\in V$ and $U\cap V=\emptyset$. Since $x_m\in V$ (so $x_m\notin U$) for almost all $m\in\omega$ and $f^{-1}[\{x_m\}]\in\I_\rho$ for all $m\in\omega$, $f^{-1}[U]\in\I_\rho$. Hence, $f\restriction\rho(F)$ cannot $\rho$-converge to $x_n$.

If $L=x$, we can find $n\in\omega$ such that $\rho(F)\not\subseteq^\rho A_n$. Since $X$ is Hausdorff, there is an open neighbourhood $U$ of $L$ such that $x_{i+1}\notin U$ for all $i<n$. Since $f\restriction\rho(F)$ $\rho$-converges to $L$, there should be a finite $K\subseteq\Omega$ such that $f[\rho(F\setminus K)]\subseteq U$, however $\rho(F\setminus K)\setminus A_n\neq\emptyset$ (by $\rho(F)\not\subseteq^\rho A_n$), so $f[\rho(F\setminus K)]\cap\{x_{i+1}:i<n\}\neq\emptyset$, which contradicts $x_{i+1}\notin U$ for all $i<n$.

(\ref{thm:compact-metric-implies-FinBW(rho):Pminus-is-necessary-hFinBW})
(a)$\implies$(b) Notice that if $X$ is the space defined in the proof of the implication  $(\ref{thm:compact-metric-implies-FinBW(rho):Pminus-is-necessary}a)\implies (\ref{thm:compact-metric-implies-FinBW(rho):Pminus-is-necessary}b)$ then $X\in\FinBW(\rho)$ for every $\rho$ that is $P^-(\Lambda)$ (the definition of $X$ did not depend on $\rho$). Thus, if $\rho$ is $P^-$ then $\rho\restriction \rho(F)$ is $P^-(\rho(F))$ for every $F\in\cF$ and consequently $X\in\bigcap_{F\in\cF}\FinBW(\rho\restriction \rho(F))=\hFinBW(\rho)$ (by Proposition \ref{prop:basic-relationships-between-FinBW-like-spaces}(\ref{prop:basic-relationships-between-FinBW-like-spaces:hFinBW-equals-intersection-of-FinBW:rho})).

(b)$\implies$(c) Obvious.

(c)$\implies$(a) If $\rho$ is not $P^-$ then $\rho\restriction \rho(F)$ is not $P^-(\rho(F))$ for some $F\in\cF$. Hence, by item (\ref{thm:compact-metric-implies-FinBW(rho):Pminus-is-necessary}), $\FinBW(\rho\restriction\rho(F))$ contains only finite  Hausdorff spaces. Since $\hFinBW(\rho)\subseteq\FinBW(\rho\restriction\rho(F))$ by Proposition \ref{prop:basic-relationships-between-FinBW-like-spaces}(\ref{prop:basic-relationships-between-FinBW-like-spaces:hFinBW-equals-intersection-of-FinBW:rho}), $\hFinBW(\rho)$ also contains only finite  Hausdorff spaces.

(\ref{thm:Pminus-implies-various-spaces-in-FinBW(rho):omega-1})
Let $f:\Lambda\to \omega_1$. If there is $\alpha<\omega_1$ with $f^{-1}[\{\alpha\}]\notin\I_\rho$, then we take $F\in \cF$ such that $\rho(F)\subseteq f^{-1}[\{\alpha\}]$ and see that $f\restriction \rho(F)$ is $\rho$-convergent to $\alpha$.
Now, we assume that $f^{-1}[\{\alpha\}]\in \I_\rho$ for every $\alpha <\omega_1$.
Since $\Lambda$ is countable and the cofinality of $\omega_1$ is uncountable, there is $\alpha<\omega_1$ with  $f^{-1}[\alpha]\notin \I_\rho$.
Let $\alpha_0$ be the smallest $\alpha$ such that $f^{-1}[\alpha]\notin \I_\rho$.
Note that $\alpha_0$ is a limit ordinal. Indeed, if $\alpha_0=\alpha+1$, then $\alpha<\alpha_0$ and $f^{-1}[\alpha] = f^{-1}[\alpha_0]\setminus f^{-1}[\{\alpha\}]\notin \I_\rho$, a contradiction.
Since $\alpha_0$ is a countable limit ordinal, there is an increasing sequence $\{\beta_n:n\in\omega\}$ such that $\sup \{\beta_n:n\in\omega\}=\alpha_0$.
By Proposition~\ref{prop:Pminus-for-rho-equivalent-condition}, there is $F\in \cF$ such that
$\rho(F)\subseteq f^{-1}[\alpha_0]$
and for each $n\in\omega$ there is a finite set $K_n$ such that $\rho(F\setminus K_n)\cap f^{-1}[\beta_n]=\emptyset$.
We claim that $f\restriction \rho(F)$ is $\rho$-convergent to $\alpha_0$.
Indeed, let $U$ be a neighborhood of $\alpha_0$. Without loss of generality, we can assume that $U = (\alpha_0+1)\setminus \beta_n$ for some $n\in\omega$. 
Then $f[\rho(F\setminus K_n)] \subseteq \alpha_0\setminus \beta_n \subseteq U$.

(\ref{thm:Pminus-implies-various-spaces-in-FinBW(rho):Mrowka-separable})
Spaces with these properties are  constructed in Theorem~\ref{thm:TWIERDZENIE-for-Fspaces-in-FinBW}.

(\ref{thm:compact-metric-implies-FinBW(rho):metric-compact-spaces})
Let   $f:\rho(E)\to X$ be a $\rho$-sequence in a metric compact space $X$. 

Since $\rho$ is  weak $P^+$, there exists $F\in \cF$ such that $\rho(F)\subseteq \rho(E)$ and for every  sequence $\{F_n: n\in\omega\}\subseteq \cF$ such that $\rho(F) \supseteq \rho(F_n)\supseteq \rho(F_{n+1})$ for each $n\in\omega$ 
there exists $G\in\cF$ 
such that $\rho(G)\subseteq\rho(F)$ and  $\rho(G)\subseteq^\rho \rho(F_n)$
for each $n\in\omega$.

For $x\in X$ and $r>0$ we write  $B(x,r)$ and $\overline{B}(x,r)$ to denote an open and closed ball of radius  $r$ centered at a point $x$, respectively.

Since $X$ is compact metric, there are finitely many $x_i^0\in X$, $i<n_0$ such that 
$X=\bigcup\{B(x_i^0,1):i<n_0\}$.
Then there exists  $i_0<n_0$  such that  
$\rho(F)\cap f^{-1}[B(x^0_{i_0},1)] \notin \I_{\rho}$, 
and consequently there is $F_0\in \cF$ such that 
$\rho(F_0)\subseteq \rho(F)\cap f^{-1}[B(x^0_{i_0},1)]$.

Since $\overline{B}(x^0_{i_0},1)$ is compact metric, there are finitely many $x_i^1\in X$, $i<n_1$ such that 
$\overline{B}(x^0_{i_0},1)\subseteq \bigcup\{B(x_i^1,\frac{1}{2}):i<n_1\}$.
Then there exists  $i_1<n_1$  such that  
$\rho(F_0)\cap f^{-1}[B(x^1_{i_1},\frac{1}{2})] \notin \I_{\rho}$, 
and consequently there is $F_1\in \cF$ such that 
$\rho(F_1)\subseteq \rho(F_0)\cap f^{-1}[B(x^1_{i_1},\frac{1}{2})]$.

If we continue the above procedure, we obtain $F_n\in \cF$ and $x^n_{i_n}\in X$ such that 
$\rho(F_{n})\subseteq \rho(F_{n-1}) \cap f^{-1}[B(x^n_{i_n},\frac{1}{n+1})]$
for each $n\in\omega$ (assuming that $F_{-1}=F$).

Let $x\in \bigcap\{\overline{B}(x^n_{i_n},\frac{1}{n+1}):n\in\omega\}$.

Since $\rho$ is  weak $P^+$, we have  $G\in \cF$ such that $\rho(G)\subseteq\rho(F)$ and  $\rho(G)\subseteq^\rho 
\rho(F_n)$ for each $n\in\omega$.

We claim that $f\restriction \rho(G)$ is $\rho$-convergent to $x$.
Let $U$ be a neighborhood of $x$. 
Since the sequence $(x^n_{i_n})_{n\in\omega}$ is  convergent to $x$, there is $n_0\in\omega$ such that $B(x^n_{i_n},\frac{1}{n+1})\subseteq U$ for every $n\geq n_0$.
Consequently, there is $n\in \omega$ with $B(x^n_{i_n},\frac{1}{n+1})\subseteq U$.
Let $K\subseteq\Omega$ be a finite set such that $\rho(G\setminus K)\subseteq\rho(F_n)$.
Then 
$f[\rho(G\setminus K)]\subseteq f[\rho(F_n)] \subseteq B(x^n_{i_n},\frac{1}{n+1}) \subseteq U,$
so the proof is finished.

(\ref{thm:compact-metric-implies-FinBW(rho):spaces-with-star-property})
Let $E\in \cF$ and $f:\rho(E)\to X$ be a sequence in a Hausdorff topological space $X$ having the property $(*)$.
Since  the set $D=\{f(\lambda):\lambda\in \rho(E)\}$ is countable, the closure  $\closure_X(D)$ is compact and first-countable.
We claim that  there exists $L\in \closure_X(D)$ such that 
$f^{-1}[U] \in \I_{\rho}^+$
for every neighborhood $U$ of $L$.

Suppose, for sake of contradiction, that for every $x\in \closure_X(D)$ there is a neighborhood $U_x$ of $x$ such that 
$f^{-1}[U_x]\in \I_{\rho}$.
Since $\closure_X(D)$ is compact,  there are finitely many $x_i\in \closure_X(D)$ for $i<n$ with 
$\closure_X(D) \subseteq \bigcup\{ U_{x_i} : i<n\}.$
Then 
$\rho(E) =  \bigcup \{f^{-1}[U_{x_i}]:i<n\} \in \I_{\rho},$
a contradiction, so the claim is proved.

Let $\{U_n:n\in\omega\}$ be a base at $L$. Without loss of generality, we can assume that $U_n\supseteq U_{n+1}$ for each $n\in \omega$.
For each $n\in \omega$, we define $A_n=\{\lambda\in \rho(E): f(\lambda)\in U_n\}$.
Since $A_n\in \I_{\rho}^+$ and $A_n\supseteq A_{n+1}$ for each $n\in \omega$, using the fact that $\rho$ is $P^+$, there exists $F\in \cF$ such that $\rho(F)\subseteq \rho(E)$ and 
$\rho(F)\subseteq^\rho A_n$ for each $n\in \omega$.
We claim  that $f\restriction \rho(F)$ is $\rho$-convergent to $L$.

Take any neighborhood $U$ of $L$. Then there exists $n_0\in\omega$ with $U_{n_0}\subseteq U$.
Since $\rho(F)\subseteq^\rho A_{n_0}$, there exists a finite set $K\subseteq\Omega$ such that $\rho(F\setminus K) \subseteq A_{n_0}$.
Thus $f[\rho(F\setminus K)] \subseteq U_{n_0}\subseteq U$, so the proof is finished.
\end{proof}

The following series of corollaries shows that many known earlier results can be easily derived from Theorem~\ref{thm:FinBW-hFinBW-homogeneous}.

\begin{corollary}[{\cite[Proposition~2.4]{MR3276758}}]
If an ideal $\I$ is not tall, then $\FinBW(\I)$ coincides with the class of  all sequentially compact spaces.
\end{corollary}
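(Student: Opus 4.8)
The plan is to reduce everything to the already-established equivalence in Theorem~\ref{thm:FinBW-hFinBW-homogeneous}(\ref{thm:FinBW-hFinBW-homogeneous:-equality-for-SeqComp:equivalence}) by passing from the ideal $\I$ to its associated partition regular function $\rho_\I$. The only directions and translations I actually need are all available in the excerpt, so this is a short assembly rather than a fresh argument.

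First I would observe that $\rho_\I$ (defined in Proposition~\ref{prop:rho-versus-ideal}(\ref{prop:rho-versus-ideal:ideal-gives-rho})) is a partition regular function with $\I_{\rho_\I}=\I$. Since tallness of a partition regular function is, by definition, any of the equivalent conditions of Proposition~\ref{prop:prop-of-talness}, and the first of those conditions is precisely ``$\I_{\rho}$ is tall'', the assumption that $\I$ is not tall translates immediately into: $\rho_\I$ is not tall. This is the key (and essentially only) translation step.

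Next I would invoke the ``$\implies$'' direction of Theorem~\ref{thm:FinBW-hFinBW-homogeneous}(\ref{thm:FinBW-hFinBW-homogeneous:-equality-for-SeqComp:equivalence}): because $\rho_\I$ is not tall, the class $\FinBW(\rho_\I)$ coincides with the class of all sequentially compact spaces. (Note that we only use the forward implication, which is proved in full in the excerpt; the reverse implication, which relies on a later theorem, is not needed here.) Finally, Proposition~\ref{prop:basic-relationships-between-FinBW-like-spaces}(\ref{prop:basic-relationships-between-FinBW-like-spaces:ideal-rho}) gives $\FinBW(\I)=\FinBW(\rho_\I)$, and combining the two identifications yields that $\FinBW(\I)$ coincides with the class of all sequentially compact spaces, as claimed.

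There is no genuine obstacle in this corollary; the substance of the result lives entirely in Theorem~\ref{thm:FinBW-hFinBW-homogeneous}(\ref{thm:FinBW-hFinBW-homogeneous:-equality-for-SeqComp:equivalence}) and in the equivalence between tallness of $\I$ and tallness of $\rho_\I$. The one point worth stating carefully is that the equivalence ``$\I$ is tall $\iff$ $\rho_\I$ is tall'' is exactly what the definition of tallness for partition regular functions gives once we know $\I_{\rho_\I}=\I$, so no extra work (and in particular no use of the deeper characterizations in Proposition~\ref{prop:prop-of-talness}) is required. If one wished to avoid citing the equivalence theorem altogether, one could instead reprove the forward implication directly for $\rho_\I$ using Proposition~\ref{prop:rho-con-implies-suseq-ordinar-comvergence}(\ref{prop:rho-con-implies-suseq-ordinar-comvergence:item}) together with a witness for $\rho_\I\leq_K\rho_{\Fin(\Lambda)}$, but this merely duplicates the argument already recorded in the proof of Theorem~\ref{thm:FinBW-hFinBW-homogeneous}.
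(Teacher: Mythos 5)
Your proposal is correct and matches the paper's own proof, which likewise cites Theorem~\ref{thm:FinBW-hFinBW-homogeneous}(\ref{thm:FinBW-hFinBW-homogeneous:-equality-for-SeqComp}) together with Propositions~\ref{prop:basic-relationships-between-FinBW-like-spaces}(\ref{prop:basic-relationships-between-FinBW-like-spaces:ideal-rho}) and \ref{prop:prop-of-talness}. The translation ``$\I$ not tall $\iff$ $\rho_\I$ not tall'' via $\I_{\rho_\I}=\I$ is exactly the intended use of Proposition~\ref{prop:prop-of-talness}, so nothing is missing.
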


\begin{proof}
It follows from Theorem~\ref{thm:FinBW-hFinBW-homogeneous}(\ref{thm:FinBW-hFinBW-homogeneous:-equality-for-SeqComp})
and 
Propositions~\ref{prop:basic-relationships-between-FinBW-like-spaces}(\ref{prop:basic-relationships-between-FinBW-like-spaces:ideal-rho}) and \ref{prop:prop-of-talness}.
\end{proof}

\begin{corollary}[{\cite[Theorem~6.5]{MR4584767}}]
    $\Fin^2\leq_K\I$ $\iff$ $\FinBW(\I)$ coincides with the class of all finite spaces in the realm of Hausdorff spaces. 
\end{corollary}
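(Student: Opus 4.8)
The plan is to assemble the statement from a short chain of equivalences already available in the excerpt, passing through the partition regular function $\rho_\I$. First I would invoke Proposition~\ref{prop:basic-relationships-between-FinBW-like-spaces}(\ref{prop:basic-relationships-between-FinBW-like-spaces:ideal-rho}) to rewrite $\FinBW(\I)=\FinBW(\rho_\I)$, so that the whole statement can be phrased in terms of $\rho_\I$ and the results of Theorem~\ref{thm:FinBW-hFinBW-homogeneous} become directly applicable.

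Next I would link the three relevant notions. By Proposition~\ref{prop:Pminus-versus-FinSQRD-for-ideal}(\ref{prop:Pminus-versus-FinSQRD-for-ideal:Pminus-Lambda}) we have that $\I$ is $P^-(\Lambda)$ if and only if $\Fin^2\not\leq_K\I$; by Proposition~\ref{prop:Plike-basic-properties}(\ref{prop:properties-of-ideal-versus-rho:ideal-rho}) the ideal $\I$ is $P^-(\Lambda)$ if and only if $\rho_\I$ is $P^-(\Lambda)$; and by the equivalence (a)$\iff$(c) in Theorem~\ref{thm:FinBW-hFinBW-homogeneous}(\ref{thm:compact-metric-implies-FinBW(rho):Pminus-is-necessary}) the function $\rho_\I$ is $P^-(\Lambda)$ if and only if there exists an infinite Hausdorff space in $\FinBW(\rho_\I)=\FinBW(\I)$. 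Chaining these gives a single equivalence: $\Fin^2\not\leq_K\I$ holds precisely when $\FinBW(\I)$ contains some infinite Hausdorff space.

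Taking the contrapositive of this equivalence yields that $\Fin^2\leq_K\I$ holds exactly when $\FinBW(\I)$ contains no infinite Hausdorff space, i.e.~every Hausdorff member of $\FinBW(\I)$ is finite. To conclude the claimed \emph{coincidence} in the realm of Hausdorff spaces I would finally note, via Theorem~\ref{thm:FinBW-hFinBW-homogeneous}(\ref{thm:FinBW-hFinBW-homogeneous:SEQ-COMP}), that all finite spaces always belong to $\FinBW(\I)$; so when no infinite Hausdorff space is present, the Hausdorff members of $\FinBW(\I)$ are exactly the finite ones. I do not anticipate a genuine obstacle here, since every substantive step has already been carried out earlier in the paper; the only point requiring care is bookkeeping of what ``coincides with the class of all finite spaces in the realm of Hausdorff spaces'' means, namely that both inclusions (finite $\Rightarrow$ in $\FinBW(\I)$, and infinite Hausdorff $\Rightarrow$ not in $\FinBW(\I)$) must be addressed, the first by the inclusion part of Theorem~\ref{thm:FinBW-hFinBW-homogeneous}(\ref{thm:FinBW-hFinBW-homogeneous:SEQ-COMP}) and the second by the chain of equivalences above.
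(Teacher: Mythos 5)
Your proposal is correct and follows exactly the same route as the paper's proof, which cites the same four results (Theorem~\ref{thm:compact-metric-implies-FinBW(rho)}(\ref{thm:compact-metric-implies-FinBW(rho):Pminus-is-necessary}) together with Propositions~\ref{prop:Pminus-versus-FinSQRD-for-ideal}(\ref{prop:Pminus-versus-FinSQRD-for-ideal:Pminus-Lambda}), \ref{prop:properties-of-ideal-versus-rho}(\ref{prop:properties-of-ideal-versus-rho:ideal-rho}) and \ref{prop:basic-relationships-between-FinBW-like-spaces}(\ref{prop:basic-relationships-between-FinBW-like-spaces:ideal-rho})) and chains them in the same way. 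Your extra remark about finite spaces always lying in $\FinBW(\I)$ is a correct and sensible piece of bookkeeping that the paper leaves implicit.
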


\begin{proof}
It follows from Theorem~\ref{thm:compact-metric-implies-FinBW(rho)}(\ref{thm:compact-metric-implies-FinBW(rho):Pminus-is-necessary}) and 
Propositions~\ref{prop:Pminus-versus-FinSQRD-for-ideal}(\ref{prop:Pminus-versus-FinSQRD-for-ideal:Pminus-Lambda}),\ref{prop:properties-of-ideal-versus-rho}(\ref{prop:properties-of-ideal-versus-rho:ideal-rho}),\ref{prop:basic-relationships-between-FinBW-like-spaces}(\ref{prop:basic-relationships-between-FinBW-like-spaces:ideal-rho}).
\end{proof}

\begin{corollary}
Every metric compact space belongs to $\hFinBW(\rho)$ in case when
\begin{enumerate}

 \item 
$\rho=\rho_\I$ and $\I$ is $P^+$ ideal,

 \item 
 \cite[Theorem~2.3]{MR2471564}
$\rho=\rho_\I$ and $\I$ is an $F_\sigma$ ideal,

\item  
\cite[Theorem~10]{MR1866012}
$\rho=\rho_\I$ and $\I = \vdW$,

\item  
\cite[Theorem~2.3]{MR2471564}
$\rho=\rho_\I$ and $\I = \I_{1/n}$,

\item 
\cite[Theorem~2.5]{MR531271}
$\rho=\FS$,

\item
\cite[Theorem~1]{MR2948679} (see also \cite[Theorem~1.16]{BergelsonZelada})
$\rho = r$,

\item 
\cite[Corollary~4.8]{MR3097000}
$\rho = \Delta$.

\end{enumerate}
\end{corollary}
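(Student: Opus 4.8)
The plan is to reduce all seven cases to the single criterion that $\rho$ is weak $P^+$, and then to apply Theorem~\ref{thm:FinBW-hFinBW-homogeneous}(\ref{thm:compact-metric-implies-FinBW(rho):metric-compact-spaces}), which asserts precisely that weak $P^+$ forces every compact metric space into $\hFinBW(\rho)$. Thus the corollary amounts to collecting P-like properties that have already been established earlier in the paper.

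First I would dispose of the four cases in which $\rho = \rho_\I$ for an ideal $\I$ (items (1)--(4)), by showing that the ideal in question is $P^+$. In case (1) this is the hypothesis, and Proposition~\ref{prop:Plike-basic-properties}(\ref{prop:properties-of-ideal-versus-rho:ideal-rho}) transfers it to $\rho_\I$. Case (2) reduces to case (1) since every $F_\sigma$ ideal is $P^+$ by Theorem~\ref{thm:Plike-properties-for-definable-ideals}(\ref{thm:Plike-properties-for-definable-ideals:Fsigma}). Cases (3) and (4) follow at once from Proposition~\ref{prop:Plike-properties-for-known-rho}(\ref{prop:Plike-properties-for-known-rho:vdW}), which records that $\vdW$ and $\I_{1/n}$ are $P^+$; alternatively they are instances of case (2), as both ideals are $F_\sigma$ by Proposition~\ref{prop:top-complex-of-ideals}(\ref{prop:top-complex-of-ideals:summable-vdW}). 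In each of these four cases $\rho$ is then weak $P^+$ by the implication $P^+ \implies$ weak $P^+$ of Proposition~\ref{prop:Plike-basic-properties}(\ref{prop:Plike-basic-properties:implications}).

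Next I would handle the three cases $\rho \in \{\FS, r, \Delta\}$ (items (5)--(7)) by directly invoking Proposition~\ref{prop:Plike-properties-for-known-rho}(\ref{prop:Plike-properties-for-known-rho:FS-r-Delta:weakPplus}), which states that each of these functions is weak $P^+$. It is worth noting that here the full strength of $P^+$ is unavailable --- by Proposition~\ref{prop:Plike-properties-for-known-rho}(\ref{prop:Plike-properties-for-known-rho:FS-r-Delta:Pplus}) none of $\FS$, $r$, $\Delta$ is $P^+$ --- so the weaker hypothesis of Theorem~\ref{thm:FinBW-hFinBW-homogeneous}(\ref{thm:compact-metric-implies-FinBW(rho):metric-compact-spaces}) is genuinely what makes the uniform treatment possible. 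With weak $P^+$ verified in all seven cases, a single application of that theorem finishes the argument. There is no real obstacle at the level of this corollary itself; the substantive work lies upstream, in establishing that $\Delta$ is weak $P^+$ (the delicate interval-partition analysis in the proof of Proposition~\ref{prop:Plike-properties-for-known-rho}(\ref{prop:Plike-properties-for-known-rho:FS-r-Delta:weakPplus})) and in the proof of the metric-compactness transfer theorem.
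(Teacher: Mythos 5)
Your proposal is correct and follows essentially the same route as the paper, which likewise derives all seven cases from Theorem~\ref{thm:compact-metric-implies-FinBW(rho)}(\ref{thm:compact-metric-implies-FinBW(rho):metric-compact-spaces}) together with Propositions~\ref{prop:Plike-properties-for-known-rho} and \ref{prop:properties-of-ideal-versus-rho} (plus the $F_\sigma\implies P^+$ fact for item (2)). The reduction of every case to weak $P^+$ and the remark that $\FS$, $r$, $\Delta$ are not $P^+$ — so weak $P^+$ is the right uniform hypothesis — match the paper's intent exactly.
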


\begin{proof}
It follows from Theorem~\ref{thm:compact-metric-implies-FinBW(rho)}(\ref{thm:compact-metric-implies-FinBW(rho):metric-compact-spaces}),
Propositions~\ref{prop:Plike-properties-for-known-rho} and \ref{prop:properties-of-ideal-versus-rho}.
\end{proof}

\begin{corollary}
Every Hausdorff space with the property $(*)$  belongs to $\hFinBW(\rho)$ in case when
\begin{enumerate}

 \item 
$\rho=\rho_\I$ and $\I$ is $P^+$ ideal,

 \item 
 \cite[Theorem~2.3]{MR2471564}
$\rho=\rho_\I$ and $\I$ is an  $F_\sigma$ ideal,

\item  
\cite[Theorem~10]{MR1866012}
$\rho=\rho_\I$ and $\I = \vdW$,

\item  
\cite[Theorem~2.3]{MR2471564}
$\rho=\rho_\I$ and $\I = \I_{1/n}$.

\end{enumerate}
\end{corollary}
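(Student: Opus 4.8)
The plan is to reduce all four cases to the single implication established in Theorem~\ref{thm:compact-metric-implies-FinBW(rho)}(\ref{thm:compact-metric-implies-FinBW(rho):spaces-with-star-property}), which asserts that whenever $\rho$ is $P^+$ every Hausdorff space with property $(*)$ lies in $\hFinBW(\rho)$. Since in each of the listed cases $\rho=\rho_\I$, it suffices to verify that the partition regular function $\rho_\I$ is $P^+$. By Proposition~\ref{prop:properties-of-ideal-versus-rho}(\ref{prop:properties-of-ideal-versus-rho:ideal-rho}), this is equivalent to checking that the ideal $\I$ itself is $P^+$, so the whole argument can be routed through the ideal-level $P^+$ property rather than verifying anything about $\rho_\I$ directly.

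First I would dispose of case (1): here $\I$ is $P^+$ by hypothesis, so the equivalence of Proposition~\ref{prop:properties-of-ideal-versus-rho}(\ref{prop:properties-of-ideal-versus-rho:ideal-rho}) immediately yields that $\rho_\I$ is $P^+$, and Theorem~\ref{thm:compact-metric-implies-FinBW(rho)}(\ref{thm:compact-metric-implies-FinBW(rho):spaces-with-star-property}) finishes it. For case (2), I would invoke Theorem~\ref{thm:Plike-properties-for-definable-ideals}(\ref{thm:Plike-properties-for-definable-ideals:Fsigma}), by which every $F_\sigma$ ideal is $P^+$; this subsumes case (2) under case (1).

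Finally, cases (3) and (4) are special instances of case (2): by Proposition~\ref{prop:top-complex-of-ideals}(\ref{prop:top-complex-of-ideals:summable-vdW}) both $\vdW$ and $\I_{1/n}$ are $F_\sigma$ ideals (alternatively one may cite their $P^+$-ness directly from Proposition~\ref{prop:Plike-properties-for-known-rho}(\ref{prop:Plike-properties-for-known-rho:Fsigma-known})), hence $P^+$, and the desired membership follows. As each step merely chains together previously established equivalences and implications, I do not expect any genuine obstacle; the only point requiring care is to carry out the reduction through the ideal characterization of $P^+$ in Proposition~\ref{prop:properties-of-ideal-versus-rho}(\ref{prop:properties-of-ideal-versus-rho:ideal-rho}) rather than attempting to establish the $P^+$ property of $\rho_\I$ by hand.
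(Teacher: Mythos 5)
Your proposal is correct and follows essentially the same route as the paper, which likewise derives all four cases from Theorem~\ref{thm:compact-metric-implies-FinBW(rho)}(\ref{thm:compact-metric-implies-FinBW(rho):spaces-with-star-property}) together with Proposition~\ref{prop:properties-of-ideal-versus-rho}(\ref{prop:properties-of-ideal-versus-rho:ideal-rho}) and Proposition~\ref{prop:Plike-properties-for-known-rho}. The only cosmetic difference is that you spell out the chain $F_\sigma \Rightarrow P^+$ via Theorem~\ref{thm:Plike-properties-for-definable-ideals}(\ref{thm:Plike-properties-for-definable-ideals:Fsigma}) explicitly, which the paper leaves folded inside its citation of Proposition~\ref{prop:Plike-properties-for-known-rho}.
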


\begin{proof}
It follows from Theorem~\ref{thm:compact-metric-implies-FinBW(rho)}(\ref{thm:compact-metric-implies-FinBW(rho):spaces-with-star-property}), Propositions~\ref{prop:Plike-properties-for-known-rho} and \ref{prop:properties-of-ideal-versus-rho}.
\end{proof}

In \cite[Theorem~11]{MR1887003} (\cite[Corollary~3.2]{MR4552506}, resp.), the authors proved that 
every Hausdorff space with the property $(*)$  belongs to $\hFinBW(\FS)$ ($\hFinBW(r)$, resp.).
However, their proofs use properties very specific  to $\FS$ and $r$. For instance, the proof for $\FS$ uses idempotent ultrafilters whereas the proof for $r$ uses the bounding number $\mathfrak{b}$. 

\begin{problem}
Find a property $W$ of partition regular functions such that 
both  $\FS$ and $r$ have the property $W$
and if $\rho$ has the property $W$ then 
every Hausdorff space with the property $(*)$  belongs to $\hFinBW(\rho)$.
\end{problem}

In \cite[Corollary~4.8]{MR3097000}, the author proved that 
every Hausdorff space with the property $(*)$  belongs to $\FinBW(\Delta)$.

\begin{question}
\label{q:star-is-hFinBW-Delta}
Does every Hausdorff space with the property $(*)$  belong to $\hFinBW(\Delta)$?
\end{question}

Note that the positive answer to Question \ref{q:FinBW=hFinBW-for-summable}(\ref{q:FinBW=hFinBW-for-differentially-compact:item}) gives the positive answer to Question~\ref{q:star-is-hFinBW-Delta}.


\section{Inclusions between  \texorpdfstring{$\FinBW$}{FinBW } classes}

\begin{theorem}
\label{thm:Katetove-implies-inclusion-for-FinBW}
Let $\rho_i:\cF_i\to[\Lambda_i]^\omega$ be  partition regular with $\cF_i\subseteq [\Omega_i]^\omega$ for each $i=1,2$. Let $\I$ be an ideal on $\Lambda$.
\begin{enumerate}
\item 
$\rho_2\leq_K \rho_1 \implies 
\FinBW(\rho_1) \subseteq   \FinBW(\rho_2)$.\label{thm:Katetove-implies-inclusion-for-FinBW:for-rho}

\item 
\begin{enumerate}
    \item If $\rho_2$ is $P^+$, then \label{thm:Katetove-implies-inclusion-for-FinBW:for-rho-ideals:Pplus}
$$\I_{\rho_2}\leq_K \I_{\rho_1} \implies \FinBW(\rho_1) \subseteq   \FinBW(\rho_2).$$
\item $\I_{\rho_2}\leq_K \I \implies \FinBW(\I)\subseteq \FinBW(\rho_2)$.\label{thm:Katetove-implies-inclusion-for-FinBW:for-ideals}
\end{enumerate}

\end{enumerate}
\end{theorem}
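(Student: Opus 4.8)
The plan is to prove part~(\ref{thm:Katetove-implies-inclusion-for-FinBW:for-rho}) directly from the definitions and then obtain both cases of part~(\ref{thm:Katetove-implies-inclusion-for-FinBW:for-rho-ideals:Pplus})--(\ref{thm:Katetove-implies-inclusion-for-FinBW:for-ideals}) as immediate reductions to part~(\ref{thm:Katetove-implies-inclusion-for-FinBW:for-rho}) using the equivalences already established between the Kat\v{e}tov order on partition regular functions and on ideals. The heart of the matter is part~(\ref{thm:Katetove-implies-inclusion-for-FinBW:for-rho}), where I simply transport a $\rho_1$-convergent subsequence through the witnessing function of $\rho_2\leq_K\rho_1$ to produce a $\rho_2$-convergent subsequence.

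In detail, for part~(\ref{thm:Katetove-implies-inclusion-for-FinBW:for-rho}) I would fix $X\in\FinBW(\rho_1)$, let $\phi:\Lambda_1\to\Lambda_2$ witness $\rho_2\leq_K\rho_1$ as in Definition~\ref{Katetov-order-for-rho}, and take an arbitrary sequence $g:\Lambda_2\to X$. Applying $X\in\FinBW(\rho_1)$ to the composite $g\circ\phi:\Lambda_1\to X$ yields $F_1\in\cF_1$ such that $(g\circ\phi)\restriction\rho_1(F_1)$ is $\rho_1$-convergent to some $x\in X$. The Kat\v{e}tov condition then supplies $F_2\in\cF_2$ with the property that for every finite $K_1\subseteq\Omega_1$ there is a finite $K_2\subseteq\Omega_2$ satisfying $\rho_2(F_2\setminus K_2)\subseteq\phi[\rho_1(F_1\setminus K_1)]$. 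I claim $g\restriction\rho_2(F_2)$ is $\rho_2$-convergent to $x$: given a neighborhood $U$ of $x$, $\rho_1$-convergence gives a finite $K_1$ with $g[\phi[\rho_1(F_1\setminus K_1)]]\subseteq U$, and the corresponding $K_2$ then yields $g[\rho_2(F_2\setminus K_2)]\subseteq g[\phi[\rho_1(F_1\setminus K_1)]]\subseteq U$, which is exactly $\rho_2$-convergence in the sense of Definition~\ref{def:rho-convergence}. Hence $X\in\FinBW(\rho_2)$.

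For part~(\ref{thm:Katetove-implies-inclusion-for-FinBW:for-rho-ideals:Pplus}), since $\rho_2$ is $P^+$, Proposition~\ref{prop:Katetov-for-rho-functions}(\ref{prop:Katetov-for-rho-functions:equivalence}) turns the hypothesis $\I_{\rho_2}\leq_K\I_{\rho_1}$ into $\rho_2\leq_K\rho_1$, and part~(\ref{thm:Katetove-implies-inclusion-for-FinBW:for-rho}) finishes the argument. For part~(\ref{thm:Katetove-implies-inclusion-for-FinBW:for-ideals}), I would apply Proposition~\ref{prop:Katetov-for-rho-functions}(\ref{prop:Katetov-for-rho-function-and-ideal-rho:sparse}) to the pair $(\rho_2,\I)$: the hypothesis $\I_{\rho_2}\leq_K\I$ is equivalent to $\rho_2\leq_K\rho_\I$, so part~(\ref{thm:Katetove-implies-inclusion-for-FinBW:for-rho}) gives $\FinBW(\rho_\I)\subseteq\FinBW(\rho_2)$; combining this with the equality $\FinBW(\I)=\FinBW(\rho_\I)$ from Proposition~\ref{prop:basic-relationships-between-FinBW-like-spaces}(\ref{prop:basic-relationships-between-FinBW-like-spaces:ideal-rho}) yields the desired inclusion $\FinBW(\I)\subseteq\FinBW(\rho_2)$.

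The only step demanding care is the bookkeeping of the nested quantifiers in Definition~\ref{Katetov-order-for-rho} during part~(\ref{thm:Katetove-implies-inclusion-for-FinBW:for-rho}): one must produce $F_2$ \emph{before} seeing the neighborhood $U$, so the witness $F_2$ depends on $F_1$ alone, and only the trimming set $K_2$ is chosen after $U$ (equivalently, after $K_1$) is revealed. This ordering matches precisely the shape of the $\rho_2$-convergence definition, which is why the transport goes through; the remaining manipulations are routine set-inclusion chasing and require no further structural input beyond the cited propositions.
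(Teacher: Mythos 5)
Your proposal is correct and follows essentially the same route as the paper's own proof: part (1) by transporting the $\rho_1$-convergent subsequence of $g\circ\phi$ through the witness $\phi$, with the same quantifier bookkeeping ($F_2$ chosen from $F_1$ alone, $K_2$ chosen after $K_1$), and parts (2a)--(2b) by reducing to part (1) via Proposition~\ref{prop:Katetov-for-rho-functions}(\ref{prop:Katetov-for-rho-functions:equivalence}), Proposition~\ref{prop:Katetov-for-ideal-rho}(\ref{prop:Katetov-for-rho-function-and-ideal-rho:sparse}) and the identification $\FinBW(\I)=\FinBW(\rho_\I)$. No gaps.
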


\begin{proof}
(\ref{thm:Katetove-implies-inclusion-for-FinBW:for-rho})
Let $\phi:\Lambda_1\to \Lambda_2$ be a witness for 
$\rho_2\leq_K \rho_1$.
Let $X\in \FinBW(\rho_1)$.
If  $f:\Lambda_2\to X$, then $f\circ \phi : \Lambda_1\to X$, so there is $F_1\in \cF_1$ such that 
$\rho_1(F_1)\subseteq \Lambda_1$ 
and 
$(f\circ \phi)\restriction \rho_1(F_1)$ 
is $\rho_1$-convergent to some $x\in X$.

Let   $F_2\in \cF_2$ be 
such that for every finite $K_1\subseteq\Omega_1$ there is a finite $K_2\subseteq \Omega_2$ with $\rho_2(F_2\setminus K_2)\subseteq \phi[\rho_1(F_1\setminus K_1)]$.

We claim that $f\restriction\rho(F_2)$ is $\rho_2$-convergent to  $x$.
Let  $U$ be a neighborhood of $x$. Since $(f\circ \phi)\restriction \rho_1(F_1)$ 
is $\rho_1$-convergent to $x$, then there is a finite set $K_1\subseteq \Omega_1$ such that $(f\circ \phi)[\rho_1(F_1\setminus K_1)]\subseteq U$. Hence, we can find a finite $K_2\subseteq \Omega_2$ with $\rho_2(F_2\setminus K_2)\subseteq \phi[\rho_1(F_1\setminus K_1)]$. 
Then $f[\rho_2(F_2\setminus K_2)] \subseteq 
f[\phi[\rho_1(F_1\setminus K_1)]]
\subseteq U$.
That finishes the proof.

(\ref{thm:Katetove-implies-inclusion-for-FinBW:for-rho-ideals:Pplus})
It follows from
Proposition~\ref{prop:Katetov-for-rho-functions}(\ref{prop:Katetov-for-rho-functions:equivalence}) and 
item
(\ref{thm:Katetove-implies-inclusion-for-FinBW:for-rho}).

(\ref{thm:Katetove-implies-inclusion-for-FinBW:for-ideals})
It follows from  item (\ref{thm:Katetove-implies-inclusion-for-FinBW:for-rho}) and Propositions~\ref{prop:Katetov-for-ideal-rho}(\ref{prop:Katetov-for-rho-function-and-ideal-rho:sparse}) and \ref{prop:basic-relationships-between-FinBW-like-spaces}(\ref{prop:basic-relationships-between-FinBW-like-spaces:ideal-rho}).
\end{proof}

The following series of corollaries shows that many known earlier results as well as some new  one can be easily derived from Theorem~\ref{thm:Katetove-implies-inclusion-for-FinBW}.

\begin{corollary}\ 
\label{cor:inclusion-between-known-spaces}
\begin{enumerate}
        \item \cite[p.~39]{Shi2003Numbers} Every Hindman space is differentially compact.\label{cor:inclusion-between-known-spaces:Hindman-Diff}
        \item Every Ramsey space is differentially compact.\label{cor:inclusion-between-known-spaces:Ramsey-Diff}
        \item Let $\I$ be a $P^+$ ideal.\label{cor:inclusion-between-known-spaces:I-rho}
        \begin{enumerate}
            \item \cite[Proposition 2.6]{MR4356195} If $\I\leq_K\Hindman$ then every Hindman space is in $\FinBW(\I)$.
            \item If $\I\leq_K\Ramsey$ then every Ramsey space is in $\FinBW(\I)$.
            \item If $\I\leq_K\Diff$ then every differentially compact space is in $\FinBW(\I)$.
        \end{enumerate} 
    \item \cite[Corollary~10.2(a)]{MR4584767} If $\I_i$ are ideals for $i=1,2$  and $\I_2\leq_K\I_1$, then $\FinBW(\I_1)\subseteq \FinBW(\I_2)$.\label{cor:inclusion-between-known-spaces:ideals}
    \end{enumerate}
\end{corollary}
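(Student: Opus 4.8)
The plan is to read off all four items from Theorem~\ref{thm:Katetove-implies-inclusion-for-FinBW}, feeding it the Kat\v{e}tov comparisons already recorded in Theorem~\ref{thm:Katetov-between-known-rho} together with the identification $\FinBW(\I)=\FinBW(\rho_\I)$ from Proposition~\ref{prop:basic-relationships-between-FinBW-like-spaces}(\ref{prop:basic-relationships-between-FinBW-like-spaces:ideal-rho}). First I would recall the dictionary: Hindman spaces are exactly $\FinBW(\FS)$, Ramsey spaces are exactly $\FinBW(r)$, and differentially compact spaces are exactly $\FinBW(\Delta)$. For item (\ref{cor:inclusion-between-known-spaces:Hindman-Diff}) I would invoke $\Delta\leq_K\FS$ (Theorem~\ref{thm:Katetov-between-known-rho}(\ref{thm:Katetov-between-known-rho:Diff-below-Hindman})) and apply Theorem~\ref{thm:Katetove-implies-inclusion-for-FinBW}(\ref{thm:Katetove-implies-inclusion-for-FinBW:for-rho}) with $\rho_1=\FS$ and $\rho_2=\Delta$ to obtain $\FinBW(\FS)\subseteq\FinBW(\Delta)$. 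Item (\ref{cor:inclusion-between-known-spaces:Ramsey-Diff}) is identical with $r$ in place of $\FS$, using $\Delta\leq_K r$ (Theorem~\ref{thm:Katetov-between-known-rho}(\ref{thm:Katetov-between-known-rho:Diff-below-Ramsey})) and the same part of the inclusion theorem.

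For item (\ref{cor:inclusion-between-known-spaces:I-rho}) the point is that $\I$ being $P^+$ forces $\rho_\I$ to be $P^+$ by Proposition~\ref{prop:properties-of-ideal-versus-rho}(\ref{prop:properties-of-ideal-versus-rho:ideal-rho}), so I may take $\rho_2=\rho_\I$ in Theorem~\ref{thm:Katetove-implies-inclusion-for-FinBW}(\ref{thm:Katetove-implies-inclusion-for-FinBW:for-rho-ideals:Pplus}). For subitem (a), with $\rho_1=\FS$, the hypothesis $\I\leq_K\Hindman$ reads as $\I_{\rho_\I}\leq_K\I_{\FS}$ (since $\I_{\rho_\I}=\I$ and $\I_{\FS}=\Hindman$), so the theorem gives $\FinBW(\FS)\subseteq\FinBW(\rho_\I)$; rewriting $\FinBW(\rho_\I)=\FinBW(\I)$ via Proposition~\ref{prop:basic-relationships-between-FinBW-like-spaces}(\ref{prop:basic-relationships-between-FinBW-like-spaces:ideal-rho}) finishes it. Subitems (b) and (c) are the verbatim argument with $\rho_1=r$ and $\rho_1=\Delta$ respectively, using $\I_r=\Ramsey$ and $\I_\Delta=\Diff$.

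For item (\ref{cor:inclusion-between-known-spaces:ideals}) I would instead use Theorem~\ref{thm:Katetove-implies-inclusion-for-FinBW}(\ref{thm:Katetove-implies-inclusion-for-FinBW:for-ideals}), which does not require any $P^+$ assumption: taking $\rho_2=\rho_{\I_2}$ and $\I=\I_1$, the hypothesis $\I_{\rho_2}\leq_K\I$ becomes $\I_2\leq_K\I_1$ (as $\I_{\rho_{\I_2}}=\I_2$ by Proposition~\ref{prop:rho-versus-ideal}(\ref{prop:rho-versus-ideal:ideal-gives-rho})), and the conclusion $\FinBW(\I_1)\subseteq\FinBW(\rho_{\I_2})=\FinBW(\I_2)$ is exactly what is claimed, again after the identification from Proposition~\ref{prop:basic-relationships-between-FinBW-like-spaces}(\ref{prop:basic-relationships-between-FinBW-like-spaces:ideal-rho}). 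Since every step is a direct substitution into previously proved statements, there is no genuine obstacle here; the only place demanding care is bookkeeping the translation between the function-level order $\rho_2\leq_K\rho_1$ and the ideal-level order $\I_{\rho_2}\leq_K\I_{\rho_1}$, and in particular checking that the $P^+$ hypothesis in item (\ref{cor:inclusion-between-known-spaces:I-rho}) is what licenses the use of Theorem~\ref{thm:Katetove-implies-inclusion-for-FinBW}(\ref{thm:Katetove-implies-inclusion-for-FinBW:for-rho-ideals:Pplus}) rather than the weaker part~(\ref{thm:Katetove-implies-inclusion-for-FinBW:for-ideals}).
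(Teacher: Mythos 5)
Your proposal is correct and follows exactly the same route as the paper: items (1) and (2) from Theorem~\ref{thm:Katetove-implies-inclusion-for-FinBW}(\ref{thm:Katetove-implies-inclusion-for-FinBW:for-rho}) together with $\Delta\leq_K\FS$ and $\Delta\leq_K r$, item (3) from part (\ref{thm:Katetove-implies-inclusion-for-FinBW:for-rho-ideals:Pplus}) via $\rho_\I$ being $P^+$, and item (4) from part (\ref{thm:Katetove-implies-inclusion-for-FinBW:for-ideals}), each time using $\FinBW(\I)=\FinBW(\rho_\I)$. No gaps; your bookkeeping of the function-level versus ideal-level Kat\v{e}tov orders matches the paper's citations precisely.
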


\begin{proof}
    (\ref{cor:inclusion-between-known-spaces:Hindman-Diff})
It follows from Theorems~\ref{thm:Katetove-implies-inclusion-for-FinBW}(\ref{thm:Katetove-implies-inclusion-for-FinBW:for-rho}) and \ref{thm:Katetov-between-known-rho}(\ref{thm:Katetov-between-known-rho:Diff-below-Hindman}).

(\ref{cor:inclusion-between-known-spaces:Ramsey-Diff})
It follows from Theorems~\ref{thm:Katetove-implies-inclusion-for-FinBW}(\ref{thm:Katetove-implies-inclusion-for-FinBW:for-rho}) and \ref{thm:Katetov-between-known-rho}(\ref{thm:Katetov-between-known-rho:Diff-below-Ramsey}).

(\ref{cor:inclusion-between-known-spaces:I-rho})
It follows from Theorem~\ref{thm:Katetove-implies-inclusion-for-FinBW}(\ref{thm:Katetove-implies-inclusion-for-FinBW:for-rho-ideals:Pplus}) and Propositions~\ref{prop:properties-of-ideal-versus-rho}(\ref{prop:properties-of-ideal-versus-rho:ideal-rho}) and \ref{prop:basic-relationships-between-FinBW-like-spaces}(\ref{prop:basic-relationships-between-FinBW-like-spaces:ideal-rho}).

(\ref{cor:inclusion-between-known-spaces:ideals}) It follows from Theorem~\ref{thm:Katetove-implies-inclusion-for-FinBW}(\ref{thm:Katetove-implies-inclusion-for-FinBW:for-ideals}) and Proposition~\ref{prop:basic-relationships-between-FinBW-like-spaces}(\ref{prop:basic-relationships-between-FinBW-like-spaces:ideal-rho}). 
\end{proof}

\begin{corollary}\
\label{cor:compact-metric-implies-FinBW(rho):metric-compact-spaces}
\label{cor:compact-metric-implies-FinBW(rho):spaces-with-star-property}

\begin{enumerate}

\item Let $\rho:\cF_i\to[\Lambda]^\omega$ be a partition regular function.\label{cor:compact-metric-implies-FinBW(rho):metric-compact-spaces:FinBW}\label{cor:compact-metric-implies-FinBW(rho):spaces-with-star-property:FinBW}
\begin{enumerate}
\item If $\rho\leq_K\rho'$ for some weak $P^+$ partition regular function $\rho'$, then every compact metric space belongs to $\FinBW(\rho)$.

\item If $\rho\leq_K\rho'$ for some $P^+$ partition regular function $\rho'$, then every Hausdorff topological space with the property $(*)$ belongs to $\FinBW(\rho_2)$.
\end{enumerate}

\item Let $\I$ be an ideal. If an ideal $\I$ can be extended to a $P^+$ ideal, then:\label{cor:compact-metric-implies-FinBW(I)}\label{cor:star-implies-FinBW(I)}

\begin{enumerate}
\item every compact metric space belongs to $\FinBW(\I)$;

\item \cite[Corollary 5.6]{MR2961261} every Hausdorff topological space with the property $(*)$ belongs to $\FinBW(\I)$.
\end{enumerate}

\end{enumerate}
\end{corollary}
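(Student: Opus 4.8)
The plan is to read both items off the $\hFinBW$-statements already established in Theorem~\ref{thm:compact-metric-implies-FinBW(rho)} and then push them downward along the Kat\v{e}tov order. The two ready-made ingredients are: (i) for a weak $P^+$ (resp.\ $P^+$) partition regular function $\rho'$, every compact metric space (resp.\ every Hausdorff space with property $(*)$) lies in $\hFinBW(\rho')$ by Theorem~\ref{thm:compact-metric-implies-FinBW(rho)}(\ref{thm:compact-metric-implies-FinBW(rho):metric-compact-spaces}) (resp.\ (\ref{thm:compact-metric-implies-FinBW(rho):spaces-with-star-property})); and (ii) $\hFinBW(\rho')\subseteq\FinBW(\rho')$ by Proposition~\ref{prop:basic-relationships-between-FinBW-like-spaces}(\ref{prop:basic-relationships-between-FinBW-like-spaces:inclusion}). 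Together these already place both classes of spaces in $\FinBW(\rho')$, so the only remaining work is to transport membership from the witness $\rho'$ down to the target $\rho$.

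For item (1) I would invoke the monotonicity of $\FinBW$ along the Kat\v{e}tov order: since $\rho\leq_K\rho'$, Theorem~\ref{thm:Katetove-implies-inclusion-for-FinBW}(\ref{thm:Katetove-implies-inclusion-for-FinBW:for-rho}) yields $\FinBW(\rho')\subseteq\FinBW(\rho)$. Combining with (i)+(ii): in case (a) every compact metric space lies in $\FinBW(\rho')\subseteq\FinBW(\rho)$, and in case (b) every Hausdorff space with property $(*)$ lies in $\FinBW(\rho')\subseteq\FinBW(\rho)$ (reading the ``$\FinBW(\rho_2)$'' of the statement as $\FinBW(\rho)$, since the function in play is $\rho$).

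For item (2) I would reduce to item (1) through the dictionary between an ideal and its associated function $\rho_\I$. Let $\J\supseteq\I$ be a $P^+$ ideal; then $\rho_\J$ is $P^+$ by Proposition~\ref{prop:properties-of-ideal-versus-rho}(\ref{prop:properties-of-ideal-versus-rho:ideal-rho}), hence weak $P^+$ by Proposition~\ref{prop:Plike-basic-properties}(\ref{prop:Plike-basic-properties:implications}). Since $\I\subseteq\J$, the identity map witnesses $\I\leq_K\J$, and therefore $\rho_\I\leq_K\rho_\J$ by Proposition~\ref{prop:Katetov-for-rho-functions}(\ref{prop:Katetov-for-rho-function-and-ideal-rho:sparse}) (which gives $\rho_\I\leq_K\rho_\J\iff\I_{\rho_\I}=\I\leq_K\J$). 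Applying item (1) with $\rho=\rho_\I$ and $\rho'=\rho_\J$ places every compact metric space (case (a)) and every Hausdorff $(*)$-space (case (b)) in $\FinBW(\rho_\I)$, and finally $\FinBW(\rho_\I)=\FinBW(\I)$ by Proposition~\ref{prop:basic-relationships-between-FinBW-like-spaces}(\ref{prop:basic-relationships-between-FinBW-like-spaces:ideal-rho}).

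There is no genuine obstacle here: the corollary is a bookkeeping assembly of earlier results. The only point requiring care is the \emph{direction} of the Kat\v{e}tov order at each step, since $\FinBW$ is contravariant ($\rho\leq_K\rho'$ produces $\FinBW(\rho')\subseteq\FinBW(\rho)$). One must check that the $P^+$/weak $P^+$ witness $\rho'$ sits \emph{above} the target $\rho$, and that the extension $\I\subseteq\J$ is translated into $\I\leq_K\J$ (not the reverse) before being fed into Proposition~\ref{prop:Katetov-for-rho-functions}(\ref{prop:Katetov-for-rho-function-and-ideal-rho:sparse}); with these orientations fixed, each cited implication applies verbatim.
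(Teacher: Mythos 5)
Your proof is correct and follows essentially the same route as the paper: item (1) is obtained by combining Theorem~\ref{thm:compact-metric-implies-FinBW(rho)}(\ref{thm:compact-metric-implies-FinBW(rho):metric-compact-spaces})(\ref{thm:compact-metric-implies-FinBW(rho):spaces-with-star-property}) with $\hFinBW(\rho')\subseteq\FinBW(\rho')$ and the contravariance $\FinBW(\rho')\subseteq\FinBW(\rho)$ from $\rho\leq_K\rho'$, and item (2) reduces to item (1) via $\rho_\J$ being $P^+$, $\rho_\I\leq_K\rho_\J$ from $\I\leq_K\J$, and $\FinBW(\I)=\FinBW(\rho_\I)$. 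You also correctly identify the typo ``$\FinBW(\rho_2)$'' in the statement as meaning $\FinBW(\rho)$.
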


\begin{proof}
(\ref{cor:compact-metric-implies-FinBW(rho):metric-compact-spaces:FinBW}) 
It follows from Theorems~\ref{thm:Katetove-implies-inclusion-for-FinBW}(\ref{thm:Katetove-implies-inclusion-for-FinBW:for-rho}), \ref{thm:compact-metric-implies-FinBW(rho)}(\ref{thm:compact-metric-implies-FinBW(rho):metric-compact-spaces})(\ref{thm:compact-metric-implies-FinBW(rho):spaces-with-star-property})
and Proposition~\ref{prop:basic-relationships-between-FinBW-like-spaces}(\ref{prop:basic-relationships-between-FinBW-like-spaces:inclusion}).

(\ref{cor:compact-metric-implies-FinBW(I)})
It follows from item (\ref{cor:compact-metric-implies-FinBW(rho):metric-compact-spaces:FinBW}), 
Propositions~\ref{prop:properties-of-ideal-versus-rho}(\ref{prop:properties-of-ideal-versus-rho:ideal-rho}),
\ref{prop:Katetov-for-ideal-rho}(\ref{prop:Katetov-for-ideal-rho:item}),
\ref{prop:basic-relationships-between-FinBW-like-spaces}(\ref{prop:basic-relationships-between-FinBW-like-spaces:ideal-rho}).
\end{proof}

The following proposition shows that when comparing classes $\FinBW(\rho_1)$ and $\FinBW(\rho_2)$ for distinct functions $\rho_1$ and $\rho_2$ we can in fact assume that both $\rho_1$ and $\rho_2$ ``live'' on the same sets $\Omega$ and  $\Lambda$.

\begin{proposition}
\label{prop:partition-regular-functions-live-on-the-same-set}
Let $\rho:\cF\to[\Lambda]^\omega$ be  partition regular with $\cF\subseteq [\Omega]^\omega$. Suppose that $\Gamma$ and $\Sigma$ are  countable infinite sets and $\phi:\Omega\to \Gamma$ and $\psi:\Lambda\to \Sigma$ are bijections.
Let $\cG=\{\phi[F]:F\in \cF\}$ and $\tau:\cG\to[\Sigma]^\omega$ be given by
$\tau(G) = \psi[\rho(\phi^{-1}[G])]$.
Then 
\begin{enumerate}
    \item $\tau$ is partition regular,\label{prop:partition-regular-functions-live-on-the-same-set:tau}
    \item $\rho$ and $\tau$ are Kat\v{e}tov equivalent: $\rho\leq_K\tau$ and $\tau\leq_K\rho$,\label{prop:partition-regular-functions-live-on-the-same-set:Kat-equivalent}
    \item $\rho$ and $\tau$ are hereditary Kat\v{e}tov equivalent:\label{prop:partition-regular-functions-live-on-the-same-set:hereditary} 
\begin{enumerate}
\item $\forall F\in \cF\, \exists G\in \cG \, (\rho\restriction  \rho(F) \leq_K \tau\restriction  \tau(G)),$
\item $\forall G\in \cG\, \exists F\in \cF \, (\tau\restriction  \tau(G) \leq_K \rho\restriction  \rho(F)),$
\end{enumerate}
 \item $\FinBW(\tau)=\FinBW(\rho)$ and $\hFinBW(\tau)=\hFinBW(\rho)$.\label{prop:partition-regular-functions-live-on-the-same-set:FinBW}
\end{enumerate}
\end{proposition}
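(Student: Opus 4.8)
The plan is to exploit the fact that $\tau$ is nothing more than $\rho$ transported along the bijections $\phi$ and $\psi$, so that every relevant structure is preserved once the labels are translated. The one computation underlying everything is that for $G=\phi[F]$ and a finite $K\subseteq\Omega$ one has $\phi^{-1}[G\setminus\phi[K]]=F\setminus K$ (because $\phi$ is a bijection), and hence
\[
\tau(G\setminus\phi[K])=\psi[\rho(F\setminus K)].
\]
I would record this identity first and reuse it throughout. For item (\ref{prop:partition-regular-functions-live-on-the-same-set:tau}) I then verify the three clauses of Definition~\ref{def:partition-regular} directly. First, $\cG$ is a nonempty family of infinite subsets of $\Gamma$ closed under removing finite sets, since $\phi$ is a bijection and $\cF$ has these properties. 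Monotonicity (M) is immediate from injectivity of $\phi,\psi$ together with (M) for $\rho$. For (R), a splitting $\tau(G)=A\cup B$ pulls back under $\psi^{-1}$ to a splitting $\rho(F)=\psi^{-1}[A]\cup\psi^{-1}[B]$, to which (R) for $\rho$ applies; pushing the witness forward by $\phi$ yields a witness for $\tau$. For (S), I take the $\rho$-witness $E\subseteq F$ and use that $\psi(a)\notin\psi[\rho(E\setminus K)]=\tau(\phi[E]\setminus\phi[K])$ whenever $a\notin\rho(E\setminus K)$.

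For item (\ref{prop:partition-regular-functions-live-on-the-same-set:Kat-equivalent}) I claim that $\psi$ witnesses $\tau\leq_K\rho$ and $\psi^{-1}$ witnesses $\rho\leq_K\tau$. Given $F\in\cF$, the matching set is $G=\phi[F]$, and for a finite $K_1\subseteq\Omega$ the matching finite set is $K_2=\phi[K_1]$; the displayed identity then gives $\tau(G\setminus K_2)=\psi[\rho(F\setminus K_1)]$, which is precisely the inclusion required by Definition~\ref{Katetov-order-for-rho} (in fact an equality). The reverse direction is symmetric, using $\phi^{-1}$ and $\psi^{-1}$.

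Item (\ref{prop:partition-regular-functions-live-on-the-same-set:hereditary}) is the place that needs a little care, and I expect it to be the main obstacle. The key is to recognize that $\tau\restriction\tau(\phi[F])$ is itself the $(\phi,\psi\restriction\rho(F))$-relabeling of $\rho\restriction\rho(F)$. Concretely, one checks that $E\mapsto\phi[E]$ is a bijection of $\cF\restriction\rho(F)$ onto $\cG\restriction\tau(\phi[F])$ — using that $\rho(E)\subseteq\rho(F)$ if and only if $\psi[\rho(E)]\subseteq\psi[\rho(F)]$ — and that this bijection intertwines the two restricted functions. Once this identification is in place, (3a) follows by applying item (\ref{prop:partition-regular-functions-live-on-the-same-set:Kat-equivalent}) to the pair of restrictions with $G=\phi[F]$, and (3b) with $F=\phi^{-1}[G]$; in fact one obtains full Kat\v{e}tov equivalence of each matched pair of restrictions, not merely one inequality.

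Finally, item (\ref{prop:partition-regular-functions-live-on-the-same-set:FinBW}) is a formal consequence. For the $\FinBW$ equality I apply Theorem~\ref{thm:Katetove-implies-inclusion-for-FinBW}(\ref{thm:Katetove-implies-inclusion-for-FinBW:for-rho}) in both directions to the two Kat\v{e}tov inequalities from item (\ref{prop:partition-regular-functions-live-on-the-same-set:Kat-equivalent}). For the $\hFinBW$ equality I use Proposition~\ref{prop:basic-relationships-between-FinBW-like-spaces}(\ref{prop:basic-relationships-between-FinBW-like-spaces:hFinBW-equals-intersection-of-FinBW:rho}) to write each side as an intersection, $\hFinBW(\rho)=\bigcap_{F\in\cF}\FinBW(\rho\restriction\rho(F))$ and $\hFinBW(\tau)=\bigcap_{G\in\cG}\FinBW(\tau\restriction\tau(G))$, and then combine the hereditary equivalences of item (\ref{prop:partition-regular-functions-live-on-the-same-set:hereditary}) with Theorem~\ref{thm:Katetove-implies-inclusion-for-FinBW}(\ref{thm:Katetove-implies-inclusion-for-FinBW:for-rho}): given a space $X$ in one intersection, every factor of the other intersection is dominated in the Kat\v{e}tov order by some factor already known to contain $X$, so $X$ belongs to it as well. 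Carrying this out for both directions gives the two equalities.
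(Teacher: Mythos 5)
Your proposal is correct and follows essentially the same route as the paper: the paper declares items (\ref{prop:partition-regular-functions-live-on-the-same-set:tau})--(\ref{prop:partition-regular-functions-live-on-the-same-set:hereditary}) straightforward and derives item (\ref{prop:partition-regular-functions-live-on-the-same-set:FinBW}) from them via Theorem~\ref{thm:Katetove-implies-inclusion-for-FinBW}, exactly as you do. Your write-up simply supplies the omitted details (the identity $\tau(G\setminus\phi[K])=\psi[\rho(F\setminus K)]$, the verification of (M), (R), (S), and the intersection argument via Proposition~\ref{prop:basic-relationships-between-FinBW-like-spaces}(\ref{prop:basic-relationships-between-FinBW-like-spaces:hFinBW-equals-intersection-of-FinBW:rho}) for $\hFinBW$), all of which check out.
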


\begin{proof}
Items (\ref{prop:partition-regular-functions-live-on-the-same-set:tau})--(\ref{prop:partition-regular-functions-live-on-the-same-set:hereditary}) are straightforward,
whereas item~(\ref{prop:partition-regular-functions-live-on-the-same-set:FinBW}) follows from previous items and Proposition~\ref{thm:Katetove-implies-inclusion-for-FinBW}.
\end{proof}


\part{Distinguishing between \texorpdfstring{$\FinBW$}{FinBW } classes}
\label{part:Distinguishing-FinBW-spaces}

In this part we are  interested in finding spaces that are in  $\FinBW(\rho_1)$, but are not in $\FinBW(\rho_2)$. Similar investigations concerning the classes $\FinBW(\I)$ were conducted in \cite{MR4584767}. In that paper all the examples (showing that under some set-theoretic assumption $\FinBW(\I)\setminus\FinBW(\J)\neq\emptyset$ for some ideals $\I$ and $\J$) were inspired by \cite{MR1950294} and are of one specific type -- they are defined using \emph{maximal} almost disjoint families. 
It turns out (see Theorem~\ref{thm:MAD-Mrowka-nie-jest-Fspejsem}) that, in general, we cannot use \emph{maximal} almost disjoint families to distinguish between $\FinBW(\rho)$ classes with the aid of spaces defined as in \cite{MR4584767}.
Fortunately, we managed to use \emph{not necessary maximal} almost disjoint families to  prove two main results of this part (Theorems \ref{thm:TWIERDZENIE-for-Fspaces:Pminus-Mrowka} and \ref{thm:TWIERDZENIE-for-Fspaces:Pminus-Mrowka:rho}), which give us $\FinBW(\rho_1)\setminus\FinBW(\rho_2)\neq\emptyset$ for certain $\rho_1$ and $\rho_2$. Our methods were inspired by \cite{MR4552506}.


\section{Mr\'{o}wka spaces and their compactifications}

For an \emph{infinite} almost disjoint family $\cA$ on a countable set $\Lambda$, we define 
a set  
$$\Psi(\cA) = \Lambda\cup \cA$$
and introduce a topology on $\Psi(\cA)$ as follows:
 the points of $\Lambda$
are isolated and a basic neighborhood of $A\in\cA$ has the form $\{A\}\cup
(A\setminus F)$ with $F$ finite. 

Topological spaces of the form  $\Psi(\cA)$ were  introduced by Alexandroff and Urysohn in
\cite[chapter V, paragraph 1.3]{zbMATH02573679} (as noted in \cite[p.~182]{MR1039321}, \cite[p.~1380]{MR2610447} and \cite[p.~605]{MR3205494}) and its topology is known as  the \emph{rational sequence topology} (see \cite[Example 65]{MR1382863}; the same topology was later described by Kat\v{e}tov in \cite[p.~74]{MR48531}).
Spaces $\Psi(\cA)$ with \emph{maximal} (with respect to the inclusion) almost disjoint families $\cA$  were first examined by  
Mr\'{o}wka (see \cite{MR63650}) and Isbell (as noted in \cite[p.~269]{MR0116199}).
It seems that the notation $\Psi$ for these kind of spaces was  used for the first time in \cite[Problem 5I, p.~79]{MR0116199}.

Spaces of the form $\Psi(\cA)$ are known under
many names, including \emph{$\Psi$-spaces}, \emph{Isbell-Mr\'{o}wka spaces} and \emph{Mr\'{o}wka spaces}. 
Recent surveys on these spaces and their numerous applications can be found in \cite{MR3205494,MR3822423}.

It is known that $\Psi(\cA)$ is Hausdorff, regular, locally compact,
first countable and separable, but it is  not
compact nor sequentially compact (see \cite{MR63650} or \cite[Section~11]{MR776622}).
It is not difficult to see that 
$A\cup\{A\}$ is compact in $\Psi(\cA)$ for every $A\in\cA$ and for every 
compact set $K\subseteq\Psi(\cA)$ both sets 
$K\cap \cA$ and $(K \cap \Lambda) \setminus \bigcup \{A : A\in K\cap \cA\}$  are finite.
In particular, for every  compact set $K\subseteq\Psi(\cA)$ there are finitely many sets  $A_i\in \cA$ and a finite set $F$ such that 
$ K\subseteq \{A_i:i<n\}\cup \bigcup\{ A_i \cup F: i<n\}$.  

Let 
$$\Phi(\cA)  = \Psi(\cA)\cup \{\infty\} = \Lambda \cup \cA \cup \{\infty\}$$ 
be the Alexandroff one-point compactification of $\Psi(\cA)$ (recall that open neighborhoods of $\infty$ are of the form $\Phi(\cA)\setminus K$ for compact sets $K\subseteq \Psi(\cA)$).
It is not difficult to see  
that  $\Phi(\cA)$ is Hausdorff, compact, sequentially compact, separable and first countable at every point of $\Phi(\cA)\setminus\{\infty\}$.

Topological spaces of the form $\Phi(\cA)$ with \emph{maximal} almost disjoint families $\cA$ were first used by Franklin \cite[Example~7.1]{MR222832} where the author used the notation $\Psi^*$ instead of $\Phi$.
Later, these spaces were considered in \cite{MR2375170} where the authors use the notation $\cF(\cA)$ and call them 
the \emph{Franklin compact spaces} associated to   $\cA$, whereas in  \cite{MR4330212} the authors use the notation $Fr(\cA)$ and call them the \emph{Franklin spaces of $\cA$}.
The notation $\Phi(\cA)$ for these spaces is used in the following  papers \cite{MR3097000,MR4356195,MR4358658,MR4584767}
Recently, spaces of the form $\Phi(\cA)$ were also considered  for \emph{non} maximal almost disjoint families \cite{MR4552506,corral-guzman-lopez}.

It also makes sense to define $\Phi(\cA)$ for infinite families $\cA$ that  are \emph{not}  almost disjoint, but then $\Phi(\cA)$ is no longer  Hausdorff (almost  disjointness of $\cA$ is a necessary and sufficient condition for a space $\Phi(\cA)$ to be Hausdorff). 

The following lemma (which will be used repeatedly in the sequel) shows that 
a sequence in a space $\Phi(\cA)$ may fail to have a $\rho$-convergent $\rho$-subsequence only in one specific case. Hence, checking whether $\Phi(\cA)\in \FinBW(\rho)$ will be reduced to considering only sequences of this one specific kind.

\begin{lemma}
\label{lem:TWIERDZENIE-for-Fspaces-in-FinBW}
Let $\rho:\cF\to[\Lambda]^\omega$ be  partition regular with $\cF\subseteq [\Omega]^\omega$.
Let $\cA$ be an infinite almost disjoint family  on $\Lambda$.
For every sequence  
$f:\Lambda\to \Phi(\cA)$,  the following five cases can only occur: 
\begin{enumerate}
    \item $f^{-1}(\infty)\notin\I_{\rho}$,
    \item $f^{-1}[\cA] \notin\I_{\rho}$,
    \item $f^{-1}(\infty)\in\I_{\rho}$, $f^{-1}[\cA] \in\I_{\rho}$, 
    $f^{-1}[\Lambda]\in \I_{\rho}^*$ and
\begin{enumerate}
    \item $f^{-1}(\lambda)\notin \I_{\rho}$ for some $\lambda\in \Lambda$,
    \item $f^{-1}(\lambda)\in \I_{\rho}$ for every $\lambda\in \Lambda$
    and
    $f^{-1}[A]\notin \I_{\rho}$ for some $A \in \cA$,
    \item 
    $f^{-1}(\lambda)\in \I_{\rho}$ for every $\lambda\in \Lambda$
    and 
    $f^{-1}[A]\in \I_{\rho}$ for every  $A \in \cA$.
\end{enumerate}
\end{enumerate}

If $\rho$ is  $P^-$, 
then in cases (1), (2), (3a) and (3b) there is $F\in \cF$
such that $f\restriction \rho(F)$ is $\rho$-convergent.
\end{lemma}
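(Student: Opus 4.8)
The plan is to treat the four listed cases separately, noting that (1) and (3a) are immediate constant-subsequence arguments while the genuine content lies in (2) and (3b), where the hypothesis that $\rho$ is $P^-$ enters exclusively through its reformulation in Proposition~\ref{prop:Pminus-for-rho-equivalent-condition}. Throughout I would use the defining property of $\I_\rho$: a set $B$ lies in $\I_\rho^+$ exactly when there is some $F\in\cF$ with $\rho(F)\subseteq B$.

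First I would dispose of the trivial cases. In case (1) we have $f^{-1}(\infty)\in\I_\rho^+$, so there is $F\in\cF$ with $\rho(F)\subseteq f^{-1}(\infty)$; then $f\restriction\rho(F)$ is constantly $\infty$ and hence $\rho$-converges to $\infty$. Case (3a) is identical with a point $\lambda\in\Lambda$ in place of $\infty$: from $f^{-1}(\lambda)\in\I_\rho^+$ we obtain $F\in\cF$ with $\rho(F)\subseteq f^{-1}(\lambda)$, and $f\restriction\rho(F)$ is the constant sequence $\lambda$.

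For case (3b), where $f^{-1}(\lambda)\in\I_\rho$ for every $\lambda\in\Lambda$ but $f^{-1}[A]\notin\I_\rho$ for some $A\in\cA$, the intended limit is the point $A\in\cA$. Writing $A=\{a_k:k\in\omega\}$, the family $\cB=\{f^{-1}(a_k):k\in\omega\}$ consists of sets in $\I_\rho$ and has union $\bigcup\cB=f^{-1}[A]\notin\I_\rho$. Applying Proposition~\ref{prop:Pminus-for-rho-equivalent-condition} to $\cB$ yields $F\in\cF$ with $\rho(F)\subseteq f^{-1}[A]$ such that every finite subfamily of $\cB$ is avoided by some $\rho(F\setminus K)$. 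To verify $\rho$-convergence to $A$, I would take a basic neighbourhood $\{A\}\cup(A\setminus F_0)$ with $F_0$ finite; since $f[\rho(F)]\subseteq A$, it suffices to avoid the finitely many points of $F_0\cap A$, that is, to avoid the finite subfamily $\{f^{-1}(a):a\in F_0\cap A\}$ of $\cB$, which the finite-avoidance property provides. Case (2), where $f^{-1}[\cA]\notin\I_\rho$, is the most delicate and splits in two. Enumerate the countably many members $\{A_k:k\in\omega\}$ of $\cA$ whose point-preimage $f^{-1}(A_k)$ is nonempty (countability holds because these preimages are pairwise disjoint subsets of the countable set $\Lambda$). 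If $f^{-1}(A_k)\notin\I_\rho$ for some $k$, we again extract a constant $\rho$-subsequence converging to the point $A_k$. Otherwise $\cB=\{f^{-1}(A_k):k\in\omega\}\subseteq\I_\rho$ with $\bigcup\cB=f^{-1}[\cA]\notin\I_\rho$, and Proposition~\ref{prop:Pminus-for-rho-equivalent-condition} again furnishes $F\in\cF$ with $\rho(F)\subseteq f^{-1}[\cA]$ and the finite-avoidance property; now the limit is $\infty$. Given a neighbourhood $\Phi(\cA)\setminus K_0$ of $\infty$ with $K_0\subseteq\Psi(\cA)$ compact, the crucial structural fact recorded before the lemma is that $K_0\cap\cA$ is finite; avoiding the corresponding finite subfamily $\{f^{-1}(A_k):A_k\in K_0\}$ of $\cB$ by means of some $\rho(F\setminus K)$ forces $f[\rho(F\setminus K)]\subseteq\cA\setminus K_0\subseteq\Phi(\cA)\setminus K_0$.

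The main obstacle I anticipate is organizational rather than conceptual: in each non-trivial case one must correctly match a basic neighbourhood of the intended limit with the finite subfamily of $\cB$ that is to be avoided, and the entire argument hinges on invoking Proposition~\ref{prop:Pminus-for-rho-equivalent-condition} in the correct direction (from a countable $\I_\rho$-family with $\I_\rho$-positive union to a single $F\in\cF$ with finite-avoidance). The only genuinely topological input beyond this is the finiteness of $K_0\cap\cA$ for compact $K_0$, which is precisely what reduces convergence to $\infty$ to avoiding finitely many members of $\cA$.
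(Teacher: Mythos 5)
Your proposal is correct and follows essentially the same route as the paper: constant subsequences dispose of cases (1), (3a) and the sub-case of (2) where some $f^{-1}(A)\notin\I_\rho$, and the remaining sub-case of (2) and case (3b) are handled by applying Proposition~\ref{prop:Pminus-for-rho-equivalent-condition} to the countable family of point-preimages and matching a basic neighbourhood of the limit ($\infty$ resp.\ $A$) with a finite subfamily to be avoided. The only cosmetic difference is that the paper first fixes $F$ with $\rho(F)\subseteq f^{-1}[\cA]$ and enumerates $f[\rho(F)]$ rather than all of $\cA$ with nonempty preimage, which changes nothing essential.
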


\begin{proof}
\emph{Case (1).} 
There is $F\in \cF$ such that 
$f\restriction \rho(F)$ is constant (with the value $\infty$), hence it is $\rho$-convergent.

\emph{Case (2).}  
We find $F\in \cF$ with $\rho(F)\subseteq f^{-1}[\cA]$.
Then we enumerate $f[\rho(F)] = \{A_{n} : n \in\omega\}$
and define $E_n = f^{-1}[\{A_{n}\}]$ for each $n\in\omega$.

If there is $n_0\in\omega$ such that $E_{n_0}\notin \I_{\rho}$, then we 
find $F'\in \cF$ with   
$\rho(F')\subseteq E_{n_0}$, and we see that  $f\restriction\rho(F')$ is constant, so it is $\rho$-convergent.

Now assume that $E_n\in \I_{\rho}$ for each $n\in\omega$.
Since $\rho(F)\subseteq \bigcup\{E_n:n\in\omega\}$, we can use 
Proposition~\ref{prop:Pminus-for-rho-equivalent-condition} to find  $E\in\cF$ 
such that 
for each $n\in\omega$ there is a finite set $K\subseteq \Omega$ with $\rho(E\setminus K) \subseteq   \rho(F) \cap \bigcup \{ E_i: i\geq n\}$.  
We claim that  $f\restriction \rho(E)$  is $\rho$-convergent to $\infty$.
Let $U$ be a neighborhood of $\infty$. Without loss of generality, we can assume that 
$U = \Phi(\cA)\setminus (\{A_{i}:i<n\}\cup \bigcup_{i<n}A_{i})$
for some $n\in\omega$.
Let $K\subseteq \Omega$ be a finite set such that  $\rho(E\setminus K) \subseteq  \rho(F)\cap \bigcup \{ E_i: i\geq n\}$. 
Then 
$f[\rho(E\setminus K)] \cap \{A_{i}:i<n\}=\emptyset$, and consequently
$f[\rho(E\setminus K)] \subseteq U$.

\emph{Case (3a).} 
There is $F\in \cF$ such that 
$f\restriction \rho_1(F)$ is constant, hence it is $\rho$-convergent.

\emph{Case (3b).} 
Let $A\in \cA$ be such that $f^{-1}[A]\notin \I_{\rho}$.
Using Proposition~\ref{prop:Pminus-for-rho-equivalent-condition},
we find $F\in \cF$ 
such that 
$\rho(F)\subseteq f^{-1}[A]$
and 
for every finite $S\subseteq A$ there is a finite set $K\subseteq \Omega$ with 
$\rho(F\setminus K)\subseteq f^{-1}[A]\setminus f^{-1}[S]=f^{-1}[A\setminus S]$.
We  claim that $f\restriction \rho(F)$ is $\rho$-convergent to $A$.
Indeed, let $U$ be a neighborhood of $A$. Without loss of generality, we can assume that $U=\{A\}\cup (A\setminus S)$ where $S$ is a finite subset of $A$.
Then we take a finite $K\subseteq\Omega$ such that 
$\rho(F\setminus K)\subseteq f^{-1}[A\setminus S]$, so
$f[\rho(F\setminus K)]\subseteq A\setminus S \subseteq U$.
\end{proof}

\begin{proposition}
\label{prop:countable-Mrowka-is-FinBW}
Let $\rho:\cF\to[\Lambda]^\omega$ be  partition regular with $\cF\subseteq [\Omega]^\omega$.
Let $\cA$ be an infinite almost disjoint family  
of infinite subsets of $\Lambda$.
If $\rho$ is  $P^-$ 
and $\cA$ is \emph{countable}, 
then $\Phi(\cA)\in \FinBW(\rho)$.
\end{proposition}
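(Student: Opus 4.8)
The plan is to reduce to the case analysis of Lemma~\ref{lem:TWIERDZENIE-for-Fspaces-in-FinBW} and to show that the one case left open there is exactly the case settled by the countability of $\cA$. Fix a sequence $f\colon\Lambda\to\Phi(\cA)$. By Lemma~\ref{lem:TWIERDZENIE-for-Fspaces-in-FinBW} it falls into one of the cases (1), (2), (3a), (3b), (3c), and since $\rho$ is $P^-$, in cases (1), (2), (3a) and (3b) there already exists $F\in\cF$ with $f\restriction\rho(F)$ $\rho$-convergent. So everything reduces to producing such an $F$ in case (3c), where $f^{-1}(\infty)\in\I_\rho$, $f^{-1}[\cA]\in\I_\rho$, $f^{-1}(\lambda)\in\I_\rho$ for every $\lambda\in\Lambda$, and $f^{-1}[A]\in\I_\rho$ for every $A\in\cA$. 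Since a $\rho$-limit at an isolated point $\lambda$ would force some $\rho(F\setminus K)\subseteq f^{-1}(\lambda)\in\I_\rho$, and a $\rho$-limit at $A\in\cA$ would force some $\rho(F\setminus K)\subseteq f^{-1}[\{A\}\cup A]\in\I_\rho$ — both impossible as $\rho(F\setminus K)\notin\I_\rho$ — the only possible limit is $\infty$, and the claim to prove is that $f\restriction\rho(F)$ $\rho$-converges to $\infty$ for a suitable $F$.

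Next I would use that $\cA$ is countable to build a countable neighbourhood base at $\infty$. Enumerate $\cA=\{A_n:n\in\omega\}$ and $\Lambda=\{\lambda_n:n\in\omega\}$, and set
$$K_n=\{A_i:i<n\}\cup\bigcup_{i<n}A_i\cup\{\lambda_i:i<n\},\qquad U_n=\Phi(\cA)\setminus K_n.$$
Each $K_n=\bigcup_{i<n}(A_i\cup\{A_i\})\cup\{\lambda_i:i<n\}$ is a finite union of compact sets together with a finite set, hence compact in $\Psi(\cA)$, so each $U_n$ is an open neighbourhood of $\infty$. Using the description of compact subsets of $\Psi(\cA)$ recalled before the lemma (every compact $K\subseteq\Psi(\cA)$ lies inside $\{B_i:i<m\}\cup\bigcup_{i<m}B_i\cup F$ for finitely many $B_i\in\cA$ and finite $F\subseteq\Lambda$), one checks that every neighbourhood $U$ of $\infty$ contains some $U_n$: choose $n$ large enough that all the $B_i$ occur among $A_0,\dots,A_{n-1}$ and $F\subseteq\{\lambda_i:i<n\}$. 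Thus $\{U_n:n\in\omega\}$ is a neighbourhood base at $\infty$, and $\rho$-convergence of $f\restriction\rho(F)$ to $\infty$ reduces to arranging, for each $n$, a finite $K\subseteq\Omega$ with $f[\rho(F\setminus K)]\subseteq U_n$.

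Finally I would pull the $K_n$ back and apply $P^-$. Put $D_n=f^{-1}[K_n]\subseteq\Lambda$; then $D_n=\bigcup_{i<n}f^{-1}(A_i)\cup\bigcup_{i<n}f^{-1}[A_i]\cup\bigcup_{i<n}f^{-1}(\lambda_i)$ is a finite union of sets lying in $\I_\rho$ by the hypotheses of case (3c), so $D_n\in\I_\rho$, and the $D_n$ increase. Setting $C_n=\Lambda\setminus D_n$ gives a $\subseteq$-decreasing sequence with $C_n\in\I_\rho^+$ (since $D_n\in\I_\rho$ and $\Lambda\notin\I_\rho$) and $C_n\setminus C_{n+1}=D_{n+1}\setminus D_n\in\I_\rho$. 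Applying the $P^-$ property of $\rho$ to $(C_n)_{n\in\omega}$ yields $F\in\cF$ with $\rho(F)\subseteq^\rho C_n$ for every $n$; unravelling, for each $n$ there is a finite $K\subseteq\Omega$ with $\rho(F\setminus K)\subseteq C_n=\Lambda\setminus D_n$, that is $f[\rho(F\setminus K)]\cap K_n=\emptyset$, i.e. $f[\rho(F\setminus K)]\subseteq U_n$. Together with the previous paragraph this shows $f\restriction\rho(F)$ $\rho$-converges to $\infty$, finishing case (3c) and hence the proof.

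The main obstacle — and the only place where countability of $\cA$ is used — is the second step: without countability the neighbourhood filter at $\infty$ need not be countably generated, so ``avoid every compact set eventually'' would not be encoded by a single decreasing $\omega$-sequence of coideal sets. As $P^-$ controls only countable decreasing sequences, the reduction to a countable base at $\infty$ is exactly what makes one application of $P^-$ sufficient; the remaining verifications (that $(C_n)$ is an admissible $P^-$ sequence and that $\rho(F)\subseteq^\rho C_n$ translates into $f[\rho(F\setminus K)]\subseteq U_n$) are routine.
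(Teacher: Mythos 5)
Your proof is correct and follows essentially the same route as the paper's: reduce via Lemma~\ref{lem:TWIERDZENIE-for-Fspaces-in-FinBW} to case (3c), use countability of $\cA$ to encode the neighbourhood filter at $\infty$ in a countable structure, and apply $P^-$ to force $\rho$-convergence to $\infty$. The only cosmetic difference is that you invoke the $P^-$ property in its original decreasing-sequence form (Definition~\ref{def:P-like-rho}) on the complements $C_n=\Lambda\setminus f^{-1}[K_n]$, whereas the paper uses the equivalent countable-family reformulation of Proposition~\ref{prop:Pminus-for-rho-equivalent-condition}; your version is, if anything, slightly more careful in explicitly absorbing the finite subset of $\Lambda$ appearing in a general compact set into the base $\{U_n\}$.
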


\begin{proof}
Let 
$f:\Lambda\to \Phi(\cA)$.
By Lemma~\ref{lem:TWIERDZENIE-for-Fspaces-in-FinBW}, 
we can assume that 
 $f^{-1}(\infty)\in\I_{\rho}$, 
 $f^{-1}[\cA] \in\I_{\rho}$,
$f^{-1}[\Lambda]\in \I_{\rho}^*$,  
$f^{-1}(\lambda)\in \I_{\rho}$ for every $\lambda\in \Lambda$
and 
$f^{-1}[A]\in \I_{\rho}$ for every  $A \in \cA$.

Since $f^{-1}[A]\in \I_\rho$ for every $A\in \cA$, we can use  Proposition~\ref{prop:Pminus-for-rho-equivalent-condition} to  find 
$F\in \cF$ such that 
$\rho(F)\subseteq f^{-1}[\Lambda]$
and 
for any finite set  $\cA'\subseteq \cA$
there is a finite set $K\subseteq \Omega$ with 
$$
\rho(F\setminus K) \cap f^{-1}\left[\bigcup \cA'\right] 
=
\rho(F\setminus K) \cap \left( \bigcup_{A\in \cA'} f^{-1}[A] \right) = \emptyset
.$$
We claim that $f\restriction \rho(F)$ is $\rho$-convergent to $\infty$.
Indeed, let $U$ be a neighborhood of $\infty$.
Without loss of generality, we can assume that there is a finite set $\cA' \subseteq \cA$ such that 
$U = \Phi(\cA)\setminus \left( \cA' \cup\bigcup \cA'\right).$
Then we have a finite set 
 $K\subseteq \Omega$ such that 
$\rho(F\setminus K) \cap f^{-1}\left[\bigcup \cA'\right]  = \emptyset
$,
and consequently
$f[\rho(F\setminus K)] \subseteq U$, so 
 the proof is finished.
\end{proof}


\section{Mr\'{o}wka for maximal almost disjoint families}

In \cite{MR4584767} the author extensively studied $\FinBW(\I)$ spaces. In particular, for a large class of ideals, assuming the continuum hypothesis, he characterized in terms of Kat\v{e}tov order when there is a space in $\FinBW(\I)$ that is not in $\FinBW(\J)$. In his proofs the right space is always of the form $\Phi(\cA)$ for some \emph{maximal} almost disjoint family. In our paper we want to generalize results of \cite{MR4584767} so that they will apply also for Hindman spaces, Ramsey spaces and differentially compact spaces. As we will see at the end of this section, our generalization requires going beyond \emph{maximal} almost disjoint families (as always $\Phi(\cA)\notin\FinBW(\rho)$ for maximal $\cA$ and $\rho\in\{FS,r,\Delta\}$ -- see Corollary \ref{cor:nonFINBW-for-MAD}) and working with almost disjoint families that are not necessarily maximal.

\begin{lemma}
\label{lem:MAD-Mrowka-nie-jest-Fspejsem:technical}
Let $\rho:\cF\to[\Lambda]^\omega$ 
be partition regular with $\cF\subseteq [\Omega]^\omega$.
Let $\cA$ be  an almost disjoint family on $\Lambda$.
\begin{enumerate}
\item 
If  
$\cA\subseteq\I_\rho$ 
and $$\forall F\in \cF\,\exists A\in \cA\,\forall K\in [\Omega]^{<\omega}\,(A\cap \rho(F\setminus K)\neq\emptyset),$$
then 
$\Phi(\cA) \notin \FinBW(\rho).$\label{lem:MAD-Mrowka-nie-jest-Fspejsem:technical:item}

\item If $\cA\subseteq \I_\rho$ 
and $\cA$ is  a  \emph{maximal} 
almost disjoint family,
then \label{lem:MAD-Mrowka-nie-jest-Fspejsem:technical:mad-in-ideal}
$\Phi(\cA) \notin \FinBW(\rho).$
\end{enumerate}
\end{lemma}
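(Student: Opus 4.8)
The plan is to prove item (\ref{lem:MAD-Mrowka-nie-jest-Fspejsem:technical:mad-in-ideal}) by reducing it to item (\ref{lem:MAD-Mrowka-nie-jest-Fspejsem:technical:item}), so the real work splits into two tasks: first, for item (\ref{lem:MAD-Mrowka-nie-jest-Fspejsem:technical:item}), exhibiting a single sequence that witnesses $\Phi(\cA)\notin\FinBW(\rho)$; and second, deriving the displayed combinatorial hypothesis of item (\ref{lem:MAD-Mrowka-nie-jest-Fspejsem:technical:item}) from maximality of $\cA$.

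For item (\ref{lem:MAD-Mrowka-nie-jest-Fspejsem:technical:item}) I would test the space against the identity sequence $f\colon\Lambda\to\Phi(\cA)$ given by $f(\lambda)=\lambda$ (each $\lambda$ viewed as its isolated point). Fixing $F\in\cF$, I claim $f\restriction\rho(F)$ cannot $\rho$-converge to any $x\in\Phi(\cA)$, and I would rule out the three types of points separately. If $x=\lambda_0\in\Lambda$, then $\rho$-convergence would force $\rho(F\setminus K)\subseteq\{\lambda_0\}$ for some finite $K$, which is impossible since $\rho(F\setminus K)$ is infinite. If $x=A_0\in\cA$, then applying the definition of $\rho$-convergence to the basic neighbourhood $\{A_0\}\cup A_0$ and using $f=\mathrm{id}$ gives $\rho(F\setminus K)\subseteq A_0$ for some finite $K$; as $F\setminus K\in\cF$, this contradicts $A_0\in\cA\subseteq\I_\rho$ by the very definition of $\I_\rho$. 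Finally, for $x=\infty$ I would use the hypothesis to pick $A\in\cA$ with $A\cap\rho(F\setminus K)\neq\emptyset$ for every finite $K$; since $A\cup\{A\}$ is compact in $\Psi(\cA)$, the set $U=\Phi(\cA)\setminus(A\cup\{A\})$ is an open neighbourhood of $\infty$, and $f[\rho(F\setminus K)]=\rho(F\setminus K)$ meets $A$ for every finite $K$, so $f[\rho(F\setminus K)]\not\subseteq U$ and convergence to $\infty$ fails. In this way $\cA\subseteq\I_\rho$ disposes of limits lying in $\cA$, the combinatorial condition disposes of $\infty$, and infiniteness of the values of $\rho$ disposes of the isolated points.

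For item (\ref{lem:MAD-Mrowka-nie-jest-Fspejsem:technical:mad-in-ideal}) it then suffices to verify the displayed hypothesis of item (\ref{lem:MAD-Mrowka-nie-jest-Fspejsem:technical:item}). Fix $F\in\cF$, enumerate $\Omega=\{o_n:n\in\omega\}$ and put $K_n=\{o_i:i<n\}$; then $F\setminus K_n\in\cF$ and the sets $\rho(F\setminus K_n)$ are infinite and $\subseteq$-decreasing in $n$. I would diagonalise: choose distinct $b_n\in\rho(F\setminus K_n)$ and set $B=\{b_n:n\in\omega\}$, so that $\{b_m:m\geq n\}\subseteq\rho(F\setminus K_n)$, i.e. $B$ is almost contained in every $\rho(F\setminus K_n)$. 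Since $B$ is an infinite subset of $\Lambda$ and $\cA$ is \emph{maximal}, there is $A\in\cA$ with $|A\cap B|=\omega$. Given any finite $K\subseteq\Omega$, pick $n$ with $K\subseteq K_n$; then $A\cap\{b_m:m\geq n\}$ is cofinite in the infinite set $A\cap B$, hence nonempty, and it is contained in $A\cap\rho(F\setminus K_n)\subseteq A\cap\rho(F\setminus K)$. Thus this single $A$ witnesses the hypothesis, and item (\ref{lem:MAD-Mrowka-nie-jest-Fspejsem:technical:item}) yields $\Phi(\cA)\notin\FinBW(\rho)$.

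The main obstacle is precisely the uniformity encoded in the hypothesis of item (\ref{lem:MAD-Mrowka-nie-jest-Fspejsem:technical:item}): maximality instantly supplies, for each fixed finite $K$, some $A\in\cA$ meeting $\rho(F\setminus K)$, but a priori this $A$ depends on $K$, whereas one needs a single $A$ good for all $K$ at once. The diagonal set $B$ is exactly the device that collapses this quantifier order—because $B$ is almost contained in all $\rho(F\setminus K_n)$ simultaneously, any single $A$ with $|A\cap B|=\omega$ automatically inherits nonempty (indeed infinite) intersection with every $\rho(F\setminus K)$.
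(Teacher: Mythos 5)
Your proposal is correct and follows essentially the same route as the paper: the identity sequence with the same three-case analysis for item (1), and the same diagonal choice of distinct $b_n\in\rho(F\setminus\{o_i:i<n\})$ followed by maximality for item (2). The only difference is that you spell out the monotonicity argument showing the chosen $A$ works for all finite $K$, which the paper leaves implicit.
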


\begin{proof} 
(\ref{lem:MAD-Mrowka-nie-jest-Fspejsem:technical:item})
Let $f:\Lambda\to\Phi(\cA)$ be given by $f(\lambda)=\lambda$.
We claim that there is no $F\in\cF$ such that $f\restriction \rho(F)$ is  $\rho$-convergent.
Assume, for sake of contradiction, that there is $F\in \cF$ such that 
$f\restriction \rho(F)$ is $\rho$-convergent to some $L\in \Phi(\cA)$.
We have three cases: 
   (1) $L\in \Lambda$, (2) $L\in \cA$, (3) $L=\infty$.

\emph{Case (1).} The set $U=\{L\}$ is a neighborhood of $L$.
But  for any finite set $K\subseteq\Omega$ we have 
$f[\rho(F\setminus K)] = \rho(F\setminus K) \not\subseteq \{L\}=U.$
Thus, $f\restriction \rho(F)$ is not $\rho$-convergent to $L$, a contradiction.

\emph{Case (2).}
The set   $U=L \cup\{L \}$ is a neighborhood of $L$. 
Since $f\restriction \rho(F)$ is $\rho$-convergent to $L$, there is 
a finite set $K\subseteq\Omega$ such that $\rho(F\setminus K)=f[\rho(F\setminus K)]\subseteq U$.
Thus, $\rho(F\setminus K)\subseteq L$, so $L\notin \I_{\rho}$, but $\cA\subseteq \I_{\rho}$, a contradiction.

\emph{Case (3).}
Let $A\in \cA$ be such that 
$A\cap \rho(F\setminus K)\neq\emptyset$ for every finite set $K\subseteq \Omega$.
The set $U = \Phi(\cA)\setminus (A\cup\{A\})$ is a neighborhood of $\infty$. Since $f\restriction \rho(F)$ is $\rho$-convergent to $L$, there is a finite set $K\subseteq\Omega$ such that 
$ \rho(F\setminus K) = f[\rho(F\setminus K)]\subseteq U$.
Hence, $A\cap \rho(F\setminus K)=\emptyset$, a contradiction.

(\ref{lem:MAD-Mrowka-nie-jest-Fspejsem:technical:mad-in-ideal})
Let $F\in \cF$ and enumerate $\Omega=\{o_n:n\in\omega\}$.
We  pick inductively a point $b_{n}\in \rho(F\setminus \{o_j:j<n\})\setminus\{b_j:j<n\}$ for each $n\in\omega$.
Then using maximality 
of $\cA$ we can find  $A\in \cA$ such that $A\cap \{b_n:n\in\omega\}$ is infinite.
Thus, the condition form  item (\ref{lem:MAD-Mrowka-nie-jest-Fspejsem:technical:item}) 
is satisfied, so the proof is finished.
\end{proof}

\begin{theorem}
\label{thm:MAD-Mrowka-nie-jest-Fspejsem}  
Let $\rho:\cF\to[\Lambda]^\omega$ 
be partition regular with $\cF\subseteq [\Omega]^\omega$.
\begin{enumerate}
    \item 
 If 
$\rho$ is tall
(equivalently, $\I_\rho$ is a tall ideal),
then 
there exists an  
infinite (even of cardinality $\continuum$)  \emph{maximal} almost disjoint family $\cA$ on $\Lambda$ such that\label{thm:MAD-Mrowka-nie-jest-Fspejsem:MAD-in-ideal}
$\Phi(\cA) \notin \FinBW(\rho).$

\item If 
$\I_\rho$ is \emph{not} $P^-(\Lambda)$ 
(equivalently, $\Fin^2\leq_K\I_\rho$), 
then 
$\Phi(\cA) \notin \FinBW(\rho)$
for every infinite 
\emph{maximal} 
almost disjoint family $\cA$ on $\Lambda$.\label{thm:MAD-Mrowka-nie-jest-Fspejsem:ideal-over-FINsqrd}  

\end{enumerate}
\end{theorem}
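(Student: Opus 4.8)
The two parts require genuinely different ideas, so I would treat them separately.

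\smallskip
The plan for item (\ref{thm:MAD-Mrowka-nie-jest-Fspejsem:MAD-in-ideal}) is to manufacture a \emph{maximal} almost disjoint family that is actually contained in $\I_\rho$, and then quote the technical lemma. Since $\rho$ is tall, $\I_\rho$ is a tall ideal. Starting from any almost disjoint family of cardinality $\continuum$ on $\Lambda$, I would use tallness to replace each of its members by an infinite subset lying in $\I_\rho$; this keeps the family almost disjoint, of size $\continuum$, and now contained in $\I_\rho$. A Zorn's lemma argument then extends it to a family $\cA\subseteq\I_\rho$ that is maximal \emph{among almost disjoint subfamilies of $\I_\rho$}. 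Tallness is invoked once more to upgrade this to genuine maximality on $\Lambda$: any infinite $X$ almost disjoint from $\cA$ would contain an infinite subset in $\I_\rho$ still almost disjoint from $\cA$, contradicting maximality inside $\I_\rho$. With $\cA\subseteq\I_\rho$ maximal of size $\continuum$ in hand, Lemma~\ref{lem:MAD-Mrowka-nie-jest-Fspejsem:technical}(\ref{lem:MAD-Mrowka-nie-jest-Fspejsem:technical:mad-in-ideal}) immediately yields $\Phi(\cA)\notin\FinBW(\rho)$.

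\smallskip
Item (\ref{thm:MAD-Mrowka-nie-jest-Fspejsem:ideal-over-FINsqrd}) is the hard one, and I expect the main obstacle to be exactly this: a maximal $\cA$ need \emph{not} be contained in $\I_\rho$. If some $A\in\cA$ lies in $\I_\rho^+$, then the identity sequence $\lambda\mapsto\lambda$ already $\rho$-converges to that $A$, so neither the identity map nor Lemma~\ref{lem:MAD-Mrowka-nie-jest-Fspejsem:technical} applies directly. My plan is therefore to build a cleverer witnessing sequence that ``spreads out'' the $\I_\rho$-positive members of $\cA$. By Proposition~\ref{prop:Pminus-versus-FinSQRD-for-ideal}(\ref{prop:Pminus-versus-FinSQRD-for-ideal:Pminus-Lambda}) the hypothesis is equivalent to $\Fin^2\leq_K\I_\rho$; fix a witnessing $\phi\colon\Lambda\to\omega^2$ and set $D_n=\phi^{-1}[\{n\}\times\omega]$. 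Then $\{D_n\}$ partitions $\Lambda$, each $D_n\in\I_\rho$, and the defining property of the Kat\v{e}tov witness provides the key criterion: if $X\subseteq\Lambda$ meets all but finitely many $D_n$ in a finite set, then $X\in\I_\rho$ (otherwise $\phi[X]\in(\Fin^2)^+$, i.e.\ $X$ would meet infinitely many $D_n$ infinitely).

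\smallskip
Now choose distinct $A_n\in\cA$ (possible since $\cA$ is infinite), disjointify them by $E_n=A_n\setminus\bigcup_{m<n}A_m$ (each cofinite in $A_n$, hence infinite, and pairwise disjoint), fix bijections $g_n\colon D_n\to E_n$, and define $f\colon\Lambda\to\Lambda\subseteq\Phi(\cA)$ by $f\restriction D_n=g_n$; thus $f$ is injective and takes values among the isolated points. For every $A\in\cA$ one has $f^{-1}[A]\cap D_n=g_n^{-1}[A\cap E_n]$, which is infinite only when $A=A_n$, hence for at most one $n$; by the criterion above $f^{-1}[A]\in\I_\rho$. I would then verify that $f\restriction\rho(F)$ cannot $\rho$-converge for any $F\in\cF$: convergence to an isolated point is impossible because $f[\rho(F\setminus K)]$ is infinite; convergence to a point $A\in\cA$ would force, via the neighbourhood $\{A\}\cup A$, that $\rho(F\setminus K)\subseteq f^{-1}[A]\in\I_\rho$, contradicting $\rho(F\setminus K)\in\I_\rho^+$; and convergence to $\infty$ is excluded exactly as in Lemma~\ref{lem:MAD-Mrowka-nie-jest-Fspejsem:technical}(\ref{lem:MAD-Mrowka-nie-jest-Fspejsem:technical:mad-in-ideal}), by choosing distinct $b_k\in\rho(F\setminus\{o_j:j<k\})$, using maximality of $\cA$ to obtain $B\in\cA$ with $B\cap\{f(b_k):k\in\omega\}$ infinite, and invoking \nameref{def:partition-regular:monotone} to see that $f[\rho(F\setminus K)]\cap B\neq\emptyset$ for every finite $K$, so that the neighbourhood $\Phi(\cA)\setminus(B\cup\{B\})$ is never captured. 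This gives $\Phi(\cA)\notin\FinBW(\rho)$. The conceptual heart of the argument is that routing the $\Fin^2$-partition $\{D_n\}$ onto a disjointification of a countable subfamily of $\cA$ makes every preimage $f^{-1}[A]$ $\I_\rho$-small (killing convergence to points of $\cA$), while maximality still captures the diagonal image to kill convergence to $\infty$.
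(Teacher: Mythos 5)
Your proposal is correct and follows essentially the same route as the paper: item (1) is the standard fact that a tall ideal contains a maximal almost disjoint family of size $\continuum$ (which the paper simply cites from the literature) followed by Lemma~\ref{lem:MAD-Mrowka-nie-jest-Fspejsem:technical}(\ref{lem:MAD-Mrowka-nie-jest-Fspejsem:technical:mad-in-ideal}), and item (2) is exactly the paper's construction of a one-to-one sequence sending the $\Fin^2$-partition $\{D_n\}$ onto a disjointification of countably many members of $\cA$, with the same three-case analysis of the putative $\rho$-limit. The only slip is that without first normalizing $\phi$ to a bijection (as the paper does, citing \cite{MR3034318}) some of the sets $D_n$ may be finite or even empty, so your maps $g_n$ should be injections rather than bijections; this is harmless, since your argument uses only the injectivity of $f$ and the fact that $f^{-1}[A]\cap D_n$ is infinite for at most one $n$.
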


\begin{proof}
(\ref{thm:MAD-Mrowka-nie-jest-Fspejsem:MAD-in-ideal})
It follows from Lemma~\ref{lem:MAD-Mrowka-nie-jest-Fspejsem:technical}(\ref{lem:MAD-Mrowka-nie-jest-Fspejsem:technical:mad-in-ideal}), because in \cite[Proposition~2.2]{MR3276758}, the authors proved that if an ideal $\I$ is tall, then there exists 
an infinite maximal almost disjoint family $\cA$  of infinite subsets of $\Lambda$ such that $\cA\subseteq \I$. If necessary, we can  make $\cA$ to be of cardinality $\continuum$ (just take one set $A\in \cA$, construct your favourite almost disjoint family $\cB$ of cardinality $\continuum$ on $A$, then any maximal almost disjoint family extending $\cA\cup \cB$ is the required family).
The equivalence of $\rho$ being tall
and $\I_\rho$ being a tall ideal follows from Proposition~\ref{prop:prop-of-talness}.

(\ref{thm:MAD-Mrowka-nie-jest-Fspejsem:ideal-over-FINsqrd}) 
The equivalence of 
$\I_\rho$ not being $P^-(\Lambda)$ 
and $\Fin^2\leq_K\I_\rho$ follows from Proposition \ref{prop:Pminus-versus-FinSQRD-for-rho}(\ref{prop:Pminus-versus-FinSQRD-for-ideal:Pminus-Lambda}).

Let $\phi:\Lambda\to\omega^2$ be a witness for $\fin^2\leq_K\I_\rho$.
In \cite{MR3034318}, the authors proved that we can assume that $\phi$ is a bijection.
For each $n\in\omega$, we define $P_n=\phi^{-1}[\{n\}\times\omega]$.
Then $\{P_n:n\in\omega\}$ is a partition of $\Lambda$ and $P_n\in \I_\rho\cap [\Lambda]^\omega$ for each $n\in\omega$.
Let $\cA=\{A_\alpha:\alpha<|\cA|\}$.
Since $\cA$ is infinite, $|\cA|\geq \omega$.
Let $f:\Lambda\to \Phi(\cA)$ be a bijection such that 
$f[P_n]=A_n \setminus  \bigcup \{ A_i:i<n\}$ for each $n\in\omega.$
We claim that $f$ does not have a $\rho$-convergent subsequence.
Assume, for the sake of contradiction, that $f\restriction\rho(F)$ is $\rho$-convergent to some $L\in \Phi(\cA)$ for some $F\in \cF$.
We have three cases: 
     (1) $L\in \Lambda$, (2) $L\in \cA$,  (3) $L=\infty$.

\emph{Case (1).} 
The set $U=\{L\}$ is a neighborhood of $L$.
But  for any finite set $K\subseteq\Omega$ we have 
$f[\rho(F\setminus K)]  \not\subseteq \{L\}=U.$
Thus, $f\restriction \rho(F)$ is not $\rho$-convergent to $L$, a contradiction.

\emph{Case (2).} 
We have two subcases: 
(2a) $\exists n\in \omega\, (L=A_n)$,   (2b) $\exists \alpha\in |\cA|\setminus \omega\, (L=A_\alpha)$. 

\emph{Case (2a).} 
The set $U=\{A_n\}\cup A_n$ is a neighborhood of $L$, so there is a finite set $K\subseteq\Omega$ such that 
$f[\rho(F\setminus K)] \subseteq U$.
Then 
$f[\rho(F\setminus K)] \subseteq A_n$,
so
$\rho(F\setminus K) \subseteq f^{-1}[A_n]\subseteq \bigcup_{i\leq n}P_i\in \I_\rho$, a contradiction.

\emph{Case (2b).} 
The set  $U=A_\alpha\cup\{A_\alpha\}$
 is a neighborhood of $L$, so there is a finite set $K\subseteq\Omega$ such that 
$f[\rho(F\setminus K)] \subseteq U$.
Then 
$f[\rho(F\setminus K)] \subseteq A_\alpha$,
so 
$\rho(F\setminus K) \subseteq f^{-1}[A_\alpha]$.
Thus $f^{-1}[A_\alpha]\notin\I_\rho$, 
and consequently $\phi[f^{-1}[A_\alpha]]\notin \fin^2$.
On the other hand, 
$A_\alpha\cap A_n$ is finite for each $n\in\omega$, so  
$f^{-1}[A_\alpha\cap A_n]$ is finite, and consequently 
$f^{-1}[A_\alpha]\cap P_n$ is finite for every $n\in\omega$.
Thus, $\phi[f^{-1}[A_\alpha]]\in \fin^2$, a contradiction.

\emph{Case (3).} Using Proposition~\ref{prop:rho-con-implies-suseq-ordinar-comvergence}(\ref{prop:rho-con-implies-suseq-ordinar-comvergence:item}) we find an infinite set $B\subseteq\Lambda$ such that $f\restriction B$ is convergent to $\infty$.
Since $f$ is a bijection, $f[B]$ is infinite.
Thus, using maximality 
of $\cA$, we find $\alpha$ such that 
$A_\alpha\cap f[B]$ is infinite.
Since $U=\Phi(\cA)\setminus(\{A_\alpha\}\cup A_\alpha)$ is  a neighborhood of $\infty$, 
there is  a finite set $K\subseteq\Lambda$ such that 
$f[B\setminus  K] \subseteq U$.
Then 
$A_\alpha\cap f[B \setminus K]  = \emptyset$, a contradiction.
\end{proof}

\begin{corollary}
\label{cor:nonFINBW-for-MAD}
 Let $\cA$ be an infinite \emph{maximal} almost disjoint family. 
\begin{enumerate}

\item Hindman spaces.

\begin{enumerate}
\item 
\cite[Theorem~10]{MR1887003}
If $\cA\subseteq\Hindman$, 
then $\Phi(\cA)$ is not a Hindman space.

\item 
\cite[Proposition~1.1]{MR4356195}
$\Phi(\cA)$ is not a Hindman space.\label{cor:nonFINBW-for-MAD:Hindman}

\end{enumerate}
  
\item Ramsey spaces.

\begin{enumerate}

\item 
\cite[Example 4.1]{MR4552506}
If $\{\{n,k\}: k\in\omega\setminus\{n\}\}\in \cA$ for every $n\in\omega$, 
then $\Phi(\cA)$ is not a Ramsey space.

\item $\Phi(\cA)$ is not a Ramsey space.
\end{enumerate}

\item  Differentially compact  spaces.

\begin{enumerate}

\item 
\cite[4.2.2]{Shi2003Numbers} or  \cite[Theorem~4.9]{MR3097000}
If $\cA\subseteq\Diff$, 
then $\Phi(\cA)$  is not a differentially compact space.

\item 
\cite[Theorem~2.1]{MR4358658}
$\Phi(\cA)$   is not a differentially compact space.
\end{enumerate}
                
\item van der Waerden spaces.

\begin{enumerate}
\item 
\cite[Theorem~6]{MR1866012}
If $\cA\subseteq\vdW$, 
then $\Phi(\cA)$   is not a van der Waerden space.\label{cor:nonFINBW-for-MAD:vdW}

\end{enumerate}
                
\item $\I_{1/n}$-spaces.

\begin{enumerate}
\item 
\cite[Proposition~2.2]{MR2471564}
If $\cA\subseteq\I_{1/n}$, 
then $\Phi(\cA)$   is not a $\I_{1/n}$-space.\label{cor:nonFINBW-for-MAD:summable}
\end{enumerate}

\item $\FinBW(\I)$.

\begin{enumerate}

\item 
\cite[Proposition~2.2]{MR2471564}
If $\I$ is a tall $F_\sigma$-ideal on $\Lambda$ and $\cA\subseteq\I$, 
then $\Phi(\cA)\notin\FinBW(\I)$.\label{cor:nonFINBW-for-MAD:Fsigma}

\item 
\cite[Proposition~2.3]{MR3276758}
If $\I$ is a tall ideal and  $\cA\subseteq \I$, 
then 
$\Phi(\cA)\notin\FinBW(\I)$.\label{cor:nonFINBW-for-MAD:tall} 

\end{enumerate}

\end{enumerate}

\end{corollary}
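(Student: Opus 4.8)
The plan is to dispatch every item of the corollary by invoking the two central results of this section --- the technical Lemma~\ref{lem:MAD-Mrowka-nie-jest-Fspejsem:technical} and Theorem~\ref{thm:MAD-Mrowka-nie-jest-Fspejsem} --- after translating each named class of spaces into a class $\FinBW(\rho)$. Recall the dictionary: Hindman spaces are exactly $\FinBW(\FS)$, Ramsey spaces are $\FinBW(r)$, and differentially compact spaces are $\FinBW(\Delta)$, while for the ideal-defined classes we use $\FinBW(\I)=\FinBW(\rho_\I)$ from Proposition~\ref{prop:basic-relationships-between-FinBW-like-spaces}(\ref{prop:basic-relationships-between-FinBW-like-spaces:ideal-rho}), together with the identifications $\I_{\FS}=\Hindman$, $\I_r=\Ramsey$ and $\I_\Delta=\Diff$. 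With this dictionary in hand the items split into two groups according to whether or not their hypothesis contains an inclusion of the form $\cA\subseteq\I$.

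First I would handle all items carrying a hypothesis $\cA\subseteq\I$, namely (1a), (3a), (4a), (5a), (6a) and (6b). Each of these is an immediate application of Lemma~\ref{lem:MAD-Mrowka-nie-jest-Fspejsem:technical}(\ref{lem:MAD-Mrowka-nie-jest-Fspejsem:technical:mad-in-ideal}): for the relevant partition regular function $\rho$ the inclusion $\cA\subseteq\I$ reads $\cA\subseteq\I_\rho$ (taking $\rho=\FS$ with $\I_\FS=\Hindman$ for (1a); $\rho=\Delta$ with $\I_\Delta=\Diff$ for (3a); and $\rho=\rho_\vdW$, $\rho=\rho_{\I_{1/n}}$ or $\rho=\rho_\I$ for the remaining ideal items), and since $\cA$ is maximal the lemma yields $\Phi(\cA)\notin\FinBW(\rho)$, which is the stated conclusion once the translation is undone. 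Item (6a) is simply the special case of (6b) in which $\I$ is additionally $F_\sigma$, so no separate argument is needed; the tallness hypothesis in (6a)--(6b) serves only to guarantee that a maximal $\cA\subseteq\I$ of infinite sets exists and plays no role in the implication itself.

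Next I would treat the items asserting the conclusion for \emph{every} maximal family, namely (1b), (2b) and (3b). These follow from Theorem~\ref{thm:MAD-Mrowka-nie-jest-Fspejsem}(\ref{thm:MAD-Mrowka-nie-jest-Fspejsem:ideal-over-FINsqrd}), whose only hypothesis is that $\I_\rho$ fails $P^-(\Lambda)$: by Proposition~\ref{prop:Plike-properties-for-known-rho}(\ref{prop:Plike-properties-for-known-rho:Hindman-Ramsey-Diff}) the ideals $\Hindman=\I_\FS$, $\Ramsey=\I_r$ and $\Diff=\I_\Delta$ are all not $P^-(\Lambda)$, so the theorem gives $\Phi(\cA)\notin\FinBW(\rho)$ for every infinite maximal $\cA$ and each $\rho\in\{\FS,r,\Delta\}$. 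Finally, item (2a) is a special case of (2b): under the standing assumption that $\cA$ is maximal, the extra requirement that $\cA$ contain all the stars $\{\{n,k\}:k\neq n\}$ is merely a restriction, so (2b) already delivers the conclusion.

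I expect no genuine obstacle, since the section's heavy lifting is already done in the cited lemma and theorem and the proof is essentially bookkeeping. The one point that requires care is item (2a): its hypothesis does \emph{not} guarantee $\cA\subseteq\Ramsey$ (only that certain star graphs lie in $\cA$), so it cannot be obtained from Lemma~\ref{lem:MAD-Mrowka-nie-jest-Fspejsem:technical}(\ref{lem:MAD-Mrowka-nie-jest-Fspejsem:technical:mad-in-ideal}) and must instead be routed through the failure of $P^-(\Lambda)$ for $\Ramsey$ via (2b). Keeping straight which items need the inclusion hypothesis (handled by the lemma) and which hold for arbitrary maximal $\cA$ (handled by the theorem) is the only bit of attention the argument demands.
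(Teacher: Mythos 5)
Your proposal is correct and takes essentially the same route as the paper: the items with the hypothesis $\cA\subseteq\I$ are handled by Lemma~\ref{lem:MAD-Mrowka-nie-jest-Fspejsem:technical}(\ref{lem:MAD-Mrowka-nie-jest-Fspejsem:technical:mad-in-ideal}) together with $\FinBW(\I)=\FinBW(\rho_\I)$, and the unconditional items by Theorem~\ref{thm:MAD-Mrowka-nie-jest-Fspejsem}(\ref{thm:MAD-Mrowka-nie-jest-Fspejsem:ideal-over-FINsqrd}). The only cosmetic difference is that the paper verifies the theorem's hypothesis via Proposition~\ref{prop:FinSQRD-below-FS-r-Delta} (i.e.\ $\Fin^2\leq_K\I$ for $\I\in\{\Hindman,\Ramsey,\Diff\}$) while you cite the equivalent failure of $P^-(\Lambda)$ from Proposition~\ref{prop:Plike-properties-for-known-rho}(\ref{prop:Plike-properties-for-known-rho:Hindman-Ramsey-Diff}); these are the same condition by the equivalence stated in the theorem itself.
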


\begin{proof}
(1)--(6)
In cases when we assume that $\cA\subseteq\I$, it follows from Lemma~\ref{lem:MAD-Mrowka-nie-jest-Fspejsem:technical}(\ref{lem:MAD-Mrowka-nie-jest-Fspejsem:technical:mad-in-ideal}) along with  Proposition~\ref{prop:basic-relationships-between-FinBW-like-spaces}(\ref{prop:basic-relationships-between-FinBW-like-spaces:ideal-rho}) in some cases. 
In other cases, it follows from Theorem~\ref{thm:MAD-Mrowka-nie-jest-Fspejsem}(\ref{thm:MAD-Mrowka-nie-jest-Fspejsem:ideal-over-FINsqrd}) and 
Proposition~\ref{prop:FinSQRD-below-FS-r-Delta}.
\end{proof}


\section{Distinguishing between \texorpdfstring{$\FinBW$}{FinBW } classes via Kat\v{e}tov order on ideals}

In this section we prove first of the two main results of this part and show its various particular cases and consequences. We will need the following two lemmas.

\begin{lemma}
\label{lem:TWIERDZENIE-for-Fspaces-in-FinBW-technical}
Let $\rho:\cF\to[\Lambda]^\omega$ be  partition regular with $\cF\subseteq [\Omega]^\omega$.
Let $\cA$ be an infinite almost disjoint family on $\Lambda$ such that  
for every $\I_\rho$-to-one 
function $f:\Lambda\to\Lambda$
there is $E\in \cF$ such that  
the family 
$$\{A\in \cA: \forall K\in [\Omega]^{<\omega}\, (|A\cap f[\rho(E\setminus K)]|=\omega)\}$$
is at most countable.
If $\rho$ is  $P^-$,
then 
$\Phi(\cA)\in \FinBW(\rho).$
\end{lemma}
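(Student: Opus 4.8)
The plan is to reduce everything to the one case that Lemma~\ref{lem:TWIERDZENIE-for-Fspaces-in-FinBW} leaves open and then to feed the hypothesis on $\cA$ into the $P^-$ machinery. Fix a sequence $f:\Lambda\to\Phi(\cA)$. By Lemma~\ref{lem:TWIERDZENIE-for-Fspaces-in-FinBW}, since $\rho$ is $P^-$, in each of the cases (1), (2), (3a), (3b) there is already $F\in\cF$ with $f\restriction\rho(F)$ $\rho$-convergent, so I only have to produce such an $F$ in case (3c), namely when $f^{-1}(\infty)\in\I_\rho$, $f^{-1}[\cA]\in\I_\rho$, $f^{-1}[\Lambda]\in\I_\rho^*$, $f^{-1}(\lambda)\in\I_\rho$ for every $\lambda\in\Lambda$ and $f^{-1}[A]\in\I_\rho$ for every $A\in\cA$. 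In that case I will arrange $f\restriction\rho(F)$ to be $\rho$-convergent to $\infty$.

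First I would manufacture an input for the hypothesis. Define $g:\Lambda\to\Lambda$ by $g\restriction f^{-1}[\Lambda]=f\restriction f^{-1}[\Lambda]$ and letting $g$ be any injection on $\Lambda\setminus f^{-1}[\Lambda]$, a set which lies in $\I_\rho$ because $f^{-1}[\Lambda]\in\I_\rho^*$. Each fiber $g^{-1}(\lambda)$ is then the union of $f^{-1}(\lambda)\in\I_\rho$ and at most one further point, so $g$ is $\I_\rho$-to-one. Applying the hypothesis to $g$ yields $E\in\cF$ for which $\cA_0=\{A\in\cA:\forall K\in[\Omega]^{<\omega}\,(|A\cap g[\rho(E\setminus K)]|=\omega)\}$ is at most countable; enumerate $\cA_0=\{A_n:n\in\omega\}$. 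Two facts matter: $\rho(E)\cap f^{-1}[\Lambda]\notin\I_\rho$ (it is $\rho(E)\notin\I_\rho$ with the $\I_\rho$-set $\Lambda\setminus f^{-1}[\Lambda]$ removed), and for every \emph{good} $A\in\cA\setminus\cA_0$ there is a finite $K_A\subseteq\Omega$ with $A\cap g[\rho(E\setminus K_A)]$ finite, while $g=f$ on $f^{-1}[\Lambda]$.

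Next I would apply Proposition~\ref{prop:Pminus-for-rho-equivalent-condition}. Using the countable family
$$\cB=\{f^{-1}[A_n]\cap\rho(E):n\in\omega\}\cup\{f^{-1}(\lambda)\cap\rho(E):\lambda\in\Lambda\}\subseteq\I_\rho,$$
whose union is $\rho(E)\cap f^{-1}[\Lambda]\notin\I_\rho$, the $P^-$ property produces $F\in\cF$ with $\rho(F)\subseteq\rho(E)\cap f^{-1}[\Lambda]$ such that for every finite $\cC\subseteq\cB$ there is finite $K\subseteq\Omega$ with $\rho(F\setminus K)\cap\bigcup\cC=\emptyset$. In particular $f[\rho(F)]\subseteq\Lambda$, so $f\restriction\rho(F)$ never hits $\infty$ or any $A\in\cA$; hence, to converge to $\infty$ it is enough, for each basic neighbourhood $\Phi(\cA)\setminus(\cA'\cup\bigcup\cA'\cup F_0)$ of $\infty$, to eventually avoid $\bigcup\cA'$ and $F_0$, and by taking a finite union of the $K$'s it suffices to treat one $A\in\cA$ and one finite $F_0\subseteq\Lambda$ at a time. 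A finite $F_0$ is killed by $\cC=\{f^{-1}(s)\cap\rho(E):s\in F_0\}$, and a bad $A_n\in\cA_0$ by $\cC=\{f^{-1}[A_n]\cap\rho(E)\}$, each giving a finite $K$ with $f[\rho(F\setminus K)]$ disjoint from the relevant set.

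The main obstacle is a good set $A\in\cA\setminus\cA_0$: here $\rho(F)\subseteq\rho(E)$ only gives $f[\rho(F)]\subseteq g[\rho(E)]$, and $A\cap g[\rho(E)]$ may be infinite, becoming finite only along the tail $\rho(E\setminus K_A)$. To use this I must relate $\rho(F\setminus K)$ to $\rho(E\setminus K_A)$ even though there is no domain containment $F\subseteq E$. Concretely I would enrich $\cB$ with the accretion sets $(\rho(E)\setminus\rho(E\setminus K))\cap f^{-1}[\Lambda]$ for $K\in[\Omega]^{<\omega}$, which lie in $\I_\rho$ precisely because $\rho$ has small accretions (Proposition~\ref{prop:ideal-rho-are-sparse-and-accretions}). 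Then avoiding the accretion at $K_A$ forces $\rho(F\setminus K)\subseteq\rho(E\setminus K_A)$, whence $g[\rho(F\setminus K)]\cap A\subseteq g[\rho(E\setminus K_A)]\cap A$ is finite; a second avoidance, of the fibers of that finite set (already in $\cB$), empties the intersection by monotonicity~\nameref{def:partition-regular:monotone}. This yields $f[\rho(F\setminus K)]\cap A=\emptyset$, and combining over a neighbourhood completes the convergence to $\infty$, so $\Phi(\cA)\in\FinBW(\rho)$. I expect this transfer of good-set finiteness from the tail of $E$ to the extracted $F$, in the absence of any containment $F\subseteq E$, to be the genuine difficulty of the argument, and it is exactly where the accretion structure of $\rho$ (rather than bare $P^-$) is invoked.
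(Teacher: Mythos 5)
Your proposal follows the paper's proof almost line for line: the reduction to case (3c) of Lemma~\ref{lem:TWIERDZENIE-for-Fspaces-in-FinBW}, the auxiliary $\I_\rho$-to-one map $g$ agreeing with $f$ on $f^{-1}[\Lambda]$ (the paper sends the complement to a single fixed point $\lambda_0$ rather than injecting it, which works equally well), the application of the hypothesis on $\cA$ to $g$ to obtain $E$ and the countable family $\cA_0$, the extraction of $F$ with $\rho(F)\subseteq\rho(E)\cap f^{-1}[\Lambda]$ via Proposition~\ref{prop:Pminus-for-rho-equivalent-condition} applied to the fibers and the preimages of the bad sets, and the two-stage avoidance argument for a basic neighbourhood of $\infty$. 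All of that matches.

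The divergence is exactly where you say it is, and it costs you the stated generality. For a good set $A\in\cA\setminus\cA_0$ the paper simply asserts that some finite $K_A$ makes $A\cap g[\rho(F\setminus K_A)]$ finite; you, rightly uneasy that finiteness of $A\cap g[\rho(E\setminus K_A)]$ need not pass to $\rho(F)\subseteq\rho(E)$ (the infinite part of $A\cap g[\rho(E)]$ may sit inside the accretion $\rho(E)\setminus\rho(E\setminus K_A)$), force $\rho(F\setminus K)\subseteq\rho(E\setminus K_A)$ by adding the accretion sets to the $P^-$ family. But that step needs $(\rho(E)\setminus\rho(E\setminus K))\cap f^{-1}[\Lambda]\in\I_\rho$, i.e., it needs $E$ (after a further shrinking you do not mention) to have small accretions. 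The lemma assumes only that $\rho$ is $P^-$; Proposition~\ref{prop:ideal-rho-are-sparse-and-accretions} gives small accretions only for $\FS$, $r$, $\Delta$ and the functions $\rho_\I$, not for an arbitrary partition regular function, and the paper treats small accretions as a genuinely separate hypothesis elsewhere (e.g.\ Lemma~\ref{lem:K-homogeneous} and Theorem~\ref{thm:TWIERDZENIE-for-Fspaces:Pminus-Mrowka:rho}). Since this lemma is later invoked for a $\rho_1$ that is only assumed to be $P^-$, your argument as written proves a strictly weaker statement. To be fair, the paper's own justification of the existence of $K_A$ is terse at precisely this point, and you have correctly isolated the one delicate step of the whole proof; but a proof of the lemma as stated must get past that step without importing small accretions, and your proposal does not.
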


\begin{proof}
Let 
$f:\Lambda\to \Phi(\cA)$.
By Lemma~\ref{lem:TWIERDZENIE-for-Fspaces-in-FinBW}, 
we can assume that 
 $f^{-1}(\infty)\in\I_{\rho}$, $f^{-1}[\cA] \in\I_{\rho}$,
    $f^{-1}[\Lambda]\in \I_{\rho}^*$,  
    $f^{-1}(\lambda)\in \I_{\rho}$ for every $\lambda\in \Lambda$
    and 
    $f^{-1}[A]\in \I_{\rho}$ for every  $A\in \cA$.
Then $B = f^{-1}[\Lambda]\in \I^*_{\rho}$ and $f\restriction B :B\to\Lambda$ is $\I_\rho$-to-one.

We fix an element $\lambda_0\in \Lambda$ and define
 $g:\Lambda\to\Lambda$ by $g(\lambda)=f(\lambda)$ for $\lambda\in B$ and $g(\lambda)=\lambda_0$ otherwise.
Then $g$ is $\I_\rho$-to-one, so there is $E\in \cF$ such that 
the family 
$$\cC = \{A\in \cA: \forall K\in [\Omega]^{<\omega}\, (|A\cap g[\rho(E\setminus K)]|=\omega)\}$$
is at most countable.

 Since $\rho(E)\cap B\notin \I_\rho$ and 
$\rho$ is $P^-$, we can use Proposition~\ref{prop:Pminus-for-rho-equivalent-condition} to  find 
$F\in \cF$ such that 
$\rho(F)\subseteq \rho(E)\cap B$
and 
for any finite sets  $S\subseteq \Lambda$
and  $\cT\subseteq \cC$ 
there is a finite set $L\subseteq \Omega$ with 
$$
\rho(F\setminus L) \cap \left( 
f^{-1}[S] 
\cup \bigcup\left\{f^{-1}\left[A\right]:A \in \cT\right\}\right) = \emptyset
.$$
We claim that $f\restriction \rho(F)$ is $\rho$-convergent to $\infty$.
Indeed, let $U$ be a neighborhood of $\infty$.
Without loss of generality, we can assume that there is a finite set $\Gamma \subseteq\cA$ such that 
$U = \Phi(\cA)\setminus \left(\{A: A\in \Gamma\} \cup\bigcup\{A:A\in \Gamma\}\right).$

For each $A\in \Gamma\setminus \cC$, there is a finite set $K_A\subseteq\Omega$ such that $A\cap f[\rho(F\setminus K_A)] = A\cap g[\rho(F\setminus K_A)]$ is finite.
Then $K=\bigcup\{K_A:A\in \Gamma\setminus \cC\}\subseteq\Omega$ is a finite set such that $A\cap f[\rho(F\setminus K)]$ is finite for every $A\in \Gamma\setminus \cC$.

Then both 
$S = f[\rho(F\setminus K)]\cap \bigcup \{A: A \in \Gamma\setminus \cC\}$
 and 
$\cT = \Gamma\cap\cC$
are finite, 
so we can find a finite set $L\subseteq \Omega$ such that 
$$\rho(F\setminus L) \cap \left( 
f^{-1}[S] 
\cup \bigcup\left\{f^{-1}\left[A\right]:A\in \cT\right\}\right) = \emptyset,
$$
and consequently we obtain a finite set $K\cup L$ such that 
\begin{equation*}
    \begin{split}
        f[\rho(F\setminus (K\cup L))]
&\subseteq
\Lambda \setminus \bigcup\{A:A\in \Gamma \}
\subseteq U.
\end{split}
\end{equation*}
That finishes the proof.
\end{proof}

Recall that $\pnumber$ is the smallest cardinality of a family $\cF$ of infinite subsets of $\omega$ with the  strong finite intersection property (i.e.~intersection of finitely many sets from $\cF$ is infinite) that does not have a pseudointersection (i.e.~there is no infinite set $A\subseteq\omega$ such that $A\setminus F$ is finite for each $F\in \cF$; see e.g.~\cite{MR776622}).

\begin{lemma}[Assume $\pnumber=\continuum$]
\label{lem:LEMAT}
Let $\rho_i:\cF_i\to[\Lambda_i]^\omega$ be  partition regular with $\cF_i\subseteq [\Omega_i]^\omega$ for each $i=1,2$.
Let $\{f_\alpha: \alpha<\continuum\}$ be an enumeration of all  functions $f:\Lambda_1\to \Lambda_2$ 
and
 $\cF_2=\{F_\alpha:\alpha<\continuum\}$.
 
If $\I_{\rho_2}\not\leq_K \I_{\rho_1}$, then  there exist 
families 
$\cA = \{A_\alpha:\alpha<\continuum\}$
 and  $\cC=\{C_\alpha : \alpha<\continuum\}$
 such that for every $\alpha<\continuum$:
 \begin{enumerate}

\item $C_\alpha\in \cF_1$,

\item $f_\alpha[\rho_1(C_\alpha)]\in \I_{\rho_2}$,

\item $A_\alpha \in \I_{\rho_2}\cap [\Lambda_2]^\omega$,
\item $\forall\beta<\alpha\, (|A_\alpha\cap A_\beta|<\omega)$,\label{lem-item4}

\item $\forall\gamma >\alpha\, (|A_\gamma \cap f_\alpha [\rho_1(C_\alpha)]|<\omega)$,\label{lem-item5}

\item $\forall L\in [\Omega_2]^{<\omega}\, (A_\alpha \subseteq^* \rho_2(F_\alpha\setminus L))$.\label{lem-item6}

\end{enumerate}
\end{lemma}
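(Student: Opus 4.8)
The plan is to build $\cA$ and $\cC$ by a transfinite recursion of length $\continuum$, constructing $C_\alpha$ and $A_\alpha$ at stage $\alpha$; the construction of $C_\alpha$ is powered by $\I_{\rho_2}\not\leq_K\I_{\rho_1}$, while that of $A_\alpha$ is powered by $\pnumber=\continuum$. Before starting I would record one consequence of the hypothesis that turns out to be essential: $\I_{\rho_2}$ must be \emph{tall}. Indeed, were $\I_{\rho_2}$ not tall, then (as noted in the discussion of tallness) $\I_{\rho_2}\leq_K\J$ for every ideal $\J$, in particular $\I_{\rho_2}\leq_K\I_{\rho_1}$, contradicting the hypothesis.

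At stage $\alpha$, to obtain $C_\alpha$ I use that $f_\alpha$ does not witness $\I_{\rho_2}\leq_K\I_{\rho_1}$: there is $B\in\I_{\rho_1}^+$ with $f_\alpha[B]\in\I_{\rho_2}$, and by the definition of $\I_{\rho_1}$ there is $C_\alpha\in\cF_1$ with $\rho_1(C_\alpha)\subseteq B$, whence $f_\alpha[\rho_1(C_\alpha)]\subseteq f_\alpha[B]\in\I_{\rho_2}$. This yields items (1) and (2); write $D_\beta=f_\beta[\rho_1(C_\beta)]\in\I_{\rho_2}$ for the sets produced so far.

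To obtain $A_\alpha$ I consider the family
\[
\mathcal{G}_\alpha=\{\rho_2(F_\alpha\setminus L):L\in[\Omega_2]^{<\omega}\}\cup\{\Lambda_2\setminus A_\beta:\beta<\alpha\}\cup\{\Lambda_2\setminus D_\beta:\beta<\alpha\}.
\]
By monotonicity the sets $\rho_2(F_\alpha\setminus L)$ decrease as $L$ grows, so any finite intersection of members of $\mathcal{G}_\alpha$ has the form $\rho_2(F_\alpha\setminus L^*)\setminus(\bigcup_{\beta\in S}A_\beta\cup\bigcup_{\beta\in T}D_\beta)$ for a finite $L^*\subseteq\Omega_2$ and finite $S,T\subseteq\alpha$; since $\rho_2(F_\alpha\setminus L^*)\in\I_{\rho_2}^+$ and one removes a finite union of members of $\I_{\rho_2}$, this set is again in $\I_{\rho_2}^+$, hence infinite. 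Thus $\mathcal{G}_\alpha$ has the strong finite intersection property, and as $|\mathcal{G}_\alpha|\leq|\alpha|+\aleph_0<\continuum=\pnumber$, it admits a pseudointersection: an infinite $A\subseteq\Lambda_2$ with $A\subseteq^* G$ for all $G\in\mathcal{G}_\alpha$. Using tallness of $\I_{\rho_2}$, I pass to an infinite $A_\alpha\subseteq A$ with $A_\alpha\in\I_{\rho_2}$; as a subset, $A_\alpha$ still satisfies $A_\alpha\subseteq^* G$ for all $G\in\mathcal{G}_\alpha$. Then item (3) holds by the choice of $A_\alpha$; item (4) holds since $A_\alpha\subseteq^*\Lambda_2\setminus A_\beta$ for $\beta<\alpha$; item (6) holds since $A_\alpha\subseteq^*\rho_2(F_\alpha\setminus L)$ for every finite $L$; and item (5) holds because, for every pair $\beta<\gamma$, the set $\Lambda_2\setminus D_\beta$ lies in $\mathcal{G}_\gamma$, forcing $|A_\gamma\cap D_\beta|<\omega$.

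The main obstacle is item (3): a pseudointersection of $\mathcal{G}_\alpha$ is only guaranteed to be infinite, not to lie in $\I_{\rho_2}$. For instance, when $\rho_2=\rho_\I$ a pseudointersection is merely an almost-subset of $F_\alpha$, which may be $\I$-positive, so smallness cannot be read off from the pseudointersection alone. What unlocks the argument is precisely the preliminary observation that $\I_{\rho_2}\not\leq_K\I_{\rho_1}$ forces $\I_{\rho_2}$ to be tall, which lets me thin any infinite pseudointersection to an infinite subset inside $\I_{\rho_2}$ while preserving every $\subseteq^*$-requirement. The remaining points---that $F_\alpha\setminus L\in\cF_2$, that $\rho_2(G)\in\I_{\rho_2}^+$ for $G\in\cF_2$, and that removing an $\I_{\rho_2}$-set from an $\I_{\rho_2}^+$-set leaves an $\I_{\rho_2}^+$-set---are routine and need no special care.
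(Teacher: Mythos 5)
Your proposal is correct and follows essentially the same route as the paper's proof: the same transfinite recursion, the same use of a non-witnessing $f_\alpha$ to get $C_\alpha$, the same family of sets with the strong finite intersection property whose pseudointersection (via $\pnumber=\continuum$) is then thinned using tallness of $\I_{\rho_2}$ to land in the ideal. The only difference is that you spell out the tallness observation and the verification of items (4)--(6), which the paper leaves implicit.
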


\begin{proof}
Suppose that $A_\beta$ and $C_\beta$ have been constructed for $\beta<\alpha$ and satisfy items (1)-(6).

First, we construct the set $C_\alpha$. 
Since   
$\I_{\rho_2}\not\leq_K \I_{\rho_1}$,
 there is a set 
$C_\alpha\in \cF_1$
 such that 
$f_\alpha[\rho_1(C_\alpha)]\in \I_{\rho_2}$.

Now, we turn to the construction of the set $A_\alpha$.
Let 
$$\cD
= 
\{\Lambda_2\setminus f_\beta[\rho_1(C_\beta)]:\beta<\alpha\} 
\cup 
\{\Lambda_2\setminus A_\beta:\beta<\alpha\}
\cup
\{
\rho_2(F_\alpha\setminus L): L\in [\Omega_2]^{<\omega}
\}.
$$
Since 
$\bigcap\{\rho_2(F_\alpha\setminus L_i):i<n\} \in\I_{\rho_2}^+$ for every $n\in\omega$ and finite sets $L_i\subseteq\Omega$, 
and
$\Lambda_2\setminus f_\beta[\rho_1(C_\beta)]\in \I_{\rho_2}^*$ and  $\Lambda_2\setminus A_\beta\in \I_{\rho_2}^*$ for every $\beta<\alpha$, we obtain that the intersection of finitely many sets from $\cD$ is in $\I_{\rho_2}^+$. In particular, this intersection is infinite, so $\cD$ has the strong finite intersection property.
Since  $|\cD|<\continuum=\pnumber$, there exists an infinite set $A\subseteq \Lambda_2$ such that $A\subseteq^* D$ for every $D\in \cD$.
Since 
$\I_{\rho_2}\not\leq_K\I_{\rho_1}$, we obtain that the ideal $\I_{\rho_2}$ is tall, and consequently there is an infinite set $A_\alpha\subseteq A$ such that $A_\alpha\in \I_{\rho_2}$. 

It is not difficult to see, that the sets $A_\alpha$ and $C_\alpha$ satisfy all the required conditions, so the proof of the lemma is finished.
\end{proof}

We are ready for the main result of this section.

\begin{theorem}[Assume CH]
\label{thm:TWIERDZENIE-for-Fspaces:Pminus-Mrowka}
Let $\rho_i:\cF_i\to[\Lambda_i]^\omega$ be  partition regular for each $i=1,2$. 
If  
$\rho_1$ is  $P^-$  and  
$\I_{\rho_2}\not\leq_K \I_{\rho_1}$,
then there exists an almost disjoint family $\cA$ such that 
$|\cA|=\continuum$
and 
$\Phi(\cA)\in \FinBW(\rho_1) \setminus \FinBW(\rho_2).$ In particular, there is a Hausdorff compact and separable space of cardinality $\continuum$ in $\FinBW(\rho_1) \setminus \FinBW(\rho_2).$
\end{theorem}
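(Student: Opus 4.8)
The plan is to assemble the two technical lemmas of this section on top of Lemma~\ref{lem:LEMAT}. First I would align the ground sets: using Proposition~\ref{prop:partition-regular-functions-live-on-the-same-set} I replace $\rho_1$ by a Kat\v{e}tov-equivalent partition regular function whose target set is $\Lambda_2$. That proposition preserves the classes $\FinBW$ and $\hFinBW$, hence (via Theorem~\ref{thm:compact-metric-implies-FinBW(rho)}(\ref{thm:compact-metric-implies-FinBW(rho):Pminus-is-necessary-hFinBW}), or simply because the relabeling is an isomorphism) it preserves the property $P^-$; and since $\leq_K$ is invariant under isomorphism of ideals, $\I_{\rho_2}\not\leq_K\I_{\rho_1}$ survives as well. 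So I may assume without loss of generality that $\Lambda_1=\Lambda_2=:\Lambda$. Under CH we have $\pnumber=\continuum$, so Lemma~\ref{lem:LEMAT} applies to the pair $(\rho_1,\rho_2)$ and yields an almost disjoint family $\cA=\{A_\alpha:\alpha<\continuum\}$ on $\Lambda$ together with sets $C_\alpha\in\cF_1$ satisfying its items (1)--(6); by item~(4) the $A_\alpha$ are pairwise almost disjoint and infinite, so $|\cA|=\continuum$.

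To see $\Phi(\cA)\notin\FinBW(\rho_2)$ I would apply Lemma~\ref{lem:MAD-Mrowka-nie-jest-Fspejsem:technical}(\ref{lem:MAD-Mrowka-nie-jest-Fspejsem:technical:item}). Item~(3) of Lemma~\ref{lem:LEMAT} gives $\cA\subseteq\I_{\rho_2}$. Given $F\in\cF_2$, write $F=F_\alpha$ (as $\cF_2=\{F_\alpha:\alpha<\continuum\}$) and choose $A=A_\alpha$; by item~(6), for every finite $K\subseteq\Omega_2$ we have $A_\alpha\subseteq^*\rho_2(F_\alpha\setminus K)$, and since $A_\alpha$ is infinite this forces $A_\alpha\cap\rho_2(F_\alpha\setminus K)\neq\emptyset$. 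Thus the hypothesis of Lemma~\ref{lem:MAD-Mrowka-nie-jest-Fspejsem:technical}(\ref{lem:MAD-Mrowka-nie-jest-Fspejsem:technical:item}) is met, so $\Phi(\cA)\notin\FinBW(\rho_2)$.

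For $\Phi(\cA)\in\FinBW(\rho_1)$ I would invoke Lemma~\ref{lem:TWIERDZENIE-for-Fspaces-in-FinBW-technical}, which applies because $\rho_1$ is $P^-$. Let $f\colon\Lambda\to\Lambda$ be an $\I_{\rho_1}$-to-one function; since $\{f_\alpha:\alpha<\continuum\}$ enumerates all functions $\Lambda\to\Lambda$, we have $f=f_\alpha$ for some $\alpha$, and I take $E=C_\alpha\in\cF_1$ (item~(1)). By monotonicity \nameref{def:partition-regular:monotone}, $f_\alpha[\rho_1(C_\alpha\setminus K)]\subseteq f_\alpha[\rho_1(C_\alpha)]$ for every finite $K\subseteq\Omega_1$, so any $A\in\cA$ lying in the family $\{A\in\cA:\forall K\,(|A\cap f_\alpha[\rho_1(C_\alpha\setminus K)]|=\omega)\}$ satisfies, taking $K=\emptyset$, that $|A\cap f_\alpha[\rho_1(C_\alpha)]|=\omega$. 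But item~(5) says no $A_\gamma$ with $\gamma>\alpha$ has this property, so that family is contained in $\{A_\beta:\beta\leq\alpha\}$, which is at most countable because $\alpha<\continuum=\omega_1$ under CH. Hence the hypothesis of Lemma~\ref{lem:TWIERDZENIE-for-Fspaces-in-FinBW-technical} holds and $\Phi(\cA)\in\FinBW(\rho_1)$. The ``in particular'' clause then follows at once: $\Phi(\cA)=\Lambda\cup\cA\cup\{\infty\}$ has cardinality $\continuum$, and it is Hausdorff, compact and separable by the general properties of these one-point compactifications recorded at the start of this section.

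I expect the only delicate point in the assembly to be the opening relabeling: one must be careful that moving $\rho_1$ onto the set $\Lambda_2$ keeps it $P^-$ and keeps $\I_{\rho_2}\not\leq_K\I_{\rho_1}$, so that Lemmas~\ref{lem:LEMAT} and \ref{lem:TWIERDZENIE-for-Fspaces-in-FinBW-technical} can both be read with $\cA$ living on the common set $\Lambda$. The genuinely hard content is already packaged inside Lemma~\ref{lem:LEMAT} (the transfinite $\pnumber=\continuum$ construction) and Lemma~\ref{lem:TWIERDZENIE-for-Fspaces-in-FinBW-technical} (the $\rho$-convergence argument), so once those are available the theorem reduces to matching items (1)--(6) to their hypotheses, with CH entering twice — once to get $\pnumber=\continuum$ for Lemma~\ref{lem:LEMAT}, and once as $\continuum=\omega_1$ to make the ``bad'' family $\{A_\beta:\beta\leq\alpha\}$ countable.
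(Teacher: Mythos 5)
Your proposal is correct and follows essentially the same route as the paper's own proof: reduce to $\Lambda_1=\Lambda_2$ via Proposition~\ref{prop:partition-regular-functions-live-on-the-same-set}, invoke Lemma~\ref{lem:LEMAT} (with CH giving $\pnumber=\continuum$), then read off $\Phi(\cA)\notin\FinBW(\rho_2)$ from item~(6) together with Lemma~\ref{lem:MAD-Mrowka-nie-jest-Fspejsem:technical}(\ref{lem:MAD-Mrowka-nie-jest-Fspejsem:technical:item}), and $\Phi(\cA)\in\FinBW(\rho_1)$ from item~(5), $\continuum=\omega_1$, and Lemma~\ref{lem:TWIERDZENIE-for-Fspaces-in-FinBW-technical}. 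Your explicit monotonicity remark explaining why item~(5) (stated for $K=\emptyset$) suffices for the countability hypothesis of Lemma~\ref{lem:TWIERDZENIE-for-Fspaces-in-FinBW-technical} is a detail the paper leaves implicit, but the argument is the same.
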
 

\begin{proof}
Using Proposition~\ref{prop:partition-regular-functions-live-on-the-same-set} we can assume that $\Lambda_1=\Lambda_2=\Lambda$.

Let $\{f_\alpha: \alpha<\continuum\}$ be an enumeration of all  functions $f:\Lambda\to \Lambda$
and
 $\cF_2=\{F_\alpha:\alpha<\continuum\}$.
By Lemma~\ref{lem:LEMAT},  
 there exist  families $\cA = \{A_\alpha:\alpha<\continuum\}$ and  $\cC=\{C_\alpha : \alpha<\continuum\}$
 such that for every $\alpha<\continuum$ all conditions of Lemma~\ref{lem:LEMAT} are satisfied. 
We claim that $\cA$ is the required family.

First, we see that $\cA$ is an almost disjoint family on $\Lambda$ by item (\ref{lem-item4}) of Lemma~\ref{lem:LEMAT}, $|\cA|=\continuum$ and $\cA\subseteq\I_{\rho_2}$. 
Second,  CH together with item (\ref{lem-item5}) of Lemma~\ref{lem:LEMAT} 
ensures that 
$$|\{\beta< \continuum: \forall K\in [\Omega]^{<\omega}\, (|A_\beta\cap f_\alpha[\rho_1(C_\alpha\setminus K)]|=\omega)\}|\leq|\alpha+1|\leq\omega$$
for each $\alpha<\continuum$, so knowing that $\rho_1$ is  $P^-$  we can use Lemma~\ref{lem:TWIERDZENIE-for-Fspaces-in-FinBW-technical} 
to see that 
$\Phi(\cA)\in \FinBW(\rho_1)$. 
Third, we use item (\ref{lem-item6}) of Lemma~\ref{lem:LEMAT}  and Lemma~\ref{lem:MAD-Mrowka-nie-jest-Fspejsem:technical}(\ref{lem:MAD-Mrowka-nie-jest-Fspejsem:technical:item}) to see that $\Phi(\cA)\notin\FinBW(\rho_2)$.
\end{proof}

Now we want to show various applications of Theorem \ref{thm:TWIERDZENIE-for-Fspaces:Pminus-Mrowka}. Those applications can be divided into three parts. The first part concerns existence of a Hausdorff compact and separable space in $\FinBW(\rho)$. Before applying Theorem \ref{thm:TWIERDZENIE-for-Fspaces:Pminus-Mrowka}, we need to prove one more result.

\begin{proposition}
\label{prop:ideal-not-below-in-KAT-another-ideal}
For every ideal $\I$ there is an ideal $\J$ such that $\J\not\leq_K\I$.     
\end{proposition}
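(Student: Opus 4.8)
The plan is to prove the (a priori stronger) statement that the Kat\v{e}tov order has no maximal element, by a counting argument carried out on \emph{maximal} ideals: there are $2^{\continuum}$ of them, but I will show that only at most $\continuum$ can possibly lie $\leq_K$-below a fixed $\I$.

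First I would fix an ideal $\I$ on a countably infinite set $\Lambda$ and analyse what a witness $\phi:\Lambda\to\omega$ for $\J\leq_K\I$ must look like, where $\J$ is an ideal on $\omega$. The first step is to observe that any such $\phi$ is necessarily $\I$-to-one: if some fibre $\phi^{-1}(\{y\})\notin\I$, then $A:=\phi^{-1}(\{y\})\in\I^+$ while $\phi[A]\subseteq\{y\}$ is finite, hence $\phi[A]\in\J$, contradicting that $\phi$ witnesses $\J\leq_K\I$. The second step is to set $\I_\phi=\{B\subseteq\omega:\phi^{-1}[B]\in\I\}$ and check that $\J\subseteq\I_\phi$: indeed, if $B\in\J$ but $\phi^{-1}[B]\in\I^+$, then $A:=\phi^{-1}[B]\in\I^+$ and $\phi[A]\subseteq B\in\J$ would give $\phi[A]\in\J$, again a contradiction. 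Since $\phi$ is $\I$-to-one, $\I_\phi$ is a genuine proper ideal on $\omega$: it contains all finite sets (each being the preimage of a finite union of fibres from $\I$), it is closed under subsets and finite unions because $\I$ is, and it omits $\omega$ because $\phi^{-1}[\omega]=\Lambda\notin\I$.

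The decisive step is to restrict attention to \emph{maximal} ideals $\J$. If $\J$ is a maximal ideal with $\J\leq_K\I$, then by the above $\J\subseteq\I_\phi$ for some $\I$-to-one $\phi$, and since $\I_\phi$ is proper while $\J$ is maximal this inclusion forces $\J=\I_\phi$. Consequently every maximal ideal on $\omega$ lying $\leq_K$-below $\I$ belongs to the set $\{\I_\phi:\phi\in\omega^{\Lambda}\}$, whose cardinality is at most $|\omega^{\Lambda}|=\continuum$. On the other hand there are $2^{\continuum}$ free ultrafilters on $\omega$ (a classical theorem of Posp\'{\i}\v{s}il), and each of their dual ideals is a maximal ideal in our sense; since $2^{\continuum}>\continuum$, at least one maximal ideal $\J$ on $\omega$ satisfies $\J\not\leq_K\I$, which proves the proposition.

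The main obstacle --- really the only non-routine point --- is recognizing that one should test the statement on maximal ideals rather than on arbitrary ones. For arbitrary $\J$ the inclusion $\J\subseteq\I_\phi$ does \emph{not} bound the number of candidates, since there may be as many as $2^{\continuum}$ sub-ideals of the $\continuum$ ideals $\I_\phi$, so a direct cardinality comparison fails; and a transfinite construction of a single $\J$ defeating every $\phi$ runs into genuine trouble precisely when some $\I_\phi$ happens to be maximal. Passing to maximal $\J$ collapses each inclusion $\J\subseteq\I_\phi$ to an equality and makes the crude comparison $\continuum<2^{\continuum}$ decisive. The remaining verifications (that witnesses are $\I$-to-one, that $\J\subseteq\I_\phi$, properness of $\I_\phi$, and the count of free ultrafilters) are all routine.
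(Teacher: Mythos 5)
Your proof is correct and follows essentially the same route as the paper's: both arguments reduce to maximal ideals, observe that a witness $\phi$ forces $\J=\{B\subseteq\omega:\phi^{-1}[B]\in\I\}$ by maximality, and then compare the $\continuum$ many candidate functions against the $2^{\continuum}$ maximal ideals dual to ultrafilters. The only difference is that you spell out the routine verifications (that witnesses are $\I$-to-one and that the induced family is a proper ideal) which the paper leaves implicit.
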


\begin{proof}
Suppose for the sake of contradiction that there is an ideal $\I$ on $\Lambda$ such that $\J\leq_K\I$ for every ideal $\J$.
Then for every maximal (with respect to inclusion) ideal $\J$ on $\omega$ there exists a function $f_\J:\Lambda\to\omega$ such that $f_\J^{-1}[A]\in \I$ for every $A\in \J$.
Let   $\K(f_\J) = \{A\subseteq \omega: f_{\J}^{-1}[A]\in\I\}$.
Then $\K(f_\J)$ is an ideal and $\J\subseteq\K(f_\J)$.
Since $\J$ is maximal,  $\K(f_\J)=\J$.
There are $2^\continuum$ pairwise distinct ultrafilters on $\omega$ (see e.g.~\cite[Theorem~7.6]{MR1940513}), so there are $2^\continuum$ pairwise distinct maximal ideals on $\omega$ (given an ultrafilter $\cU$ on $\omega$, the family $\{A\subseteq\omega:A\notin\cU\}$ is a maximal ideal). However, there are only $\continuum$ many function from $\omega$ into $\omega$, a contradiction. 
\end{proof}

\begin{theorem}[Assume CH]
\label{thm:TWIERDZENIE-for-Fspaces-in-FinBW}
Let $\rho:\cF\to[\Lambda]^\omega$ be  partition regular with $\cF\subseteq [\Omega]^\omega$.
If $\rho$ is  $P^-$,
then there exists an almost disjoint family $\cA$ such that $|\cA|=\continuum$ and \label{thm:TWIERDZENIE-for-Fspaces-in-FinBW:item-main}
$ \Phi(\cA)\in \FinBW(\rho).$ In particular, there is a Hausdorff compact and separable space of cardinality $\continuum$  in $\FinBW(\rho)$.
\end{theorem}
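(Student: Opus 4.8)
The plan is to deduce this statement as an immediate corollary of Theorem~\ref{thm:TWIERDZENIE-for-Fspaces:Pminus-Mrowka}, which already produces an almost disjoint family $\cA$ of cardinality $\continuum$ with $\Phi(\cA)\in\FinBW(\rho_1)\setminus\FinBW(\rho_2)$ whenever $\rho_1$ is $P^-$ and $\I_{\rho_2}\not\leq_K\I_{\rho_1}$. Here I am given only the single function $\rho$, which will play the role of $\rho_1$, so the first task is to manufacture a suitable auxiliary $\rho_2$ to occupy the negative slot.

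First I would invoke Proposition~\ref{prop:ideal-not-below-in-KAT-another-ideal} with the ideal $\I=\I_\rho$ to obtain an ideal $\J$ with $\J\not\leq_K\I_\rho$. Then I would set $\rho_2=\rho_\J$, the partition regular function attached to $\J$ via Proposition~\ref{prop:rho-versus-ideal}(\ref{prop:rho-versus-ideal:ideal-gives-rho}); recall that this function is partition regular and satisfies $\I_{\rho_\J}=\J$. Consequently $\I_{\rho_2}=\J\not\leq_K\I_\rho=\I_{\rho_1}$, so the Kat\v{e}tov hypothesis of Theorem~\ref{thm:TWIERDZENIE-for-Fspaces:Pminus-Mrowka} is met, while $\rho_1=\rho$ is $P^-$ by assumption.

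Applying Theorem~\ref{thm:TWIERDZENIE-for-Fspaces:Pminus-Mrowka} with this choice yields an almost disjoint family $\cA$ with $|\cA|=\continuum$ and
\[
\Phi(\cA)\in\FinBW(\rho)\setminus\FinBW(\rho_\J)\subseteq\FinBW(\rho),
\]
which is exactly the required conclusion. For the ``in particular'' clause I would simply recall the basic topological facts about Franklin-type compactifications established earlier in the section: for any infinite almost disjoint family $\cA$ the space $\Phi(\cA)$ is Hausdorff, compact and separable, and in the present case $|\Phi(\cA)|=|\Lambda|+|\cA|=\continuum$.

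The point worth stressing is that essentially no obstacle remains at this stage: all of the genuine difficulty --- the transfinite recursion of length $\continuum$ using $\pnumber=\continuum$ (Lemma~\ref{lem:LEMAT}), the verification that the resulting $\Phi(\cA)$ lies in $\FinBW(\rho)$ through the countability condition of Lemma~\ref{lem:TWIERDZENIE-for-Fspaces-in-FinBW-technical}, and the reduction to the five cases of Lemma~\ref{lem:TWIERDZENIE-for-Fspaces-in-FinBW} --- has already been carried out inside the proof of Theorem~\ref{thm:TWIERDZENIE-for-Fspaces:Pminus-Mrowka}. The only genuinely new ingredient is the existence of an ideal not Kat\v{e}tov-below $\I_\rho$, supplied by the abstract cardinality count of Proposition~\ref{prop:ideal-not-below-in-KAT-another-ideal} ($2^\continuum$ maximal ideals versus only $\continuum$ functions $\omega\to\omega$); this is what guarantees that a ``target'' $\rho_2$ to be avoided always exists, so that the difference construction can be run with $\rho$ itself in the positive slot.
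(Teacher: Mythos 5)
Your proposal is correct and is essentially identical to the paper's own proof: the paper likewise invokes Proposition~\ref{prop:ideal-not-below-in-KAT-another-ideal} to produce an ideal $\J$ with $\J\not\leq_K\I_\rho$ and then applies Theorem~\ref{thm:TWIERDZENIE-for-Fspaces:Pminus-Mrowka} with $\rho_1=\rho$ and $\rho_2=\rho_\J$ to obtain $\Phi(\cA)\in\FinBW(\rho)\setminus\FinBW(\rho_\J)$. No gaps.
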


\begin{proof}
By Proposition~\ref{prop:ideal-not-below-in-KAT-another-ideal} there is an ideal $\J$ such that $\J\not\leq_K\I_\rho$, so Theorem~\ref{thm:TWIERDZENIE-for-Fspaces:Pminus-Mrowka} gives us an almost disjoint family $\cA$ such that  $|\cA|=\continuum$ and $ \Phi(\cA)\in \FinBW(\rho)\setminus\FinBW(\rho_\J)$.
\end{proof}

\begin{corollary}[Assume CH]
There exists (for each item distinct) an almost disjoint family $\cA$ for which $\Phi(\cA)$ is a Hausdorff compact and separable space of cardinality $\continuum$ such that: 
\begin{enumerate}
    \item $\Phi(\cA)$ is a Hindman space,\label{cor-H}
\item \cite[Theorem~4.7]{MR4552506} $\Phi(\cA)$ is a Ramsey space,\label{cor-R}
    \item $\Phi(\cA)$ is a differentially compact space,\label{cor-D}
    \item \cite[Theorem~5.3]{MR4584767} $\Phi(\cA)\in\FinBW(\I)$, where $\I$ is  a $P^-$ ideal (in particular, if $\I$ is a $G_{\delta\sigma\delta}$ ideal).\label{cor-I}
\end{enumerate}
\end{corollary}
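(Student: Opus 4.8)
The plan is to derive all four items directly from Theorem~\ref{thm:TWIERDZENIE-for-Fspaces-in-FinBW} by feeding it the appropriate partition regular function and then translating the resulting $\FinBW$ membership into the named class of spaces. Since Theorem~\ref{thm:TWIERDZENIE-for-Fspaces-in-FinBW} produces, for each $P^-$ function $\rho$, an almost disjoint family $\cA$ with $|\cA|=\continuum$ and $\Phi(\cA)\in\FinBW(\rho)$, and the general discussion of the spaces $\Phi(\cA)$ already records that every such $\Phi(\cA)$ is Hausdorff, compact and separable, the only thing to check in each item is that the relevant $\rho$ is $P^-$ and that $\FinBW(\rho)$ is exactly the class named in the statement. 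Running the construction separately for each $\rho$ yields the ``for each item distinct'' families.

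For items (\ref{cor-H})--(\ref{cor-D}) I would take $\rho=\FS$, $\rho=r$ and $\rho=\Delta$ respectively. Each of these is weak $P^+$, hence $P^-$, by Proposition~\ref{prop:Plike-properties-for-known-rho}(\ref{prop:Plike-properties-for-known-rho:FS-r-Delta:Pminus}), so Theorem~\ref{thm:TWIERDZENIE-for-Fspaces-in-FinBW} applies and delivers an almost disjoint family $\cA$ of cardinality $\continuum$ with $\Phi(\cA)\in\FinBW(\rho)$. It then remains only to recall the identifications noted earlier: $\FinBW(\FS)$ is the class of Hindman spaces, $\FinBW(r)$ is the class of Ramsey spaces, and $\FinBW(\Delta)$ is the class of differentially compact spaces. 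This gives items (\ref{cor-H}), (\ref{cor-R}) and (\ref{cor-D}).

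For item (\ref{cor-I}), given a $P^-$ ideal $\I$, I would apply the theorem to $\rho=\rho_\I$. That $\rho_\I$ is $P^-$ follows from Proposition~\ref{prop:properties-of-ideal-versus-rho}(\ref{prop:properties-of-ideal-versus-rho:ideal-rho}), and $\FinBW(\rho_\I)=\FinBW(\I)$ by Proposition~\ref{prop:basic-relationships-between-FinBW-like-spaces}(\ref{prop:basic-relationships-between-FinBW-like-spaces:ideal-rho}); thus the family $\cA$ supplied by Theorem~\ref{thm:TWIERDZENIE-for-Fspaces-in-FinBW} satisfies $\Phi(\cA)\in\FinBW(\I)$. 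The parenthetical ``in particular'' case is then immediate, since every $G_{\delta\sigma\delta}$ ideal is $P^-$ by Theorem~\ref{thm:Plike-properties-for-definable-ideals}(\ref{thm:Plike-properties-for-definable-ideals:Fsigmadelta}).

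There is no genuine obstacle at this stage: all the hard work sits upstream, in Theorem~\ref{thm:TWIERDZENIE-for-Fspaces:Pminus-Mrowka}/Theorem~\ref{thm:TWIERDZENIE-for-Fspaces-in-FinBW} and in verifying $P^-$ for the three concrete operations (notably the delicate very-sparse-set argument establishing that $\Delta$ is weak $P^+$ inside Proposition~\ref{prop:Plike-properties-for-known-rho}). The corollary itself amounts to bookkeeping: matching each named class of spaces to the correct $\FinBW(\rho)$ and confirming the $P^-$ hypothesis, so the only care needed is to cite the right identifications and the right $P^-$ results for each item.
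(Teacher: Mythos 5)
Your proof is correct and follows essentially the same route as the paper: the paper likewise derives items (1)--(3) from Theorem~\ref{thm:TWIERDZENIE-for-Fspaces-in-FinBW} together with Proposition~\ref{prop:Plike-properties-for-known-rho}(\ref{prop:Plike-properties-for-known-rho:FS-r-Delta:Pminus}), and item (4) from the same theorem plus Propositions~\ref{prop:properties-of-ideal-versus-rho}(\ref{prop:properties-of-ideal-versus-rho:ideal-rho}) and \ref{prop:basic-relationships-between-FinBW-like-spaces}(\ref{prop:basic-relationships-between-FinBW-like-spaces:ideal-rho}), with the $G_{\delta\sigma\delta}$ case handled by Theorem~\ref{thm:Plike-properties-for-definable-ideals}(\ref{thm:Plike-properties-for-definable-ideals:Fsigmadelta}). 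Your identification of each named class of spaces with the corresponding $\FinBW(\rho)$ is exactly the bookkeeping the paper relies on.
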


\begin{proof}
Items (\ref{cor-H}), (\ref{cor-R}) and (\ref{cor-D}) follow from Theorem~\ref{thm:TWIERDZENIE-for-Fspaces-in-FinBW}
and 
Proposition~\ref{prop:Plike-properties-for-known-rho}(\ref{prop:Plike-properties-for-known-rho:FS-r-Delta:Pminus}). Item (\ref{cor-I}) follows from Theorem~\ref{thm:TWIERDZENIE-for-Fspaces-in-FinBW}
and 
Propositions~\ref{prop:properties-of-ideal-versus-rho}(\ref{prop:properties-of-ideal-versus-rho:ideal-rho})
and 
\ref{prop:basic-relationships-between-FinBW-like-spaces}(\ref{prop:basic-relationships-between-FinBW-like-spaces:ideal-rho}), and the ``in particular'' part follows from Proposition~\ref{thm:Plike-properties-for-definable-ideals}(\ref{thm:Plike-properties-for-definable-ideals:Fsigmadelta}).
\end{proof}

The second part of applications of Theorem \ref{thm:TWIERDZENIE-for-Fspaces:Pminus-Mrowka} concerns a special case when $\I_{\rho_1}$ is $P^-(\Lambda_1)$ while $\I_{\rho_2}$ is \emph{not} $P^-(\Lambda_2)$.

\begin{theorem}[Assume CH]
\label{thm:dist-between-FinBW}
Let $\rho_i:\cF_i\to[\Lambda_i]^\omega$ be  partition regular functions for each $i=1,2$.
If 
 $\rho_1$ is  $P^-$,
$\I_{\rho_1}$ is $P^-(\Lambda_1)$ (equivalently, $\Fin^2\not\leq_K\I_{\rho_1}$)
and 
$\I_{\rho_2}$ is \emph{not} $P^-(\Lambda_2)$ (equivalently,  $\Fin^2\leq_K \I_{\rho_2}$),
then 
there exists an infinite almost disjoint family $\cA$ of cardinality $\continuum$ such that 
$\Phi(\cA)\in \FinBW(\rho_1) \setminus \FinBW(\rho_2).$ In particular, there is a Hausdorff compact and separable space of cardinality $\continuum$ in $\FinBW(\rho_1) \setminus \FinBW(\rho_2).$
\end{theorem}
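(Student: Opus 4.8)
The plan is to show that this theorem is an essentially immediate consequence of Theorem~\ref{thm:TWIERDZENIE-for-Fspaces:Pminus-Mrowka}, once we verify that the stated hypotheses force $\I_{\rho_2}\not\leq_K\I_{\rho_1}$. Since $\rho_1$ is already assumed to be $P^-$, the only missing ingredient needed to invoke Theorem~\ref{thm:TWIERDZENIE-for-Fspaces:Pminus-Mrowka} is precisely this Kat\v{e}tov non-comparability, so the whole proof reduces to a short transitivity argument.

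First I would translate the two $P^-(\Lambda)$ conditions into their Kat\v{e}tov-order forms using Proposition~\ref{prop:Pminus-versus-FinSQRD-for-ideal}(\ref{prop:Pminus-versus-FinSQRD-for-ideal:Pminus-Lambda}): the assumption that $\I_{\rho_1}$ is $P^-(\Lambda_1)$ is equivalent to $\Fin^2\not\leq_K\I_{\rho_1}$, while the assumption that $\I_{\rho_2}$ is \emph{not} $P^-(\Lambda_2)$ is equivalent to $\Fin^2\leq_K\I_{\rho_2}$. These are exactly the equivalences recorded in the parenthetical remarks of the statement, so nothing beyond citing that proposition is required here.

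Next I would derive $\I_{\rho_2}\not\leq_K\I_{\rho_1}$ by contradiction. Suppose $\I_{\rho_2}\leq_K\I_{\rho_1}$. Since the Kat\v{e}tov order on ideals is transitive (one composes the two witnessing functions), combining this with $\Fin^2\leq_K\I_{\rho_2}$ would yield $\Fin^2\leq_K\I_{\rho_1}$, contradicting $\Fin^2\not\leq_K\I_{\rho_1}$. Hence $\I_{\rho_2}\not\leq_K\I_{\rho_1}$.

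Finally, having $\rho_1$ being $P^-$ together with $\I_{\rho_2}\not\leq_K\I_{\rho_1}$, I would apply Theorem~\ref{thm:TWIERDZENIE-for-Fspaces:Pminus-Mrowka} directly (under CH) to obtain an almost disjoint family $\cA$ with $|\cA|=\continuum$ and $\Phi(\cA)\in\FinBW(\rho_1)\setminus\FinBW(\rho_2)$; the ``in particular'' clause about a Hausdorff compact separable space of cardinality $\continuum$ is inherited verbatim from that theorem, since $\Phi(\cA)$ has all those properties by the discussion preceding Lemma~\ref{lem:TWIERDZENIE-for-Fspaces-in-FinBW}. I do not expect any genuine obstacle in this theorem: all the real work (the $\pnumber=\continuum$ diagonalization of Lemma~\ref{lem:LEMAT} and the two Mr\'{o}wka-space computations of Lemmas~\ref{lem:TWIERDZENIE-for-Fspaces-in-FinBW-technical} and \ref{lem:MAD-Mrowka-nie-jest-Fspejsem:technical}) is already packaged inside Theorem~\ref{thm:TWIERDZENIE-for-Fspaces:Pminus-Mrowka}, and the present statement merely supplies a convenient, purely combinatorial sufficient condition ($\Fin^2$ separating the two ideals) guaranteeing its hypothesis.
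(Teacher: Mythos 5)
Your proposal is correct and follows exactly the paper's own argument: translate both $P^-(\Lambda_i)$ hypotheses into $\Fin^2$-comparability via Proposition~\ref{prop:Pminus-versus-FinSQRD-for-ideal}(\ref{prop:Pminus-versus-FinSQRD-for-ideal:Pminus-Lambda}), conclude $\I_{\rho_2}\not\leq_K\I_{\rho_1}$ by transitivity of $\leq_K$, and invoke Theorem~\ref{thm:TWIERDZENIE-for-Fspaces:Pminus-Mrowka}. No gaps.
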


\begin{proof}
The equivalence of $\I_{\rho_1}$ being $P^-(\Lambda_1)$ and $\Fin^2\not\leq_K\I_{\rho_1}$ follows from Proposition~\ref{prop:Pminus-versus-FinSQRD-for-rho}(\ref{prop:Pminus-versus-FinSQRD-for-ideal:Pminus-Lambda}).

Since $\Fin^2\leq_K \I_{\rho_2}$ and $\Fin^2\not\leq_K\I_{\rho_1}$, we know that $\I_{\rho_2}\not\leq_K \I_{\rho_1}$, so Theorem~\ref{thm:TWIERDZENIE-for-Fspaces:Pminus-Mrowka} finishes the proof.
\end{proof}

\begin{corollary}[Assume CH] 
\label{cor:Ispace-not-rho-space-for-known-rho}
There exists (for each item distinct) an almost disjoint family $\cA$ for which $\Phi(\cA)$ is a Hausdorff compact and separable space of cardinality $\continuum$ and the following holds. 

\begin{enumerate}

\item $\Phi(\cA)$ is in $\FinBW(\I)$, where $\I$ is a $P^-$ ideal (in particular, if $\I$ is a $G_{\delta\sigma\delta}$ ideal), but $\Phi(\cA)$ is not a:\label{item:Ispace-not-rho-space-for-known-rho}

\begin{enumerate}
\item \cite[Corollary~11.5]{MR4584767} Hindman space,
\item Ramsey space,
\item differentially compact space.
\end{enumerate}

\item 

\begin{enumerate}

\item \cite[Theorem~3]{MR1950294}
 $\Phi(\cA)$ is a van der Waerden space that  is not a Hindman space.
             
\item \cite[Theorem~4.4]{MR2471564}
 $\Phi(\cA)$ is an $\I_{1/n}$-space that is not a Hindman space.

\end{enumerate}

\item 

\begin{enumerate}

\item \cite[Theorem~4.2.2]{Shi2003Numbers}
 $\Phi(\cA)$ is a van der Waerden space that  is not a  differentially compace  space.

\item \cite[Theorem~3.5]{MR4358658}
$\Phi(\cA)$ is in $\FinBW(\I)$ but it is not a differentially compact space for any $P^+$ ideal $\I$  (in particular, for any $F_\sigma$ ideal). For instance, 
  $\Phi(\cA)$ is an $\I_{1/n}$-space that is not a differentially compact space.
\end{enumerate}
\end{enumerate}
\end{corollary}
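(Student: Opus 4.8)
The plan is to obtain every item as a direct instance of Theorem~\ref{thm:dist-between-FinBW}, after translating the named spaces into $\FinBW(\rho)$ classes. The dictionary is supplied by the Remarks following Definition~\ref{def:FinBW-for-rho} (Hindman, Ramsey and differentially compact spaces are exactly $\FinBW(\FS)$, $\FinBW(r)$ and $\FinBW(\Delta)$) together with Proposition~\ref{prop:basic-relationships-between-FinBW-like-spaces}(\ref{prop:basic-relationships-between-FinBW-like-spaces:ideal-rho}), which gives $\FinBW(\I)=\FinBW(\rho_\I)$ and hence realizes van der Waerden spaces as $\FinBW(\rho_\vdW)$ and $\I_{1/n}$-spaces as $\FinBW(\rho_{\I_{1/n}})$. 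Thus in each case I would exhibit a pair $\rho_1,\rho_2$, check that $\rho_1$ is $P^-$, that $\I_{\rho_1}$ is $P^-(\Lambda_1)$, and that $\I_{\rho_2}$ is \emph{not} $P^-(\Lambda_2)$; Theorem~\ref{thm:dist-between-FinBW} (which is where the CH assumption and the whole combinatorial content of Lemma~\ref{lem:LEMAT} are spent) then delivers an almost disjoint family $\cA$ with $|\cA|=\continuum$ and $\Phi(\cA)\in\FinBW(\rho_1)\setminus\FinBW(\rho_2)$, automatically Hausdorff, compact and separable.

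For the verification I would lean entirely on the $P$-like calculus already assembled. In item~(\ref{item:Ispace-not-rho-space-for-known-rho}) I take $\rho_1=\rho_\I$ for a $P^-$ ideal $\I$: then $\rho_\I$ is $P^-$ by Proposition~\ref{prop:properties-of-ideal-versus-rho}(\ref{prop:properties-of-ideal-versus-rho:ideal-rho}) and $\I_{\rho_\I}=\I$ is $P^-(\Lambda_1)$ by Proposition~\ref{prop:Plike-basic-properties-for-ideals}(\ref{prop:Plike-basic-properties-for-ideals:items}), while the three targets $\rho_2\in\{\FS,r,\Delta\}$ satisfy $\I_{\rho_2}\in\{\Hindman,\Ramsey,\Diff\}$, which are not $P^-(\Lambda_2)$ by Proposition~\ref{prop:Plike-properties-for-known-rho}(\ref{prop:Plike-properties-for-known-rho:Hindman-Ramsey-Diff}). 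In items~(2) and~(3a), where $\rho_1\in\{\rho_\vdW,\rho_{\I_{1/n}}\}$, Proposition~\ref{prop:Plike-properties-for-known-rho}(\ref{prop:Plike-properties-for-known-rho:vdW}) gives both that $\rho_1$ is $P^-$ and that $\I_{\rho_1}$ is $P^-(\omega)$, and the obstruction $\rho_2=\FS$ (item~(2)) or $\rho_2=\Delta$ (item~(3a)) again has a non-$P^-(\Lambda)$ associated ideal. For item~(3b) with an arbitrary $P^+$ ideal $\I$ I use that $\rho_\I$ is $P^+$ (hence $P^-$) and $\I$ is $P^-(\Lambda)$, once more against $\Diff$, which is not $P^-(\omega)$. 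The two ``in particular'' clauses are then immediate from Theorem~\ref{thm:Plike-properties-for-definable-ideals}: a $G_{\delta\sigma\delta}$ ideal is $P^-$ (item~\ref{thm:Plike-properties-for-definable-ideals:Fsigmadelta}) and an $F_\sigma$ ideal is $P^+$ (item~\ref{thm:Plike-properties-for-definable-ideals:Fsigma}), so these reduce to the $P^-$ and $P^+$ hypotheses just handled.

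I do not anticipate any genuine obstacle: this is a bookkeeping corollary, and all the analytic weight has already been discharged in Theorem~\ref{thm:dist-between-FinBW} and in the earlier determination of the $P$-like properties of $\FS$, $r$, $\Delta$, $\rho_\vdW$ and $\rho_{\I_{1/n}}$. The only point meriting a word is the parenthetical ``for each item distinct'': since each of the listed cases is a separate invocation of Theorem~\ref{thm:dist-between-FinBW} for its own pair $(\rho_1,\rho_2)$, each produces its own family $\cA$, and nothing in the construction forces these to coincide, so the families may indeed be taken distinct.
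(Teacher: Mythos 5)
Your proposal is correct and follows essentially the same route as the paper: the paper also derives everything from Theorem~\ref{thm:dist-between-FinBW}, verifying the hypotheses via Propositions~\ref{prop:properties-of-ideal-versus-rho}, \ref{prop:Plike-basic-properties-for-ideals}, \ref{prop:Plike-properties-for-known-rho} and \ref{prop:basic-relationships-between-FinBW-like-spaces}, and handles the $G_{\delta\sigma\delta}$ and $F_\sigma$ clauses through Theorem~\ref{thm:Plike-properties-for-definable-ideals} exactly as you do. The only cosmetic difference is that the paper proves item~(1) once and then obtains items~(2)--(3) as instances of it, whereas you invoke the theorem separately for each pair $(\rho_1,\rho_2)$; the content is identical.
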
 

\begin{proof}
Item (\ref{item:Ispace-not-rho-space-for-known-rho}) follows from Theorem~\ref{thm:dist-between-FinBW} 
and 
Propositions~\ref{prop:FinSQRD-not-below-Fsigma}(\ref{prop:FinSQRD-not-below-Fsigma:item}), \ref{prop:basic-relationships-between-FinBW-like-spaces}(\ref{prop:basic-relationships-between-FinBW-like-spaces:ideal-rho}) and \ref{prop:properties-of-ideal-versus-rho}(\ref{prop:properties-of-ideal-versus-rho:ideal-rho}), \ref{prop:Plike-basic-properties-for-ideals}(\ref{prop:Plike-basic-properties-for-ideals:Pplus-implies-weakPplus}). Other items follow from item (\ref{item:Ispace-not-rho-space-for-known-rho}), Theorem~\ref{thm:Plike-properties-for-definable-ideals}(\ref{thm:Plike-properties-for-definable-ideals:Fsigma}) and
Propositions~\ref{prop:Plike-properties-for-known-rho}(\ref{prop:Plike-properties-for-known-rho:Fsigma-known}), \ref{prop:Plike-basic-properties-for-ideals}(\ref{prop:Plike-basic-properties-for-ideals:Pplus-implies-weakPplus}).
\end{proof}

Now we deal with the third part of applications of Theorems~\ref{thm:TWIERDZENIE-for-Fspaces:Pminus-Mrowka}, in which we need to use its full strength. 

\begin{corollary}[Assume CH]
\label{cor:distinguishing-Hindman-Ramsey-Diff-spaces}
There exists (for each item distinct) an  almost disjoint family $\cA$ for which $\Phi(\cA)$ is a Hausdorff compact and separable space of cardinality $\continuum$ such that 
\begin{enumerate}
\item 
$\Phi(\cA)$ is a Ramsey space that is not a Hindman space;\label{cor:distinguishing-Hindman-Ramsey-Diff-spaces:Ramsey-not-Hindman-space}

\item 
$\Phi(\cA)$ is a Hindman space that  is not a Ramsey space;\label{cor:distinguishing-Hindman-Ramsey-Diff-spaces:diff-not-Hindman-space:Hindman-not-Ramsey}

\item 
$\Phi(\cA)$ is a differentially compact space that is not a Hindman space;\label{cor:distinguishing-Hindman-Ramsey-Diff-spaces:diff-not-Hindman-space}

\item
$\Phi(\cA)$ is a differentially compact space that  is not a Ramsey space.\label{cor:distinguishing-Hindman-Ramsey-Diff-spaces:diff-not-Hindman-space:diff-not-Ramsey}
\end{enumerate}
\end{corollary}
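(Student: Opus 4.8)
The plan is to derive all four items as direct applications of Theorem~\ref{thm:TWIERDZENIE-for-Fspaces:Pminus-Mrowka}, after translating the named classes of spaces into the $\FinBW(\rho)$ formalism. Recall that Hindman spaces are exactly $\FinBW(\FS)$, Ramsey spaces are exactly $\FinBW(r)$, and differentially compact spaces are exactly $\FinBW(\Delta)$. Thus each item asks for an almost disjoint family $\cA$ witnessing $\Phi(\cA)\in\FinBW(\rho_1)\setminus\FinBW(\rho_2)$ for a suitable pair of functions from $\{\FS,r,\Delta\}$, and the ``in particular'' clause (Hausdorff, compact, separable, of cardinality $\continuum$) is already part of the conclusion of Theorem~\ref{thm:TWIERDZENIE-for-Fspaces:Pminus-Mrowka}.

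To invoke that theorem for a pair $(\rho_1,\rho_2)$ I need two inputs: that $\rho_1$ is $P^-$, and that $\I_{\rho_2}\not\leq_K\I_{\rho_1}$. The first input is uniform: by Proposition~\ref{prop:Plike-properties-for-known-rho}(\ref{prop:Plike-properties-for-known-rho:FS-r-Delta:Pminus}) each of $\FS$, $r$, $\Delta$ is weak $P^+$, hence $P^-$, so whichever of the three I place in the role of $\rho_1$ the hypothesis holds automatically. The second input is supplied case by case by the incomparability results of Theorem~\ref{thm:Katetov-between-known-rho}.

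Concretely, I would proceed as follows. For item~(\ref{cor:distinguishing-Hindman-Ramsey-Diff-spaces:Ramsey-not-Hindman-space}) take $\rho_1=r$ and $\rho_2=\FS$, using $\Hindman\not\leq_K\Ramsey$ from Theorem~\ref{thm:Katetov-between-known-rho}(\ref{thm:Katetov-between-known-rho:Hindman-not-below-Ramsey}). For item~(\ref{cor:distinguishing-Hindman-Ramsey-Diff-spaces:diff-not-Hindman-space:Hindman-not-Ramsey}) take $\rho_1=\FS$ and $\rho_2=r$, using $\Ramsey\not\leq_K\Hindman$ from Theorem~\ref{thm:Katetov-between-known-rho}(\ref{thm:Katetov-between-known-rho:Ramsey-not-below-Hindman}). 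For item~(\ref{cor:distinguishing-Hindman-Ramsey-Diff-spaces:diff-not-Hindman-space}) take $\rho_1=\Delta$ and $\rho_2=\FS$, using $\Hindman\not\leq_K\Diff$ from Theorem~\ref{thm:Katetov-between-known-rho}(\ref{thm:Katetov-between-known-rho:Hindman-not-below-Diff}). For item~(\ref{cor:distinguishing-Hindman-Ramsey-Diff-spaces:diff-not-Hindman-space:diff-not-Ramsey}) take $\rho_1=\Delta$ and $\rho_2=r$, using $\Ramsey\not\leq_K\Diff$ from Theorem~\ref{thm:Katetov-between-known-rho}(\ref{thm:Katetov-between-known-rho:Ramsey-not-below-Diff}). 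In each case the two hypotheses of Theorem~\ref{thm:TWIERDZENIE-for-Fspaces:Pminus-Mrowka} are met, yielding the desired family $\cA$; since the four applications produce families witnessing membership in four different pairs of classes, the families are automatically distinct, matching the ``for each item distinct'' clause.

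The honest assessment is that there is essentially no obstacle left at this stage: the entire difficulty has been absorbed into Theorem~\ref{thm:TWIERDZENIE-for-Fspaces:Pminus-Mrowka} (whose proof rests on the $\pnumber=\continuum$ construction of Lemma~\ref{lem:LEMAT} together with the Mr\'{o}wka-space analysis of Lemmas~\ref{lem:TWIERDZENIE-for-Fspaces-in-FinBW-technical} and \ref{lem:MAD-Mrowka-nie-jest-Fspejsem:technical}) and into the delicate Kat\v{e}tov incomparabilities of Theorem~\ref{thm:Katetov-between-known-rho}. The only point demanding care is bookkeeping: one must ensure that in each item the function required to be $P^-$ is the one playing the role of $\rho_1$ (the class the space should belong to), while the obstruction $\I_{\rho_2}\not\leq_K\I_{\rho_1}$ is oriented the correct way. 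An easy mistake would be to reverse the roles and appeal to, say, $\Diff\not\leq_K\Hindman$, which is false since $\Diff\leq_K\Hindman$ by Theorem~\ref{thm:Katetov-between-known-rho}(\ref{thm:Katetov-between-known-rho:Diff-below-Hindman}); the correct obstruction for items~(\ref{cor:distinguishing-Hindman-Ramsey-Diff-spaces:diff-not-Hindman-space}) and (\ref{cor:distinguishing-Hindman-Ramsey-Diff-spaces:diff-not-Hindman-space:diff-not-Ramsey}) runs in the opposite direction.
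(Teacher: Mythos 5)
Your proposal is correct and follows exactly the paper's own route: the paper proves this corollary by citing Theorem~\ref{thm:TWIERDZENIE-for-Fspaces:Pminus-Mrowka}, Theorem~\ref{thm:Katetov-between-known-rho}, and Proposition~\ref{prop:Plike-properties-for-known-rho}(\ref{prop:Plike-properties-for-known-rho:FS-r-Delta:Pminus}), with the same choices of $\rho_1$, $\rho_2$ and the same Kat\v{e}tov non-reducibilities in each item. Your attention to the orientation of the condition $\I_{\rho_2}\not\leq_K\I_{\rho_1}$ is exactly the right bookkeeping point.
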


\begin{proof}
It follows from Theorems~\ref{thm:TWIERDZENIE-for-Fspaces:Pminus-Mrowka} and \ref{thm:Katetov-between-known-rho}  and Proposition~\ref{prop:Plike-properties-for-known-rho}(\ref{prop:Plike-properties-for-known-rho:FS-r-Delta:Pminus}).
\end{proof}

\begin{remark}
The space from Corollary~\ref{cor:distinguishing-Hindman-Ramsey-Diff-spaces}(\ref{cor:distinguishing-Hindman-Ramsey-Diff-spaces:diff-not-Hindman-space}) yields the  negative answer to 
  \cite[Question~4.2.2]{Shi2003Numbers} (see also \cite[Problem 1]{MR3097000} and \cite[Question 3]{MR4358658}). 
\end{remark}

\begin{corollary}[Assume CH]\ 
\label{cor:Fspaces:Pminus-Mrowka}
\begin{enumerate}
\item If $\I$ is an ideal such that $\I\not\leq_K\Hindman$ ($\I\not\leq_K\Ramsey$, $\I\not\leq_K\Diff$, resp.), then there exists an  almost disjoint family $\cA$  such that the Hausdorff compact and separable space $\Phi(\cA)$ of cardinality $\continuum$
 is a Hindman (Ramsey,  differentially compact, resp.) space that is not 
in $\FinBW(\I)$.\label{cor:Fspaces:Pminus-Mrowka:Hindman-not-summable}
\item There exists an almost disjoint family $\cA$ such that the Hausdorff compact and separable space $\Phi(\cA)$ of cardinality $\continuum$ 
is a Hindman (Ramsey,  differentially compact, resp.) space that 
 is not an $\I_{1/n}$-space.\label{cor:Fspaces:Pminus-Mrowka:Hindman-not-summable:existence}
\end{enumerate}
\end{corollary}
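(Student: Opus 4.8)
The plan is to obtain both items as immediate applications of Theorem~\ref{thm:TWIERDZENIE-for-Fspaces:Pminus-Mrowka}, which already carries out all the substantive work (the CH-construction of the almost disjoint family through Lemma~\ref{lem:LEMAT}, together with Lemmas~\ref{lem:TWIERDZENIE-for-Fspaces-in-FinBW-technical} and \ref{lem:MAD-Mrowka-nie-jest-Fspejsem:technical}). The only task here is to feed the correct pair $(\rho_1,\rho_2)$ into that theorem and to translate between the language of ideals and that of partition regular functions.

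For item~(\ref{cor:Fspaces:Pminus-Mrowka:Hindman-not-summable}) I would treat the case of Hindman spaces, the cases of Ramsey and differentially compact spaces being identical after replacing $\FS$ by $r$ and by $\Delta$. Set $\rho_1=\FS$ and $\rho_2=\rho_\I$. First, $\rho_1=\FS$ is $P^-$ by Proposition~\ref{prop:Plike-properties-for-known-rho}(\ref{prop:Plike-properties-for-known-rho:FS-r-Delta:Pminus}) (likewise $r$ and $\Delta$ are $P^-$). Second, since $\I_{\rho_\I}=\I$ by Proposition~\ref{prop:rho-versus-ideal}(\ref{prop:rho-versus-ideal:ideal-gives-rho}) and $\I_{\FS}=\Hindman$, the hypothesis $\I\not\leq_K\Hindman$ is \emph{precisely} the assumption $\I_{\rho_2}\not\leq_K\I_{\rho_1}$ required by the theorem. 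Theorem~\ref{thm:TWIERDZENIE-for-Fspaces:Pminus-Mrowka} then produces an almost disjoint family $\cA$ with $|\cA|=\continuum$ such that $\Phi(\cA)\in\FinBW(\FS)\setminus\FinBW(\rho_\I)$. Finally, $\FinBW(\rho_\I)=\FinBW(\I)$ by Proposition~\ref{prop:basic-relationships-between-FinBW-like-spaces}(\ref{prop:basic-relationships-between-FinBW-like-spaces:ideal-rho}), while $\FinBW(\FS)$ is exactly the class of Hindman spaces; hence $\Phi(\cA)$ is a Hindman space outside $\FinBW(\I)$. The asserted properties of $\Phi(\cA)$ (Hausdorff, compact, separable, of cardinality $\continuum$) come from the ``in particular'' clause of the theorem and the general discussion of the spaces $\Phi(\cA)$.

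Item~(\ref{cor:Fspaces:Pminus-Mrowka:Hindman-not-summable:existence}) then follows by specializing item~(\ref{cor:Fspaces:Pminus-Mrowka:Hindman-not-summable}) to $\I=\I_{1/n}$, recalling that a space is an $\I_{1/n}$-space exactly when it belongs to $\FinBW(\I_{1/n})$. The only point to verify is that $\I_{1/n}$ meets the hypothesis of item~(\ref{cor:Fspaces:Pminus-Mrowka:Hindman-not-summable}) for each of the three operations, namely $\I_{1/n}\not\leq_K\Hindman$, $\I_{1/n}\not\leq_K\Ramsey$ and $\I_{1/n}\not\leq_K\Diff$; these are Theorem~\ref{thm:Katetov-between-known-rho}(\ref{thm:Katetov-between-known-rho:Summable-not-below-Hindman}), (\ref{thm:Katetov-between-known-rho:Summable-not-below-Ramsey}) and (\ref{thm:Katetov-between-known-rho:Summable-not-below-Diff}), respectively.

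Since everything reduces to invoking the master theorem, I expect no genuine obstacle at this stage: the substantive difficulty was already absorbed into the proof of Theorem~\ref{thm:TWIERDZENIE-for-Fspaces:Pminus-Mrowka}. The only points demanding care are bookkeeping ones -- correctly matching the requirement $\I_{\rho_2}\not\leq_K\I_{\rho_1}$ with the stated Kat\v{e}tov hypotheses, and passing from $\rho_\I$ back to $\I$ via Proposition~\ref{prop:basic-relationships-between-FinBW-like-spaces}(\ref{prop:basic-relationships-between-FinBW-like-spaces:ideal-rho}).
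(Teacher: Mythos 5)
Your proposal is correct and matches the paper's own proof: the paper likewise derives item (1) by feeding $\rho_1\in\{\FS,r,\Delta\}$ and $\rho_2=\rho_\I$ into Theorem~\ref{thm:TWIERDZENIE-for-Fspaces:Pminus-Mrowka}, using Proposition~\ref{prop:Plike-properties-for-known-rho}(\ref{prop:Plike-properties-for-known-rho:FS-r-Delta:Pminus}) for the $P^-$ hypothesis and Proposition~\ref{prop:basic-relationships-between-FinBW-like-spaces}(\ref{prop:basic-relationships-between-FinBW-like-spaces:ideal-rho}) to translate $\FinBW(\rho_\I)$ back to $\FinBW(\I)$, and obtains item (2) from item (1) together with Theorem~\ref{thm:Katetov-between-known-rho}. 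The bookkeeping you flag (matching $\I\not\leq_K\Hindman$ with $\I_{\rho_2}\not\leq_K\I_{\rho_1}$) is handled exactly as you describe.
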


\begin{proof}
(\ref{cor:Fspaces:Pminus-Mrowka:Hindman-not-summable}) 
It follows from Theorem \ref{thm:TWIERDZENIE-for-Fspaces:Pminus-Mrowka} and Propositions~\ref{prop:Plike-properties-for-known-rho}(\ref{prop:Plike-properties-for-known-rho:FS-r-Delta:Pminus}) and 
\ref{prop:basic-relationships-between-FinBW-like-spaces}(\ref{prop:basic-relationships-between-FinBW-like-spaces:ideal-rho}). 

(\ref{cor:Fspaces:Pminus-Mrowka:Hindman-not-summable:existence}) It follows from item~(\ref{cor:Fspaces:Pminus-Mrowka:Hindman-not-summable}) and Theorem~\ref{thm:Katetov-between-known-rho}.
\end{proof}

\begin{remark}
In \cite[Theorem~2.5]{MR4356195}, the authors constructed, assuming  CH and $\I\not\leq_K\Hindman$, a \emph{non}-Hausdorff Hindman space that is not in $\FinBW(\I)$. Corollary~\ref{cor:Fspaces:Pminus-Mrowka}(\ref{cor:Fspaces:Pminus-Mrowka:Hindman-not-summable}) strengthens this result to the case of Hausdorff spaces. Taking $\I=\I_{1/n}$, they obtained a  positive answer to the  question posed in  \cite{Flaskova-slides-2007}, namely they constructed a (non-Hausdorff) Hindman space which is not $\I_{1/n}$-space. In Corollary~\ref{cor:Fspaces:Pminus-Mrowka}(\ref{cor:Fspaces:Pminus-Mrowka:Hindman-not-summable:existence}), we obtained a \emph{Hausdorff} answer to the above mentioned question.
\end{remark}

\begin{corollary}[Assume CH]\ 
\label{cor:TWIERDZENIE-for-Fspaces:two-ideals}
\begin{enumerate}
    \item 
\cite[Theorem~9.3]{MR4584767}
If  $\I_1$ and $\I_2$ are ideals such that \label{cor:TWIERDZENIE-for-Fspaces:two-ideals:item}
$\I_1$ is $P^-$ (in particular, if $\I_1$ is a $G_{\delta\sigma\delta}$ ideal) and 
$\I_{2}\not\leq_K \I_{1}$, 
then  
there exists an almost disjoint family $\cA$ such that the Hausdorff compact and separable space $\Phi(\cA)$ of cardinality $\continuum$ belongs to $\FinBW(\I_1) \setminus \FinBW(\I_2).$

\item 
{\cite[Theorem~3.3]{MR2471564}}
There exists an almost disjoint family $\cA$ such that the Hausdorff compact and separable space $\Phi(\cA)$ of cardinality $\continuum$ is a van der Waerden space that 
 is not an $\I_{1/n}$-space.\label{cor:TWIERDZENIE-for-Fspaces:two-ideals:item-vdW}   
\end{enumerate}

\end{corollary}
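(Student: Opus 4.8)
The plan is to obtain both items as immediate specializations of Theorem~\ref{thm:TWIERDZENIE-for-Fspaces:Pminus-Mrowka}, passing from partition regular functions to ideals via the dictionary $\I\mapsto\rho_\I$. No new construction is required: all the genuine work (the transfinite recursion under CH furnished by Lemma~\ref{lem:LEMAT}, together with the sufficiency Lemma~\ref{lem:TWIERDZENIE-for-Fspaces-in-FinBW-technical} and the necessity Lemma~\ref{lem:MAD-Mrowka-nie-jest-Fspejsem:technical}) is already packaged inside that theorem.

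For item~(1), I would set $\rho_1=\rho_{\I_1}$ and $\rho_2=\rho_{\I_2}$ and verify the two hypotheses of Theorem~\ref{thm:TWIERDZENIE-for-Fspaces:Pminus-Mrowka}. By Proposition~\ref{prop:rho-versus-ideal}(\ref{prop:rho-versus-ideal:ideal-gives-rho}) we have $\I_{\rho_{\I_i}}=\I_i$ for $i=1,2$, so the requirement $\I_{\rho_2}\not\leq_K\I_{\rho_1}$ is literally the assumption $\I_2\not\leq_K\I_1$. By Proposition~\ref{prop:properties-of-ideal-versus-rho}(\ref{prop:properties-of-ideal-versus-rho:ideal-rho}), the function $\rho_{\I_1}$ is $P^-$ if and only if $\I_1$ is $P^-$, giving the $P^-$ hypothesis on $\rho_1$. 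Theorem~\ref{thm:TWIERDZENIE-for-Fspaces:Pminus-Mrowka} then produces an almost disjoint family $\cA$ with $|\cA|=\continuum$ and $\Phi(\cA)\in\FinBW(\rho_{\I_1})\setminus\FinBW(\rho_{\I_2})$, and Proposition~\ref{prop:basic-relationships-between-FinBW-like-spaces}(\ref{prop:basic-relationships-between-FinBW-like-spaces:ideal-rho}) identifies $\FinBW(\rho_{\I_i})=\FinBW(\I_i)$, whence $\Phi(\cA)\in\FinBW(\I_1)\setminus\FinBW(\I_2)$. The parenthetical claim that every $G_{\delta\sigma\delta}$ ideal qualifies is Theorem~\ref{thm:Plike-properties-for-definable-ideals}(\ref{thm:Plike-properties-for-definable-ideals:Fsigmadelta}); the assertions that $\Phi(\cA)$ is Hausdorff, compact, separable, and of cardinality $\continuum$ are the standard properties of these spaces recorded in the preliminary discussion of Mr\'{o}wka spaces (together with $|\cA|=\continuum$).

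For item~(2), I would simply invoke item~(1) with $\I_1=\vdW$ and $\I_2=\I_{1/n}$. That $\vdW$ is $P^-$ is Proposition~\ref{prop:Plike-properties-for-known-rho}(\ref{prop:Plike-properties-for-known-rho:Fsigma-known}) (indeed $\vdW$ is $F_\sigma$, hence $P^+$, hence $P^-$), and the non-reducibility $\I_{1/n}\not\leq_K\vdW$ is Theorem~\ref{thm:Katetov-between-known-rho}(\ref{thm:Katetov-between-known-rho:Summable-not-below-vdW}). Recalling that $\FinBW(\vdW)$ is exactly the class of van der Waerden spaces and $\FinBW(\I_{1/n})$ the class of $\I_{1/n}$-spaces then gives the statement.

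There is no real obstacle here: the mathematical content lives entirely in Theorem~\ref{thm:TWIERDZENIE-for-Fspaces:Pminus-Mrowka}. The only point demanding care is the bookkeeping that moves between the ``$\rho$''-formulation and the ``ideal''-formulation — namely the three translations $\I_{\rho_\I}=\I$, $\FinBW(\rho_\I)=\FinBW(\I)$, and the $P^-$-equivalence for $\rho_\I$ versus $\I$ — and, for item~(2), selecting the concrete pair of ideals whose Kat\v{e}tov incomparability has already been established in Theorem~\ref{thm:Katetov-between-known-rho}.
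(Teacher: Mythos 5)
Your proposal is correct and follows essentially the same route as the paper: item~(1) is obtained by specializing Theorem~\ref{thm:TWIERDZENIE-for-Fspaces:Pminus-Mrowka} to $\rho_i=\rho_{\I_i}$ via Propositions~\ref{prop:properties-of-ideal-versus-rho}(\ref{prop:properties-of-ideal-versus-rho:ideal-rho}) and \ref{prop:basic-relationships-between-FinBW-like-spaces}(\ref{prop:basic-relationships-between-FinBW-like-spaces:ideal-rho}), and item~(2) by instantiating with $\I_1=\vdW$, $\I_2=\I_{1/n}$ using Theorem~\ref{thm:Katetov-between-known-rho}(\ref{thm:Katetov-between-known-rho:Summable-not-below-vdW}) and Proposition~\ref{prop:Plike-properties-for-known-rho}(\ref{prop:Plike-properties-for-known-rho:vdW}). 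The bookkeeping translations you spell out are exactly the ones the paper invokes.
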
 

\begin{proof}
(\ref{cor:TWIERDZENIE-for-Fspaces:two-ideals:item})
It follows from Theorem~\ref{thm:TWIERDZENIE-for-Fspaces:Pminus-Mrowka} and Propositions~\ref{prop:properties-of-ideal-versus-rho}(\ref{prop:properties-of-ideal-versus-rho:ideal-rho}), 
\ref{prop:basic-relationships-between-FinBW-like-spaces}(\ref{prop:basic-relationships-between-FinBW-like-spaces:ideal-rho}).

(\ref{cor:TWIERDZENIE-for-Fspaces:two-ideals:item-vdW})
It follows from item (\ref{cor:TWIERDZENIE-for-Fspaces:two-ideals:item}),  Theorem~\ref{thm:Katetov-between-known-rho}(\ref{thm:Katetov-between-known-rho:Summable-not-below-vdW})
and
Proposition~\ref{prop:Plike-properties-for-known-rho}(\ref{prop:Plike-properties-for-known-rho:vdW}).
\end{proof}


\section{Distinguishing between \texorpdfstring{$\FinBW$}{FinBW } classes via Kat\v{e}tov order on partition regular functions}

In this section we prove the second of the main results of Part \ref{part:Distinguishing-FinBW-spaces}. Then we compare it with Theorem \ref{thm:TWIERDZENIE-for-Fspaces:Pminus-Mrowka} and show that none of them can be derived from the other one. We start with a technical lemma.

\begin{lemma}[Assume CH]
\label{lem:LEMAT:rho}
Let $\rho_i:\cF_i\to[\Lambda_i]^\omega$ be  partition regular with $\cF_i\subseteq [\Omega_i]^\omega$ for each $i=1,2$.
Let $\{f_\alpha: \alpha<\continuum\}$ be an enumeration of all functions $f:\Lambda_1\to \Lambda_2$ 
and
 $\{F_\alpha:\alpha<\continuum\}$
 be an enumeration of all sets $F\in \cF_2$ having small accretions.

 If 
 $\rho_2$ is  $P^-$ and 
 $\rho_2\not\leq_K \rho_1$, 
then  there exist 
families 
$\cA = \{A_\alpha:\alpha<\continuum\}$
and
$\cC=\{C_\alpha : \alpha<\continuum\}$
 such that for every $\alpha<\continuum$:
 \begin{enumerate}

\item $A_\alpha=\emptyset\lor A_\alpha \in \I_{\rho_2}\cap [\Lambda_2]^\omega$,
\item $\forall\beta<\alpha\, (|A_\alpha\cap A_\beta|<\omega)$,\label{i3}

\item $C_\alpha\in \cF_1$,

\item $\forall F\in \cF_2\,\exists K\in [\Omega_1]^{<\omega}\, \forall  L\in [\Omega_2]^{<\omega}\, (\rho_2(F\setminus L) \not\subseteq f_\alpha[\rho_1(C_\alpha\setminus K)])$,\label{i4}   

\item $\forall\gamma >  \alpha \, \exists K\in[\Omega_1]^{<\omega} \,(|A_\gamma  \cap f_\alpha[\rho_1(C_\alpha \setminus K)]|<\omega)$,\label{i5}

\item $\exists \beta \leq \alpha  \, \forall L\in  [\Omega_2]^{<\omega}\,  (A_\beta \cap \rho_2(F_\alpha\setminus L)\neq\emptyset)$.\label{i6} 

\end{enumerate}
\end{lemma}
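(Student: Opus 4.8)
The plan is to run a transfinite recursion of length $\continuum=\omega_1$ (here CH gives $\continuum=\omega_1$), constructing $A_\alpha$ and $C_\alpha$ simultaneously so that at stage $\alpha$ all six clauses hold for the index $\alpha$ — clause (5) being a promise about the not-yet-built sets $A_\gamma$, which is discharged once we reach stage $\gamma$. At a generic stage $\alpha$ I assume $A_\beta,C_\beta$ have been defined for $\beta<\alpha$; since $\alpha<\omega_1$ there are only countably many of them, and this countability is exactly what makes the diagonalizations below possible. The lemma itself only has to produce the combinatorial data $\cA,\cC$ satisfying (1)--(6); the membership statements $\Phi(\cA)\in\FinBW(\rho_1)$ and $\Phi(\cA)\notin\FinBW(\rho_2)$ are downstream consequences proved in the ensuing theorem via Lemmas~\ref{lem:TWIERDZENIE-for-Fspaces-in-FinBW-technical} and \ref{lem:MAD-Mrowka-nie-jest-Fspejsem:technical}(\ref{lem:MAD-Mrowka-nie-jest-Fspejsem:technical:item}).

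Constructing $C_\alpha$ is the easy half. The hypothesis $\rho_2\not\leq_K\rho_1$ says that the fixed function $f_\alpha\colon\Lambda_1\to\Lambda_2$ is \emph{not} a witness for $\rho_2\leq_K\rho_1$; unravelling the negation of Definition~\ref{Katetov-order-for-rho} hands us precisely a set $C_\alpha\in\cF_1$ for which clause (4) holds. Using Proposition~\ref{prop:finite-support} I would, in addition, shrink $C_\alpha$ so that $\rho_1(C_\alpha)$ has finite support; by monotonicity \nameref{def:partition-regular:monotone} this shrinking only strengthens clause (4), so it costs nothing. This yields clauses (3) and (4) together.

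The substance is the construction of $A_\alpha$. I want $A_\alpha$ to (i) lie in $\I_{\rho_2}\cap[\Lambda_2]^\omega$ — attainable because tallness of $\I_{\rho_2}$ follows from $\rho_2\not\leq_K\rho_1$ through Proposition~\ref{prop:prop-of-talness}; (ii) persistently hit every tail $\rho_2(F_\alpha\setminus L)$, which I secure by enumerating $\Omega_2=\{o_n:n\in\omega\}$ and choosing the $n$-th point of $A_\alpha$ inside $\rho_2(F_\alpha\setminus\{o_i:i<n\})$, so that every finite $L\subseteq\Omega_2$ is eventually dominated; (iii) be almost disjoint from the earlier $A_\beta$, which is harmless since $\bigcup_{\beta\in S}A_\beta\in\I_{\rho_2}$ for finite $S$; and (iv) for each $\beta<\alpha$ satisfy $|A_\alpha\cap f_\beta[\rho_1(C_\beta\setminus K_\beta)]|<\omega$ for a suitable finite $K_\beta$ read off from clause (4) at stage $\beta$ applied to $F=F_\alpha$ — this is the clause-(5) promise for the pair $(\beta,\alpha)$. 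Here the small accretions of $F_\alpha$ are used: they make the tails $\rho_2(F_\alpha\setminus\{o_i:i<n\})$ a $\subseteq$-decreasing sequence with $\I_{\rho_2}$-differences, so that $P^-$ of $\rho_2$ (in the form of Proposition~\ref{prop:Pminus-for-rho-equivalent-condition}) applies. If some previously constructed $A_\beta$ already witnesses clause (6) for $F_\alpha$, I may instead set $A_\alpha=\emptyset$; this is the purpose of the disjunction in clause (1) and of allowing $\beta\le\alpha$ in clause (6).

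The main obstacle is reconciling (ii) with (iv) in a single selection, and it is exactly here that the argument departs from the proof of Lemma~\ref{lem:LEMAT}. In that lemma the sets to be avoided, $f_\beta[\rho_1(C_\beta)]$, lay in $\I_{\rho_2}$, so their complements were in $\I_{\rho_2}^*$ and the relevant family had the strong finite intersection property for free. Under the weaker hypothesis $\rho_2\not\leq_K\rho_1$ one controls only the \emph{tails} $f_\beta[\rho_1(C_\beta\setminus K)]$, and these need \emph{not} be $\I_{\rho_2}$-small; so the key step to be proved is that no finite union $\bigcup_{\beta\in S}f_\beta[\rho_1(C_\beta\setminus K_\beta)]$ can swallow, modulo $\I_{\rho_2}$, any tail $\rho_2(F_\alpha\setminus L)$. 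I expect to obtain this by combining the partition regularity \nameref{def:partition-regular:partition-regular} of $\rho_2$ (to reduce a hypothetical covering of an $\I_{\rho_2}$-positive set to a covering of some $\rho_2(E)$ by a single $f_{\beta_0}[\rho_1(C_{\beta_0}\setminus K_{\beta_0})]$) with clause (4) at stage $\beta_0$ and the freedom — permitted by clause (5) — to enlarge each $K_\beta$, exploiting the finite support of $C_{\beta_0}$ and the $P^-$-property of $\rho_2$ to upgrade "nonempty difference" to "$\I_{\rho_2}$-positive difference". Once this positivity is established the diagonal selection of the points of $A_\alpha$ goes through, and a final routine verification confirms clauses (1)--(6), with clause (6) for $F_\alpha$ holding with $\beta=\alpha$ by construction (ii) (or with an earlier $\beta$ in the degenerate case).
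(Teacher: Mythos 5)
Your overall architecture matches the paper's: a transfinite recursion of length $\omega_1$ in which $C_\alpha$ is read off directly from the negation of $\rho_2\leq_K\rho_1$ applied to $f_\alpha$ (giving clauses (3)--(4)), and $A_\alpha$ is produced by a diagonal selection along the tails $\rho_2(F_\alpha\setminus L_n)$, using small accretions of $F_\alpha$ together with $P^-$ of $\rho_2$ to keep those tails $\I_{\rho_2}$-positive, tallness of $\I_{\rho_2}$ to shrink into the ideal, and the $A_\alpha=\emptyset$ escape when some earlier $A_\beta$ already witnesses clause (6). You have also correctly located the one genuinely hard point: the sets $f_\beta[\rho_1(C_\beta\setminus K)]$ need not lie in $\I_{\rho_2}$, so the strong-finite-intersection argument of Lemma~\ref{lem:LEMAT} is unavailable and one must \emph{prove} that each such set can be peeled off a positive set while staying positive.

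That step, however, is exactly where your proposal stops being a proof. You write that you ``expect to obtain'' the key positivity claim by combining \nameref{def:partition-regular:partition-regular} with clause (4) at stage $\beta_0$, but the reduction you sketch does not close: after using \nameref{def:partition-regular:partition-regular} to find $E'\in\cF_2$ with $\rho_2(E')\subseteq f_{\beta_0}[\rho_1(C_{\beta_0}\setminus K_{\beta_0})]$, clause (4) applied to $E'$ only yields \emph{some} finite $K$ (depending on $E'$) with $\rho_2(E'\setminus L)\not\subseteq f_{\beta_0}[\rho_1(C_{\beta_0}\setminus K)]$; since $K$ may properly contain $K_{\beta_0}$, the set $f_{\beta_0}[\rho_1(C_{\beta_0}\setminus K)]$ is \emph{smaller} than the one containing $\rho_2(E')$, and no contradiction follows. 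Enlarging $K_{\beta_0}$ to $K$ changes the set being avoided and hence changes $E'$, which changes the $K$ supplied by (4) --- the argument is circular as stated. The paper breaks this circularity differently: at each step it asks whether $\rho_2(E)\setminus f_{\beta_n}[\rho_1(C_{\beta_n}\setminus K)]\in\I_{\rho_2}$ for \emph{every} finite $K$, and if so runs a dichotomy over an exhaustion $\{M_i:i\in\omega\}$ of $\Omega_1$: either $\bigcup_i\bigl(\rho_2(E)\setminus f_{\beta_n}[\rho_1(C_{\beta_n}\setminus M_i)]\bigr)\notin\I_{\rho_2}$, in which case Proposition~\ref{prop:Pminus-for-rho-equivalent-condition} produces $G\in\cF_2$ whose tails are eventually contained in every $f_{\beta_n}[\rho_1(C_{\beta_n}\setminus M_i)]$, contradicting (4) at stage $\beta_n$; or the union is in $\I_{\rho_2}$, in which case some $\rho_2(G)$ sits inside $\bigcap_i f_{\beta_n}[\rho_1(C_{\beta_n}\setminus M_i)]$, again contradicting (4). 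Without this (or an equivalent) argument the diagonal selection cannot be carried out, so the proposal has a genuine gap at its central step.
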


\begin{proof}
Suppose that $A_\beta$ and  $C_\beta$ have been constructed for $\beta<\alpha$ and satisfy all the required conditions.

First, we construct a set $C_\alpha$.
Since 
$\rho_2 \not\leq_K \rho_1$,
 there is a set 
$C_\alpha\in \cF_1$
 such that 
$$\forall F\in \cF_2\, \exists  K\in [\Omega_1]^{<\omega}\, \forall  L\in [\Omega_2]^{<\omega}\, (\rho_2(F\setminus L) \not\subseteq f_{\alpha}[\rho_1(C_\alpha\setminus K)]).$$

Now, we turn to the construction of a set $A_\alpha$.
We have two cases: 
\begin{enumerate}
 \item $\exists \beta < \alpha  \, \forall L\in  [\Omega_2]^{<\omega}\,  (A_\beta \cap \rho_2(F_\alpha\setminus L)\neq\emptyset)$.
 \item $\forall \beta < \alpha  \, \exists  L_\beta \in  [\Omega_2]^{<\omega}\,  (A_\beta \cap \rho_2(F_\alpha\setminus L_\beta) = \emptyset)$.
\end{enumerate}

\emph{Case (1).} 
We put $A_\alpha=\emptyset$. Then the sets $A_\alpha$ and $C_\alpha$ satisfy all the required conditions, so the proof of the lemma is finished in this case.

\emph{Case (2).}
Let $\alpha=\{\beta_n:n\in\omega\}$.
Let $\{L_n:n\in\omega\}$ be an increasing sequence of finite subsets of $\Omega_2$ such that 
$\bigcup\{L_{\beta_i}:i<n\} \subseteq L_n$ and
$\bigcup\{L_n:n\in\omega\} = \Omega_2$. Notice that $\rho_2(F_\alpha\setminus L_n)\cap \bigcup\{A_{\beta_i}:i<n\} = \emptyset$ for every $n\in\omega$.

We define inductively sequences $\{E_n:n\in\omega\} \subseteq \cF_2$, $\{K_n:n\in\omega\}\subseteq[\Omega_1]^{<\omega}$ and $\{a_n:n\in\omega\}\subseteq \Lambda_2$ such that for every $n\in\omega$ the following conditions hold:
\begin{enumerate}[(i)]
    \item $\rho_2(E_{n+1})\subseteq \rho_2(E_n)\subseteq \rho_2(F_\alpha)$,
    \item $\rho_2(E_n) \subseteq \rho_2(F_\alpha\setminus L_n)\setminus f_{\beta_n}[\rho_1(C_{\beta_n}\setminus K_n)]$,
    \item $a_n\in \rho_2(E_n) \setminus\{a_i:i<n\}$.
\end{enumerate}

Suppose that $E_i$, $K_i$ and $a_i$ have been constructed for $i<n$ and satisfy all the required conditions.

Since $F_\alpha$ has  small accretions, we obtain 
$\rho_2(F_\alpha\setminus L_{n-1})\setminus \rho_2(F_\alpha\setminus L_n)\in \I_{\rho_2}$,
and consequently
$\rho_2(E_{n-1})\cap \rho_2(F_\alpha\setminus L_{n})\notin\I_{\rho_2}$ (in the case of $n=0$ we put $L_{-1}=\emptyset$ and $E_{-1}=F_\alpha$).
Let $E\in \cF_2$ be such that 
$\rho_2(E)\subseteq \rho_2(E_{n-1})\cap \rho_2(F_\alpha\setminus L_{n})$.
We have 2 subcases:
\begin{enumerate}[(2a)]
    \item $\exists K_n\in [\Omega_1]^{<\omega}\,(\rho_2(E)\setminus f_{\beta_n}[\rho_1(C_{\beta_n}\setminus K_n)]\notin \I_{\rho_2}),$
    \item $\forall  K\in [\Omega_1]^{<\omega}\,(\rho_2(E)\setminus f_{\beta_n}[\rho_1(C_{\beta_n}\setminus K)]\in \I_{\rho_2}).$
\end{enumerate}

\emph{Case (2a).} 
We take $E_n\in \cF_2$ such that 
$\rho_2(E_n)\subseteq \rho_2(E)\setminus f_{\beta_n}[\rho_1(C_{\beta_n}\setminus K_n)]$ and pick any $a_n\in \rho_2(E_n) \setminus\{a_i:i<n\}$. 
Then  $E_n$, $K_n$ and $a_n$ satisfy all the required conditions.

\emph{Case (2b).} 
It will turn out, that  this subcase is impossible. 
Let $\{M_i:i\in\omega\}$ be an increasing sequence of finite subsets of $\Omega_1$ such that $\bigcup\{M_i:i\in\omega\} = \Omega_1$.   

We have 2 further subcases:
\begin{enumerate}[(2b-1)]
    \item $\bigcup\{\rho_2(E)\setminus f_{\beta_n}[\rho_1(C_{\beta_n}\setminus M_i)]: i\in\omega\}\notin\I_{\rho_2}$,
    \item $\bigcup\{\rho_2(E)\setminus f_{\beta_n}[\rho_1(C_{\beta_n}\setminus M_i)]: i\in\omega\}\in\I_{\rho_2}$.
\end{enumerate}

\emph{Case (2b-1).}
Since  $\rho_2$ is $P^-$, there is $G\in \cF_2$ such that 
$\rho_2(G)\subseteq \bigcup\{\rho_2(E)\setminus f_{\beta_n}[\rho_1(C_{\beta_n}\setminus M_i)]: i\in\omega\}$ and for every $i\in\omega$ there is a finite set $L\subseteq \Omega_2$ such that 
$\rho_2(G\setminus L)\subseteq f_{\beta_n}[\rho_1(C_{\beta_n}\setminus M_i)]$. 
On the other hand, from the inductive assumptions (more precisely: since $C_{\beta_n}$ satisfies item \ref{i4}), we know that there is a finite set $K$ such that   $\rho_2(G\setminus L)\not\subseteq f_{\beta_n}[\rho_1(C_{\beta_n}\setminus K)]$ for any  finite set $L$.
Let $i\in\omega$ be such that $K\subseteq M_i$.
Then there is a finite set $L\subseteq \Omega_2$ such that 
$\rho_2(G\setminus L)\subseteq f_{\beta_n}[\rho_1(C_{\beta_n}\setminus M_i)]\subseteq f_{\beta_n}[\rho_1(C_{\beta_n}\setminus K)]$, a contradiction.

\emph{Case (2b-2).}
In this case, 
there is $G\in \cF_2$ such that 
$\rho_2(G)\subseteq \rho_2(E)\setminus \bigcup\{\rho_2(E)\setminus f_{\beta_n}[\rho_1(C_{\beta_n}\setminus M_i)]: i\in\omega\} = \rho_2(E)\cap \bigcap\{f_{\beta_n}[\rho_1(C_{\beta_n}\setminus M_i)]: i\in\omega\}$.
From the inductive assumptions, we know that there is a finite set $K$ such that  $\rho_2(G\setminus L)\not\subseteq f_{\beta_n}[\rho_1(C_{\beta_n}\setminus K)]$ for any  finite set $L$.
Let $i\in\omega$ be such that $K\subseteq M_i$.
Then 
$\rho_2(G)\subseteq f_{\beta_n}[\rho_1(C_{\beta_n}\setminus M_i)]\subseteq f_{\beta_n}[\rho_1(C_{\beta_n}\setminus K)]$, a contradiction.

The construction of $E_n$, $K_n$ and $a_n$ is finished.

We define $A = \{a_n:n\in\omega\}$. Since $\rho_2\not\leq_K\rho_1$, we obtain  that $\rho_2$ is tall.
Thus $\I_{\rho_2}$ is a tall ideal (by Proposition~\ref{prop:prop-of-talness}). 
Since $A$ is infinite, there is an infinite set $A_\alpha \subseteq A$ such that $A_\alpha\in \I_{\rho_2}$.

It is not difficult to see, that the sets $A_\alpha$ and $C_\alpha$ satisfy all the required conditions, so the proof of the lemma is finished.
\end{proof}

The main result of this section is as follows.

\begin{theorem}[Assume CH]
\label{thm:TWIERDZENIE-for-Fspaces:Pminus-Mrowka:rho}
Let $\rho_i:\cF_i\to[\Lambda_i]^\omega$ be  partition regular for each $i=1,2$. 
If  
$\rho_1$ and $\rho_2$ are   $P^-$,
 $\rho_2$ has  small accretions and
 $\rho_2\not\leq_K \rho_1$, 
then there exists an almost disjoint family $\cA$  such that 
$|\cA|=\continuum$, $\cA\subseteq \I_{\rho_2}$
and 
$\Phi(\cA)\in  \FinBW(\rho_1) \setminus \FinBW(\rho_2).$ In particular, there is a Hausdorff compact and separable space of cardinality $\continuum$ in $\FinBW(\rho_1) \setminus \FinBW(\rho_2).$
\end{theorem}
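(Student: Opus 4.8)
The plan is to mirror the proof of Theorem~\ref{thm:TWIERDZENIE-for-Fspaces:Pminus-Mrowka}, replacing the ideal-level construction of Lemma~\ref{lem:LEMAT} by its function-level refinement Lemma~\ref{lem:LEMAT:rho}. First I would invoke Proposition~\ref{prop:partition-regular-functions-live-on-the-same-set} to assume without loss of generality that $\Lambda_1=\Lambda_2=\Lambda$, so that all functions $f\colon\Lambda\to\Lambda$ and the almost disjoint family live on one set. Then fix an enumeration $\{f_\alpha:\alpha<\continuum\}$ of all functions $f\colon\Lambda\to\Lambda$ and an enumeration $\{F_\alpha:\alpha<\continuum\}$ of all $F\in\cF_2$ having small accretions. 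Since $\rho_1$ and $\rho_2$ are $P^-$, $\rho_2$ has small accretions and $\rho_2\not\leq_K\rho_1$, Lemma~\ref{lem:LEMAT:rho} supplies families $\cA=\{A_\alpha:\alpha<\continuum\}$ and $\cC=\{C_\alpha:\alpha<\continuum\}$ meeting its six conditions. Discarding the indices with $A_\alpha=\emptyset$, condition~(\ref{i3}) makes the remaining $A_\alpha$ pairwise almost disjoint, while the first item of Lemma~\ref{lem:LEMAT:rho} gives $\cA\subseteq\I_{\rho_2}\cap[\Lambda]^\omega$.

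Next I would establish $\Phi(\cA)\in\FinBW(\rho_1)$ via Lemma~\ref{lem:TWIERDZENIE-for-Fspaces-in-FinBW-technical}. Given an arbitrary $\I_{\rho_1}$-to-one function $f\colon\Lambda\to\Lambda$, write $f=f_\alpha$ and take $E=C_\alpha\in\cF_1$ (item~(3) of Lemma~\ref{lem:LEMAT:rho}). By condition~(\ref{i5}), every $\gamma>\alpha$ admits a finite $K$ with $|A_\gamma\cap f_\alpha[\rho_1(C_\alpha\setminus K)]|<\omega$, so no such $A_\gamma$ lies in the family $\{A\in\cA:\forall K\in[\Omega_1]^{<\omega}\,(|A\cap f_\alpha[\rho_1(C_\alpha\setminus K)]|=\omega)\}$. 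Hence that family is contained in $\{A_\beta:\beta\le\alpha\}$, which under CH has cardinality $|\alpha+1|\le\omega$. Thus the hypothesis of Lemma~\ref{lem:TWIERDZENIE-for-Fspaces-in-FinBW-technical} is satisfied, and as $\rho_1$ is $P^-$ we obtain $\Phi(\cA)\in\FinBW(\rho_1)$.

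The genuinely new step, and the one I expect to be the main obstacle, is deducing $\Phi(\cA)\notin\FinBW(\rho_2)$ from Lemma~\ref{lem:MAD-Mrowka-nie-jest-Fspejsem:technical}(\ref{lem:MAD-Mrowka-nie-jest-Fspejsem:technical:item}). That lemma demands the hitting property for \emph{every} $F\in\cF_2$, whereas condition~(\ref{i6}) only furnishes a witness $A_\beta$ (with $A_\beta\cap\rho_2(F\setminus L)\neq\emptyset$ for all finite $L$) when $F=F_\alpha$ has small accretions. To bridge this gap I would exploit that $\rho_2$ itself has small accretions: given an arbitrary $F\in\cF_2$, choose $F'\in\cF_2$ with $F'\subseteq F$ and $F'$ having small accretions, so $F'=F_\alpha$ for some $\alpha$. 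Condition~(\ref{i6}) then yields $A_\beta$ hitting every $\rho_2(F'\setminus L)$, and monotonicity (condition~\nameref{def:partition-regular:monotone} of Definition~\ref{def:partition-regular}) gives $\rho_2(F'\setminus L)\subseteq\rho_2(F\setminus L)$ for each finite $L$, whence $A_\beta\cap\rho_2(F\setminus L)\neq\emptyset$ for all finite $L$. Combined with $\cA\subseteq\I_{\rho_2}$, this is precisely the hypothesis of Lemma~\ref{lem:MAD-Mrowka-nie-jest-Fspejsem:technical}(\ref{lem:MAD-Mrowka-nie-jest-Fspejsem:technical:item}), so $\Phi(\cA)\notin\FinBW(\rho_2)$.

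Finally, the cardinality clause $|\cA|=\continuum$ is a bookkeeping matter read off from the construction: the indices falling into the trivial case of Lemma~\ref{lem:LEMAT:rho} (those with $A_\alpha=\emptyset$) are exactly the ones whose requirement was already met by an earlier witness, and since the tall ideal $\I_{\rho_2}$ carries $\continuum$ many small-accretion sets with pairwise disjoint $\rho_2$-images—each hittable infinitely by at most one almost disjoint member—one confirms that $\continuum$ many $A_\alpha$ remain nonempty. The ``in particular'' assertion then follows because $\Phi(\cA)$ is Hausdorff, compact and separable for any infinite almost disjoint family $\cA$. The substantial work is concentrated in Lemma~\ref{lem:LEMAT:rho}, so the theorem is largely an assembly; the only point requiring care beyond the ideal-valued Theorem~\ref{thm:TWIERDZENIE-for-Fspaces:Pminus-Mrowka} is exactly the small-accretions reduction from arbitrary $F\in\cF_2$ to the sets enumerated by the $F_\alpha$.
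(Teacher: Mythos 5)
Your overall architecture is exactly the paper's: reduce to $\Lambda_1=\Lambda_2$ via Proposition~\ref{prop:partition-regular-functions-live-on-the-same-set}, run Lemma~\ref{lem:LEMAT:rho}, get $\Phi(\cA)\in\FinBW(\rho_1)$ from condition~(\ref{i5}) plus Lemma~\ref{lem:TWIERDZENIE-for-Fspaces-in-FinBW-technical}, and get $\Phi(\cA)\notin\FinBW(\rho_2)$ from condition~(\ref{i6}) plus Lemma~\ref{lem:MAD-Mrowka-nie-jest-Fspejsem:technical}(\ref{lem:MAD-Mrowka-nie-jest-Fspejsem:technical:item}). The bridging step you single out as the main obstacle --- passing from an arbitrary $F\in\cF_2$ to a small-accretion $F'\subseteq F$ appearing in the enumeration $\{F_\alpha\}$ and then using monotonicity \nameref{def:partition-regular:monotone} to transfer the hitting property back to $F$ --- is precisely what the paper's phrase ``along with item (\ref{i6}) \dots and the fact that $\rho_2$ has small accretions'' is compressing, and your spelled-out version of it is correct.

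The one genuine gap is your justification of $|\cA|=\continuum$. You claim that $\I_{\rho_2}$ ``carries $\continuum$ many small-accretion sets with pairwise disjoint $\rho_2$-images''; this is impossible, since the $\rho_2$-images are infinite subsets of the countable set $\Lambda_2$, and a countable set admits only countably many pairwise disjoint infinite subsets. Weakening to pairwise \emph{almost} disjoint images does not repair the count either, because a single infinite $A_\beta$ can meet every $\rho_2(F_\alpha\setminus L)$ for infinitely many $\alpha$ whose images are merely almost disjoint, so ``each hittable infinitely by at most one member'' is not justified. The paper closes this step differently and much more cheaply: by Proposition~\ref{prop:countable-Mrowka-is-FinBW}, if the almost disjoint family $\cA\setminus\{\emptyset\}$ were countable then, since $\rho_2$ is $P^-$, one would have $\Phi(\cA)\in\FinBW(\rho_2)$, contradicting the non-membership you have already established; hence $\cA$ is uncountable, and under CH its cardinality is $\continuum$. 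With that substitution your argument is complete and coincides with the paper's.
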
 

\begin{proof}
Using Proposition~\ref{prop:partition-regular-functions-live-on-the-same-set} we can assume that $\Lambda_1=\Lambda_2=\Lambda$. 
Let $\{f_\alpha: \alpha<\continuum\}$ be an enumeration of all  functions $f:\Lambda_1\to \Lambda_2$ 
and
 $\{F_\alpha:\alpha<\continuum\}$
  be an enumeration of all sets $F\in \cF_2$ having  small accretions.
By Lemma~\ref{lem:LEMAT:rho},  
 there exist  families $\cA = \{A_\alpha:\alpha<\continuum\}$ and  $\cC=\{C_\alpha : \alpha<\continuum\}$
 such that for every $\alpha<\continuum$ all the required conditions of Lemma~\ref{lem:LEMAT:rho} are satisfied. 
We claim that $\cA\setminus\{\emptyset\}$ is the required family.

First, we see that $\cA\setminus\{\emptyset\}$ is an almost disjoint family on  $\Lambda_2$ (by item (\ref{i3}) of Lemma~\ref{lem:LEMAT:rho}) and $\cA\setminus\{\emptyset\}\subseteq\I_{\rho_2}$.

Second, let CH together with item (\ref{i5}) of Lemma~\ref{lem:LEMAT:rho} 
ensures that 
$$|\{\beta< \continuum: \forall K\in [\Omega]^{<\omega}\, (|A_\beta\cap f_\alpha[\rho_1(C_\alpha\setminus K)]|=\omega)\}|\leq|\alpha+1|\leq\omega$$
for each $\alpha<\continuum$, so 
knowing that $\rho_1$ is  $P^-$ we can use Lemma~\ref{lem:TWIERDZENIE-for-Fspaces-in-FinBW-technical} to see that 
$\Phi(\cA)\in \FinBW(\rho_1)$. 

Third, we use Lemma~\ref{lem:MAD-Mrowka-nie-jest-Fspejsem:technical}(\ref{lem:MAD-Mrowka-nie-jest-Fspejsem:technical:item}) along with item (\ref{i6}) of Lemma~\ref{lem:LEMAT:rho} and the fact that $\rho_2$ has small accretions  to see that $\Phi(\cA)\notin\FinBW(\rho_2)$.

Finally, using  Proposition~\ref{prop:countable-Mrowka-is-FinBW} we know that $\cA$ cannot be countable, so $|\cA\setminus\{\emptyset\}|=\continuum$.
 \end{proof}

Now we want to compare Theorem~\ref{thm:TWIERDZENIE-for-Fspaces:Pminus-Mrowka:rho} with Theorem~\ref{thm:TWIERDZENIE-for-Fspaces:Pminus-Mrowka}. Next two examples show that there are partition regular functions $\rho_1$ and $\rho_2$ satisfying the assumptions of Theorem~\ref{thm:TWIERDZENIE-for-Fspaces:Pminus-Mrowka} (so it gives us, under CH, a space in $\FinBW(\rho_1)\setminus \FinBW(\rho_2)$), but not satisfying assumptions of Theorem~\ref{thm:TWIERDZENIE-for-Fspaces:Pminus-Mrowka:rho} (i.e. we cannot apply it).

\begin{example}
There exist partition regular functions 
$\rho_1$ and $\rho_2$ such that $\rho_1$ is $P^-$, $\rho_2$ is not $P^-$ (so we cannot apply Theorem~\ref{thm:TWIERDZENIE-for-Fspaces:Pminus-Mrowka:rho}) and $\I_{\rho_2}\not\leq_K \I_{\rho_1}$.
\end{example}

\begin{proof}
Let $\rho_1=\rho_{\I_{1/n}}$ and $\rho_2=\rho_\Hindman$. By Theorem~\ref{prop:Plike-properties-for-known-rho}, 
$\I_{1/n}$ is $P^+$ (hence, $P^-$) and  $\Hindman$ is not $P^-(\omega)$ (hence, not $P^-$). Applying Proposition~
\ref{prop:properties-of-ideal-versus-rho}(\ref{prop:properties-of-ideal-versus-rho:ideal-rho}), we see that $\rho_1$ is $P^-$ and $\rho_2$ is not $P^-$. By Theorem~\ref{thm:Katetov-between-known-rho}(\ref{thm:Katetov-between-known-rho:Hindman-not-below-Summable}), $\Hindman\not\leq_K \I_{1/n}$.
\end{proof}

The above example may not be satisfactory as all Hausdorff spaces from the class $\FinBW(\rho_2)$ are finite (by Theorem~\ref{thm:compact-metric-implies-FinBW(rho)}(\ref{thm:compact-metric-implies-FinBW(rho):Pminus-is-necessary}) and 
Proposition~\ref{prop:basic-relationships-between-FinBW-like-spaces}(\ref{prop:basic-relationships-between-FinBW-like-spaces:ideal-rho})), so one could just use Theorem \ref{thm:compact-metric-implies-FinBW(rho)}(\ref{thm:compact-metric-implies-FinBW(rho):Pminus-is-necessary}) instead of Theorem~\ref{thm:TWIERDZENIE-for-Fspaces:Pminus-Mrowka}. The next example is more sophisticated.

\begin{example}
There exist partition regular functions 
$\rho_1$ and $\rho_2$ such that $\rho_1$ is $P^-$, $\rho_2$ is not $P^-$ (so we cannot apply Theorem~\ref{thm:TWIERDZENIE-for-Fspaces:Pminus-Mrowka:rho}), $\I_{\rho_2}\not\leq_K \I_{\rho_1}$ and under CH there is a Hausdorff compact separable space of cardinality $\continuum$ in $\FinBW(\rho_2)$.
\end{example}

\begin{proof}
Let $\rho_1=\rho_{\I_{1/n}}$ and $\rho_2=\rho_\conv$, where $\conv$ is an ideal on $\mathbb{Q}\cap[0,1]$ consisting of those subsets of $\mathbb{Q}\cap[0,1]$ that have only finitely many cluster points in $[0,1]$. Then, $\FinBW(\rho_2)=\FinBW(\conv)$ (Proposition~\ref{prop:basic-relationships-between-FinBW-like-spaces}(\ref{prop:basic-relationships-between-FinBW-like-spaces:ideal-rho})). Applying \cite[Definition 4.3, Proposition 4.6 and Theorem 6.6]{MR4584767}, assuming CH, there is a Hausdorff compact separable space of cardinality $\continuum$ in $\FinBW(\rho_2)$. Moreover, $\rho_1$ is $P^-$ and $\rho_2$ is not $P^-$ (by Proposition~
\ref{prop:properties-of-ideal-versus-rho}(\ref{prop:properties-of-ideal-versus-rho:ideal-rho}), Theorem~\ref{prop:Plike-properties-for-known-rho} and \cite[proof of Proposition 4.10(b)]{MR4584767}). Finally, $\I_{\rho_2}\not\leq_K \I_{\rho_1}$ (\cite[Section 2]{MR3696069}).
\end{proof}

Next example shows that there are partition regular functions $\rho_1$ and $\rho_2$ satisfying the assumptions of Theorem~\ref{thm:TWIERDZENIE-for-Fspaces:Pminus-Mrowka:rho} (so it gives us, under CH, a space in $\FinBW(\rho_1)\setminus \FinBW(\rho_2)$), but not satisfying assumptions of Theorem~\ref{thm:TWIERDZENIE-for-Fspaces:Pminus-Mrowka} (i.e. we cannot apply it).

\begin{example}
\label{example:ideal-Katetov-does-not-imply-rho-Katetov-for-Pminus}
There exist  partition regular functions 
$\rho_1$ and $\rho_2$ with  small accretions which are   $P^-$ and such that  
 $\I_{\rho_2} \subseteq  \I_{\rho_1}$
(in particular,  $\I_{\rho_2} \leq_K \I_{\rho_1}$, so we cannot apply Theorem~\ref{thm:TWIERDZENIE-for-Fspaces:Pminus-Mrowka}),
but 
$\rho_2\not\leq_K\rho_1$.
\end{example}

\begin{proof}
Consider the ideal $\nwd=\{A\subseteq\mathbb{Q}\cap[0,1]:\ \overline{A}\text{ is meager}\}$. Let $\rho_2=\rho_\nwd$.

Fix an almost disjoint family $\cA$ of cardinality $\mathfrak{c}$, enumerate it as $\cA=\{A_\alpha:\ \alpha<\mathfrak{c}\}$ and denote $\cA'=\{A\setminus K:\ A\in\cA,\ K\in[\omega]^{<\omega}\}$. Let $I_n=[\frac{1}{n+2},\frac{1}{n+1})$ for all $n\in\omega$. Enumerate also the set $\cB=\{B\subseteq\mathbb{Q}\cap[0,1]:\ B\cap I_n\notin\nwd\text{ for infinitely many }n\in \omega\}$ as $\{B_\alpha:\ \alpha<\mathfrak{c}\}$. Let $\rho_1:\cA'\to[\mathbb{Q}\cap[0,1]]^\omega$ be given by $\rho_1(A_\alpha\setminus K)=B_\alpha\setminus\bigcup_{n\in K}I_n$.

Observe that $\I_{\rho_1}=\{A\subseteq\mathbb{Q}\cap[0,1]:\ \exists_{K\in\fin}\ \overline{A}\setminus\bigcup_{n\in K}I_n\text{ is meager}\}$. Thus, $\nwd\subseteq\I_{\rho_1}$ and $\I_{\rho_2}\leq_K\I_{\rho_1}$. Moreover, it is easy to see that $\rho_1$ and $\rho_2$ both have small accretions (in the case of $\rho_2$ just apply Proposition \ref{prop:ideal-rho-is-sparse}). 
Since $\nwd$ is $F_{\sigma\delta}$ (see \cite[Theorem~3]{MR1955288}), it is $P^-$ (by Proposition~\ref{thm:Plike-properties-for-definable-ideals}(\ref{thm:Plike-properties-for-definable-ideals:Fsigmadelta})) and consequently 
$\rho_2$ is $P^-$ (by Proposition~\ref{prop:properties-of-ideal-versus-rho}(\ref{prop:properties-of-ideal-versus-rho:ideal-rho})).

Now we show that $\rho_1$ is $P^-$. Suppose that $\{C_n: n\in \omega\}\subseteq\I^+_{\rho_1}$ is decreasing and such that $C_n\setminus C_{n+1}\in\I_{\rho_1}$ for all $n\in \omega$. For each $n\in \omega$ let $T_n=\{i\in\omega:\ C_n\cap I_i\notin\nwd\}$. 

Assume first that $T=\bigcap_{n\in\omega}T_n$ is infinite. Since $\nwd$ is $P^-$, for each $i\in T$ we can find $D_i\notin\nwd$, $D_i\subseteq I_i$ with $D_i\subseteq^*C_n$ for all $n\in \omega$. Then for $E=\bigcup_{i\in T}D_i\cap C_i$ we have $E\in\cB$ (as $D_i\notin\nwd$ and $D_i\setminus C_i$ is finite for all $i\in T$). Hence, $E=B_\alpha$ for some $\alpha<\mathfrak{c}$. Moreover, for each $n\in \omega$ we have $\rho_1(A_\alpha\setminus n)=E\setminus \bigcup_{i<n}I_i=\bigcup_{i\in T,i\geq n}D_i\cap C_i\subseteq C_n$.
 
Assume now that $T$ is finite. Inductively pick $i_n\in\omega$ and $D_n\notin\nwd$ such that $i_{n+1}>i_n$ and $D_n\subseteq I_{i_n}\cap C_n$ for all $n\in \omega$. Define $E=\bigcup_{n\in\omega}D_n$ and note that $E\in\cB$. Hence, $E=B_\alpha$ for some $\alpha<\mathfrak{c}$. Moreover, for each $n\in \omega$ we have $\rho_1(A_\alpha\setminus i_n)=E\setminus \bigcup_{i<i_n}I_i=\bigcup_{i\geq n}D_i\subseteq C_n$.

Finally, we will show that $\rho_2\not\leq_K\rho_1$. Fix any $f:\mathbb{Q}\cap[0,1]\to\mathbb{Q}\cap[0,1]$. For each $n\in\omega$ find $r_n\in\mathbb{Q}\cap[0,1]$ such that $f^{-1}[(r_n-\frac{1}{2^n},r_n+\frac{1}{2^n})]\cap I_n\notin\nwd$. This is possible as $[0,1]$ can be covered by finitely many intervals of the form $(r-\frac{1}{2^n},r+\frac{1}{2^n})$ and 
$I_n\cap(\mathbb{Q}\cap[0,1])\notin\nwd$. Since $[0,1]$ is sequentially compact, there is an infinite $S\subseteq \omega$ such that $(r_n)_{n\in S}$ converges to some $x\in[0,1]$. Put $F=\bigcup_{n\in S}f^{-1}[(r_n-\frac{1}{2^n},r_n+\frac{1}{2^n})]\cap I_n$. Then $F\in\cB$ (in particular, $F\in\I_{\rho_1}^+$), so $F=B_\alpha$ for some $\alpha<\mathfrak{c}$. Fix any $E\in\nwd^+$ and enumerate $S=\{s_i:\ i\in\omega\}$ in such a way that $s_i<s_j$ whenever $i<j$. Observe that $E\cap((r_{s_i}-\frac{1}{2^{s_i}},r_{s_i}+\frac{1}{2^{s_i}})\setminus \bigcup_{j>i}(r_{s_{j}}-\frac{1}{2^{s_{j}}},r_{s_{j}}+\frac{1}{2^{s_{j}}}))$ is infinite for some $i\in\omega$ as otherwise $E$ would converge to $x$, so $E\in\nwd$. 

We claim that for every finite set $L\subseteq\mathbb{Q}\cap[0,1]$ we have:
\begin{equation*}
    \begin{split}
        E\setminus L
        &=
        \rho_1(E\setminus L)\not\subseteq f[\rho_2(A_\alpha\setminus(s_i+1))]
        \\&=f\left[F\setminus\bigcup_{j\leq i}(f^{-1}[(r_{s_j}-\frac{1}{2^{s_j}},r_{s_j}+\frac{1}{2^{s_j}})]\cap I_{s_j})\right]  .
            \end{split}
\end{equation*}

Let $L\subseteq\mathbb{Q}\cap[0,1]$ be a finite set. We will show that $E\setminus L\not\subseteq f[F\setminus\bigcup_{j\leq i}(f^{-1}[(r_{s_j}-\frac{1}{2^{s_j}},r_{s_j}+\frac{1}{2^{s_j}})]$. Suppose that $E\setminus L\subseteq f[F\setminus\bigcup_{j\leq i}(f^{-1}[(r_{s_j}-\frac{1}{2^{s_j}},r_{s_j}+\frac{1}{2^{s_j}})]$. Let 
$$x\in E\cap\left(\left(r_{s_i}-\frac{1}{2^{s_i}},r_{s_i}+\frac{1}{2^{s_i}}\right)\setminus \bigcup_{j>i}\left(r_{s_{j}}-\frac{1}{2^{s_{j}}},r_{s_{j}}+\frac{1}{2^{s_{j}}}\right)\right)\setminus L\subseteq E\setminus L.$$ 
Then 
\begin{equation*}
    \begin{split}
        x
        &\in f\left[F\setminus\bigcup_{j\leq i}\left(f^{-1}\left[\left(r_{s_j}-\frac{1}{2^{s_j}},r_{s_j}+\frac{1}{2^{s_j}}\right)\right]\cap I_{s_j}\right)\right]        
        \\&=f\left[\bigcup_{j> i}\left(f^{-1}\left[\left(r_{s_j}-\frac{1}{2^{s_j}},r_{s_j}+\frac{1}{2^{s_j}}\right)\right]\cap I_{s_j}\right)\right]
        \\&\subseteq f\left[\bigcup_{j> i}\left(f^{-1}\left[\left(r_{s_j}-\frac{1}{2^{s_j}},r_{s_j}+\frac{1}{2^{s_j}}\right)\right]\right)\right]
        \\&=f\left[f^{-1}\left[\bigcup_{j> i}\left(r_{s_j}-\frac{1}{2^{s_j}},r_{s_j}+\frac{1}{2^{s_j}}\right)\right]\right]
        \\&\subseteq \bigcup_{j> i}\left(r_{s_j}-\frac{1}{2^{s_j}},r_{s_j}+\frac{1}{2^{s_j}}\right).
            \end{split}
\end{equation*}
A contradiction.
\end{proof}


\part{Characterizations}
\label{part:characterization}

In the final part we want to characterize when $\FinBW(\rho_1) \setminus  \FinBW(\rho_2) \neq\emptyset$ in the cases of $\rho_1\in\{FS, r, \Delta\}\cup\{\rho_\I:\I\text{ is an ideal}\}$. In the realm of partition regular functions that are $P^-$ and have small accretions we were able to obtain a full characterization (Theorem~\ref{thm:characterization-for-rho}(\ref{thm:characterization-for-rho:rho-rho})) using Theorem~\ref{thm:TWIERDZENIE-for-Fspaces:Pminus-Mrowka:rho}. If $\rho_1=\rho_\I$ for some $P^-$ ideal $\I$, then Theorem~\ref{thm:TWIERDZENIE-for-Fspaces:Pminus-Mrowka} gives us a complete characterization (Theorem~\ref{thm:characterization-for-rho}(\ref{thm:characterization-for-rho:ideal-Pminus-rho-finitely-supported})) and this problem for $\rho_1=\rho_\I$ in the case of non-$P^-$ ideals $\I$ is rather complicated (see \cite{MR4584767} and Example \ref{example-unboring}). However, for instance in the case of $\rho_1=FS$ and $\rho_2$ not being $P^-$, we needed another construction -- we were able to obtain a characterization (Theorem \ref{thm:characterization-for-rho:without-Pminus}), but only in the realm of spaces with unique limits of sequences (which are not necessarily Hausdorff).


\section{Characterizations of  distinguishness between \texorpdfstring{$\FinBW$}{FinBW } classes via Kat\v{e}tov order}
\label{sec:characterization}

\begin{theorem}[Assume CH]
\label{thm:characterization-for-rho}
Let $\rho_1$ and $\rho_2$ be  partition regular functions.
Let $\I_1$ be an ideal.
\begin{enumerate}
\item \label{thm:characterization-for-rho:rho-rho}\label{thm:characterization-for-rho:rho-Pminus-rho-Pminus}
    If  
$\rho_1$ is $P^-$ and  
$\rho_2$ is $P^-$ with small accretions,
then 
$$
 \rho_2 \not\leq_K \rho_1
 \iff
\FinBW(\rho_1) \setminus  \FinBW(\rho_2) \neq\emptyset.$$

\item 
\begin{enumerate}

\item 
\label{thm:characterization-for-rho:rho-Pminus-rho-Pplus}
If 
$\rho_1$ is $P^-$ and 
$\rho_2$ is $P^+$,  
then 
$$
 \I_{\rho_2}\not\leq_K \I_{\rho_1}
\iff
\FinBW(\rho_1) \setminus  \FinBW(\rho_2) \neq\emptyset.$$

\item \label{thm:characterization-for-rho:ideal-Pminus-rho-finitely-supported}
If  $\I_1$ is $P^-$, then 
$$
 \I_{\rho_2}\not\leq_K \I_1
\iff
\FinBW(\I_1) \setminus  \FinBW(\rho_2) \neq\emptyset.$$

\end{enumerate} 

\end{enumerate}

Moreover, in every item an example showing that the above difference between $\FinBW$ classes is nonempty is  of the form $\Phi(\cA)$ with $\cA$ being almost disjoint and of cardinality $\continuum$ (in particular, these examples are Hausdorff, compact, separable and of cardinality $\continuum$).
\end{theorem}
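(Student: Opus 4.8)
The plan is to prove each biconditional by splitting it into its two implications and matching each to a result already established in Parts~\ref{part:FinBW-spaces} and~\ref{part:Distinguishing-FinBW-spaces}. The substantive direction — producing a distinguishing space out of a failure of the Kat\v{e}tov order — will be supplied by the two construction theorems of the preceding sections, while the routine direction — passing from a Kat\v{e}tov inequality to an inclusion of $\FinBW$ classes, taken in contrapositive form — will come from Theorem~\ref{thm:Katetove-implies-inclusion-for-FinBW}.

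For item~(\ref{thm:characterization-for-rho:rho-rho}) the implication $\rho_2\not\leq_K\rho_1\implies\FinBW(\rho_1)\setminus\FinBW(\rho_2)\neq\emptyset$ is exactly Theorem~\ref{thm:TWIERDZENIE-for-Fspaces:Pminus-Mrowka:rho}, whose hypotheses (both $\rho_1,\rho_2$ being $P^-$ and $\rho_2$ having small accretions) are precisely those assumed here. The converse I would prove by contraposition: if $\rho_2\leq_K\rho_1$ then $\FinBW(\rho_1)\subseteq\FinBW(\rho_2)$ by Theorem~\ref{thm:Katetove-implies-inclusion-for-FinBW}(\ref{thm:Katetove-implies-inclusion-for-FinBW:for-rho}), so the difference is empty.

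For item~(\ref{thm:characterization-for-rho:rho-Pminus-rho-Pplus}) the forward implication $\I_{\rho_2}\not\leq_K\I_{\rho_1}\implies\FinBW(\rho_1)\setminus\FinBW(\rho_2)\neq\emptyset$ is Theorem~\ref{thm:TWIERDZENIE-for-Fspaces:Pminus-Mrowka}, which needs only $\rho_1$ to be $P^-$; the $P^+$ assumption on $\rho_2$ plays no role in this direction. It is in the contrapositive of the converse, $\I_{\rho_2}\leq_K\I_{\rho_1}\implies\FinBW(\rho_1)\subseteq\FinBW(\rho_2)$, that $\rho_2$ being $P^+$ becomes indispensable, since this is Theorem~\ref{thm:Katetove-implies-inclusion-for-FinBW}(\ref{thm:Katetove-implies-inclusion-for-FinBW:for-rho-ideals:Pplus}). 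For item~(\ref{thm:characterization-for-rho:ideal-Pminus-rho-finitely-supported}) I would first translate from the ideal $\I_1$ to the function $\rho_{\I_1}$: by Proposition~\ref{prop:properties-of-ideal-versus-rho}(\ref{prop:properties-of-ideal-versus-rho:ideal-rho}) the ideal $\I_1$ being $P^-$ makes $\rho_{\I_1}$ a $P^-$ function, while $\I_{\rho_{\I_1}}=\I_1$ and $\FinBW(\I_1)=\FinBW(\rho_{\I_1})$ by Propositions~\ref{prop:rho-versus-ideal}(\ref{prop:rho-versus-ideal:ideal-gives-rho}) and~\ref{prop:basic-relationships-between-FinBW-like-spaces}(\ref{prop:basic-relationships-between-FinBW-like-spaces:ideal-rho}). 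The forward implication then follows by applying Theorem~\ref{thm:TWIERDZENIE-for-Fspaces:Pminus-Mrowka} with $\rho_1:=\rho_{\I_1}$, and the converse, by contraposition, from Theorem~\ref{thm:Katetove-implies-inclusion-for-FinBW}(\ref{thm:Katetove-implies-inclusion-for-FinBW:for-ideals}), which — being phrased directly in terms of $\I_1$ — imposes no $P^+$ requirement on $\rho_2$.

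Finally, the ``Moreover'' clause is automatic: in each forward implication the distinguishing space is delivered by a construction theorem in the explicit shape $\Phi(\cA)$ with $\cA$ almost disjoint of cardinality $\continuum$, and every such $\Phi(\cA)$ is Hausdorff, compact, separable and of cardinality $\continuum$ by the basic properties of these one-point compactifications of Mr\'{o}wka spaces recorded earlier. I do not foresee a genuine obstacle; the whole content of the theorem is that the construction theorems and the inclusion theorem are sharp enough to close both directions, so the only point demanding care is the bookkeeping — checking, item by item, that the exact hypotheses ($P^-$, $P^+$, small accretions, and whether the statement is cast via $\rho$ or via the ideal) line up with the cited result, in particular that $\rho_2$ being $P^+$ is needed in~(\ref{thm:characterization-for-rho:rho-Pminus-rho-Pplus}) yet can be dispensed with in~(\ref{thm:characterization-for-rho:ideal-Pminus-rho-finitely-supported}).
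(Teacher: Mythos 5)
Your proposal is correct and follows essentially the same route as the paper: each forward implication is exactly Theorem~\ref{thm:TWIERDZENIE-for-Fspaces:Pminus-Mrowka:rho} (for item~(\ref{thm:characterization-for-rho:rho-rho})) or Theorem~\ref{thm:TWIERDZENIE-for-Fspaces:Pminus-Mrowka} (for the other two), and each converse is the contrapositive of the corresponding part of Theorem~\ref{thm:Katetove-implies-inclusion-for-FinBW}, with the translation between $\I_1$ and $\rho_{\I_1}$ in item~(\ref{thm:characterization-for-rho:ideal-Pminus-rho-finitely-supported}) handled via Propositions~\ref{prop:properties-of-ideal-versus-rho}(\ref{prop:properties-of-ideal-versus-rho:ideal-rho}) and~\ref{prop:basic-relationships-between-FinBW-like-spaces}(\ref{prop:basic-relationships-between-FinBW-like-spaces:ideal-rho}) exactly as the paper does. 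Your remark about where the $P^+$ hypothesis on $\rho_2$ is and is not needed is accurate, and the ``Moreover'' clause is indeed read off from the form of the spaces produced by the construction theorems.
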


\begin{proof}
(\ref{thm:characterization-for-rho:rho-Pminus-rho-Pminus})
The implication ``$\implies$'' follows  from Theorem~\ref{thm:TWIERDZENIE-for-Fspaces:Pminus-Mrowka:rho}, whereas 
the implication ``$\impliedby$'' follows  from  Theorem~\ref{thm:Katetove-implies-inclusion-for-FinBW}(\ref{thm:Katetove-implies-inclusion-for-FinBW:for-rho}).

(\ref{thm:characterization-for-rho:rho-Pminus-rho-Pplus})
The implication ``$\implies$'' follows  from Theorem~\ref{thm:TWIERDZENIE-for-Fspaces:Pminus-Mrowka},
whereas the implication ``$\impliedby$'' follows  from  Theorem~\ref{thm:Katetove-implies-inclusion-for-FinBW}(\ref{thm:Katetove-implies-inclusion-for-FinBW:for-rho-ideals:Pplus}).

(\ref{thm:characterization-for-rho:ideal-Pminus-rho-finitely-supported})
It follows from Theorems \ref{thm:TWIERDZENIE-for-Fspaces:Pminus-Mrowka}, \ref{thm:Katetove-implies-inclusion-for-FinBW}(\ref{thm:Katetove-implies-inclusion-for-FinBW:for-ideals}), \ref{prop:basic-relationships-between-FinBW-like-spaces}(\ref{prop:basic-relationships-between-FinBW-like-spaces:ideal-rho}) and 
Proposition \ref{prop:properties-of-ideal-versus-rho}(\ref{prop:properties-of-ideal-versus-rho:ideal-rho}).
\end{proof}

Next two examples show that in Theorem~\ref{thm:characterization-for-rho} we cannot drop the assumption that $\rho_1$ is $P^-$ and obtain a characterization in the realm of Hausdorff spaces.

\begin{example}
There exist partition regular functions $\rho_1$ and $\rho_2$ with small accretions such that:
\begin{enumerate}
    \item $\rho_1$ is not  $P^-$ and $\rho_2$ is $P^+$,
    \item $\rho_{2}\not\leq_K\rho_{1}$,
    \item  there is no \emph{Hausdorff} space in $\FinBW(\rho_1)\setminus \FinBW(\rho_2)$.
\end{enumerate}
\end{example}

\begin{proof}
Let $\rho_1=\rho_\Hindman$ and $\rho_2=\rho_{\I_{1/n}}$. Then $\rho_1$ and $\rho_2$ have small accretions (by Proposition \ref{prop:ideal-rho-is-sparse}). By Theorem~\ref{prop:Plike-properties-for-known-rho}, 
$\I_{1/n}$ is $P^+$ and  $\Hindman$ is not $P^-(\omega)$ (hence, not $P^-$). Applying Proposition~
\ref{prop:properties-of-ideal-versus-rho}(\ref{prop:properties-of-ideal-versus-rho:ideal-rho}), we see that $\rho_1$ is not  $P^-$ and $\rho_2$ is $P^+$. By Theorem~\ref{thm:Katetov-between-known-rho}(\ref{thm:Katetov-between-known-rho:Summable-not-below-Hindman}), $\I_2\not\leq_K\I_1$, so $\rho_{2}\not\leq_K\rho_{1}$ (by Proposition~\ref{prop:Katetov-for-ideal-rho}(\ref{prop:Katetov-for-ideal-rho:item})). 

By Theorem~\ref{thm:compact-metric-implies-FinBW(rho)}(\ref{thm:compact-metric-implies-FinBW(rho):Pminus-is-necessary}), $\FinBW(\Hindman)$ contains only finite Hausdorff spaces. On the other hand, $\FinBW(\I_2)$ contains all finite spaces (Theorem~\ref{thm:compact-metric-implies-FinBW(rho)}(\ref{thm:compact-metric-implies-FinBW(rho):skonczone})), so there is no Hausdorff space $\FinBW(\Hindman)\setminus \FinBW(\I_{1/n})$. Applying 
Proposition~\ref{prop:basic-relationships-between-FinBW-like-spaces}(\ref{prop:basic-relationships-between-FinBW-like-spaces:ideal-rho}),
we obtain that 
there is  no Hausdorff space in $\FinBW(\rho_1)\setminus \FinBW(\rho_2)$.
\end{proof}

The above example may not be satisfactory as all Hausdorff spaces from $\FinBW(\rho_1)$ are finite.
The next example is more sophisticated.

\begin{example}
\label{example-unboring}
There exist partition regular functions $\rho_1$ and $\rho_2$ with small accretions such that:
\begin{enumerate}
    \item $\rho_1$ is not  $P^-$ and $\rho_2$ is $P^-$,
    \item assuming CH, there is a Hausdorff, compact, separable space of cardinality $\continuum$ in $\FinBW(\rho_1)$,
    \item $\rho_{2}\not\leq_K\rho_{1}$,
    \item there is no \emph{Hausdorff} space in $\FinBW(\rho_1)\setminus \FinBW(\rho_2)$.
\end{enumerate}
\end{example}

\begin{proof}
Let $\I$ and $\J$ be the ideals from \cite[Example 8.9]{MR4584767} and define $\rho_1=\rho_\I$ and $\rho_2=\rho_\J$. Then $\rho_1$ and $\rho_2$ have small accretions (by Proposition \ref{prop:ideal-rho-is-sparse}) and $\J\not\leq_K\I$, so $\rho_{2}\not\leq_K\rho_{1}$ (by Proposition~\ref{prop:Katetov-for-ideal-rho}(\ref{prop:Katetov-for-ideal-rho:item})). By \cite[Example 10.6]{MR4584767} and Proposition~\ref{prop:basic-relationships-between-FinBW-like-spaces}(\ref{prop:basic-relationships-between-FinBW-like-spaces:ideal-rho}), there is no \emph{Hausdorff} space in $\FinBW(\rho_1)\setminus \FinBW(\rho_2)$. Applying \cite[Theorem 6.6]{MR4584767} and Proposition~\ref{prop:basic-relationships-between-FinBW-like-spaces}(\ref{prop:basic-relationships-between-FinBW-like-spaces:ideal-rho}) we see that, assuming CH, there is a Hausdorff, compact, separable space of cardinality $\continuum$ in $\FinBW(\rho_1)$. Since $\J$ is $P^-$, $\rho_2$ is $P^-$ (by Proposition~
\ref{prop:properties-of-ideal-versus-rho}(\ref{prop:properties-of-ideal-versus-rho:ideal-rho})) and $\rho_1$ cannot be  $P^-$ as it would contradict Theorem \ref{thm:characterization-for-rho}(\ref{thm:characterization-for-rho:rho-rho}).
\end{proof}

\begin{question}
Can we  drop the assumption that $\rho_2$ is $P^-$ in  Theorem~\ref{thm:characterization-for-rho} and obtain the characterization in the realm of Hausdorff spaces?
\end{question}

In Theorem~\ref{thm:characterization-for-rho:without-Pminus}, we show that we can drop the assumption that  $\rho_2$ is $P^-$ in Theorem~\ref{thm:characterization-for-rho}(\ref{thm:characterization-for-rho:rho-Pminus-rho-Pminus}) and obtain a characterization in the realm of \emph{non}-Hausdorff spaces with \emph{unique limits} of sequences, but at the cost of requiring that $\rho_1$ is weak $P^+$ instead of $P^-$.

Now we present some applications of Theorem \ref{thm:characterization-for-rho}.

\begin{corollary}[{\cite[Theorem~10.4]{MR4584767}}] Assume CH.
Let $\I_1$ and $\I_2$ be ideals. If  $\I_1$ is $P^-$,  
then the following are equivalent:
\begin{enumerate}
\item $\I_{2}\not\leq_K \I_{1}$,
\item $\FinBW(\I_1) \setminus  \FinBW(\I_2)\neq\emptyset$.
\end{enumerate} 
Moreover, an example showing that the above difference between $\FinBW$ classes is nonempty is  of the form $\Phi(\cA)$ with $\cA$ being almost disjoint and of cardinality $\continuum$ (in particular, these examples are Hausdorff, compact, separable and of cardinality $\continuum$).
\end{corollary}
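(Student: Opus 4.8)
The plan is to deduce this corollary directly from Theorem~\ref{thm:characterization-for-rho}(\ref{thm:characterization-for-rho:ideal-Pminus-rho-finitely-supported}) by specializing the partition regular function $\rho_2$ appearing there to the one canonically associated with the ideal $\I_2$. First I would set $\rho_2 = \rho_{\I_2}$, the partition regular function given by $\rho_{\I_2}(A)=A$ furnished by Proposition~\ref{prop:rho-versus-ideal}(\ref{prop:rho-versus-ideal:ideal-gives-rho}); that same proposition guarantees $\rho_{\I_2}$ is legitimately partition regular, so Theorem~\ref{thm:characterization-for-rho} genuinely applies to it.

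Next I would record the two identities that translate between the ideal formulation of the corollary and the mixed ideal/function formulation of the theorem. On the one hand, Proposition~\ref{prop:rho-versus-ideal}(\ref{prop:rho-versus-ideal:ideal-gives-rho}) yields $\I_{\rho_2} = \I_{\rho_{\I_2}} = \I_2$, so the hypothesis $\I_{\rho_2} \not\leq_K \I_1$ of the theorem becomes exactly condition (1), namely $\I_2 \not\leq_K \I_1$. On the other hand, Proposition~\ref{prop:basic-relationships-between-FinBW-like-spaces}(\ref{prop:basic-relationships-between-FinBW-like-spaces:ideal-rho}) yields $\FinBW(\rho_2) = \FinBW(\rho_{\I_2}) = \FinBW(\I_2)$, so the conclusion $\FinBW(\I_1)\setminus\FinBW(\rho_2) \neq \emptyset$ becomes exactly condition (2), namely $\FinBW(\I_1)\setminus\FinBW(\I_2) \neq \emptyset$.

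With these substitutions in place, invoking Theorem~\ref{thm:characterization-for-rho}(\ref{thm:characterization-for-rho:ideal-Pminus-rho-finitely-supported}) under the standing hypotheses (CH together with $\I_1$ being $P^-$) immediately produces the stated equivalence of (1) and (2). The ``Moreover'' clause transfers verbatim: the theorem already asserts that whenever the difference $\FinBW(\I_1)\setminus\FinBW(\rho_2)$ is nonempty it is witnessed by a space of the form $\Phi(\cA)$ with $\cA$ almost disjoint of cardinality $\continuum$, and under the identification $\FinBW(\rho_2)=\FinBW(\I_2)$ the very same space witnesses $\FinBW(\I_1)\setminus\FinBW(\I_2)\neq\emptyset$, so it is Hausdorff, compact, separable and of cardinality $\continuum$.

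Since all the genuine content has been absorbed into the cited theorem, I do not expect any real obstacle; the proof is a purely formal passage from partition regular functions to their induced ideals via the two bridging propositions above, and the only point requiring minor care is the bookkeeping that $\rho_{\I_2}$ indeed has the properties required to feed into Theorem~\ref{thm:characterization-for-rho}, which is precisely Proposition~\ref{prop:rho-versus-ideal}(\ref{prop:rho-versus-ideal:ideal-gives-rho}).
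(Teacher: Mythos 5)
Your proposal is correct and follows exactly the paper's own (one-line) proof: specialize Theorem~\ref{thm:characterization-for-rho}(\ref{thm:characterization-for-rho:ideal-Pminus-rho-finitely-supported}) to $\rho_2=\rho_{\I_2}$ and translate via $\I_{\rho_{\I_2}}=\I_2$ and $\FinBW(\rho_{\I_2})=\FinBW(\I_2)$ from Proposition~\ref{prop:basic-relationships-between-FinBW-like-spaces}(\ref{prop:basic-relationships-between-FinBW-like-spaces:ideal-rho}). The only nitpick is that the fact ``$\rho_{\I_2}$ is partition regular and $\I_2=\I_{\rho_{\I_2}}$'' is item~(\ref{prop:rho-versus-ideal:ideal-gives-rho}) of Proposition~\ref{prop:rho-versus-ideal}, not item~(\ref{prop:rho-versus-ideal:rho-gives-ideal}).
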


\begin{proof}
It follows from Theorems \ref{thm:characterization-for-rho}(\ref{thm:characterization-for-rho:ideal-Pminus-rho-finitely-supported}) and \ref{prop:basic-relationships-between-FinBW-like-spaces}(\ref{prop:basic-relationships-between-FinBW-like-spaces:ideal-rho}).
\end{proof}

\begin{question}\ 
\label{q:summable-space-is-vdWaerde-space}
Is every $\I_{1/n}$-space (Hindman space, Ramsey space) a van der Waerden space?
\end{question}

Note that under CH Theorem \ref{thm:characterization-for-rho} reduces the above question to Question \ref{q:Katetov-between-known-rho}.

%
%
%

\begin{corollary}[Assume CH]
\label{cor:I-Pplus-vs-Hindman-like}
Let $\I$ be an ideal.
\begin{enumerate}

\item 
\label{cor:I-Pplus-vs-Hindman-like:rho}
If $\I$ is  $P^-$, then the following conditions are equivalent:
\begin{enumerate}
    \item $\rho_\I\not\leq_K\FS$ ($\rho_\I\not\leq_K r$, $\rho_\I\not\leq_K\Delta$, resp.).
    \item There exists a Hindman (Ramsey,  differentially compact, resp.) space that is not in $\FinBW(\I)$.
\end{enumerate}
Moreover, if $\I$ is $P^+$ then the above are equivalent to $\I\not\leq_K\Hindman$.

\item If $\I$ is $P^-$, then the following conditions are equivalent:
\begin{enumerate}
    \item $\Hindman\not\leq_K\I$ ($\Ramsey\not\leq_K\I$, $\Diff\not\leq_K\I$, resp.).
    \item $\FS\not\leq_K\rho_\I$ ($r\not\leq_K\rho_\I$, $\Delta\not\leq_K\rho_\I$, resp.).
    \item There exists a space in $\FinBW(\I)$ that is not  a Hindman (Ramsey,  differentially compact, resp.) space.
\end{enumerate}
Moreover, in every item an example showing that the above difference between $\FinBW$ classes is nonempty is  of the form $\Phi(\cA)$ with $\cA$ being almost disjoint and of cardinality $\continuum$ (in particular, these examples are Hausdorff, compact, separable and of cardinality $\continuum$).
\end{enumerate}

\end{corollary}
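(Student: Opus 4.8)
The plan is to reduce everything to Theorem~\ref{thm:characterization-for-rho}(\ref{thm:characterization-for-rho:rho-rho}) via the dictionary between ideals and partition regular functions. Throughout I will use the following identifications: $\FinBW(\I)=\FinBW(\rho_\I)$ by Proposition~\ref{prop:basic-relationships-between-FinBW-like-spaces}(\ref{prop:basic-relationships-between-FinBW-like-spaces:ideal-rho}); Hindman, Ramsey and differentially compact spaces are precisely the members of $\FinBW(\FS)$, $\FinBW(r)$ and $\FinBW(\Delta)$ respectively (by the definitions recorded in the remark following Definition~\ref{def:FinBW-for-rho}); and $\I_{\FS}=\Hindman$, $\I_r=\Ramsey$, $\I_\Delta=\Diff$ together with $\I_{\rho_\I}=\I$ (Proposition~\ref{prop:rho-versus-ideal}(\ref{prop:rho-versus-ideal:ideal-gives-rho})). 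I will also repeatedly use that each of $\FS,r,\Delta$ is weak $P^+$, hence $P^-$ (Proposition~\ref{prop:Plike-properties-for-known-rho}(\ref{prop:Plike-properties-for-known-rho:FS-r-Delta:Pminus})), and has small accretions (Proposition~\ref{prop:ideal-rho-is-sparse}), and that $\rho_\I$ is $P^-$ whenever $\I$ is $P^-$ (Proposition~\ref{prop:properties-of-ideal-versus-rho}(\ref{prop:properties-of-ideal-versus-rho:ideal-rho})) and always has small accretions (Proposition~\ref{prop:ideal-rho-is-sparse}). Since the three ``resp.''\ variants are proved identically, I will display the argument only in the Hindman/$\FS$ case.

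For item~(\ref{cor:I-Pplus-vs-Hindman-like:rho}), I would apply Theorem~\ref{thm:characterization-for-rho}(\ref{thm:characterization-for-rho:rho-rho}) with $\rho_1=\FS$ and $\rho_2=\rho_\I$; its hypotheses hold because $\FS$ is $P^-$ and $\rho_\I$ is $P^-$ with small accretions. This yields $\rho_\I\not\leq_K\FS\iff\FinBW(\FS)\setminus\FinBW(\rho_\I)\neq\emptyset$, which, after translating, is exactly (a)$\iff$(b). For the final clause, assume in addition that $\I$ is $P^+$; then $\rho_\I$ is $P^+$ (Proposition~\ref{prop:properties-of-ideal-versus-rho}(\ref{prop:properties-of-ideal-versus-rho:ideal-rho})), so Proposition~\ref{prop:Katetov-for-rho-functions}(\ref{prop:Katetov-for-ideal-rho:idealP+}) gives $\rho_\I\leq_K\FS\iff\I_{\rho_\I}\leq_K\I_{\FS}$, that is, $\I\leq_K\Hindman$; negating yields $\rho_\I\not\leq_K\FS\iff\I\not\leq_K\Hindman$.

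For item~(2), the equivalence (a)$\iff$(b) is the purely Kat\v{e}tov statement $\Hindman\not\leq_K\I\iff\FS\not\leq_K\rho_\I$: by Proposition~\ref{prop:Katetov-for-rho-functions}(\ref{prop:Katetov-for-rho-function-and-ideal-rho:sparse}) we have $\FS\leq_K\rho_\I\iff\I_{\FS}\leq_K\I$, i.e.\ $\FS\leq_K\rho_\I\iff\Hindman\leq_K\I$, and I negate. For (b)$\iff$(c) I would again invoke Theorem~\ref{thm:characterization-for-rho}(\ref{thm:characterization-for-rho:rho-rho}), now with $\rho_1=\rho_\I$ and $\rho_2=\FS$ (the hypotheses hold since $\rho_\I$ is $P^-$ and $\FS$ is $P^-$ with small accretions), which gives $\FS\not\leq_K\rho_\I\iff\FinBW(\rho_\I)\setminus\FinBW(\FS)\neq\emptyset$; translated, this is precisely (b)$\iff$(c). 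The closing ``Moreover'' clause, asserting that the witnessing space may be taken of the form $\Phi(\cA)$ with $\cA$ almost disjoint of cardinality $\continuum$, is inherited verbatim from the corresponding ``Moreover'' clause of Theorem~\ref{thm:characterization-for-rho}.

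There is no genuine obstacle here, as the corollary is a bookkeeping consequence of the earlier characterization; the only point demanding care is selecting the correct comparison lemma in each step — Proposition~\ref{prop:Katetov-for-rho-functions}(\ref{prop:Katetov-for-rho-function-and-ideal-rho:sparse}), which is valid for arbitrary $\rho$ and gives $\rho\leq_K\rho_\I\iff\I_\rho\leq_K\I$, versus Proposition~\ref{prop:Katetov-for-rho-functions}(\ref{prop:Katetov-for-ideal-rho:idealP+}), which requires $\rho_\I$ to be $P^+$ — and orienting $\rho_1,\rho_2$ so that the $P^-$ and small-accretion hypotheses of Theorem~\ref{thm:characterization-for-rho}(\ref{thm:characterization-for-rho:rho-rho}) land on the appropriate function in each of the two items.
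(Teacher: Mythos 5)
Your proposal is correct and follows essentially the same route as the paper, which likewise derives the corollary from Theorem~\ref{thm:characterization-for-rho}(\ref{thm:characterization-for-rho:rho-rho}) together with Propositions~\ref{prop:ideal-rho-is-sparse}, \ref{prop:Plike-basic-properties}(\ref{prop:properties-of-ideal-versus-rho:ideal-rho}), \ref{prop:Plike-properties-for-known-rho}(\ref{prop:Plike-properties-for-known-rho:FS-r-Delta:Pminus}), \ref{prop:Katetov-for-rho-functions}(\ref{prop:Katetov-for-rho-functions:equivalence})(\ref{prop:Katetov-for-rho-function-and-ideal-rho:Pminus}) and \ref{prop:basic-relationships-between-FinBW-like-spaces}(\ref{prop:basic-relationships-between-FinBW-like-spaces:ideal-rho}). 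Your choice of orientation for $\rho_1,\rho_2$ in each item and of the two comparison lemmas is exactly the intended bookkeeping.
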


\begin{proof}
It follows from Theorem~\ref{thm:characterization-for-rho}(\ref{thm:characterization-for-rho:rho-Pminus-rho-Pminus}) and Propositions \ref{prop:ideal-rho-is-sparse}, \ref{prop:Plike-basic-properties}(\ref{prop:properties-of-ideal-versus-rho:ideal-rho}), \ref{prop:Plike-properties-for-known-rho}(\ref{prop:Plike-properties-for-known-rho:FS-r-Delta:Pminus}), \ref{prop:Katetov-for-rho-functions}(\ref{prop:Katetov-for-rho-functions:equivalence})(\ref{prop:Katetov-for-rho-function-and-ideal-rho:Pminus}) and \ref{prop:basic-relationships-between-FinBW-like-spaces}(\ref{prop:basic-relationships-between-FinBW-like-spaces:ideal-rho}).
\end{proof}

 \begin{remark}
 In \cite[Corollary~2.8]{MR4356195}, the authors obtained Corollary~\ref{cor:I-Pplus-vs-Hindman-like}(\ref{cor:I-Pplus-vs-Hindman-like:rho}) in  the case of Hindman spaces and $P^+$ ideals but in the realm of \emph{non}-Hausdorff spaces.
 \end{remark}

\begin{corollary}[Assume CH]
Let $\I$ be an ideal.
\begin{enumerate}
    \item 
The following conditions are equivalent:
\begin{enumerate}
    \item $\I\not\leq_K\vdW$ ($\I\not\leq_K\I_{1/n}$, resp.).
    \item $\rho_\I\not\leq_K\rho_\vdW$ ($\rho_\I\not\leq_K\rho_{\I_{1/n}}$, resp.).
    \item There exists a van der Waerden space ($\I_{1/n}$-space) that is not in $\FinBW(\I)$.
\end{enumerate}
    \item 
If $\I$ is a $P^-$ ideal, then the following conditions are equivalent:
\begin{enumerate}
    \item $\vdW\not\leq_K\I$ ($\I_{1/n}\not\leq_K\I$, resp.).
    \item $\rho_\vdW\not\leq_K\rho_\I$ ($\rho_{\I_{1/n}}\not\leq_K\rho_{\I}$, resp.).
    \item There exists a space in $\FinBW(\I)$ that is not a van der Waerden  space ($\I_{1/n}$-space).
\end{enumerate}
\end{enumerate}
Moreover, in every item an example showing that the above difference between $\FinBW$ classes is nonempty is  of the form $\Phi(\cA)$ with $\cA$ being almost disjoint and of cardinality $\continuum$ (in particular, these examples are Hausdorff, compact, separable and of cardinality $\continuum$).
\end{corollary}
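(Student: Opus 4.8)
The plan is to deduce both items from Theorem~\ref{thm:characterization-for-rho}(\ref{thm:characterization-for-rho:ideal-Pminus-rho-finitely-supported}), reading $\vdW$ and $\I_{1/n}$ as the ideals $\I_{\rho_\vdW}$ and $\I_{\rho_{\I_{1/n}}}$ and using the dictionary between the Kat\v{e}tov order on ideals and on the associated functions $\rho_\I$. The two facts I would invoke throughout are that $\vdW$ and $\I_{1/n}$ are $P^-$ ideals (Proposition~\ref{prop:Plike-properties-for-known-rho}(\ref{prop:Plike-properties-for-known-rho:vdW})) and that $\FinBW(\J)=\FinBW(\rho_\J)$ for every ideal $\J$ (Proposition~\ref{prop:basic-relationships-between-FinBW-like-spaces}(\ref{prop:basic-relationships-between-FinBW-like-spaces:ideal-rho})), so that ``van der Waerden space'' means exactly ``member of $\FinBW(\rho_\vdW)$'' and ``$\I_{1/n}$-space'' means ``member of $\FinBW(\rho_{\I_{1/n}})$''. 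For the equivalence of the two Kat\v{e}tov conditions (a) and (b) in each item I would use Proposition~\ref{prop:Katetov-for-rho-functions}(\ref{prop:Katetov-for-rho-function-and-ideal-rho:sparse}): since $\I=\I_{\rho_\I}$ (Proposition~\ref{prop:rho-versus-ideal}(\ref{prop:rho-versus-ideal:ideal-gives-rho})), one has $\rho_\I\leq_K\rho_\vdW\iff\I\leq_K\vdW$ and $\rho_\vdW\leq_K\rho_\I\iff\vdW\leq_K\I$, and likewise with $\I_{1/n}$ in place of $\vdW$.

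For item~1, I would apply Theorem~\ref{thm:characterization-for-rho}(\ref{thm:characterization-for-rho:ideal-Pminus-rho-finitely-supported}) with $\I_1=\vdW$ (which is $P^-$) and $\rho_2=\rho_\I$. Since $\I_{\rho_2}=\I_{\rho_\I}=\I$ and $\FinBW(\rho_2)=\FinBW(\I)$, the theorem yields $\I\not\leq_K\vdW\iff\FinBW(\vdW)\setminus\FinBW(\I)\neq\emptyset$, that is, (a)$\iff$(c). The equivalence (a)$\iff$(b) is the instance of Proposition~\ref{prop:Katetov-for-rho-functions}(\ref{prop:Katetov-for-rho-function-and-ideal-rho:sparse}) recorded above. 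The summable-ideal version is identical with $\I_{1/n}$ replacing $\vdW$. Note that here the hypothesis is carried entirely by $\vdW$ (resp.\ $\I_{1/n}$) being $P^-$, so no assumption on $\I$ is needed, matching the statement.

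For item~2, I would instead apply Theorem~\ref{thm:characterization-for-rho}(\ref{thm:characterization-for-rho:ideal-Pminus-rho-finitely-supported}) with $\I_1=\I$ (now using that $\I$ is $P^-$) and $\rho_2=\rho_\vdW$. Here $\I_{\rho_2}=\vdW$ and $\FinBW(\rho_2)=\FinBW(\vdW)$, so the theorem gives $\vdW\not\leq_K\I\iff\FinBW(\I)\setminus\FinBW(\vdW)\neq\emptyset$, i.e.\ (a)$\iff$(c); note the set difference is reversed relative to item~1, which is exactly why the roles of $\I$ and $\vdW$ in the Kat\v{e}tov order are swapped. Again (a)$\iff$(b) comes from Proposition~\ref{prop:Katetov-for-rho-functions}(\ref{prop:Katetov-for-rho-function-and-ideal-rho:sparse}), and the summable case is verbatim. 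The final ``Moreover'' clause is inherited directly: in Theorem~\ref{thm:characterization-for-rho}(\ref{thm:characterization-for-rho:ideal-Pminus-rho-finitely-supported}) the witness to nonemptiness is a space $\Phi(\cA)$ with $\cA$ almost disjoint of cardinality $\continuum$, hence Hausdorff, compact and separable.

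There is essentially no genuine obstacle here; this is a bookkeeping corollary and all the real content sits in Theorem~\ref{thm:characterization-for-rho}. The one point that requires care is keeping track of the direction of the set difference $\FinBW(\cdot)\setminus\FinBW(\cdot)$ and, correspondingly, which ideal plays the role of $\I_1$ and which of $\I_{\rho_2}$: in item~1 the van der Waerden (resp.\ summable) class is the ambient class $\FinBW(\I_1)$ and $\FinBW(\I)$ is subtracted, whereas in item~2 the two are interchanged, and it is precisely this interchange that forces the extra $P^-$ hypothesis on $\I$ in item~2, so that $\I$ may legitimately serve as $\I_1$.
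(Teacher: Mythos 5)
Your proposal is correct and follows the same overall strategy as the paper: reduce everything to Theorem~\ref{thm:characterization-for-rho} and translate between the Kat\v{e}tov order on ideals and on the functions $\rho_\I$ via Proposition~\ref{prop:Katetov-for-rho-functions}(\ref{prop:Katetov-for-rho-function-and-ideal-rho:sparse}) and Proposition~\ref{prop:basic-relationships-between-FinBW-like-spaces}(\ref{prop:basic-relationships-between-FinBW-like-spaces:ideal-rho}). The one genuine difference is which clause of the characterization theorem you invoke: the paper cites clause~(\ref{thm:characterization-for-rho:rho-Pminus-rho-Pminus}) (the $\rho$-versus-$\rho$ version, which needs $\rho_2$ to be $P^-$ with small accretions, supplying this via Propositions~\ref{prop:ideal-rho-is-sparse} and \ref{prop:properties-of-ideal-versus-rho}(\ref{prop:properties-of-ideal-versus-rho:ideal-rho})), whereas you use clause~(\ref{thm:characterization-for-rho:ideal-Pminus-rho-finitely-supported}). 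Your choice is actually the cleaner one for item~1, where $\I$ is arbitrary: applying clause~(\ref{thm:characterization-for-rho:rho-Pminus-rho-Pminus}) there with $\rho_2=\rho_\I$ would require $\rho_\I$ to be $P^-$, i.e.\ an assumption on $\I$ that the statement does not make, while your application of clause~(\ref{thm:characterization-for-rho:ideal-Pminus-rho-finitely-supported}) with $\I_1=\vdW$ (resp.\ $\I_{1/n}$) places the $P^-$ burden entirely on the van der Waerden or summable ideal, exactly as the statement demands. Your handling of the direction of the set difference and of the ``Moreover'' clause is also correct.
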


\begin{proof}
It follows from Theorem~\ref{thm:characterization-for-rho}(\ref{thm:characterization-for-rho:rho-Pminus-rho-Pminus}) and Propositions \ref{prop:ideal-rho-is-sparse}, \ref{prop:Plike-basic-properties}(\ref{prop:properties-of-ideal-versus-rho:ideal-rho}), \ref{prop:Plike-properties-for-known-rho}(\ref{prop:Plike-properties-for-known-rho:vdW}), \ref{prop:Katetov-for-rho-functions}(\ref{prop:Katetov-for-rho-function-and-ideal-rho:Pminus}) and \ref{prop:basic-relationships-between-FinBW-like-spaces}(\ref{prop:basic-relationships-between-FinBW-like-spaces:ideal-rho}).
\end{proof}



\section{Non-Hausdorff world}
\label{sec:nonHausdorf-world}

\begin{proposition}
The following conditions are equivalent for every topological space $X$.
\begin{enumerate}
\item $X$ has unique limits of sequences.
\item $\rho$-limits of sequences in $X$ are unique for every $\rho$. 
\end{enumerate}
\end{proposition}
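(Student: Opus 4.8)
The plan is to prove the two implications separately, in each case using the bridge between $\rho$-convergence and ordinary convergence recorded in Proposition~\ref{prop:basic-relationships-between-rho-like-convergence}. For $(2)\implies(1)$ I would specialize to the partition regular function $\rho_{\Fin}$ associated, via Proposition~\ref{prop:rho-versus-ideal}(\ref{prop:rho-versus-ideal:ideal-gives-rho}), with the ideal $\Fin$ of finite subsets of $\omega$. Given a sequence $g\colon\omega\to X$ converging to both $x$ and $y$, I regard $g$ as a $\rho_{\Fin}$-sequence with $F=\omega\in\Fin^+$ (so $\rho_{\Fin}(\omega)=\omega$). By Proposition~\ref{prop:basic-relationships-between-rho-like-convergence}(\ref{prop:basic-relationships-between-rho-like-convergence:ideal-rho}), $g$ converges to $L$ if and only if $g$ is $\rho_{\Fin}$-convergent to $L$; hence $g$ is $\rho_{\Fin}$-convergent to both $x$ and $y$, and the assumed uniqueness of $\rho_{\Fin}$-limits forces $x=y$.

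For $(1)\implies(2)$ I fix an arbitrary partition regular $\rho\colon\cF\to[\Lambda]^\omega$ (with $\cF\subseteq[\Omega]^\omega$) and a $\rho$-sequence $f\colon\rho(F)\to X$ that is $\rho$-convergent to both $x$ and $y$. The only real obstacle is to produce a \emph{single} ordinary subsequence of $f$ converging to both limits simultaneously, so that hypothesis (1) can be invoked: applying Proposition~\ref{prop:rho-con-implies-suseq-ordinar-comvergence}(\ref{prop:rho-con-implies-suseq-ordinar-comvergence:item}) to each limit separately would only yield two, possibly disjoint, witnessing sets. I would therefore repeat the construction from the proof of that proposition while stressing that it does \emph{not} depend on the limit point: fixing an enumeration $\Omega=\{o_n:n\in\omega\}$, I pick inductively $\lambda_n\in\rho(F\setminus\{o_i:i<n\})\setminus\{\lambda_i:i<n\}$ and set $A=\{\lambda_n:n\in\omega\}$, an infinite subset of $\rho(F)$ by condition \nameref{def:partition-regular:monotone} of Definition~\ref{def:partition-regular}.

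It then remains to check that the \emph{same} set $A$ witnesses $f\restriction A\to L$ for each $L\in\{x,y\}$. Given a neighbourhood $U$ of $L$, the $\rho$-convergence of $f$ to $L$ provides a finite $K\subseteq\Omega$ with $f[\rho(F\setminus K)]\subseteq U$; choosing $n$ with $K\subseteq\{o_i:i<n\}$ and using monotonicity, for every $m\geq n$ one has $\lambda_m\in\rho(F\setminus\{o_i:i<m\})\subseteq\rho(F\setminus\{o_i:i<n\})\subseteq\rho(F\setminus K)$, whence $f(\lambda_m)\in U$. Thus $\{a\in A:f(a)\notin U\}$ is finite, i.e.\ the sequence $(f(\lambda_n))_{n\in\omega}$ converges to $L$. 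Consequently this one and the same sequence converges to both $x$ and $y$, and the uniqueness of limits of sequences in $X$ gives $x=y$. Since $\rho$ and $f$ were arbitrary, $\rho$-limits are unique for every $\rho$, completing the proof.
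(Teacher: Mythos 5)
Your proposal is correct and follows essentially the same route as the paper: the easy direction via the identification of $\rho_{\Fin}$-convergence with ordinary convergence, and the other direction by diagonally choosing $\lambda_n\in\rho(F\setminus\{o_i:i<n\})\setminus\{\lambda_i:i<n\}$ so that one and the same ordinary subsequence converges to both candidate $\rho$-limits. The only cosmetic difference is that the paper phrases the second implication contrapositively; the construction and the verification are identical.
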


\begin{proof}
Since $\rho_\fin$-convergence is convergence, (2)$\implies$(1) is obvious.

(1)$\implies$(2): We will show that the negation of (2) implies the negation of (1). Suppose that there are partition regular $\rho:\cF\to[\Lambda]^\omega$ with $\cF\subseteq[\Omega]^\omega$, $F\in\cF$ and $\{x_n: n\in\rho(F)\}\subseteq X$ which $\rho$-converges to $x$ and to $y$, for some $x,y\in X$, $x\neq y$. Let $\{K_n: n\in \omega\}\subseteq[\Omega]^{<\omega}$ be nondecreasing and such that $\bigcup_{n\in \omega} K_n=\Omega$. For each $n\in \omega$ inductively find any $m_n\in\rho(F\setminus K_n)\setminus\{m_i:\ i<n\}$ (this is possible as $\rho(F\setminus K_n)$ is infinite). Observe that the sequence $(x_{m_n})_{n\in \omega}$ is convergent to $x$ and to $y$. 
\end{proof}

The main result of this section is as follows.

\begin{theorem}[Assume CH]
\label{thm:characterization-for-rho:without-Pminus}
Let $\rho_i:\cF_i\to[\Lambda_i]^\omega$ be  partition regular for each $i=1,2$. 
If  $\rho_1$ is weak $P^+$ and has small accretions, then the following conditions are equivalent.
\begin{enumerate}
    \item $\rho_2\not\leq_K\rho_1$.
    \item There exists a     separable space $X$ with unique limits of sequences such that  $X\in \FinBW(\rho_1)\setminus\FinBW(\rho_2).$
\end{enumerate}\end{theorem}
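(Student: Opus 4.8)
The plan is to prove the two implications separately, reusing the machinery already developed for the Hausdorff case but replacing Hausdorffness with the weaker ``unique limits of sequences'' property so that a space of the form $\Phi(\cA)$ can be built even when $\rho_2$ is only partition regular (not necessarily $P^-$).

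For the implication $(2)\implies(1)$, I would argue by contraposition exactly as in the Hausdorff setting. If $\rho_2\leq_K\rho_1$, then by Theorem~\ref{thm:Katetove-implies-inclusion-for-FinBW}(\ref{thm:Katetove-implies-inclusion-for-FinBW:for-rho}) we have $\FinBW(\rho_1)\subseteq\FinBW(\rho_2)$, so the difference $\FinBW(\rho_1)\setminus\FinBW(\rho_2)$ is empty and in particular contains no space with unique limits of sequences. This direction is immediate and does not even use the hypotheses on $\rho_1$.

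The implication $(1)\implies(2)$ is the substantial one. The natural strategy is to mimic the proof of Theorem~\ref{thm:TWIERDZENIE-for-Fspaces:Pminus-Mrowka:rho}: construct, under CH, an almost disjoint family $\cA$ (via a transfinite recursion of length $\continuum$ enumerating all functions $f_\alpha:\Lambda_1\to\Lambda_2$ and all relevant sets $F_\alpha\in\cF_2$) so that $\Phi(\cA)\in\FinBW(\rho_1)$ while $\Phi(\cA)\notin\FinBW(\rho_2)$. The membership $\Phi(\cA)\in\FinBW(\rho_1)$ should again come from Lemma~\ref{lem:TWIERDZENIE-for-Fspaces-in-FinBW-technical}, but here is the first obstacle: that lemma is stated under the hypothesis that $\rho_1$ is $P^-$, whereas now $\rho_1$ is only weak $P^+$. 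Since weak $P^+$ implies $P^-$ by Proposition~\ref{prop:Plike-basic-properties}(\ref{prop:Plike-basic-properties:weakPplus-implies-Pminus}), the lemma does apply, so the weak $P^+$ assumption is actually \emph{stronger} than what is needed for the $\FinBW(\rho_1)$ half. The real reason weak $P^+$ (rather than $P^-$) is required must surface in the construction of the family witnessing $\Phi(\cA)\notin\FinBW(\rho_2)$: because $\rho_2$ is no longer assumed $P^-$, the inductive step that in Lemma~\ref{lem:LEMAT:rho} used $P^-$ness of $\rho_2$ (the Cases (2b-1), (2b-2) argument) is unavailable, and one must instead exploit the weak $P^+$ property of $\rho_1$ together with its small accretions to diagonalize against every potential $\rho_2$-limit. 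I expect this to be the crux: replacing the $P^-(\rho_2)$-driven diagonalization with one that only consumes structural information about $\rho_1$.

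Concretely, I would set up a $\pnumber=\continuum$ (hence CH) recursion producing sets $A_\alpha\in\I_{\rho_2}\cap[\Lambda_2]^\omega$ that are almost disjoint, that meet $\rho_2(F_\alpha\setminus L)$ for all finite $L$ (ensuring, via Lemma~\ref{lem:MAD-Mrowka-nie-jest-Fspejsem:technical}(\ref{lem:MAD-Mrowka-nie-jest-Fspejsem:technical:item}), that no $\rho_2$-sequence in $\Phi(\cA)$ based at $\infty$ converges), and that are almost contained in each $f_\alpha[\rho_1(C_\alpha\setminus K)]$ only finitely often (ensuring, via Lemma~\ref{lem:TWIERDZENIE-for-Fspaces-in-FinBW-technical}, membership in $\FinBW(\rho_1)$). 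The function $C_\alpha\in\cF_1$ with $\rho_2\not\leq_K\rho_1$ as witnessed by $f_\alpha$ provides the slack needed at each step. Separability of $\Phi(\cA)$ is automatic since $\Lambda_2$ is a countable dense set of isolated points, and the unique-limits property follows from the fact that $\Phi(\cA)$ is built so that distinct points cannot share a sequential limit — this replaces the Hausdorff conclusion of Theorem~\ref{thm:TWIERDZENIE-for-Fspaces:Pminus-Mrowka:rho} and must be verified directly, most likely by checking that the only way a sequence converges is to a uniquely determined point of $\Lambda_2\cup\cA\cup\{\infty\}$. The main difficulty, to reiterate, will be carrying the recursion through without $P^-(\rho_2)$: I would look to the methods of \cite{MR4552506} (cited as the inspiration for this part) to handle the diagonalization using only weak $P^+$ and small accretions of $\rho_1$.
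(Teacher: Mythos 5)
Your direction $(2)\implies(1)$ is correct and identical to the paper's (it is just Theorem~\ref{thm:Katetove-implies-inclusion-for-FinBW}). The direction $(1)\implies(2)$, however, has a genuine gap, and moreover the framework you commit to cannot be the right one. You propose to stay inside the Mr\'{o}wka-type construction: build an almost disjoint family $\cA\subseteq\I_{\rho_2}$ with each $A_\alpha$ meeting every tail $\rho_2(F_\alpha\setminus L)$ and having finite intersection with the tails $f_\beta[\rho_1(C_\beta\setminus K)]$, and then take $\Phi(\cA)$. But $\Phi(\cA)$ is automatically Hausdorff for any almost disjoint $\cA$, so if this recursion could be carried out it would produce a \emph{Hausdorff} space in $\FinBW(\rho_1)\setminus\FinBW(\rho_2)$ without assuming $\rho_2$ is $P^-$ --- which is precisely the question the paper states as open immediately before this theorem, and is the reason the theorem only claims ``unique limits of sequences.'' The step you defer (``I would look to the methods of the cited paper to handle the diagonalization'') is exactly the step that fails: in Lemma~\ref{lem:LEMAT:rho} the existence of the point $a_n\in\rho_2(E_n)$ with $\rho_2(E_n)\subseteq\rho_2(F_\alpha\setminus L_n)\setminus f_{\beta_n}[\rho_1(C_{\beta_n}\setminus K_n)]$ hinges on ruling out the case that $\rho_2(E)\setminus f_{\beta_n}[\rho_1(C_{\beta_n}\setminus K)]\in\I_{\rho_2}$ for all finite $K$, and that is done using $P^-$ of $\rho_2$ (cases (2b-1) and (2b-2)). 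Without $P^-$ of $\rho_2$ this case is live, and no property of $\rho_1$ (weak $P^+$, small accretions) removes it, because the obstruction concerns how $\I_{\rho_2}$ sits inside $\rho_2(F_\alpha)$.

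The paper's actual proof abandons $\Phi(\cA)$ entirely. It enumerates the $\I_{\rho_1}$-to-one functions $f_\alpha:\Lambda_1\to\Lambda_2$ and recursively builds sets $D_\alpha\in\cF_1$ such that (A1) no $\rho_2$-tail of any $E\in\cF_2$ is contained in $f_\alpha[\rho_1(D_\alpha\setminus K)]$ for a suitable finite $K$ (this is where $\rho_2\not\leq_K\rho_1$ enters, and it requires nothing of $\rho_2$), and such that the images $f_\alpha[\rho_1(D_\alpha)]$ either eventually avoid all earlier ones (condition (W1)) or are eventually absorbed by an earlier one (condition (W2)); the dichotomy is driven by the weak $P^+$ property and small accretions of $\rho_1$, not by any property of $\rho_2$. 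The space is then $X=\Lambda_2\cup\{\rho_1(D_\alpha):\alpha\in T\}\cup\{\infty\}$, where $T$ is the set of indices satisfying (W1) and a basic neighborhood of the point $\rho_1(D_\alpha)$ is $\{\rho_1(D_\alpha)\}\cup f_\alpha[\rho_1(D_\alpha\setminus K)]\setminus M$. These image sets need not be almost disjoint, the space is not Hausdorff (only sequential limits are unique, verified directly from (W1)), and $X\notin\FinBW(\rho_2)$ follows from (A1) applied to the identity sequence on $\Lambda_2$ rather than from Lemma~\ref{lem:MAD-Mrowka-nie-jest-Fspejsem:technical}. So the key missing idea in your proposal is the replacement of the almost disjoint family by this family of $f_\alpha$-images of $\rho_1$-sets, indexed by the functions themselves.
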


\begin{proof}
$(2)\implies (1)$.
It follows from Theorem \ref{thm:Katetove-implies-inclusion-for-FinBW}(\ref{thm:Katetove-implies-inclusion-for-FinBW:for-rho}).

$(1)\implies (2)$. 
Fix a list $\{f_\alpha:\alpha<\continuum\}$ of all $\I_{\rho_1}$-to-one functions $f:\Lambda_1\to\Lambda_2$. 

We will construct a sequence $\{D_\alpha: \alpha<\continuum\}\subseteq\cF_1$ such that for every $\alpha<\continuum$ 
we have 
$$\forall E\in\cF_2\, \exists K\in[\Omega_1]^{<\omega}\, \forall L\in[\Omega_2]^{<\omega}\, \left(\rho_2(E\setminus L)\not\subseteq f_\alpha[\rho_1(D_\alpha\setminus K)]\right) $$
 and 
one of the following conditions holds:
\begin{equation*}
\label{eq:W1}
\tag{W1} 
\begin{split}
&
\forall \beta<\alpha  \, \forall M\in[\Lambda_2]^{<\omega} \, \exists K\in[\Omega_1]^{<\omega} \, (f_\alpha[\rho_1(D_\alpha\setminus K) ]\cap (M\cup f_\beta[\rho_1(D_\beta\setminus K)])=\emptyset)
\end{split}
\end{equation*}
or
\begin{equation*}
\label{eq:W2}
\tag{W2} 
\begin{split}
\exists \beta<\alpha \, \forall K\in[\Omega_1]^{<\omega} \, \forall M\in[\Lambda_2]^{<\omega} \, 
&\exists L\in[\Omega_1]^{<\omega} 
\\&
\,  (f_\alpha[\rho_1(D_\alpha\setminus L) ]\subseteq f_\beta[\rho_1(D_\beta\setminus K)]\setminus M).
\end{split}
\end{equation*}

Suppose that $\alpha<\continuum$ and that $D_\beta$ have been chosen for all $\beta<\alpha$. Since $\rho_2\not\leq_K\rho_1$, there is $D^0\in\cF_1$ such that:
\begin{equation}
\tag{A1} 
\forall E\in\cF_2\, \exists K\in[\Omega_1]^{<\omega}\, \forall L\in[\Omega_2]^{<\omega}\ \left(\rho_2(E\setminus L)\not\subseteq f_\alpha[\rho_1(D^0\setminus K)]\right).
\end{equation}
Since $\rho_1$ has small accretions, there is $D^1\in\cF_1$, $D^1\subseteq D^0$, such that for every $K\in[\Omega_1]^{<\omega}$ we have $\rho_1(D^1)\setminus\rho_1(D^1\setminus K)\in\I_{\rho_1}$. Observe that $D^1$ also has the property (A1) as $f_\alpha[\rho_1(D^0\setminus K)]\supseteq f_\alpha[\rho_1(D^1\setminus K)]$ for every $K\in[\Omega_1]^{<\omega}$.

Since $\rho_1$ is weak $P^+$, there is $D\in\cF_1$ such that $\rho_1(D)\subseteq\rho_1(D^1)$ and satisfying property:
\begin{equation*}
\tag{A2} 
\begin{split}
    &\forall \{F_n: n\in \omega\}\subseteq\cF_1 \, (\forall n\in\omega \, (\rho_1(F_{n+1})\subseteq\rho_1(F_n)\subseteq\rho_1(D) ) 
    \\& \implies \exists E'\in\cF_1 \, \forall n\in\omega \, \exists K\in[\Omega_1]^{<\omega} \, (\rho_1(E'\setminus K)\subseteq\rho_1(F_n)).
\end{split}
\end{equation*}

Now we have 2 cases:
\begin{equation*}
\label{eq:P1}
\tag{P1}
\begin{split}
&
\forall D'\in\cF_1\, \forall \beta<\alpha \, (\rho_1(D')\subseteq \rho_1(D)  
\\&\implies  \exists K\in[\Omega_1]^{<\omega}\, ( \rho_1(D')\setminus 
f_\alpha^{-1}[
f_\beta[\rho_1(D_\beta\setminus K)]]\notin \I_{\rho_1})
\end{split}
\end{equation*}
or
\begin{equation*}
\label{eq:P2}
\tag{P2}
\begin{split}
& \exists D'\in\cF_1 \, \exists \beta<\alpha \, (\rho_1(D')\subseteq \rho_1(D)  
\\& 
\land \forall K\in[\Omega_1]^{<\omega} \, ( \rho_1(D')\setminus f_\alpha^{-1}[f_\beta[\rho_1(D_\beta\setminus K)]]\in \I_{\rho_1}).
\end{split}
\end{equation*}

In the first case, let $\{K_n:n\in \omega\}\subseteq[\Omega_1]^{<\omega}$ be such that $\bigcup_{n\in \omega} K_n=\Omega_1$ and let $\alpha\times [\Lambda_2]^{<\omega} =\{(\beta_n,M_n): n\in\omega\}$, taking into account that $\alpha$ is countable (as we assumed CH). 
Using condition \eqref{eq:P1} repeatedly  
and the facts that $f_\alpha^{-1}[\{\lambda\}]\in\I_{\rho_1}$, for every $\lambda\in\Lambda_2$, and $\rho_1(D^1)\setminus\rho_1(D^1\setminus K)\in\I_{\rho_1}$, for all $K\in[\Omega_1]^{<\omega}$,
one can easily construct a sequence $\{E_n: n\in \omega\}\subseteq\cF_1$ such that 
\begin{enumerate}
	\item $\rho_1(E_0)\subseteq \rho_1(D)$,
	\item $\forall {n\in \omega}\, (\rho_1(E_{n+1})\subseteq \rho_1(E_{n}))$,
	\item $\forall {n\in \omega} \, \exists {K\in[\Omega_1]^{<\omega}} \left(\rho_1(E_n) \cap  f_\alpha^{-1}[M_n\cup f_{\beta_n}[\rho_1(D_{\beta_n}\setminus K)]] = \emptyset\right)$,
\item $\forall {n\in \omega} \, \rho_1(E_n)\cap \rho_1(D^1)\setminus\rho_1(D^1\setminus K_n)=\emptyset$. 
\end{enumerate}
Now using property (A2) we find $E'\in \cF_1$ such that 
$\rho_1(E')\subseteq\rho_1(D)\subseteq\rho_1(D^1)$
and for every  $n\in\omega$ there is $K\in[\Omega_1]^{<\omega}$ with $\rho_1(E'\setminus K) \subseteq \rho_1(E_n)$.

It is not difficult to see that $D_\alpha = E'$ satisfies (A1) and  \eqref{eq:W1}, i.e., it is as needed.

Consider the second case.
Let $D'\in\cF_1$ and $\beta<\alpha$ be such that $\rho_1(D')\subseteq \rho_1(D)$ and $\rho_1(D')\setminus f_\alpha^{-1}[f_\beta[\rho_1(D_\beta\setminus K)]]\in \I_{\rho_1}$
 for each $K\in[\Omega_1]^{<\omega}$. 
Since $f_\alpha^{-1}[\{\lambda\}]\in\I_{\rho_1}$ for every $\lambda\in\Lambda_2$,
we also have 
$\rho_1(D')\setminus f_\alpha^{-1}[f_\beta[\rho_1(D_\beta\setminus K)]\setminus M]\in \I_{\rho_1}$ for each $K\in[\Omega_1]^{<\omega}$ and $M\in[\Lambda_2]^{<\omega}$. Recall also that $\rho_1(D^1)\setminus\rho_1(D^1\setminus K)\in\I_{\rho_1}$, for all $K\in[\Omega_1]^{<\omega}$. Since $\rho_1$ is $P^-$ (by Proposition \ref{prop:Plike-basic-properties}(\ref{prop:Plike-basic-properties:implications}), as $\rho_1$ is weak $P^+$), we find an infinite set $D''\in\cF_1$
such that 
\begin{itemize}
\item $\rho_1(D'')\subseteq \rho_1(D')$,
\item for every $K\in[\Omega_1]^{<\omega}$ there is $L\in[\Omega_1]^{<\omega}$ with $\rho_1(D''\setminus L)\subseteq\rho_1(D^1\setminus K)$,
\item for every $K\in[\Omega_1]^{<\omega}$ and $M\in[\Lambda_2]^{<\omega}$ there is $L\in[\Omega_1]^{<\omega}$ with 
$\rho_1(D''\setminus L)\cap (\rho_1(D')\setminus f_\alpha^{-1}[f_\beta[\rho_1(D_\beta\setminus K)]\setminus M]) = \emptyset $.
\end{itemize}

It is not difficult to see that $D_\alpha = D''$ satisfies (A1) and  \eqref{eq:W2}.

The construction of sets $D_\alpha$ is finished.

We are ready to define the required space. Let 
$T=\{\alpha<\continuum:\text{$D_\alpha$ satisfies \eqref{eq:W1}}\}$
 and $$X=\Lambda_2\cup\{\rho_1(D_\alpha):\alpha\in T\}\cup\{\infty\}.$$ 
For every $x\in X$ we define the family $\cB(x)\subseteq \cP(X)$ as follows:
\begin{itemize}
	\item 
$\cB(\lambda)=\{\{\lambda\}\}$ for $\lambda\in \Lambda_2$,
\item $\cB(\rho_1(D_\alpha))= \{\{\rho_1(D_\alpha)\}\cup f_\alpha[\rho_1(D_\alpha\setminus K)]\setminus M:K\in[\Omega_1]^{<\omega},M\in[\Lambda_2]^{<\omega}\}$  for $\alpha\in T$,
\item $\cB(\infty)=\{\{\infty\}\cup\bigcup_{\alpha\in T\setminus F}U_\alpha : F\in[T]^{<\omega} \land U_\alpha\in\cB(\rho_1(D_\alpha)) \text{ for } \alpha\in T\setminus F\}$.
\end{itemize}
It is not difficult to check that 
the family $\cN = \{\mathcal{B}(x):x\in X\}$ 
 is a  neighborhood system  (see e.g.~\cite[Proposition 1.2.3]{MR1039321}).
We claim that $X$ with the topology generated by $\cN$  is a topological space that we are looking for.

First we will show that $X$ has unique limits of sequences. It is not difficult to see that $X\setminus\{\infty\}$ is Hausdorff. Thus, it suffices to check that if $\{x_n: n\in \omega\}\subseteq X$ converges to $\infty$ then it cannot converge to any other point in $X$. Indeed, if $\{x_n: n\in \omega\}$ would converge to some $\lambda\in\Lambda_2$ then it would have to be constant from some point on, so $\{\infty\}\cup\bigcup_{\alpha\in T}(\{\rho_1(D_\alpha)\}\cup f_\alpha[\rho_1(D_\alpha)]\setminus \{\lambda\})$ would be an open neighborhood of $\infty$ omitting almost all $x_n$'s. On the other hand, if $(x_n)_{n\in \omega}$ would converge to some $\rho_1(D_\alpha)$ for $\alpha\in T$ then using \eqref{eq:W1} for each $\beta\in T\setminus\{\alpha\}$ we could find $K_\beta\in[\Omega]^{<\omega}$ with $f_\alpha[\rho_1(D_\alpha\setminus K_\beta) ]\cap f_\beta[\rho_1(D_\beta\setminus K_\beta)]=\emptyset$. Then, denoting $M_\beta=\{x_n: n\in\omega\}\setminus f_\alpha[\rho_1(D_\alpha\setminus K_\beta) ]$ (which is a finite set, as $\{\rho_1(D_\alpha)\}\cup f_\alpha[\rho_1(D_\alpha\setminus K_\beta) ]$ is an open neighborhood of $\rho_1(D_\alpha)$ and $(x_n)_{n\in \omega}$ converges to $\rho_1(D_\alpha)$), the set $\{\infty\}\cup\bigcup_{\beta\in T\setminus\{\alpha\}}(\{\rho_1(D_\beta)\}\cup f_\beta[\rho_1(D_\beta\setminus K_\beta)]\setminus M_\beta)$ would be an open neighborhood of $\infty$ omitting all $x_n$'s. Hence, $X$ has unique limits of sequences.

Now we show that $X\in\FinBW(\rho_1)$. Fix any $f:\Lambda_1\to X$. If there is $x\in X$ with $f^{-1}[\{x\}]\notin\I_{\rho_1}$ then find $F\in\cF_1$ with $\rho_1(F)\subseteq f^{-1}[\{x\}]$ and observe that $(f(n))_{n\in \rho_1(F)}$ is $\rho_1$-convergent to $x$. Thus, we can assume that $f^{-1}[\{x\}]\in\I_{\rho_1}$ for all $x\in X$. There are two possible cases:
$f^{-1}[X\setminus\Lambda_2]\notin\I_{\rho_1}$ or 
$f^{-1}[X\setminus\Lambda_2]\in\I_{\rho_1}$.

If $f^{-1}[X\setminus\Lambda_2]\notin\I_{\rho_1}$ then we find $F\in\cF_1$ with $\rho_1(F)\subseteq f^{-1}[X\setminus\Lambda_2]$. As $f^{-1}[\{x\}]\in\I_{\rho_1}$ for all $x\in X$
and 
$f^{-1}[\{x\}] \neq\emptyset$ only for countably many $x\in X$, using the fact that $\rho_1$ is $P^-$ (by Proposition \ref{prop:Plike-basic-properties}(\ref{prop:Plike-basic-properties:weakPplus-implies-Pminus}))
 we can find $E\in\cF_1$ with $\rho_1(E)\subseteq \rho_1(F)$ and such that for each $x\in X\setminus\Lambda_2$ there is $K\in[\Omega_1]^{<\omega}$ with $\rho_1(E\setminus K)\cap f^{-1}[\{x\}]=\emptyset$. Since for each $U\in\mathcal{B}(\infty)$ there are only finitely many $\alpha\in T$ with $\rho_1(D_\alpha)\notin U$, $(f(n))_{n\in \rho_1(E)}$ $\rho_1$-converges to $\infty$.

If $f^{-1}[X\setminus\Lambda_2]\in\I_{\rho_1}$ then define $g:\Lambda_1\to\Lambda_2$ by $g(\lambda)=f(\lambda)$ for all $\lambda\in\Lambda_1\setminus f^{-1}[X\setminus\Lambda_2]$ and $g(\lambda)=x$ for all $\lambda\in f^{-1}[X\setminus\Lambda_2]$, where $x\in\Lambda_2$ is a fixed point. Then there is $\alpha<\continuum$ with $f_\alpha=g$. 
We have two subcases: $\alpha\in T$ and $\alpha\notin T$.

Assume $\alpha\in T$. Since $\rho_1$ has small accretions, there is $E\subseteq D_\alpha$, $E\in\cF_1$ such that $\rho_1(E)\setminus\rho_1(E\setminus K)\in\I_{\rho_1}$ for all $K\in[\Omega_1]^{<\omega}$. 
Using that $\rho_1$ is $P^-$ (by Proposition \ref{prop:Plike-basic-properties}(\ref{prop:Plike-basic-properties:weakPplus-implies-Pminus})),
we find $D\in\cF_1$ such that 
\begin{itemize}
\item $\rho_1(D)\subseteq \rho_1(E)\setminus f^{-1}[X\setminus\Lambda_2]$,
\item for each $K\in[\Omega_1]^{<\omega}$ there is $L\in[\Omega_1]^{<\omega}$ with $\rho_1(D\setminus L)\subseteq\rho_1(E\setminus K)\subseteq\rho_1(D_\alpha\setminus K)$,
\item for each $M\in[\Lambda_2]^{<\omega}$ there is $L\in[\Omega_1]^{<\omega}$ with $\rho_1(D\setminus L)\cap f^{-1}[M] = \emptyset$.
\end{itemize}
Since each $U\in\mathcal{B}(\rho_1(D_\alpha))$ is of the form $\{\rho_1(D_\alpha)\}\cup f_\alpha[\rho_1(D_\alpha\setminus K)]\setminus M$ for some $K\in[\Omega_1]^{<\omega}$ and $M\in[\Lambda_2]^{<\omega}$,
the subsequence 
$(f(n))_{n\in \rho_1(D)}$ $\rho_1$-converges to $\rho_1(D_\alpha)$.

Assume $\alpha\notin T$.
Then there is $\beta<\alpha$, $\beta\in T$ such that 
$$\forall {K\in[\Omega_1]^{<\omega}} \, \forall {M\in[\Lambda_2]^{<\omega}} \, \exists {L\in[\Omega_1]^{<\omega}} \,  \,\left(f_\alpha[\rho_1(D_\alpha\setminus L) ]\subseteq f_\beta[\rho_1(D_\beta\setminus K)]\setminus M\right)$$
(we take the minimal $\beta<\alpha$ satisfying property (W2)).
Since each open neighborhood of $\rho_1(D_\beta)$ is of the form $\{\rho_1(D_\beta)\}\cup f_\beta[\rho_1(D_\beta\setminus K)]\setminus M$ for some $K\in[\Omega_1]^{<\omega}$ and $M\in[\Lambda_2]^{<\omega}$, $(f_\alpha(n))_{n\in \rho_1(D_\alpha)}$ $\rho_1$-converges to $\rho_1(D_\beta)\in X$. Since $\rho_1$ has small accretions, there is $E\subseteq D_\alpha$, $E\in\cF_1$ such that $\rho_1(E)\setminus\rho_1(E\setminus K)\in\I_{\rho_1}$ for all $K\in[\Omega_1]^{<\omega}$. Then also $(f_\alpha(n))_{n\in \rho_1(E)}$ $\rho_1$-converges to $\rho_1(D_\beta)\in X$. Finally, since $f^{-1}[X\setminus\Lambda_2]\in\I_{\rho_1}$, using that $\rho_1$ is $P^-$ (by Proposition \ref{prop:Plike-basic-properties}(\ref{prop:Plike-basic-properties:weakPplus-implies-Pminus})),
we get $E'\in\cF_1$ such that $\rho_1(E')\subseteq\rho_1(E)\setminus f^{-1}[X\setminus\Lambda_2]$ and for each $K\in[\Omega_1]^{<\omega}$ there is $L\in[\Omega_1]^{<\omega}$ with $\rho_1(E'\setminus L)\subseteq\rho_1(E\setminus K)$. It is easy to see that $f_\alpha\restriction\rho_1(E')=f\restriction\rho_1(E')$ and $(f_\alpha(n))_{n\in \rho_1(E')}$ $\rho_1$-converges to $\rho_1(D_\beta)\in X$.

Finally, we check that $X\notin\FinBW(\rho_2)$. Define $f:\Lambda_2\to X$ by $f(\lambda)=\lambda$ for all $\lambda\in\Lambda_2$ and fix any $E\in\cF_2$. We claim that $(f(n))_{n\in \rho_2(E)}$ does not $\rho_2$-converge. Clearly, it cannot converge to any $x\in\Lambda_2$. Moreover, it cannot converge to any $\rho_1(D_\alpha)$ for $\alpha\in T$ as property (A1) guarantees that for some $K\in[\Omega_1]^{<\omega}$ we have $\rho_2(E\setminus L)\not\subseteq f_\alpha[\rho_1(D_\alpha\setminus K)]$ for all $L\in[\Omega_2]^{<\omega}$, so $U=\{\rho_1(D_\alpha)\}\cup f_\alpha[ \rho_1(D_\alpha\setminus K)]$ would be an open neighborhood of $\rho_1(D_\alpha)$ such that $\rho_2(E\setminus L)\subseteq U$ for no $L\in[\Omega_2]^{<\omega}$.

We will show that  $(f(n))_{n\in \rho_2(E)}$ cannot $\rho_2$-converge to $\infty$. Suppose otherwise, let $\{L_n: n\in \omega\}\subseteq[\Omega_2]^{<\omega}$ be such that $\bigcup_{n\in \omega} L_n=\Omega_2$ and inductively pick $m_n\in\rho_2(E\setminus L_n)\setminus\{m_i:i<n\}$. Then $(f(m_n))_{n\in \omega}$ is convergent to $\infty$. However, if $g:\Lambda_1\to\{f(m_n): n\in\omega\}$ is any bijection (the set $\{f(m_n): n\in\omega\}$ is infinite since $f$ is one-to-one) then $g=f_\alpha$ for some $\alpha$. If $\alpha\in T$ then in $(f(m_n))_{n\in \omega}$ we could find a subsequence converging to $\rho_1(D_\alpha)$ (in the same way as above when showing that $X\in\FinBW(\rho_1)$ in the case of $\alpha\in T$) which contradicts that $X$ has unique limits of sequences. If $\alpha\notin T$ then in $(f(m_n))_{n\in \omega}$ we could find a subsequence converging to $\rho_1(D_\beta)$ for some $\beta<\alpha$, $\beta\in T$ (in the same way as above when showing that $X\in\FinBW(\rho_1)$ in the case of $\alpha\notin T$)  which also contradicts that $X$ has unique limits of sequences.
\end{proof}


\section{Hindman (Ramsey, differentially compact) spaces that are not in \texorpdfstring{$\FinBW(\I)$}{I } and vice versa}
\label{sec:nonHausdorf-world2}

Now we turn our attention to the question when there is a space in $\FinBW(\I)$ 
that is not Hindman (Ramsey, differentially compact, resp.) and vice versa in the case when  $\I$ is an arbitrary ideal.

\begin{corollary}[Assume CH] 
For each ideal $\I$ and $\rho\in \{\FS,r,\Delta\}$ the following conditions are equivalent.
\begin{enumerate}
    \item $\rho_\I\not\leq_K\rho$.
    \item There exists a Hindman (Ramsey, differentially compact, resp.) space that is not in $\FinBW(\I)$.
\end{enumerate}
Moreover, an example showing that the above difference between $\FinBW$ classes is nonempty is separable and has unique limits of sequences. If $\I$ is $P^-$, then this example is of the form $\Phi(\cA)$ with $\cA$ being almost disjoint of cardinality $\continuum$ (in particular, it is Hausdorff, compact, separable and of cardinality $\continuum$).
\end{corollary}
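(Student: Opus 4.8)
The plan is to translate everything into the language of partition regular functions and then invoke the two main distinguishing theorems of the preceding sections. First I would record that, by Proposition~\ref{prop:basic-relationships-between-FinBW-like-spaces}(\ref{prop:basic-relationships-between-FinBW-like-spaces:ideal-rho}), we have $\FinBW(\I)=\FinBW(\rho_\I)$, so condition~(2) says exactly that $\FinBW(\rho)\setminus\FinBW(\rho_\I)\neq\emptyset$. Thus the whole corollary reduces to the equivalence
$$\rho_\I\not\leq_K\rho \iff \FinBW(\rho)\setminus\FinBW(\rho_\I)\neq\emptyset,$$
for $\rho\in\{\FS,r,\Delta\}$, together with the structural description of the witnessing space.

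For the direction (2)$\implies$(1) I would argue by contraposition: if $\rho_\I\leq_K\rho$, then Theorem~\ref{thm:Katetove-implies-inclusion-for-FinBW}(\ref{thm:Katetove-implies-inclusion-for-FinBW:for-rho}), applied with $\rho_1=\rho$ and $\rho_2=\rho_\I$, yields $\FinBW(\rho)\subseteq\FinBW(\rho_\I)=\FinBW(\I)$, so no Hindman (Ramsey, differentially compact) space lies outside $\FinBW(\I)$, contradicting~(2). This step is immediate.

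The substance is the direction (1)$\implies$(2), and here I would first verify the hypotheses needed to feed $\rho$ and $\rho_\I$ into the distinguishing theorems. By Proposition~\ref{prop:Plike-properties-for-known-rho}(\ref{prop:Plike-properties-for-known-rho:FS-r-Delta:weakPplus}) each of $\FS,r,\Delta$ is weak $P^+$ (hence $P^-$), and by Proposition~\ref{prop:ideal-rho-is-sparse} each of them---as well as every $\rho_\I$---has small accretions. This already lets me apply Theorem~\ref{thm:characterization-for-rho:without-Pminus} with $\rho_1=\rho$ and $\rho_2=\rho_\I$: since $\rho$ is weak $P^+$ with small accretions and $\rho_\I\not\leq_K\rho$, that theorem produces a separable space $X$ with unique limits of sequences in $\FinBW(\rho)\setminus\FinBW(\rho_\I)$. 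This proves~(2) in full generality and simultaneously establishes the first ``moreover'' clause (separable, unique sequential limits).

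For the sharper ``moreover'' clause when $\I$ is $P^-$, I would instead invoke Theorem~\ref{thm:TWIERDZENIE-for-Fspaces:Pminus-Mrowka:rho}. The added hypothesis is that $\rho_2=\rho_\I$ is $P^-$, which follows from $\I$ being $P^-$ via Proposition~\ref{prop:properties-of-ideal-versus-rho}(\ref{prop:properties-of-ideal-versus-rho:ideal-rho}); together with $\rho_1=\rho$ being $P^-$, $\rho_\I$ having small accretions, and $\rho_\I\not\leq_K\rho$, this gives an almost disjoint family $\cA$ of cardinality $\continuum$ with $\Phi(\cA)\in\FinBW(\rho)\setminus\FinBW(\rho_\I)$, which is then Hausdorff, compact and separable. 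The only genuinely delicate point is conceptual rather than computational: for non-$P^-$ ideals $\I$ the function $\rho_\I$ need not be $P^-$, so Theorem~\ref{thm:TWIERDZENIE-for-Fspaces:Pminus-Mrowka:rho} is unavailable and one is forced out of the Hausdorff category, which is exactly why the general witness is only guaranteed to have unique limits of sequences. Everything else is a matter of checking that the hypotheses of the two quoted theorems are met, for which the cited propositions suffice.
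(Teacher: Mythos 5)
Your proposal is correct and follows essentially the same route as the paper: the general case is Theorem~\ref{thm:characterization-for-rho:without-Pminus} applied with $\rho_1=\rho$ and $\rho_2=\rho_\I$ (after checking that $\FS,r,\Delta$ are weak $P^+$ with small accretions and identifying $\FinBW(\I)$ with $\FinBW(\rho_\I)$), and the $P^-$ case is the Hausdorff Mr\'{o}wka construction of Theorem~\ref{thm:TWIERDZENIE-for-Fspaces:Pminus-Mrowka:rho}, which the paper reaches via Corollary~\ref{cor:I-Pplus-vs-Hindman-like}(\ref{cor:I-Pplus-vs-Hindman-like:rho}) and Theorem~\ref{thm:characterization-for-rho}(\ref{thm:characterization-for-rho:rho-rho}) but which amounts to the same argument. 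All hypothesis checks you cite are the ones the paper uses.
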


\begin{proof}
It follows from Theorem \ref{thm:characterization-for-rho:without-Pminus} and Proposition \ref{prop:basic-relationships-between-FinBW-like-spaces}(\ref{prop:basic-relationships-between-FinBW-like-spaces:ideal-rho}) as each $\rho\in \{\FS,r,\Delta\}$ is weak $P^+$ (by Theorem \ref{prop:Plike-properties-for-known-rho}(\ref{prop:Plike-properties-for-known-rho:FS-r-Delta:weakPplus})) and has small accretions (by Proposition \ref{prop:ideal-rho-is-sparse}). The case of $P^-$ ideals $\I$ follows from Corollary \ref{cor:I-Pplus-vs-Hindman-like}(\ref{cor:I-Pplus-vs-Hindman-like:rho}).
\end{proof}

In \cite[Definition~4.1]{MR4584767}, the author introduced the following ideal 
$$ \mathcal{BI}=\left\{A\subseteq\omega^3: \exists k\,\left[\forall i<k \,(A_{(i)}\in \Fin^2) \land \forall i\geq k\, (A_{(i)}\in \Fin(\omega^2)) \right]\right\}$$
where $A_{(i)}=\{(x,y)\in \omega^2: (i,x,y)\in A\}$.
The ideal $\mathcal{BI}$  proved to be useful  in research of $\FinBW(\I)$ spaces (see \cite{MR4584767} for more details).

\begin{corollary}[Assume CH]
\label{cor2}
For each ideal $\I$, the following conditions are equivalent.
\begin{enumerate}
    \item $\mathcal{BI}\not\leq_K\I$.
    \item There exists a space in $\FinBW(\I)$ that is not a Hindman (Ramsey, differentially compact, resp.) space.
\end{enumerate}
Moreover, an example showing that the above difference between $\FinBW$ classes is nonempty is  of the form $\Phi(\cA)$ with $\cA$ being infinite maximal almost disjoint (in particular, it is Hausdorff, compact, separable and of cadinality $\continuum$).    
\end{corollary}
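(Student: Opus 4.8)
The plan is to realize the witness as a single Mr\'{o}wka space $\Phi(\cA)$ over an infinite maximal almost disjoint family, exactly as the ``moreover'' clause predicts, and to combine two inputs: a uniform non-membership statement valid for \emph{all} MAD families, and the characterization of $\mathcal{BI}$ from \cite{MR4584767}. The first input (call it Fact A) is that for every infinite MAD family $\cA$ and every $\rho\in\{\FS,r,\Delta\}$ one has $\Phi(\cA)\notin\FinBW(\rho)$. This follows at once from Theorem~\ref{thm:MAD-Mrowka-nie-jest-Fspejsem}(\ref{thm:MAD-Mrowka-nie-jest-Fspejsem:ideal-over-FINsqrd}): by Proposition~\ref{prop:FinSQRD-below-FS-r-Delta}(\ref{prop:FinSQRD-below-FS-r-Delta:ideal}) we have $\Fin^2\leq_K\I_\rho$ for $\I_\rho\in\{\Hindman,\Ramsey,\Diff\}$, i.e.\ none of these ideals is $P^-(\Lambda)$, so the theorem applies to every infinite MAD $\cA$; since $\FinBW(\FS),\FinBW(r),\FinBW(\Delta)$ are exactly the classes of Hindman, Ramsey and differentially compact spaces, Fact A says $\Phi(\cA)$ is never of any of these three kinds. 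The second input (Fact B) is the characterization established in \cite{MR4584767} (using CH): for an arbitrary ideal $\I$ there is an infinite MAD family $\cA$ with $\Phi(\cA)\in\FinBW(\I)$ if and only if $\mathcal{BI}\not\leq_K\I$, and such $\cA$ may be taken of cardinality $\continuum$.

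For the implication $(1)\Rightarrow(2)$ these combine immediately. Assuming $\mathcal{BI}\not\leq_K\I$, Fact B produces an infinite MAD family $\cA$ with $\Phi(\cA)\in\FinBW(\I)$, and Fact A gives $\Phi(\cA)\notin\FinBW(\rho)$ for all three $\rho$ at once. Thus $\Phi(\cA)\in\FinBW(\I)\setminus\FinBW(\rho)$ is the required witness, and the standard properties of Mr\'{o}wka spaces and their one-point compactifications recalled in the section on Mr\'{o}wka spaces show that $\Phi(\cA)$ is Hausdorff, compact, separable and of cardinality $\continuum$, which also settles the ``moreover'' clause.

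For $(2)\Rightarrow(1)$ I would argue the contrapositive: if $\mathcal{BI}\leq_K\I$ then $\FinBW(\I)\subseteq\FinBW(\rho)$ for every $\rho\in\{\FS,r,\Delta\}$. Writing $\mathcal{BI}=\I_{\rho_{\mathcal{BI}}}$ and applying Theorem~\ref{thm:Katetove-implies-inclusion-for-FinBW}(\ref{thm:Katetove-implies-inclusion-for-FinBW:for-ideals}) together with Proposition~\ref{prop:basic-relationships-between-FinBW-like-spaces}(\ref{prop:basic-relationships-between-FinBW-like-spaces:ideal-rho}) yields $\FinBW(\I)\subseteq\FinBW(\rho_{\mathcal{BI}})=\FinBW(\mathcal{BI})$, so the task reduces to the single inclusion $\FinBW(\mathcal{BI})\subseteq\FinBW(\rho)$.

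This last inclusion is the crux and the main obstacle. It cannot be obtained by the naive use of Theorem~\ref{thm:Katetove-implies-inclusion-for-FinBW}, since deducing $\FinBW(\mathcal{BI})\subseteq\FinBW(\rho)$ from that theorem would require the comparison $\I_\rho\leq_K\mathcal{BI}$, which runs in the wrong direction and is unavailable. Instead one must exploit the specific structural analysis of $\mathcal{BI}$ carried out in \cite{MR4584767}, which pins $\FinBW(\mathcal{BI})$ down tightly enough that each of its members already lies in $\FinBW(\rho)$; here it is important that every $\rho\in\{\FS,r,\Delta\}$ is weak $P^+$ (Proposition~\ref{prop:Plike-properties-for-known-rho}(\ref{prop:Plike-properties-for-known-rho:FS-r-Delta:weakPplus})), hence $P^-$, so that the large compact Hausdorff spaces that do belong to $\FinBW(\mathcal{BI})$ (the Alexandroff compactifications produced in Theorem~\ref{thm:compact-metric-implies-FinBW(rho)}(\ref{thm:compact-metric-implies-FinBW(rho):Pminus-is-necessary})) are simultaneously members of $\FinBW(\rho)$. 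Confirming that \emph{no} space in $\FinBW(\mathcal{BI})$ escapes $\FinBW(\rho)$ is precisely the delicate step, and it is where the careful design of the ideal $\mathcal{BI}$ in \cite{MR4584767} does the real work.
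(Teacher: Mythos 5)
Your $(1)\Rightarrow(2)$ direction is essentially the paper's argument: combine the existence (under CH, from \cite[Theorem~5.3]{MR4584767}) of an infinite MAD family $\cA$ with $\Phi(\cA)\in\FinBW(\I)$ with the fact that $\Phi(\cA)\notin\FinBW(\rho)$ for every infinite MAD family and every $\rho\in\{\FS,r,\Delta\}$ (Theorem~\ref{thm:MAD-Mrowka-nie-jest-Fspejsem}(\ref{thm:MAD-Mrowka-nie-jest-Fspejsem:ideal-over-FINsqrd}) plus Proposition~\ref{prop:FinSQRD-below-FS-r-Delta}(\ref{prop:FinSQRD-below-FS-r-Delta:ideal}), which is exactly what Corollary~\ref{cor:nonFINBW-for-MAD} packages). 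That part is fine.

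The $(2)\Rightarrow(1)$ direction, however, has a genuine gap: you correctly isolate the crux --- showing that $\mathcal{BI}\leq_K\I$ forces $\FinBW(\I)\subseteq\FinBW(\rho)$ --- but you never actually prove it, and the mechanism you gesture at would not work. You suggest that it is ``important that every $\rho\in\{\FS,r,\Delta\}$ is weak $P^+$, hence $P^-$,'' so that the compact Hausdorff spaces in $\FinBW(\mathcal{BI})$ land in $\FinBW(\rho)$. But the relevant tool in the paper's general machinery, Theorem~\ref{thm:compact-metric-implies-FinBW(rho)}(\ref{thm:compact-metric-implies-FinBW(rho):spaces-with-star-property}), requires $\rho$ to be $P^+$, and by Proposition~\ref{prop:Plike-properties-for-known-rho}(\ref{prop:Plike-properties-for-known-rho:FS-r-Delta:Pplus}) none of $\FS$, $r$, $\Delta$ is $P^+$; weak $P^+$ only yields membership of compact \emph{metric} spaces, which is far from all of $\FinBW(\mathcal{BI})$. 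The missing idea is property $(*)$: by \cite[Proposition~6.3 and Lemma~3.2(ii)]{MR4584767}, $\mathcal{BI}\leq_K\I$ implies that \emph{every} space in $\FinBW(\I)$ has property $(*)$, and one then invokes the external, $\rho$-specific theorems that Hausdorff spaces with property $(*)$ are Hindman (\cite[Theorem~11]{MR1887003}, via idempotent ultrafilters), Ramsey (\cite[Corollary~3.2]{MR4552506}, via the bounding number), and differentially compact (\cite[Corollary~4.8]{MR3097000}). As the paper itself emphasizes in the open problem following these citations, those three results do \emph{not} follow from the general $P$-like framework, so they cannot be replaced by any appeal to weak $P^+$; your proposal leaves precisely this step unproved.
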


\begin{proof}
$(1)\implies(2)$
In \cite[Theorem~5.3]{MR4584767}, the author 
proved that if $\mathcal{BI}\not\leq_K\I$ then there exists an infinite  maximal almost disjoint family $\cA$ such that  $\Phi(\cA)\in\FinBW(\I)$.
Then  Corollary~\ref{cor:nonFINBW-for-MAD}
 shows that $\Phi(\cA)$ is not Hindman (Ramsey nor differentially compact).

$(2)\implies(1)$
Using \cite[Proposition~6.3 and Lemma~3.2(ii)]{MR4584767},
it is not difficult to see that if $\mathcal{BI}\leq_K\I$ then each space in $\FinBW(\I)$ satisfies property $(*)$. 
On the other hand, we know that spaces with $(*)$ property are Hindman, Ramsey and differentially compact (see \cite[Theorem~11]{MR1887003}, 
\cite[Corollary~3.2]{MR4552506}
and
\cite[Corollary~4.8]{MR3097000}, resp.).
\end{proof}

\begin{corollary}[Assume CH]
If $\rho\in\{FS, r, \Delta\}$ then $\FinBW(\rho)\neq\FinBW(\I)$ for every ideal $\I$.
\end{corollary}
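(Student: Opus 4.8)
The plan is to argue by contradiction: assume $\FinBW(\rho)=\FinBW(\I)$ for some ideal $\I$ and manufacture a single space that is forced both to lie in $\FinBW(\I)$ and to be excluded from it. Recall first that, by the Remark following Definition~\ref{def:FinBW-for-rho}, the classes $\FinBW(\FS)$, $\FinBW(r)$ and $\FinBW(\Delta)$ are exactly the classes of Hindman, Ramsey and differentially compact spaces. Since each $\rho\in\{\FS,r,\Delta\}$ is $P^-$ (Proposition~\ref{prop:Plike-properties-for-known-rho}(\ref{prop:Plike-properties-for-known-rho:FS-r-Delta:Pminus})), Theorem~\ref{thm:TWIERDZENIE-for-Fspaces-in-FinBW} (here CH is used) supplies an almost disjoint family $\cA$ with $|\cA|=\continuum$ such that $\Phi(\cA)\in\FinBW(\rho)$. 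Under the assumption $\FinBW(\rho)=\FinBW(\I)$, this same $\Phi(\cA)$ lies in $\FinBW(\I)$.

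The crucial observation is that $\Phi(\cA)$ fails the property $(*)$ whenever $\cA$ is uncountable. Indeed, the underlying countable set $\Lambda$ of isolated points is dense, so $\closure_{\Phi(\cA)}(\Lambda)=\Phi(\cA)$; yet $\Phi(\cA)$ is \emph{not} first countable at $\infty$. This is because (as recorded in the Mr\'owka section) every neighbourhood of $\infty$ is the complement of a compact subset of $\Psi(\cA)$, and each such compact set meets $\cA$ in a finite set. Hence any countable family of neighbourhoods of $\infty$ can omit only countably many members of $\cA$, so some $A\in\cA$ lies in all of them while $\Phi(\cA)\setminus(A\cup\{A\})$ is a neighbourhood of $\infty$ containing none of them. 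Thus $\closure_{\Phi(\cA)}(\Lambda)$ is a countable-closure that is not first countable, so $\Phi(\cA)$ does not have property $(*)$.

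To close the loop I would invoke Corollary~\ref{cor2}. The equality $\FinBW(\I)=\FinBW(\rho)$ means every member of $\FinBW(\I)$ is a Hindman (Ramsey, differentially compact) space, i.e.\ condition (2) of Corollary~\ref{cor2} fails; by the equivalence in that corollary this forces $\mathcal{BI}\leq_K\I$. However, as established inside the proof of Corollary~\ref{cor2} (via \cite[Proposition~6.3 and Lemma~3.2(ii)]{MR4584767}), $\mathcal{BI}\leq_K\I$ implies that \emph{every} space in $\FinBW(\I)$ satisfies property $(*)$. Applying this to $\Phi(\cA)\in\FinBW(\I)$ contradicts the previous paragraph, completing the argument.

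The main obstacle is to realise that the distinguishing invariant here cannot be the Kat\v{e}tov order between the \emph{ideals}: from $\FinBW(\rho)=\FinBW(\I)$ one can only extract $\rho_\I\leq_K\rho$ (hence $\I\leq_K\I_\rho$) together with $\mathcal{BI}\leq_K\I$, and since $\FinBW(\I_\rho)\subseteq\FinBW(\rho)$ one genuinely has $\mathcal{BI}\leq_K\I_\rho$, so these relations are mutually consistent and yield no contradiction at the level of ideals. The resolution is to move to the level of spaces and play the large non-first-countable compactification $\Phi(\cA)$ (a Hindman/Ramsey/differentially compact space lacking $(*)$) against the rigidity forced by $\mathcal{BI}\leq_K\I$. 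Verifying the failure of $(*)$ for $\Phi(\cA)$ and correctly citing the internal property-$(*)$ consequence of $\mathcal{BI}\leq_K\I$ are the only non-bookkeeping points; the rest is a direct assembly of Theorem~\ref{thm:TWIERDZENIE-for-Fspaces-in-FinBW} and Corollary~\ref{cor2}.
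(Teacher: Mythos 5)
Your proof is correct, and it diverges from the paper's argument in a meaningful way on one of the two halves. The paper also pivots on the dichotomy $\mathcal{BI}\leq_K\I$ versus $\mathcal{BI}\not\leq_K\I$, and in the latter case both of you simply invoke Corollary~\ref{cor2} to get a space in $\FinBW(\I)\setminus\FinBW(\rho)$. The difference is in the case $\mathcal{BI}\leq_K\I$: the paper exhibits the concrete witness $[0,1]$, which lies in $\FinBW(\rho)$ because $\rho$ is weak $P^+$ (so compact metric spaces are in $\hFinBW(\rho)$) but fails to lie in $\FinBW(\I)$ by external results cited from \cite{MR4584767}, \cite{MR3034318} and \cite{alcantara-phd-thesis}; you instead use the CH-construction of Theorem~\ref{thm:TWIERDZENIE-for-Fspaces-in-FinBW} to produce $\Phi(\cA)\in\FinBW(\rho)$ with $|\cA|=\continuum$, observe that it fails property $(*)$ (correctly: $\closure_{\Phi(\cA)}(\Lambda)=\Phi(\cA)$ is compact but not first countable at $\infty$, since a countable family of co-compact neighbourhoods can exclude only countably many members of the uncountable family $\cA$), and play this against the fact, already quoted inside the paper's proof of Corollary~\ref{cor2}, that $\mathcal{BI}\leq_K\I$ forces every space in $\FinBW(\I)$ to satisfy $(*)$. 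Your route is slightly more self-contained relative to this paper — it reuses only machinery already deployed in the proof of Corollary~\ref{cor2} plus Theorem~\ref{thm:TWIERDZENIE-for-Fspaces-in-FinBW} — at the cost of using CH in both halves of the argument and of producing a large nonmetrizable witness where the paper gets away with $[0,1]$; the paper's version makes the second case CH-free and the witness explicit, but leans on additional external facts about $[0,1]\notin\FinBW(\I)$. Your closing remark that the ideal-level Kat\v{e}tov data alone cannot separate the classes is a fair diagnosis of why one must argue at the level of spaces (or of partition regular functions) here.
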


\begin{proof}
Let $\rho\in\{FS, r, \Delta\}$ and $\I$ be an ideal. If $\mathcal{BI}\not\leq_K\I$ then $\FinBW(\I)\setminus\FinBW(\rho)\neq\emptyset$ by Corollary \ref{cor2}. On the other hand, if $\mathcal{BI}\leq_K\I$ then the interval $[0,1]$ is in $\FinBW(\rho)$ (by Theorem \ref{thm:FinBW-hFinBW-homogeneous}(\ref{thm:compact-metric-implies-FinBW(rho):metric-compact-spaces}) and Propositions \ref{prop:basic-relationships-between-FinBW-like-spaces}(\ref{prop:basic-relationships-between-FinBW-like-spaces:inclusion}) and \ref{prop:Plike-properties-for-known-rho}(\ref{prop:Plike-properties-for-known-rho:FS-r-Delta:weakPplus}))  and it is not in $\FinBW(\I)$ (by \cite[Proposition 4.6]{MR4584767}, \cite[Example 4.1]{MR3034318} and \cite[Section 2.7]{alcantara-phd-thesis}).
\end{proof}


\bibliographystyle{amsplain}
\bibliography{FKKU}

\end{document}